\definecolor{labelkey}{gray}{.8}
\definecolor{refkey}{gray}{.8}
\definecolor{darkred}{rgb}{0.9,0.1,0.1}
\definecolor{darkgreen}{rgb}{0,0.5,0}
\newtheorem{theorem}{Theorem}[section]
\newtheorem{lemma}[theorem]{Lemma}
\newtheorem{definition}[theorem]{Definition}
\newtheorem{corollary}[theorem]{Corollary}
\newtheorem{proposition}[theorem]{Proposition}
\newtheorem{Hypothesis}{Hypothesis} 
\theoremstyle{remark}
\newtheorem{remark}[theorem]{Remark}
\renewenvironment{proof}[1][Proof]{ {\itshape \noindent {#1.}} }{$\Box$
\medskip}
\numberwithin{equation}{section}
\numberwithin{figure}{section}
\theoremstyle{plain}
\newcommand{\R}{\mathbb{R}}
\newcommand{\1}{\mathbbm{1}}
\newcommand{\eps}{\varepsilon}
\newcommand{\malD}{\mathcal{D}}
\newcommand{\dd}{\mathrm{d}}
\newcommand{\supp}{\mathrm{supp}}
\newcommand{\Var}{\mathbf{Var}}
\newcommand{\Cov}{\mathbf{Cov}}
\newcommand{\filt}{\mathscr{F}}
\newcommand{\Prob}{\mathbf{P}}
\newcommand{\Expe}{\mathbf{E}}
\newcommand{\kernel}{\mathscr{P}}
\newcommand{\viv}{\hspace{.7pt}\vert\hspace{.7pt}}
\title{Fluctuation exponents of the half-space KPZ  at stationarity}
\author[Yu Gu]{Yu Gu}
\author[Ran Tao]{Ran Tao}
 \address[Yu Gu]{Department of Mathematics, University of Maryland,\newline 4176 Campus Drive, College Park, MD, 20742.}\email{yugull05@gmail.com}
 \address[Ran Tao]{Department of Mathematics, University of Maryland,\newline 4176 Campus Drive, College Park, MD, 20742.}  \email{rantao16@umd.edu}
\begin{document}

\begin{abstract}
We study the half-space KPZ equation with a Neumann boundary condition, starting from stationary Brownian initial data. We derive a variance identity that links the fluctuations of the height function to the transversal fluctuations of a half-space polymer model. Utilizing this identity, we obtain estimates for the polymer endpoints, leading to optimal fluctuation exponents for the height function in both the subcritical and critical regimes, {as well as an optimal upper bound for the fluctuation exponents in
{the extended critical regime}.} 
We also compute the average growth rate as a function of the boundary parameter.

\bigskip

\noindent \textsc{Keywords:} KPZ, 
directed polymer, stationary measures, Karlin-McGregor
\end{abstract}

\maketitle

\section{Introduction}
\subsection{Main Results}
The Kardar-Parisi-Zhang (KPZ) equation in half-space with a Neumann boundary condition at the origin models stochastic interface growth in contact with a boundary \cite{kardar1985depinning, ito2018fast}. Indexed by the boundary condition parameter $u \in \R$, this equation is the stochastic PDE:
\begin{equation}\label{eq:hsKPZ}
\begin{aligned}
\partial_t H_{u}(t, x) & =\frac{1}{2} \partial_x^2 H_{u}(t, x)+\frac{1}{2}\left(\partial_x H_{u}(t, x)\right)^2+\xi(t, x), \quad t\geq 0, x \geq 0,
\\ \left.\partial_x H_u(t, x)\right|_{x=0} & =u, 
\end{aligned}
\end{equation}
where $\xi$ is a (1+1) space-time white noise on $(t,x)\in \R^2$. Throughout the paper, we assume that the initial condition to the half-space KPZ equation \eqref{eq:hsKPZ} is 
\begin{equation}\label{eq:i.c.}
H_u(0,x)=W_u(x):=W(x)+ux, 
\end{equation}
with $W(\cdot)$ being a standard Brownian motion starting from 0 and independent of $\xi$. We consider the solution to \eqref{eq:hsKPZ} by the Hopf-Cole transform 
\[
H_u(t,x):= \log Z_{u-\frac{1}{2}} (t,x),
\]
where $Z_{\mu}(t,x)$ solves the half-space stochastic heat equation (SHE) with a Robin boundary condition for $\mu \in \R$:
\begin{equation}\label{eq:hsSHE}
\begin{aligned}
\partial_t Z_{\mu}(t, x) & =\frac{1}{2} \partial_x^2 Z_{\mu}(t, x)+Z_{\mu}(t, x)\xi(t, x), \quad t\geq 0, x \geq 0,\\ \left.\partial_x Z_\mu(t, x)\right|_{x=0} & =\mu Z_\mu(t, 0), 
\end{aligned}\end{equation}
with initial condition given by ${Z}_\mu(0, x)=\exp({H_{\mu+\frac{1}{2}}(0, x)})$.

The boundary conditions in \eqref{eq:hsKPZ} or \eqref{eq:hsSHE} are not immediately meaningful, as the solutions to the KPZ equations or the stochastic heat equations are not differentiable. 
We defer precise definitions to Section~\ref{se:prelim}. We also note that the parametrization here follows the convention as in \cite{barraquand2023stationary}. Under this convention, $H_u(t,x)$ is the logarithm of a half-space SHE with Robin boundary parameter $u-\frac{1}{2}$ and starting at initial condition $Z_{u-\frac{1}{2}}(0,x)= \exp(W(x)+ux)$. As it will become clear later, the shift of $-\frac{1}{2}$ introduces a certain symmetry over $u=0$.

{It was shown in \cite{barraquand2023stationary} that, with the initial data in \eqref{eq:i.c.}, the evolution of the height function $H_u$ is at stationarity, namely, for any $t>0$,  
 \begin{equation}\label{eq:station}
 H_u(t,x)-H_u(t,0) \stackrel{\text{law}}{=} H_u(0,x)-H_u(0,0) = W(x)+ux, \text{ as a process in } x \in [0, \infty).
 \end{equation}
 The goal of this paper is to study the statistics of the height function in this stationary setting, including the mean and the variance, for different choices of the boundary conditions.}

Let $(\Omega, \filt, \Prob)$ be a probability space that supports all possible randomness in the environment and the initial   data. We use $\Expe, \Var, \Cov$ to denote the expectation, variance and covariance, respectively, under this probability space. Given two functions   $a(t)$ and $b(t)$, we use notation $a(t) \sim b(t)$ when $\lim_{t\to \infty}a(t)/b(t)=1$, and use notation $a(t) \asymp b(t)$ when $C^{-1}b(t)\leq a(t) \leq Cb(t)$ for some constants $C>0$ independent of $t\geq 1$.

Associated with equations \eqref{eq:hsKPZ} and \eqref{eq:hsSHE}, one can construct a half-space continuum directed random polymer (half-space CDRP) measure $\mathbb{Q}^{u,t}_0$. We defer the precise definitions of $\mathbb{Q}^{u,t}_0$ to Definition \ref{de:cdrp}. One can think of it as a random Gibbs measure 
on the path space $\mathcal{C}([0,t],[0,\infty))$ with Borel $\sigma$-algebra. The path starts from $(t,0)$ and runs backwards in time, and it is restricted to the nonnegative half-space, attracted or repulsed by the boundary 0. 
Along the trajectory, it collects energy $\int_0^t \xi(t-s,X_s)ds$, and the terminal tilt is  given by $\exp(W_u(\cdot))$ at time $t$. In particular, the endpoint of the polymer path induces a measure $\mathbb{Q}^{u,t}_0 (X_t \in \dd x)$ on the half-space $[0,\infty)$ equipped with Borel $\sigma$-algebra $\mathcal{B}([0,\infty))$. Throughout the paper, we use  $\mathbb{E}^{\mathbb{Q}^{u,t}}_{0}$ as the quenched expectation over this polymer measure.

To describe the result, we will use the digamma function $\psi$ 
\begin{equation}\label{eq:digammafunc}
\psi(z)=\partial_z \log \Gamma(z)=-\gamma_0+\sum_{n=0}^{\infty}\left(\frac{1}{n+1}-\frac{1}{n+z}\right), \quad\quad z\in(0,\infty), 
\end{equation}
with $\gamma_0$ being the Euler–Mascheroni constant. Its derivative $\psi'$ (the trigamma function) is a strictly decreasing function on $(0,\infty)$ 
\begin{equation}\label{eq:trigammafunc}
\psi'(z)= \sum_{n=0}^{\infty}\frac{1}{(n+z)^2}.
\end{equation}
In particular, the leading behavior as $z\to0^+$ is
\begin{equation}\label{eq:psiasym}
\psi'(z) \sim \frac{1}{z^2}.
\end{equation}

Our first result below establishes a relation between the variance of  $H_u(t,0)$ and the annealed mean of the endpoint displacement of the polymer paths under $\mathbb{Q}^{u,t}_0$. This identity holds true for arbitrary $u \in \R, t\geq0$.
\begin{theorem}\label{thm:varianceID}
For any $u\in\R, t\geq0$,
\begin{equation}\label{eq:varidkpzhs}
\Var[H_u(t,0)] = 2\Expe \mathbb{E}^{\mathbb{Q}^{u,t}}_{0}[X_t] -ut,
\end{equation}
\end{theorem}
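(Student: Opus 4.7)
The plan is to combine a Malliavin integration-by-parts computation with respect to the initial Brownian motion $W$ and the stationarity relation \eqref{eq:station}. The polymer endpoint $X_t$ enters as the Malliavin derivative of $H_u(t,0)$ in the $W$-direction, and the linear correction $-ut$ comes out of the drift component of the stationary profile combined with the mean polymer displacement.

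First, I identify the Malliavin derivative of $H_u(t,0)$ with respect to $W$. Using the polymer / Feynman-Kac representation of $Z_{u-1/2}(t,0)$ (the terminal tilt is $\exp(W(X_t)+uX_t)$ and $D^W_z W(y)=\mathbf 1_{z\le y}$), a direct chain rule gives
$$D^W_z H_u(t,0)\;=\;\mathbb{Q}^{u,t}_0(X_t\ge z),\qquad z\ge 0.$$
Plugging this into the Gaussian covariance identity $\Cov[F,W(L)]=\Expe\int_0^L D^W_z F\,\dd z$ yields
$$\Cov\bigl[H_u(t,0),W(L)\bigr]\;=\;\Expe\,\mathbb{E}^{\mathbb{Q}^{u,t}}_0[X_t\wedge L],$$
whose limit as $L\to\infty$ is $\Expe\,\mathbb{E}^{\mathbb{Q}^{u,t}}_0[X_t]$, provided one can control the tails of the polymer endpoint. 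This produces the polymer term in the target identity.

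To isolate $\Var[H_u(t,0)]$, I would apply the same IBP to $H_u(t,L)$, whose Malliavin derivative is $\mathbb{Q}^{u,t}_L(X_t\ge z)$, and combine it with the stationarity identity $\Var[H_u(t,L)-H_u(t,0)]=L$. For large $L$ the polymer based at $(t,L)$ effectively decouples from the boundary and looks like a full-space stationary polymer whose endpoint has mean $L+ut$ (the drift $u$ in the initial-condition bias $e^{uX_t}$ shifts the endpoint by $ut$ over time $t$). Assembling the two IBP formulas for $\Cov[H_u(t,0),W(L)]$ and $\Cov[H_u(t,L),W(L)]$ with the stationarity identity, and letting $L\to\infty$, cancels the $L$-divergences and yields the claim. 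The factor $2$ arises from the symmetric cross-term in the increment's variance, while the $-ut$ comes from the gap between the $L$ on the right of $\Var[H_u(t,L)-H_u(t,0)]=L$ and the large-$L$ behavior $\Cov[H_u(t,L),W(L)]\sim L-ut$ of the IBP formula at the displaced base point.

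The main obstacle is the large-$L$ limit. Because \eqref{eq:station} is only an equality in distribution, the increment $H_u(t,L)-H_u(t,0)$ is not literally $W(L)+uL$, so one cannot simply substitute it inside a covariance with $H_u(t,0)$. The way through is to treat the IBP identities at the two base points separately and to quantify, as $L\to\infty$, the asymptotics of $\Expe\,\mathbb{E}^{\mathbb{Q}^{u,t}}_L[(L-X_t)^+]$ in terms of a finite constant involving $ut$. Identifying this limiting constant precisely with $ut$ (and not with any other function of $u,t$) ultimately relies on the explicit mean growth rate of the half-space height, which in turn is sensitive to the $-\tfrac12$ shift in the Robin boundary parameter that makes the Brownian initial data stationary in the first place.
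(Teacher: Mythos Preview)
Your Malliavin integration-by-parts step is correct and is exactly the paper's \eqref{eq:polymlimit}. The gap is in the assembly. The three ingredients you list --- the IBP formulas at base points $0$ and $L$, together with $\Var[H_u(t,L)-H_u(t,0)]=L$ --- do not by themselves isolate $\Var[H_u(t,0)]$: expanding the last identity gives $\Var[H_u(t,L)]-2\Cov[H_u(t,L),H_u(t,0)]+\Var[H_u(t,0)]=L$, and neither $\Var[H_u(t,L)]$ nor $\Cov[H_u(t,L),H_u(t,0)]$ can be read off from covariances with $W(L)$ alone. The paper's missing move is to introduce the \emph{time} increment $\mathcal{H}_u(t,x):=H_u(t,x)-H_u(0,x)$, so that $H_u(t,x)-H_u(t,0)=[\mathcal{H}_u(t,x)-\mathcal{H}_u(t,0)]+W_u(x)$ holds \emph{pathwise}, not merely in law. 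Taking variances of both sides and using stationarity yields, for every $x$,
\[
\Var[H_u(t,0)] \;=\; 2\Cov[H_u(t,0),W_u(x)] \;+\; 2\Cov[\mathcal{H}_u(t,x),\mathcal{H}_u(t,0)] \;-\; \bigl(\Var[\mathcal{H}_u(t,x)]+2\Cov[\mathcal{H}_u(t,x),W_u(x)]\bigr).
\]
Two further limits you omit are then required: the spatial decorrelation $\Cov[\mathcal{H}_u(t,x),\mathcal{H}_u(t,0)]\to 0$ (the paper's \eqref{eq:covlimit}), and the combined limit $\Var[\mathcal{H}_u(t,x)]+2\Cov[\mathcal{H}_u(t,x),W_u(x)]\to ut$ (the paper's \eqref{eq:heatlimit}). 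Note that it is this particular \emph{sum} that converges; neither summand need be $O(1)$ individually, so a single asymptotic of the form $\Cov[H_u(t,L),W(L)]\sim L-ut$ does not suffice.

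Your last paragraph misidentifies the source of $ut$. It does not rely on the half-space mean growth rate; Theorem~\ref{thm:mean} is proved independently and is never invoked in Section~\ref{se:varID}. Instead, the paper shows (Lemma~\ref{le:sheinftydif}, Proposition~\ref{pr:substitution}) that for large $x$ one may replace $\mathcal{H}_u(t,x)$ by the \emph{full-space} increment $\mathcal{H}^{fs}_u(t,x)$ in $L^2$, and then (Proposition~\ref{prop:wslimit}) computes the full-space version of \eqref{eq:heatlimit} via the shear invariance $\rho^{fs}_u(\,\cdot+z+ut\mid t,z)\stackrel{\text{law}}{=}\rho^{fs}_0(\,\cdot\mid t,0)$ and the evenness of $\Expe\rho^{fs}_0(\,\cdot\mid t,0)$. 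The constant $ut$ drops out of that symmetry, so the argument is not circular.
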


Next, we provide an estimate on $\Expe \mathbb{E}^{\mathbb{Q}^{u,t}}_{0}[X_t]$ under the constraint of $u<0$. The bound in turn leads to  the bounds on the fluctuations of  the height function.

\begin{theorem}\label{thm:var}
Assume  $u<0$.  For any $t\geq 0$, we have 
\begin{equation}\label{eq:intmidupbd}
0 \leq \Expe \mathbb{E}^{\mathbb{Q}^{u,t}}_{0}[X_t]\leq \psi'(2|u|).
\end{equation}

As a result,
\begin{equation}\label{eq:varbdexact}
|u|t \leq \Var [H_u(t,0)] \leq 2\psi'(2|u|)+|u|t.
\end{equation}
In particular, for
 any constants $\alpha\in[0,\frac{1}{3})$ and $c< 0$, with $u=ct^{-\alpha}$,
we have 
\begin{equation}\label{eq:varflucsize}
\Var [H_{ct^{-\alpha}}(t,0)] \sim |c|t^{1-\alpha}, \quad \text{as } t \to \infty.
\end{equation}
For $\alpha=\frac{1}{3}$ and $c<0$, with $u=ct^{-{1}/{3}}$, we have
\begin{equation}\label{eq:varfluccrit}
 \Var [H_{ct^{-{1}/{3}}}(t,0)]  \asymp t^{\frac{2}{3}}.
\end{equation}
\end{theorem}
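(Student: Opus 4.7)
The plan is to reduce the variance bound \eqref{eq:varbdexact} and its asymptotic consequences to the endpoint estimate \eqref{eq:intmidupbd}, using the variance identity from Theorem~\ref{thm:varianceID}. The lower bound $\Expe \mathbb{E}^{\mathbb{Q}^{u,t}}_0[X_t] \geq 0$ in \eqref{eq:intmidupbd} is immediate: the polymer measure $\mathbb{Q}^{u,t}_0$ is supported on paths in $[0,\infty)$, so $X_t \geq 0$ almost surely.

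The crux is the uniform-in-$t$ upper bound $\Expe \mathbb{E}^{\mathbb{Q}^{u,t}}_0[X_t] \leq \psi'(2|u|)$. My plan is to express the endpoint expectation as the sensitivity of the free energy to the initial drift. Introducing an auxiliary parameter $h \in \R$ and letting $\tilde Z^{(h)}(t,0)$ denote the half-space SHE solution \eqref{eq:hsSHE} with Robin parameter $u-\tfrac{1}{2}$ held fixed but with initial data $\exp(W(x)+(u+h)x)$, one verifies from the definition of $\mathbb{Q}^{u,t}_0$ that
\begin{equation*}
\mathbb{E}^{\mathbb{Q}^{u,t}}_0[X_t] \;=\; \partial_h \log \tilde Z^{(h)}(t,0)\,\big|_{h=0}.
\end{equation*}
After taking $\Expe$ and interchanging with $\partial_h$, the task becomes bounding the annealed initial-drift sensitivity $\partial_h \Expe\log\tilde Z^{(h)}(t,0)\big|_{h=0}$ by $\psi'(2|u|)$ uniformly in $t$. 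Since $u<0$ creates attraction of the backward polymer toward the origin (both via the negative initial drift and the negative Robin parameter $u-\tfrac12$), the endpoint distribution is expected to remain tight in $t$ and to converge as $t\to\infty$ to a stationary law whose mean is exactly $\psi'(2|u|)$. This last identification is the main obstacle: a soft monotonicity argument alone will not pinpoint the constant $\psi'(2|u|)$, so the proof likely needs the integrable-model input alluded to in the abstract (Karlin--McGregor/Fredholm-type formulas), or alternatively a coupling with a genuinely stationary half-space polymer whose boundary weights are inverse-Gamma of parameter $2|u|$, for which the endpoint mean is computable in closed form and matches $\psi'(2|u|)$.

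Once \eqref{eq:intmidupbd} is established, \eqref{eq:varbdexact} follows immediately by substitution into \eqref{eq:varidkpzhs}, since $-ut=|u|t$ when $u<0$. For the asymptotic statements I use the tail behavior \eqref{eq:psiasym}: with $u=ct^{-\alpha}$ and $c<0$,
\begin{equation*}
|u|t \;=\; |c|\,t^{1-\alpha},\qquad 2\psi'(2|u|)\;\sim\;\frac{1}{2c^2}\,t^{2\alpha}\quad\text{as }t\to\infty.
\end{equation*}
For $\alpha\in[0,\tfrac{1}{3})$ we have $2\alpha<1-\alpha$, so the linear term $|c|t^{1-\alpha}$ dominates both the lower and the upper bound in \eqref{eq:varbdexact}, and squeezing yields \eqref{eq:varflucsize}. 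For the critical value $\alpha=\tfrac{1}{3}$, both $|u|t$ and $2\psi'(2|u|)$ are of order $t^{2/3}$, so \eqref{eq:varbdexact} sandwiches $\Var[H_{ct^{-1/3}}(t,0)]$ between two quantities of the same order, giving \eqref{eq:varfluccrit}.
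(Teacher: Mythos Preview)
Your reductions are correct: the lower bound in \eqref{eq:intmidupbd} is trivial, the variance bound \eqref{eq:varbdexact} follows immediately from \eqref{eq:intmidupbd} via the variance identity, and your squeezing argument for \eqref{eq:varflucsize}--\eqref{eq:varfluccrit} is exactly right. The derivative-of-free-energy identity $\mathbb{E}^{\mathbb{Q}^{u,t}}_0[X_t]=\partial_h\log\tilde Z^{(h)}(t,0)|_{h=0}$ is also correct, but it is merely a restatement of the endpoint density and does not by itself yield a bound. The problem is that $\tilde Z^{(h)}$ has mismatched boundary parameter $u-\tfrac12$ and initial drift $u+h$, so the system is no longer stationary and you cannot compute $\Expe\log\tilde Z^{(h)}(t,0)$; this is precisely the obstruction to running a Sepp\"al\"ainen-type perturbation argument here. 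Your proposal acknowledges this (``main obstacle'') and then defers to unspecified integrable inputs or a vague inverse-Gamma coupling, which is not a proof.

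The paper's argument for the upper bound is concrete and quite different from what you sketch. It introduces a second independent Brownian motion $\tilde W$ and builds a \emph{stationary-to-stationary} polymer whose starting point is sampled from the density proportional to $\exp(\tilde W_u(x))$ (valid since $u<0$). A Karlin--McGregor path-switching argument (Lemma~\ref{le:ptstdom}) shows $\det[\mathcal Z_{u-\frac12}(t,x_i\,|\,0,y_j)]\ge 0$, which yields stochastic monotonicity of the endpoint law in the starting point (Corollary~\ref{co:endptcompdom}); since the stationary starting density lives on $[0,\infty)$, this gives $\Expe\mathbb{E}^{\mathbb{Q}^{u,t}}_0[X_t]\le \Expe\int_0^\infty y\,\tilde\rho^R_u(y\,|\,t)\,dy$. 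The right-hand side is then computed \emph{exactly} for all $t$ (Lemma~\ref{le:compendpt}): time reversal of $\mathcal Z$ and the stationarity \eqref{eq:shestatdist} reduce it to $\Expe\big[\int_0^\infty y\,e^{W_u(y)+\tilde W_u(y)}dy\big/\int_0^\infty e^{W_u+\tilde W_u}\big]$, which by a Brownian scaling and Dufresne's identity \eqref{eq:expoBM} equals $\psi'(-2u)=\psi'(2|u|)$. No Fredholm determinants or discrete log-gamma comparison are used; the only ``integrable'' ingredients are the known stationary measure, the Green's function symmetry, and Dufresne.
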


As a consequence, we obtain the tightness for the annealed polymer endpoint measure. 

\begin{corollary}\label{co:tightnessleft}
For any constants $\alpha\in[0,\frac{1}{3}]$ and $c< 0$, with $u=ct^{-\alpha}$, if we denote the annealed law of the scaled endpoint displacement $\frac{X_t}{t^{2\alpha}}$ on $[0,\infty)$ by $\hat{\Prob}^{\mathbb{Q}}_{u,t}$, (i.e. for any $t>0$ and Borel subset $A\subset [0,\infty)$, we let
$\hat{\Prob}^{\mathbb{Q}}_{u,t}(\frac{X_t}{t^{2\alpha}}\in A)=\Expe \mathbb{E}^{\mathbb{Q}^{u,t}}_{0}[\1_{A}({X_t}/{t^{2\alpha}})]$,)
then the sequence of measures $\{\hat{\Prob}^{\mathbb{Q}}_{u,t}\}_{t> 0}$ is tight.
\end{corollary}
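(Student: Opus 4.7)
The plan is to deduce tightness directly from the first-moment bound of Theorem~\ref{thm:var} via Markov's inequality. Since the measures $\hat{\Prob}^{\mathbb{Q}}_{u,t}$ live on $[0,\infty)$, tightness reduces to showing that for every $\eps>0$ there is $M$ (independent of $t\geq 1$) with $\hat{\Prob}^{\mathbb{Q}}_{u,t}([M,\infty))<\eps$. The natural quantity to control is $\int_{[0,\infty)} x \,\dd\hat{\Prob}^{\mathbb{Q}}_{u,t}(x) = t^{-2\alpha}\,\Expe \mathbb{E}^{\mathbb{Q}^{u,t}}_{0}[X_t]$.

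First, by \eqref{eq:intmidupbd} applied with $u=ct^{-\alpha}$, I have
\begin{equation*}
\Expe \mathbb{E}^{\mathbb{Q}^{u,t}}_{0}[X_t]\;\leq\; \psi'\bigl(2|c|\,t^{-\alpha}\bigr).
\end{equation*}
When $\alpha=0$ the right-hand side is the constant $\psi'(2|c|)$, so the first moment of $X_t/t^{2\alpha}=X_t$ is uniformly bounded in $t$. When $\alpha\in(0,1/3]$, the asymptotic \eqref{eq:psiasym} gives $\psi'(2|c|t^{-\alpha})\sim (2|c|)^{-2}\,t^{2\alpha}$ as $t\to\infty$, so $t^{-2\alpha}\psi'(2|c|t^{-\alpha})$ is bounded uniformly in $t\geq 1$.

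Second, combining the two steps, I obtain a constant $C=C(c,\alpha)<\infty$ such that
\begin{equation*}
\int_{[0,\infty)} x \,\dd\hat{\Prob}^{\mathbb{Q}}_{u,t}(x)\;=\;\frac{1}{t^{2\alpha}}\,\Expe \mathbb{E}^{\mathbb{Q}^{u,t}}_{0}[X_t]\;\leq\; C\qquad \text{for all } t\geq 1.
\end{equation*}
Markov's inequality then yields $\hat{\Prob}^{\mathbb{Q}}_{u,t}([M,\infty))\leq C/M$ uniformly in $t$, and taking $M$ large finishes the tightness. The finitely many small values of $t\in(0,1)$ can be absorbed trivially, e.g.\ by the same bound with $\psi'(2|c|t^{-\alpha})$ monotone in $t$ there, or simply because each individual measure is tight.

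There is no serious obstacle here: the nontrivial input (the moment bound and the scaling match between $\psi'(2|u|)\asymp t^{2\alpha}$ and the rescaling factor $t^{2\alpha}$) is exactly what Theorem~\ref{thm:var} provides, and the rest is a one-line Markov argument. The only thing worth double-checking is that the scaling exponent $2\alpha$ in the corollary is consistent with $\psi'(|u|)\sim |u|^{-2}\sim t^{2\alpha}$, which it is.
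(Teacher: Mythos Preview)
Your approach is correct and matches the paper's: Corollary~\ref{co:tightnessleft} is not given a separate proof in the paper but is treated as immediate from the bound \eqref{eq:intmidupbd}, and your Markov-inequality argument is exactly the way to make this explicit.

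One small correction on your last sentence: for $\alpha>0$ the quantity $t^{-2\alpha}\psi'(2|c|t^{-\alpha})$ actually diverges as $t\to 0^+$ (since $\psi'(z)\sim 1/z$ as $z\to\infty$, giving $t^{-2\alpha}\psi'(2|c|t^{-\alpha})\sim t^{-\alpha}/(2|c|)$), and ``each individual measure is tight'' does not yield uniform tightness of a family, so neither of your suggested fixes for small $t$ works. This is really a looseness in the corollary's stated index set rather than a gap in your argument---the intended content, and what your proof establishes, is tightness for $t\geq 1$ (consistent with the paper's convention that $\asymp$ estimates are taken for $t\geq 1$).
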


Independent of the above results, we also provide a formula for the mean of $H_u(t,x)$ when $u\leq 0$.
\begin{theorem}\label{thm:mean}
For any $u \leq 0$, $t\geq 0$, and $x\geq0$, 
\begin{equation}\label{e.quafree}
\Expe [H_u(t,x)]=\left(\frac{1}{2}u^2-\frac{1}{24}\right)t+ux.
\end{equation}
\end{theorem}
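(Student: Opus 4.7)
The plan has three steps: reduce to $x=0$ using spatial stationarity, show linear-in-$t$ growth via the time-Markov property, and identify the slope by an explicit integrable computation.

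Taking expectations in \eqref{eq:station} yields $\Expe[H_u(t,x)] - \Expe[H_u(t,0)] = \Expe[W(x)+ux] = ux$, so it suffices to prove $\Expe[H_u(t,0)] = (\tfrac{1}{2}u^2 - \tfrac{1}{24})t$. To show that $f(t) := \Expe[H_u(t,0)]$ is linear in $t$, I would invoke the time-Markov property of \eqref{eq:hsSHE}: conditionally on $\filt_t$, the process $(H_u(t+s,\cdot))_{s \ge 0}$ is a half-space KPZ solution started from $H_u(t,\cdot)$. By \eqref{eq:station}, $H_u(t,\cdot) - H_u(t,0)$ has the same law as $W(\cdot) + u\cdot = H_u(0,\cdot)$. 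Factoring $e^{H_u(t,\cdot)} = e^{H_u(t,0)} \cdot e^{H_u(t,\cdot) - H_u(t,0)}$ inside the SHE and using time-stationarity of the noise, one obtains $H_u(t+s,0) - H_u(t,0) \stackrel{\mathrm{law}}{=} H_u(s,0)$. Taking expectations gives the Cauchy equation $f(t+s) = f(t) + f(s)$; combined with continuity in $t$, this forces $f(t) = \lambda(u)\,t$ for some $\lambda(u) := f(1)$.

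It remains to show $\lambda(u) = \tfrac{1}{2}u^2 - \tfrac{1}{24}$, and this is the main obstacle. Soft Markov/stationarity arguments give no information on the constant, and Jensen's inequality only provides a one-sided bound between $\Expe[\log Z_{u-1/2}(t,0)]$ and $\log \Expe[Z_{u-1/2}(t,0)]$. My plan is to differentiate in $u$: from the polymer representation of $Z_{u-1/2}$, the quantity $\partial_u \log Z_{u-1/2}(t,0)$ splits into a contribution from the initial tilt $e^{W(y)+uy}$ (producing the annealed endpoint $\Expe\mathbb{E}^{\mathbb{Q}^{u,t}}_0[X_t]$) and a contribution from the Robin parameter $\mu = u - \tfrac{1}{2}$ (a local-time-at-$0$ term). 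I would aim to show, via annealed cancellations tied to stationarity, that $\lambda'(u) = u$, reducing the problem to the base case $\lambda(0) = -\tfrac{1}{24}$. The $u=0$ value would then be established by an explicit integrable computation --- for instance through the Karlin-McGregor framework signalled in the abstract, extracting $\Expe[\log Z_{-1/2}(t,0)]$ from Pfaffian or Laplace-transform identities for the half-space stochastic heat equation with stationary initial data. The sign restriction $u \le 0$ enters here, corresponding to $\mu \le -\tfrac{1}{2}$, the Robin regime free of a bound-state contribution that would alter the mean.
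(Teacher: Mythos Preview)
Your first two reductions are sound and match the paper: the reduction to $x=0$ via \eqref{eq:station} is exactly what the paper does, and your Cauchy-equation argument for linearity in $t$ is correct (the paper establishes linearity implicitly through its semimartingale computation rather than isolating it as you do). The gap is entirely in identifying the slope $\lambda(u)$.

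Your plan to differentiate in $u$ does not close. Writing out $\partial_u \log Z_{u-1/2}(t,0)$ as you suggest gives
\[
\lambda'(u)\,t \;=\; \Expe\,\mathbb{E}^{\mathbb{Q}^{u,t}}_{0}[X_t] \;-\; \Expe\,\mathbb{E}^{\mathbb{Q}^{u,t}}_{0}\bigl[L^{0}_t\bigr],
\]
where the first term is the annealed endpoint and the second is an annealed local-time-at-zero of the polymer path. For $\lambda'(u)=u$ you would need a nontrivial identity relating these two quantities; ``annealed cancellations tied to stationarity'' is a hope, not an argument, and the paper contains no such identity. Your base case $\lambda(0)=-\tfrac{1}{24}$ is also not addressed: the Karlin--McGregor input in this paper is used only for the stochastic-dominance Lemma~\ref{le:ptstdom} and has nothing to do with computing the mean, and no Pfaffian or Laplace-transform formula for $\Expe[\log Z_{-1/2}(t,0)]$ is invoked anywhere. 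Finally, your explanation of the restriction $u\le 0$ (``Robin regime free of a bound-state contribution'') is not the operative reason.

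The paper's route is quite different and avoids differentiation in $u$ altogether. For $u<0$ it introduces the line-integrated partition function $\mathcal{Y}_u(t)=\int_0^\infty Z_{u-1/2}(t,x)\,\dd x$, observes from stationarity that $\mathcal{Y}_u(t)/Z_{u-1/2}(t,0)\stackrel{\mathrm{law}}{=}\mathcal{Y}_u(0)$, and hence $\Expe[H_u(t,0)]=\Expe[\log\mathcal{Y}_u(t)]-\Expe[\log\mathcal{Y}_u(0)]$. A semimartingale decomposition of $\mathcal{Y}_u$ followed by It\^o's formula expresses $\Expe[H_u(t,0)]$ as $t$ times an expectation of the form $\Expe\bigl[c_1\,\mathcal{Y}_u(0)^{-1} + c_2\,\mathcal{Y}_u(0)^{-2}\int_0^\infty e^{2W_u(y)}\,\dd y\bigr]$, a functional of a single drifted Brownian motion. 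These expectations are then computed \emph{exactly} using Dufresne's identity for $\mathcal{Y}_u(0)$ and a conditional Laplace transform formula for $(\int e^{2W_u},\int e^{W_u})$ due to Baudoin--O'Connell. The restriction $u<0$ is needed precisely so that $\mathcal{Y}_u(0)=\int_0^\infty e^{W_u(y)}\,\dd y<\infty$; the case $u=0$ is then obtained by a continuity argument.
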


{The next results concern the cases when $u>0$.}
For this part, we 
rely on the following symmetry result inspired by \cite{barraquand2020halfkpz}. 
\begin{proposition}\label{pr:symmetry}
For any $u>0$ and $t\geq 0$, we have 
\[H_u(t,0) \stackrel{\text{law}}{=}H_{-u}(t,0).
\]
Consequently, \eqref{eq:varbdexact}--\eqref{eq:varfluccrit}
and Theorem~\ref{thm:mean} remain valid when $u>0$ (or correspondingly $c>0$).
\end{proposition}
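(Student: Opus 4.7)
The proposition has two parts: the distributional identity $H_u(t,0)\stackrel{\text{law}}{=}H_{-u}(t,0)$ for every $u>0$, and the extension of Theorems~\ref{thm:var} and~\ref{thm:mean} to $u>0$ (respectively $c>0$). The second part is immediate once the first is in hand, because the right-hand sides of \eqref{eq:varbdexact}--\eqref{eq:varfluccrit} depend on $u$ only through $|u|$, while the mean formula \eqref{e.quafree} depends on $u$ only through $u^{2}$. So the substantive task is the distributional identity for $u>0$.

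To establish it, my plan is to work from the Feynman--Kac representation
\[
Z_{u-1/2}(t,0)\;=\;\EExp^{B}_{0}\!\left[e^{W(B_t)+uB_t}\,e^{(u-1/2)L_t}\,\mathcal{E}(\xi,B)\right],
\]
where $B$ is a reflected Brownian motion on $[0,\infty)$ with $B_0=0$, $L_t$ is its local time at the origin, and $\mathcal{E}(\xi,B)$ denotes the Wick exponential of the line integral of $\xi$ along $B$. The $-\tfrac{1}{2}$ shift in the Robin parameter $\mu=u-\tfrac{1}{2}$ (see the comment following \eqref{eq:hsSHE}) is precisely the shift that places $u=0$ at the natural involution point of the model, so one expects a clean symmetry $u\leftrightarrow -u$ at the level of the partition function.

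The strategy, inspired by \cite{barraquand2020halfkpz}, will combine three ingredients. First, the space-time white noise $\xi$ is invariant in law under the time reversal $\xi(s,\cdot)\mapsto\xi(t-s,\cdot)$, which preserves the law of $\mathcal{E}(\xi,B)$. Second, using the independence of $W$ and $\xi$, I would apply a Cameron--Martin change of measure on the initial Brownian motion $W$ (on a truncated spatial interval, with the truncation sent to infinity at the end) to flip the terminal-tilt drift $+u$ into $-u$, at the cost of an explicit Radon--Nikodym factor. Third, a L\'evy-type identity for reflected Brownian motion (for instance, $B=M-Z$ and $L=M$ with $Z$ a standard BM and $M_t=\max_{s\leq t}Z_s$) would be used to reshape the boundary weight $e^{(u-1/2)L_t}$; the constant $-\tfrac{1}{2}$ is exactly what makes the residual Radon--Nikodym factor absorb into this term cleanly, producing the weight $e^{(-u-1/2)L_t}$ and converting the representation into that of $Z_{-u-1/2}(t,0)$.

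The hardest step will be the third one. The L\'evy identity is a pathwise transformation of the entire trajectory of $B$, and the Wick exponential $\mathcal{E}(\xi,B)$ depends on that whole trajectory, so one must check that the noise factor still has the correct law after the transformation. I anticipate handling this either via the Wiener chaos expansion of $\mathcal{E}$, tracking each term under the reflection, or more concretely at the level of integer moments $\Expe[Z_{u-1/2}(t,0)^n]$, where the polymer measure reduces to finitely many independent replicas and the reflection can be applied path by path. Once all integer moments are shown to agree under $u\leftrightarrow -u$, moment determinacy (or a direct comparison of Laplace transforms) upgrades the moment identity to a law equality, and taking logarithms gives $H_u(t,0)\stackrel{\text{law}}{=}H_{-u}(t,0)$.
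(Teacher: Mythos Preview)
Your approach is entirely different from the paper's and, as written, has a genuine gap.

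The paper does \emph{not} argue the symmetry at the continuum level. It imports it from a discrete integrable model: for the half-space log-gamma polymer with diagonal parameter $\alpha_\circ$ and first row parameter $\alpha_1$, the partition function $z(m,m)$ is invariant in law under swapping $\alpha_\circ\leftrightarrow\alpha_1$. This is proved by passing through the half-space $q$-Whittaker (Macdonald) measure, where the symmetry is the permutation invariance of the bulk specialization parameters. One then takes the weak-noise scaling limit (already established in \cite{barraquand2023stationary}) with $\alpha_\circ=u$, $\alpha_1=-v$, obtains $\int\mathcal Z_{u-1/2}(T,0\viv 0,Y)e^{W_v(Y)}dY\stackrel{\text{law}}{=}\int\mathcal Z_{-v-1/2}(T,0\viv 0,Y)e^{W_{-u}(Y)}dY$, and finally sends $v\to u$ using $L^2$ continuity. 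No stochastic-calculus manipulation of the Feynman--Kac formula is involved.

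Your route has two concrete problems. First, the fallback to integer moments is fatal: the moments of $Z_\mu(t,0)$ grow like $\exp(cn^3t)$ (the attractive $\delta$-Bose gas), so Carleman's condition fails and the moment problem is indeterminate. Matching all integer moments does \emph{not} imply equality in law, and this is exactly why rigorous proofs in this subject go through integrable discretizations rather than replica computations. Second, the Cameron--Martin step shifts only the \emph{initial drift}; the Robin parameter $u-\tfrac12$ in the local-time weight is untouched. You recognize this and propose the L\'evy identity $|B|\leftrightarrow M-\beta$, $L\leftrightarrow M$ to transfer the residual factor, but that transformation replaces the reflected path $|B|$ by $M-\beta$ everywhere, including inside the Wick exponential $\mathcal E(\xi,|B|)$. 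After the substitution the noise integral is along $s\mapsto M_s-\beta_s$, which is \emph{not} a reflected Brownian motion under the reflection $\beta\mapsto-\beta$ (the running max becomes minus the running min). You would need a further, nontrivial distributional identity for the whole functional $\mathcal E(\xi,M-\beta)$ jointly with $(M_t,\beta_t)$ under $\beta\mapsto-\beta$, and none is supplied. (Minor point: your local-time exponent has the wrong sign; the Feynman--Kac weight is $e^{-\mu L_t}=e^{(1/2-u)L_t}$, not $e^{(u-1/2)L_t}$.)

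In short, the symmetry mixes the boundary parameter and the initial drift in a way that no single Girsanov or L\'evy reflection untangles; the paper's detour through the log-gamma model is, to my knowledge, the only presently rigorous route.
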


Combining Theorem~\ref{thm:varianceID} and \eqref{eq:varbdexact}, we can also obtain the order of the polymer endpoint displacement when $u>0$.
\begin{corollary}\label{co:upbdright}
    For any $u>0, t \geq 0$, we have
\begin{equation}\label{eq:intright}
ut \leq \Expe \mathbb{E}^{\mathbb{Q}^{u,t}}_{0}[X_t]\leq ut+ \psi'(2u).
\end{equation}
In particular, for any   $\alpha\in[0,\frac{1}{3}]$ and $c> 0$, with $u=ct^{-\alpha}$,
if we denote the annealed law of the scaled endpoint displacement $\frac{X_t}{t^{1-\alpha}}$ on $[0,\infty)$ by $\tilde{\Prob}^{\mathbb{Q}}_{u,t}$, (i.e. for any $t>0$ and Borel subset $A\subset [0,\infty)$, we let
$\tilde{\Prob}^{\mathbb{Q}}_{u,t}(\frac{X_t}{t^{1-\alpha}}\in A)=\Expe \mathbb{E}^{\mathbb{Q}^{u,t}}_{0}[\1_{A}({X_t}/{t^{1-\alpha}})]$,)
then the sequence of measures $\{\tilde{\Prob}^{\mathbb{Q}}_{u,t}\}_{t> 0}$ is tight.


\end{corollary}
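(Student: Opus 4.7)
The plan is to deduce both bounds in \eqref{eq:intright} as an immediate consequence of combining three ingredients that are already available at this point of the paper: the variance identity of Theorem~\ref{thm:varianceID}, the two-sided variance bound of Theorem~\ref{thm:var}, and the $u\mapsto -u$ symmetry of Proposition~\ref{pr:symmetry}. Concretely, I would rewrite \eqref{eq:varidkpzhs} as
\begin{equation*}
\Expe\mathbb{E}^{\mathbb{Q}^{u,t}}_{0}[X_t]=\tfrac{1}{2}\bigl(\Var[H_u(t,0)]+ut\bigr),
\end{equation*}
and then, since $u>0$, use Proposition~\ref{pr:symmetry} to replace $\Var[H_u(t,0)]$ by $\Var[H_{-u}(t,0)]$; the latter variance is controlled by \eqref{eq:varbdexact} applied with parameter $-u<0$, giving
\begin{equation*}
ut\leq \Var[H_u(t,0)]\leq 2\psi'(2u)+ut.
\end{equation*}
Plugging this into the displayed identity produces the lower bound $ut$ and upper bound $ut+\psi'(2u)$ of \eqref{eq:intright} in one line.

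For the tightness claim, the idea is to apply Markov's inequality to the nonnegative random variable $X_t/t^{1-\alpha}$ under the annealed polymer law. With $u=ct^{-\alpha}$ and $c>0$, we get
\begin{equation*}
\tilde{\Prob}^{\mathbb{Q}}_{u,t}\bigl(X_t/t^{1-\alpha}>M\bigr)\leq \frac{\Expe\mathbb{E}^{\mathbb{Q}^{u,t}}_{0}[X_t]}{M\,t^{1-\alpha}}\leq \frac{ut+\psi'(2u)}{M\,t^{1-\alpha}}.
\end{equation*}
The first contribution $ut/t^{1-\alpha}=c$ is a constant, and using the asymptotic \eqref{eq:psiasym} (so that $\psi'(2ct^{-\alpha})=O(t^{2\alpha})$ as $t\to\infty$), the second contribution is of order $t^{3\alpha-1}$, which is bounded precisely for $\alpha\leq \frac{1}{3}$. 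Thus the right-hand side is uniformly bounded by $K(c)/M$, which gives tightness on $[0,\infty)$ by letting $M\to\infty$.

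There is no real obstacle here; the only minor point to be careful about is ensuring that Proposition~\ref{pr:symmetry} transports the \emph{variance} bound \eqref{eq:varbdexact} (not merely the distributional identity of $H_u(t,0)$), which is automatic because Proposition~\ref{pr:symmetry} already asserts that \eqref{eq:varbdexact} itself remains valid for $u>0$. The case $\alpha=0$ corresponds to fixed $u>0$, where $\psi'(2u)$ is simply a constant and the bound is trivial; the borderline case $\alpha=\frac{1}{3}$ is exactly where the $\psi'$-term saturates the scaling, consistently with the critical variance estimate \eqref{eq:varfluccrit}.
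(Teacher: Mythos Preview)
Your proof is correct and takes exactly the same route as the paper: rewrite the variance identity \eqref{eq:varidkpzhs} to solve for the annealed endpoint mean, then use Proposition~\ref{pr:symmetry} to carry the two-sided variance bound \eqref{eq:varbdexact} over to $u>0$ and substitute. The paper's written proof stops at \eqref{eq:intright} and leaves the Markov-inequality tightness step implicit, exactly as you outline.
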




{We further prove the optimal upper bounds for the polymer endpoint displacement, as well as the fluctuation of the height function, with  $u=ct^{-\alpha}$ for any $c\in\R$ and $\alpha>1/3$.} 

\begin{theorem}\label{thm:scpolym}
{Assume that} $u=ct^{-\alpha}$ with any $\alpha>1/3$ and $c \in \R$. For any $t\geq 1$, we have
    \begin{equation}\label{eq:scpolym}
     0 \leq 
    \Expe \mathbb{E}^{\mathbb{Q}^{u,t}}_{0}[X_t]\leq Ct^{2/3},
    \end{equation}
    for some $C>0$ depending only on $c$. 

As a result, for any $t\geq 1$, we  have    \begin{equation}\label{eq:scfe}
    \Var [H_u(t,0)] \leq Ct^{2/3},
    \end{equation}
    for some $C>0$ depending only on $c$. 
\end{theorem}

The tightness of the annealed polymer endpoint measure is the following.
\begin{corollary}\label{co:tightsc}
For any $\alpha>1/3$ and $c \in \R$, with $u=ct^{-\alpha}$, if we denote the annealed law of the scaled endpoint displacement $\frac{X_t}{t^{2/3}}$ on $[0,\infty)$ by $\bar{\Prob}^{\mathbb{Q}}_{u,t}$, (i.e. for any $t>0$ and Borel subset $A\subset [0,\infty)$, 
$\bar{\Prob}^{\mathbb{Q}}_{u,t}\left(\frac{X_t}{t^{2/3}}\in A\right)=\Expe \mathbb{E}^{\mathbb{Q}^{u,t}}_{0}[\1_{A}({X_t}/{t^{2/3}})]$,)
then the sequence of measures $\{\bar{\Prob}^{\mathbb{Q}}_{u,t}\}_{t\geq 1}$ is tight.
\end{corollary}
%

\subsection{Context}
The exploration of a growing interface near an attractive wall arises naturally from the studies of wetting and entropic repulsion phenomena.  In his pioneering work \cite{kardar1985depinning}, Kardar introduced the half-space KPZ models to describe such interface growth and he predicted a ``depinning'' phase transition as the wall attraction weakens.  The underlying concept is as follows: when the wall exerts a strong attractive force, which is also referred to as in the subcritical regime or the bound phase, Kardar conjectured that the half-space directed random polymer is ``pinned'' to the wall due to the force; as this attractive force decreases below a certain threshold, the polymer becomes entropically repelled away from the wall. It is further expected that when ``pinned'', the polymer endpoint has $O(1)$ transversal fluctuation in a window around the wall and the free energy of the polymer fluctuates with an exponent $1/2$. After being ``unpinned'', the polymer exhibits KPZ behaviors: the free energy fluctuates with exponent $1/3$ and the endpoints have transversal fluctuation exponent $2/3$.

The mathematical studies on the KPZ models in half-space started with a series of works on half-space last passage percolation (LPP) models \cite{johansson2000shape, baik2001algebraic, baik01symmetrized, baik01asymptotics}. 
For studies on the half-space LPP models starting from stationary initial conditions, we refer to \cite{betea2020stationary, betea2022half,ferrari2024time}. For models with positive temperatures, significant breakthroughs were only made in recent studies.

The phase transition predicted in \cite{kardar1985depinning} in a positive temperature model was proved through studying the free energy statistics. In \cite{barraquand2023identity}, the authors used a combinatorial identity to connect the free energy of the point-to-line half-space log-gamma (HSLG) polymer and a point-to-point full-space log-gamma polymer model. For the point-to-point HSLG polymer free energy, the phase transition was demonstrated by \cite{imamura2022solvable} using a novel correspondence between the HSLG polymer model and free boundary Schur measures. They also established the asymptotic behavior and the corresponding phase transition for the height function of the half-space KPZ equation at the boundary, starting from the droplet initial condition, by taking the weak noise scaling limit of the point-to-point HSLG polymer free energy \cite{wu2020intermediate, barraquand2023stationary}.

Beyond free energy statistics, progress has also been made in studying the ``depinning'' phase transition through the transversal fluctuations of polymer measures. In two consecutive works—\cite{barraquand2023kpz} for the unbound phase and \cite{das2023half} for the bound phase—the authors derived this transition for the transversal fluctuations in the HSLG polymer model. Their arguments rely on combinatorial identities and the solvability of the HSLG model, utilizing inputs from the HSLG line ensemble \cite{barraquand2023kpz}. Additionally, as applications of the HSLG line ensemble, \cite{das2023half} and \cite{das2024convergence} demonstrated convergence to stationary measures for the point-to-point HSLG polymer in the bound and unbound phases, respectively.

Other related works are \cite{ginsburg2023pinning,  he2022shift, he2024boundary,zhang2024tasep}. In \cite{ginsburg2023pinning}, the author examines discrete point-to-point half-space directed polymer models where the attractive force from the wall is strong enough to increase the free energy macroscopically.  \cite{ginsburg2023pinning} demonstrated that this condition is sufficient to pin the polymer to the wall (resulting in order-one transversal polymer fluctuations) and for the free energy to exhibit diffusive fluctuations and follow an asymptotically Gaussian distribution (with a fluctuation exponent of 1/2). \cite{he2022shift, he2024boundary} study the half-space ASEP and six vertex model and obtain the corresponding phase transitions.
In \cite{zhang2024tasep}, the author solves the half-space TASEP with a
general deterministic initial condition and obtains the  transition probability for the half-space
KPZ fixed point.

 Our work investigates the half-space KPZ equation \eqref{eq:hsKPZ} starting from the stationary Brownian initial condition \eqref{eq:i.c.}, which differs from the setups discussed above. Through a (formal) Feynman-Kac representation, the results are interpreted in terms of the point-to-stationary-measure half-space continuum directed random polymer (CDRP). Previous studies in the physics literature on the half-space KPZ equation with Brownian initial conditions include \cite{barraquand2020halfkpz, barraquand2022half}. Other related works include \cite{de2020delta, krajenbrink2020replica, barraquand2021kardar}. {
 It should be noted that drifted Brownian motions as in \eqref{eq:i.c.} are not the only stationary measures for the half-space KPZ equation; nevertheless, they are conjectured to be the only Gaussian ones \cite[Conjecture 1.5]{barraquand2023stationary}. For other stationary measures of the  half-space KPZ equation, we refer to \cite[Theorem~1.4]{barraquand2023stationary}.
 }
 
 We discuss our approaches and the main contributions in the following sections.
 {In our main results, we rescale the boundary parameter $u$ in windows around the phase transition point $0$. In the following, terminology-wise, we call the regime $u=ct^{-\alpha}$ with $\alpha\in [0,\frac{1}{3})$ and $c<0$ the \textbf{subcritical regime}; we call  $u=ct^{-1/3}$ with $c \neq 0$ the \textbf{critical regime}; we call $u=ct^{-\alpha}$ with $\alpha>1/3$ and $c \in \R$ the \textbf{extended critical regime}. }
 
\subsubsection{Variance identity}
 In Theorem~\ref{thm:varianceID}, we establish a relation between the variance of the height function and the expectation of the polymer endpoint displacement, which serves as the foundation of our analysis. Similar identities have previously appeared in several full-space models at stationarity, such as {log-gamma polymer \cite{timoAOP12},} geometric LPP \cite{timoEJP}, the O'Connell-Yor polymer \cite{seppalainen2010bounds}, the asymmetric simple exclusion process \cite{balazs2010order}, interacting diffusion \cite{landon}, the KPZ fixed point \cite{pimentel2022integration}, and the KPZ equation \cite{gu2023another}, among others. Our derivation of the variance identity is based on Gaussian integration by parts, inspired by \cite{pimentel2022integration, gu2023another}. Such variance identities play a pivotal role in establishing fluctuation exponents: given a family of stationary measures parameterized by a certain variable, one can proceed with a coupling argument involving a small perturbation of this parameter, and by comparing the two processes corresponding to different parameters and leveraging the quadratic form of the average free energy, it is possible to derive the correct fluctuation exponent. This approach has been developed and extensively explored by Sepp\"al\"ainen and collaborators, leading to breakthroughs \cite{balazs2010order,balazs2011fluctuation} and numerous further applications.

With the variance identity \eqref{eq:varidkpzhs} and the quadratic form of the averaging free energy given in \eqref{e.quafree}, it seems plausible to apply the coupling argument for the half-space KPZ equation and prove the fluctuation exponents for all values of $u\in\R$. However, an immediate challenge arises: the family of \emph{Brownian} stationary measures is parameterized by the Neumann boundary condition of the equation, meaning that perturbing the initial data while maintaining stationarity necessitates perturbing the equation itself.
This creates extra technical difficulties for the half-space model compared to the full-space setting{, and we use different comparison arguments to cover various ranges of $u\in\R$.}


\subsubsection{Transversal fluctuation}
With the variance identity, the analysis of height function fluctuations reduces to studying the transversal fluctuations of the polymer endpoint. For the point-to-stationary-measure half-space continuum directed random polymer (CDRP) $\mathbb{Q}_0^{u,t}$, our result \eqref{eq:intmidupbd} provides the conjectured optimal upper bound for the endpoint displacement in the bound phase $u<0$. Specifically, for a polymer path starting at $(t,0)$ and running backward in time,  when  $t$ is sufficiently large, the endpoint reaches equilibrium in the subcritical regime. The term $\psi'(2|u|)$ on the right-hand side of \eqref{eq:intmidupbd} represents the   endpoint position $\Expe \mathbb{E}^{\mathbb{Q}^{u,t}}_{0}[X_t]$ for $t\gg1$. This can also be interpreted as the localization length of the model, which was conjectured by \cite{kardar1985depinning} to diverge quadratically and is confirmed in our analysis (see \eqref{eq:psiasym}).  In essence, the result follows from the explicit stationary measure for the half-space KPZ equation constructed in \cite{barraquand2023stationary}, the Dufresne identity \cite{dufresne1990distribution}, and a stochastic dominance argument, where we compare the polymer path starting at the origin with the one starting from equilibrium.

The explicit calculation for the stationary endpoint position yields $\Expe \mathbb{E}^{\mathbb{Q}^{u,t}}_{0}[X_t]\approx \psi'(2|u|) \sim |u|^{-2}$ in the bound phase, which allows us to extend the analysis beyond the subcritical regime and cover the critical regime. {Using the symmetry result in Proposition~\ref{pr:symmetry}, as well as another comparison argument, we obtain the conjectured optimal upper bounds $t^{2/3}$ for the transversal fluctuations {in the critical and extended critical regimes} $u\sim t^{-\alpha}$ with $\alpha\geq 1/3$, in Corollary~\ref{co:tightnessleft}, Corollary~\ref{co:upbdright}, and Theorem~\ref{thm:scpolym}. }



\subsubsection{Height function statistics}
The variance estimates of the height function follow directly from the variance identity and the bounds obtained for the polymer endpoint.


In \cite{barraquand2020halfkpz}, the authors conjectured that for any fixed $u \neq 0$, $ H_u(t,0)$ would have a large time behavior as
$$
H_u(t,0) \simeq \left(\frac{1}{2}u^2-\frac{1}{24}\right)t+t^{1/2} \chi,
$$
where $\chi$ is an $O(1)$ Gaussian random variable. For $u$ in a scaling window around $0$,  the authors later stated in \cite{barraquand2022half} that for  $u=ct^{-1/3}$ with $c \in \R$, one should expect
$$
\lim _{t \rightarrow \infty} \Prob\left(\frac{H_{ct^{-1 / 3}}(0, t)+\frac{t}{24}}{t^{1 / 3}} \leqslant s\right)=F_c(s),
$$
where $F_c(\cdot)$ admits an explicit expression computed through the Fredholm determinant. 

Our results confirm the predictions regarding the mean and fluctuation exponent of the height function  $H_u(t,0)$ in both the subcritical and critical regimes. Theorem~\ref{thm:var} further illustrates how the boundary parameter $u$ influences the fluctuation scale for any $u \neq 0$ at any finite time $t\geq 0$, which was not immediately evident in earlier predictions. As expected, the method we employ does not yield the exact asymptotic fluctuation distributions.

{For the extended critical regime $u\sim t^{-\alpha}$ with $\alpha>1/3$ (including the case of $u=0$ which formally corresponds to $\alpha=\infty$),  the variance identity combined with the optimal upper bound for the endpoint displacement in \eqref{eq:scpolym} leads to an optimal upper bound of order $t^{2/3}$ for the variance of the height function in \eqref{eq:scfe}. 
Our method does not seem to give the lower bound for the variance of the height function in the {extended critical regime}, which is conjectured to be of order $t^{2/3}$ as well.}
\begin{remark}\label{re:conjconv}
The upper bounds in \eqref{eq:intmidupbd} and \eqref{eq:varbdexact} are expected to be sharp as $t\to\infty$ for any fixed $u< 0$.
This expectation stems from a conjecture, yet to be proven, that when $u<0$, the half-space KPZ equation starting from the droplet initial condition would converge weakly (modulo height shift) to the stationary measure -- a Brownian motion with drift $u$ as $t \to \infty$. The upper bound in \eqref{eq:intmidupbd} corresponds to the annealed mean of endpoint displacement under this conjectured limit. For further discussions of this conjecture, see \cite{le2022steady, barraquand2023stationary}. {We also note that it may be possible to show this result using the recently constructed half-space KPZ line ensembles \cite{das2025half}.}

{However, as shown by \eqref{eq:scpolym} and \eqref{eq:scfe}, the upper bounds in \eqref{eq:intmidupbd} and \eqref{eq:varbdexact} are no longer sharp in the {extended critical regime}.}
\end{remark}


\subsubsection{Organization of the paper}
In Section~\ref{se:prelim}, we introduce some preliminary results and auxiliary lemmas for the half-space SHE. As most results are analogous to full-space SHE, we defer their proofs to Appendix~\ref{ap:auxlemproofs}. In Section~\ref{se:mean}, we compute the average growth rate and prove Theorem~\ref{thm:mean}.  In Section~\ref{se:varID}, we establish the variance identity in Theorem~\ref{thm:varianceID}.  Section~\ref{se:uppbd} contains the proof of the upper bound for the annealed mean of polymer endpoint displacement, leading to Theorem~\ref{thm:var} and Corollary~\ref{co:tightnessleft}. In Section~\ref{se:symmetry}, we provide the proofs of Proposition~\ref{pr:symmetry} and Corollary~\ref{co:upbdright}. {Finally, in Section~\ref{se:sc}, we prove Theorem~\ref{thm:scpolym} and Corollary~\ref{co:tightsc}.}
\subsection{Notations}
We use the following notations and conventions throughout this paper.
\begin{enumerate}[(i)]
\item For two topological spaces $E$ and $F$, we use $\mathcal{C}(E,F)$ to denote the space of continuous functions from $E$ to $F$. We use $\mathcal{C}^\infty$ to denote smooth functions, $\mathcal{C}_b$ to denote the space of continuous and bounded functions, and $\mathcal{C}_c$ to denote the space of continuous and compactly supported functions. 
\item We use   $\mathbf{E}_{\eta}$ as the expectation on any specific noise $\eta$ defined on the probability space $(\Omega,\filt,\Prob)$. The notation $\Expe$ is for the total expectation on $(\Omega,\filt,\Prob)$.  The expectation on Brownian motions independent of $(\Omega,\filt,\Prob)$ will be denoted by $\mathbb{E}_B$. The expectation on the quenched polymer measures (for each fixed $\omega\in \Omega$) is denoted by $\mathbb{E}^{\mathbb{Q}^{\cdot,\cdot}}_\cdot$. We use $\|\cdot\|_p$ to denote the norm of $L^p(\Omega, \filt, \Prob)$ for any  $p >0$.
\item We use $C(\cdot,\cdot,...,\cdot)$ to denote any constant $C$ that depends only on the parameters inside the parentheses. These constants may vary from line to line.
\end{enumerate}

\subsection*{Acknowledgement}
The work was partially supported by the NSF under grant DMS:2203014. We are grateful to Shalin Parekh and Yaozhong Hu for discussions on some proofs of Section~\ref{se:prelim}. We thank  Guillaume Barraquand for helping us understand the symmetry in Proposition~\ref{pr:symmetry} and for providing extensive feedback.
{We also thank Christopher Janjigian, Firas Rassoul-Agha, and Timo Sepp\"al\"ainen for explaining to us the proof of polymer path continuity and on discussions related to \cite{alberts2022green}.}

\section{Preliminaries}\label{se:prelim}

{In this section, we review some basics of the half-space SHE with Robin boundary conditions, including definition, and moment estimates, among others. We also introduce the half-space CDRP. Most of the results presented here are fairly standard, so readers already familiar with the topic may choose to skip this section.}
\subsection{Robin heat kernel}\label{se:mildform}
For $\mu, s, t \in \R, t>s$ and $x, y\geq 0$, the Robin heat kernel on half-line with parameter $\mu$, denoted by $ \kernel_{\mu}^R(t,x\viv s,y) $, is the unique solution to the deterministic heat equation,
\begin{equation}\label{eq:RobHeat}
\begin{aligned}
\partial_t \kernel_{\mu}^R(t,x\viv s,y) &= \frac{1}{2} \partial^2_x \kernel_{\mu}^R(t,x\viv s,y) \quad \text{for }(t,x) \in (s,\infty)\times [0, \infty),\\
\left.\partial_x \kernel_{\mu}^R(t,x\viv s,y)\right|_{x=0}  &= \mu \kernel_{\mu}^R(t,0\viv s,y),\\
\lim_{t\to s} \kernel_{\mu}^R(t,x\viv s,y) &= \delta_y(x) \quad \text{in a weak sense on } L^2([0, \infty)).
\end{aligned}
\end{equation}

An explicit expression of $ \kernel_{\mu}^R(t,x\viv s,y) $ for any $t>s$ is given by 
 \begin{equation}\label{eq:kernelRobin}
\kernel_{\mu}^R(t,x\viv s,y)=p_{t-s}(x-y)+p_{t-s}(x+y)-2 \mu \int_0^{\infty} p_{t-s}(x+y+z) e^{-\mu z} \dd z, 
\end{equation}
where $p_t(x)=\frac{1}{\sqrt{2 \pi t}} e^{-x^2 /(2 t)}$ is the standard heat  kernel. 
A proof can be found in \cite[Section 4]{corwin2018open}. 

The heat kernel $\kernel_{\mu}^R(t,x\viv s,y)$ satisfies the semigroup property and is monotonically decreasing in $\mu$ (see \eqref{eq:probrep2} below). To obtain an upper bound on $\kernel_{\mu}^R$, first,   when $x, y,z \geq 0$, $(x-y)^2 \leq (x+y)^2$, thus $p_{t-s}(x+y) \leq p_{t-s}(x-y)$.
Also, since $(x+y)^2 +z^2\leq (x+y+z)^2$, we have
\[
\int_0^{\infty} p_{t-s}(x+y+z) e^{-\mu z} \dd z \leq   p_{t-s}(x+y)\int_0^{\infty}e^{-\frac{z^2}{2(t-s)}-\mu z} \dd z.
\]
It follows that for any $a,b,\mu_0 \in\R$, there exists a constant $C(a, b, \mu_0)$ such that for any $\mu \geq \mu_0, a\leq s<t\leq b$ and $x,y \in [0, \infty)$, we have
\begin{equation}\label{eq:elem1}
\kernel_{\mu}^R(t,x\viv s,y)  \leq C(a,b, \mu_0)p_{t-s}(x-y) \leq C(a,b, \mu_0)(t-s)^{-1/2}.
\end{equation}

Another useful result is the probabilistic representation of the Robin heat kernel, as it is related to the Feynman-Kac formula of the half-space SHE. We present the representation below, and one can find a proof via It\^o-Tanaka formula   in \cite[Section 2.5]{freidlin1985functional}.

Let $B_t$ be a Brownian motion starting at  $B_0=x\in [0, \infty)$. The process $|B_t|$, defined by the absolute value of $B_t$, is known as a {reflected Brownian motion} (RBM). We use notations $\mathbb{P}_B^x$ and $\mathbb{E}_B^x$ to denote the probability and expectation with respect to $B_t$ only, emphasizing the initial point $x$. For any $\mu \in \R$,  $f \in \mathcal{C}_b (\R, \R)$, 
\begin{equation}\label{eq:probrep1}
\int_0^{\infty} \kernel_{\mu}^R(t,x\viv s,y) f(y) \dd y = \mathbb{E}^x_B[f(|B_{t-s}|)\exp(-\mu L^{0}_{t-s})],
\end{equation}
where \begin{equation*}\label{eq:bmlocaltime}
L^{0}_{t} := \lim _{\varepsilon \rightarrow 0} \frac{1}{2 \varepsilon} \int_0^t \1_{[-\varepsilon, \varepsilon]}(B_s) \dd\langle B\rangle_s=\lim_{\eps \to 0}\frac{1}{2\eps}\text{Leb}(s: |B_s|\leq  \eps, 0\leq s \leq t) 
\end{equation*}
is the {Brownian local time} of $B_t$ at the origin with Leb representing the Lebesgue measure.

(To be more precise, \cite[Section 2.5]{freidlin1985functional} says that
\begin{equation}\label{eq:probrep3}
\int_0^{\infty} \kernel_{\mu}^R(t,x\viv s,y) f(y) \dd y = \mathbb{E}^x_{\Upsilon}[f(\Upsilon_{t-s})\exp(-\mu L^{0,\Upsilon}_{t-s})],
\end{equation}
where $\Upsilon_\cdot$ is the reflected Brownian motion $\Upsilon_t := |B_t|$,  $\mathbb{P}^x_{\Upsilon}$ and $\mathbb{E}^x_{\Upsilon}$ are the probability and expectation with respect to $\Upsilon$,
and $L_t^{0,\Upsilon}$ is the local time of $\Upsilon$  at zero. It is further known by \cite{itodiffusion} that
\begin{equation*}\label{eq:truelocaltime}
    L_t^{0,\Upsilon}=\lim _{\eps \to 0} \frac{1}{2 \varepsilon} \int_0^t \1_{[0, \varepsilon]}\left(\Upsilon_s\right) \dd s.
\end{equation*}
In our case, \eqref{eq:probrep3} is equivalent to \eqref{eq:probrep1}. We will use both of these two representations.)

Consequently, for any $x,y \in [0, \infty)$,
\begin{equation}\label{eq:probrep2}
\kernel_{\mu}^R(t,x\viv s,y) =  \kernel^{N}(t,x\viv s,y) \mathbb{E}_B^x\left[\exp(-\mu L^{0}_{t-s})\mid |B_{t-s}|=y\right] 
\end{equation}
where $\kernel^{N}(t,x\viv s,y) := \kernel_{0}^R(t,x\viv s,y)$ is the transition density of the reflected Brownian motion $|B_t|$. 

Using \eqref{eq:probrep2} and Lemma~\ref{le:bbmax} below, one can check that for any $\mu \in \R$, there exist two positive constants $C_1(\mu,t-s),C_2(\mu,t-s)$ such that 
\begin{equation}\label{eq:RobinComparison}
    C_1(\mu,t-s) \kernel^{N}(t,x\viv s,y) \leq \kernel_{\mu}^R(t,x\viv s,y)  \leq C_2(\mu,t-s) \kernel^{N}(t,x\viv s,y).
\end{equation}
The constants $C_1,C_2$ can be chosen uniformly for $(\mu, t-s)$ in any compact subsets of $\R\times(0,\infty)$.

\subsection{Mild formulation and Feynman-Kac formula} 
For any $\mu \in \R$, we use $Z_\mu(t,x \viv s, \zeta)$ to denote the mild solution to  the half-space SHE  starting from an arbitrary time $s \in \R$ and from a broad class of positive Borel measure $\zeta$:
\begin{equation}\label{eq:hsSHEv2}
\begin{aligned}
\partial_t Z_\mu(t,x \viv s, \zeta) & =\frac{1}{2} \partial_x^2 Z_\mu(t,x \viv s, \zeta)+Z_\mu(t,x \viv s, \zeta)\xi(t, x) \quad \text{for } (t,x)\in (s,\infty)\times [0, \infty),\\ \partial_x Z_\mu(t,x \viv s, \zeta)\big|_{x=0} & =\mu Z_\mu(t,0 \viv s, \zeta), \\
Z_\mu(s, \cdot\viv s, \zeta)& =\zeta(\cdot).
\end{aligned}\end{equation}
We discuss this generalized half-space SHE because some estimates below are easier to obtain when the initial condition is deterministic and either localized (e.g., $\delta$-type initial conditions) or spatially homogeneous (e.g., constant initial conditions). 
To avoid confusion, we use the notation $Z_\mu(t,x \viv s, \zeta)$ to specify the initial time and data. We only use notation $Z_\mu(t,x)$ for the solution of \eqref{eq:hsSHE} when the initial time is $s=0$ and the initial data is fixed as $\zeta(\dd x) = \exp(W(x)+(\mu+\frac{1}{2})x) \dd x$.

We first define the mild solution to \eqref{eq:hsSHEv2} with an arbitrary starting time $s \in \R$ and a (potentially random) Borel measure initial condition.  For any $s<t$, let $(\filt^{\xi, \zeta}_{r})_{r\in[s,t]}$ be the natural filtration  generated by $\zeta$ and $\xi$ on the time interval $[s,t]$.
\begin{definition}\label{de:SHEsol}
Let $\mu, s \in \R$, $\zeta$ be some (potentially random) Borel measure that is independent of the white noise $\xi$ and $\Prob$-almost surely supported on $[0, \infty)$. 
We say that a measurable process $Z_\mu(\cdot,\cdot \viv s,\zeta)$ on $(s,\infty)\times [0,\infty)$ is the mild solution to \eqref{eq:hsSHEv2} with initial condition $ \zeta(\cdot)$, if for all $t\in (s,\infty)$, 
$Z_\mu(t,\cdot\viv s, \zeta)$ is $(\filt^{\xi, \zeta}_{r})_{r\in[s,t]}$-adapted and   satisfies
\begin{equation}\label{eq:mildform}
\begin{aligned}
&Z_\mu(t,x \viv s, \zeta) = \int_0^{\infty} \kernel_{\mu}^R(t,x\viv s,y) \zeta(\dd y)+ \int_s^t\int_0^{\infty}  \kernel_{\mu}^R(t,x\viv r,y) Z_\mu(r,y\viv s, \zeta)\xi(\dd y\dd r), \;\; \forall x\in [0,\infty),
\end{aligned}
\end{equation}
where the stochastic integral is interpreted in the It\^o-Walsh sense.
\end{definition}

The two hypotheses below regarding the initial condition $\zeta $ ensure the existence and uniqueness (under different assumptions) of the mild solution $Z_\mu(\cdot, \cdot\viv s, \zeta)$. All the initial conditions we deal with are included in either case. Throughout the paper, we assume that $\zeta$ is not the zero measure and does not depend on $s$.

\begin{Hypothesis}\label{hy:randomic}
There exists a random variable $f$ taking values in $ \mathcal{C}([0, \infty),(0,\infty))$ and a constant $a>0$ so that
\[\zeta(\dd x) = f(x) \dd x \text{ and } \sup _{x \in [0, \infty)} e^{-a x} \Expe\left[f(x)^2\right]<\infty.\]
\end{Hypothesis}
\begin{Hypothesis}\label{hy:detic}
 $\zeta$ is deterministic, does not depend on  $\mu$, and for any $s<\tau$, 
\[\sup_{s < t \leq \tau}\sup_{x \in [0, \infty)}
\sqrt{t-s}\int_0^{\infty}  \kernel_{\mu}^R(t,x\viv s,y) \zeta(\dd y) <\infty.
\]
\end{Hypothesis}
For any $\mu \in \R$, $\zeta(\dd x)={Z}_\mu(0, x) \dd x =\exp(W(x)+(\mu+\frac{1}{2})x) \dd x$ satisfies Hypothesis~\ref{hy:randomic}. The Dirac-$\delta$ initial condition $\zeta(\cdot)=\delta_y(\cdot)=\delta(\cdot -y)$ at any point $y \in [0,\infty)$ satisfy Hypothesis~\ref{hy:detic} (see \eqref{eq:elem1}). The constant initial condition $\zeta(\dd x) = \dd x$ satisfies both.
By \eqref{eq:RobinComparison}, whenever $\zeta$ satisfies Hypothesis~\ref{hy:detic} for some $\mu \in \R$, it satisfies it for all $\mu \in \R$.
\begin{proposition}\label{pr:hsSHEprop}
(1) 
When the initial data $\zeta$ satisfies Hypothesis~\ref{hy:randomic}, there exists a unique mild solution $Z_\mu(\cdot, \cdot\viv s, \zeta) \in \mathcal{C}([s, \infty)\times [0,\infty), \mathbb{R})$ as defined in Definition~\ref{de:SHEsol} that satisfies, for any terminal time $ \tau >s$,
\begin{equation}\label{eq:uniquenesshyp1}
\sup_{s \leq t \leq \tau}\sup_{x \in [0, \infty)}e^{-ax} \Expe \left[{Z}_{\mu}(t, x\viv s, \zeta)^2\right]<\infty .\end{equation}

(2) 
When the initial data $\zeta$ satisfies Hypothesis~\ref{hy:detic}, there exists a unique mild solution $Z_\mu(\cdot, \cdot\viv s, \zeta) \in \mathcal{C}((s, \infty)\times[0,\infty), \mathbb{R})$ as defined in Definition~\ref{de:SHEsol} that satisfies, for any terminal time $ \tau >s$,
\begin{equation}\label{eq:uniquenesshyp}
\sup_{s < t \leq \tau}\sup_{x \in [0, \infty)}\int_s^t  \int_s^r  \int_0^{\infty}\int_0^{\infty}  \kernel_{\mu}^R(t,x\viv r,y) ^2  \kernel_{\mu}^R(r,y\viv w,z)^2 \mathbf{E}\left[Z_\mu(w, z\viv s, \zeta)^2\right] \dd z \dd y \dd w \dd r<\infty.
\end{equation}
The mild solution admits an explicit chaos series representation as
\begin{equation}\label{eq:chaos1}
\begin{aligned}
Z_\mu(t,x\viv s, \zeta) & =\int_0^{\infty} \kernel_{\mu}^R(t,x\viv s,y) \zeta(\dd y)\\
& +\sum_{k=1}^{\infty} \int_{[0,\infty)^{k+1}}  \int_{\R^k }\zeta(\dd y)\kernel_{\mu}^R\left(r_{1: k}, w_{1: k} | s, y ; t, x\right) \xi(\dd r_1\dd w_1)\cdots \xi(\dd r_k\dd w_k),
\end{aligned}
\end{equation}
where, with convention $r_0=s, r_{k+1}=t, w_0=y, w_{k+1}=x$, we define 
\[\kernel_{\mu}^R\left(r_{1: k}, w_{1: k} | s, y ; t, x\right) = \prod_{j=0}^{k}\kernel_{\mu}^R(r_{j+1},w_{j+1} \viv r_{j},w_{j})\1_{(0,\infty)}({r_{j+1}- r_j}).
\]
\end{proposition}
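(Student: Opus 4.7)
The plan is to adapt the classical Walsh-type Picard-iteration and chaos-expansion arguments for the stochastic heat equation to the half-space Robin setting. The essential inputs are already in hand: the Gaussian bound $\kernel_{\mu}^R(t,x\viv s,y)\leq C(a,b,\mu_0)\,p_{t-s}(x-y)$ from \eqref{eq:elem1}, and the two-sided comparison with the Neumann kernel in \eqref{eq:RobinComparison}. Together they let me reduce all heat-kernel computations to ones that are essentially identical to those on the whole line, so the only genuinely new element is tracking how the exponential growth of the initial data interacts with the boundary.

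For Part (1), I would define the Picard iterates $Z^{(0)}_\mu(t,x\viv s,\zeta)=\int_0^\infty \kernel_{\mu}^R(t,x\viv s,y)\zeta(\dd y)$ and, for $n\geq 0$,
\begin{equation*}
Z^{(n+1)}_\mu(t,x\viv s,\zeta)=Z^{(0)}_\mu(t,x\viv s,\zeta)+\int_s^t\!\!\int_0^\infty\!\!\kernel_{\mu}^R(t,x\viv r,y)\,Z^{(n)}_\mu(r,y\viv s,\zeta)\,\xi(\dd y\,\dd r).
\end{equation*}
The key technical estimate is the weighted kernel bound
\begin{equation*}
\int_s^t\!\!\int_0^\infty \kernel_{\mu}^R(t,x\viv r,y)^2\,e^{ay}\,\dd y\,\dd r \;\leq\; C(a,\tau,\mu)\,e^{ax}\,(t-s)^{1/2},
\end{equation*}
which follows from \eqref{eq:elem1} and a direct Gaussian computation. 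Combining this with the It\^o isometry and Hypothesis~\ref{hy:randomic}, an induction shows that $\sup_{s\leq t\leq \tau}\sup_x e^{-ax}\Expe[(Z^{(n)}_\mu)^2]$ stays bounded uniformly in $n$. Taking differences $Z^{(n+1)}-Z^{(n)}$ gives a contraction in this weighted $L^2(\Omega)$-norm once $\tau-s$ is small, and iteration over short time intervals produces the solution on all of $[s,\tau]$. Uniqueness follows from applying the same isometry to the difference of two solutions and invoking a Gronwall argument, while continuity in $(t,x)$ is obtained from Kolmogorov's criterion after controlling $\Expe[|Z_\mu(t,x)-Z_\mu(t',x')|^{2p}]$ via analogous $L^{2p}$ estimates using Burkholder-Davis-Gundy.

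For Part (2), since $\zeta$ is deterministic the chaos series \eqref{eq:chaos1} is the natural candidate: each multiple stochastic integral is well-defined in Wiener chaos, and by the It\^o isometry its $L^2(\Omega)$ norm squared equals
\begin{equation*}
\int_{[0,\infty)^{k+1}}\!\!\int_{\R^k}\zeta(\dd y)\,\kernel_{\mu}^R(r_{1:k},w_{1:k}\,|\,s,y;t,x)^2\,\dd r_{1:k}\,\dd w_{1:k}.
\end{equation*}
Using the semigroup identity $\int (\kernel_{\mu}^R)^2 \asymp (t-s)^{-1/2}$ together with Hypothesis~\ref{hy:detic}, the $k$-th term is bounded by $C^k/\Gamma(k/2+1)$, so the series converges in $L^2(\Omega)$. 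That this limit solves \eqref{eq:mildform} is a term-by-term check: substitute the $(n-1)$-term truncation into the right-hand side and use stochastic Fubini to recognize the $n$-term truncation. Uniqueness follows by iterating the mild equation $N$ times and noting that the remainder integral vanishes in $L^2$ by \eqref{eq:uniquenesshyp} and the factorial decay of the kernel norms.

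The main obstacle is verifying the weighted kernel inequality above with a constant that is uniform in $x$ and does not blow up under iteration; this is what forces the correct compatibility between the exponential rate $a$ in Hypothesis~\ref{hy:randomic} and the final weighted bound \eqref{eq:uniquenesshyp1}. Once this single estimate is in place, the fixed-point scheme, the chaos construction, and the continuity all follow from standard SPDE arguments, with the Robin boundary entering only through \eqref{eq:elem1}-\eqref{eq:RobinComparison}. Because this machinery is by now routine, I would defer the detailed verification to the appendix (as the authors indicate in their outline) and merely sketch the adjustments needed to pass from the Neumann kernel to the Robin kernel.
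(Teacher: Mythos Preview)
Your proposal is correct and follows essentially the same route as the paper: the authors cite \cite[Proposition 4.2]{parekh2019kpz} for part (1) and invoke the Bertini--Cancrini \cite{bertini1995stochastic} iteration scheme (together with the iterative estimates underlying Lemma~\ref{le:posMom}) for part (2), which amounts to exactly the Picard/chaos-expansion machinery you outline. Your sketch is in fact more detailed than what the paper records, but the underlying argument is the same standard one, with the Robin kernel entering only through the Gaussian comparison \eqref{eq:elem1}.
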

\begin{proof} \cite[Proposition 4.2]{parekh2019kpz} has proved part (1). To prove part (2), one can follow the same argument as in \cite{bertini1995stochastic}, with the use of the iterative arguments as in the proof of Lemma~\ref{le:posMom} below. By iterative arguments, one can also show that the right-hand side of \eqref{eq:chaos1} is well-defined, $(\filt^{\xi, \zeta}_{r})_{r\in[s,t]}$-adapted, and $\Prob$-almost surely satisfies \eqref{eq:mildform} and \eqref{eq:uniquenesshyp}. Then the uniqueness of the mild solution implies \eqref{eq:chaos1}.
\end{proof}
\begin{remark}
A more general solution theory for the half-space SHE should be able to relax the assumptions on non-random initial conditions. Similar generalizations have been explored for the full-space SHE in \cite{chen2014holder, chen2015moments}. If such relaxations were made, we would only need our initial condition in \eqref{eq:hsSHE} to meet the required hypotheses for $\Prob$-almost surely each realization of $W$. As the current hypotheses are sufficient for our purposes, we do not explore these extensions further.
\end{remark}

We next give an estimate on positive moments for $Z_\mu(t,x\viv s, \zeta)$ with $\zeta$ under Hypothesis~\ref{hy:detic} and $s,t$ in any arbitrary interval $[a,b]\subset \R$. Since the proof is  through a rather standard iteration scheme, we defer it to Appendix~\ref{ap:auxlemproofsposMom}.

\begin{lemma}\label{le:posMom}
Let $a<b, \mu_0 \in \R$ and $\zeta(\cdot)$ satisfy Hypothesis~\ref{hy:detic}. For any   $p \in[1,\infty)$, there exists a constant $C=C(a, b, \mu_0, p,\zeta)$ so that
\begin{equation}\label{eq:positivemoments}
\|Z_\mu(t,x\viv s, \zeta)\|_p \leq C (t-s)^{-1/4}\left(\int_0^{\infty} \kernel_{\mu}^R(t,x\viv s,y)\zeta( \dd y)\right)^{1/2},
\end{equation}
for any $\mu \in  [\mu_0, \infty), a\leq s<t\leq b$ and $x \in [0, \infty)$.
\end{lemma}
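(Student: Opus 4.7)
The plan is to establish \eqref{eq:positivemoments} by a standard mild-formulation iteration, mirroring the scheme implicit in the proof of Proposition~\ref{pr:hsSHEprop}(2). Abbreviate $\Phi_\mu(t,x) := \int_0^\infty \kernel_\mu^R(t,x\viv s,y)\zeta(\dd y)$, so that the mild formulation reads
\begin{equation*}
Z_\mu(t,x\viv s,\zeta) = \Phi_\mu(t,x) + \int_s^t \int_0^\infty \kernel_\mu^R(t,x\viv r,y)\, Z_\mu(r,y\viv s,\zeta)\,\xi(\dd y\, \dd r).
\end{equation*}
For $p\in[1,2]$, Jensen's inequality reduces the claim to $p=2$. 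For $p\geq 2$, applying the Burkholder-Davis-Gundy inequality (with constant $C_p$) together with Minkowski's integral inequality to the stochastic integral produces the closed recursion
\begin{equation*}
\|Z_\mu(t,x\viv s,\zeta)\|_p^2 \leq 2\Phi_\mu(t,x)^2 + 2C_p^2 \int_s^t \int_0^\infty \kernel_\mu^R(t,x\viv r,y)^2\, \|Z_\mu(r,y\viv s,\zeta)\|_p^2 \, \dd y\, \dd r.
\end{equation*}

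The second step is to isolate the two heat-kernel estimates that make the correct ansatz self-reproducing. From \eqref{eq:elem1} one gets $\kernel_\mu^R(t,x\viv r,y)^2 \leq C(a,b,\mu_0)(t-r)^{-1/2}\kernel_\mu^R(t,x\viv r,y)$, and Hypothesis~\ref{hy:detic} gives $\Phi_\mu(t,x) \leq C(a,b,\mu_0,\zeta)(t-s)^{-1/2}$, whence $\Phi_\mu(t,x)^2 \leq C(a,b,\mu_0,\zeta)(t-s)^{-1/2}\Phi_\mu(t,x)$. Combined with the semigroup identity $\int_0^\infty \kernel_\mu^R(t,x\viv r,y)\Phi_\mu(r,y)\,\dd y = \Phi_\mu(t,x)$, these bounds are exactly what is required so that plugging an ansatz of the form $M(t-s)^{-1/2}\Phi_\mu(t,x)$ into the right-hand side of the recursion returns a quantity of the same form.

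Iterating the recursion and applying these estimates at every level, the $n$-th term of the resulting series reduces (after the spatial convolutions collapse through the semigroup) to $\Phi_\mu(t,x)$ multiplied by an iterated Dirichlet integral $\int_{s<t_1<\cdots<t_n<t}(t_1-s)^{-1/2}\prod_{i=1}^{n}(t_{i+1}-t_i)^{-1/2}\,\dd t_1\cdots\dd t_n$ with the convention $t_{n+1}=t$. The standard Beta identity evaluates this to $(t-s)^{(n-1)/2}\pi^{(n+1)/2}/\Gamma((n+1)/2)$, so the resulting Mittag-Leffler-type series in $(t-s)^{1/2}$ converges uniformly for $s,t\in[a,b]$ and delivers $\|Z_\mu(t,x\viv s,\zeta)\|_p^2 \leq C(a,b,\mu_0,p,\zeta)(t-s)^{-1/2}\Phi_\mu(t,x)$, which is \eqref{eq:positivemoments} after a square root.

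The main technical obstacle is that one cannot a priori assume $\|Z_\mu(t,x\viv s,\zeta)\|_p<\infty$ before running the argument, so the recursion must be justified carefully. My plan is to execute the scheme on the Picard iterates $Z^{(n)}$ implicit in the proof of Proposition~\ref{pr:hsSHEprop}(2), for which finiteness of the $L^p$ norms at each stage is manifest, prove the uniform bound along the sequence, and then transfer the bound to the limit using the $L^2$ convergence and uniqueness already established. The $p$-dependence of the BDG constant only modifies the coefficient inside the Mittag-Leffler series and does not threaten its uniform convergence on the bounded time window $[a,b]$.
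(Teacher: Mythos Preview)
Your proposal is correct and follows essentially the same route as the paper. Both arguments rest on the chaos/Picard expansion together with BDG + Minkowski, the pointwise bound $\kernel_\mu^R(t,x\viv r,y)^2 \lesssim (t-r)^{-1/2}\kernel_\mu^R(t,x\viv r,y)$ from \eqref{eq:elem1}, the semigroup identity, and Hypothesis~\ref{hy:detic}; the paper organizes the iteration around the individual chaos terms $z_n$ via the normalized quantity $f_n(r)=\sup_x\|z_n(r,x)\|_p^2/z_0(r,x)$ and iterates two steps at a time, whereas you phrase it as a Gronwall-type recursion on Picard iterates and evaluate the resulting Dirichlet integral directly with the Beta identity, but the underlying estimates and the final series are the same.
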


It  also helps to study the solution ${Z}_{\mu}(t,x\viv s,\zeta)$ through the Feynman-Kac formula. By the probabilistic representation of parabolic PDEs with Robin boundary condition (see e.g., \cite[Section 2.5]{freidlin1985functional}), it is natural to expect that for any $\mu \in \R$ and deterministic $f \in \mathcal{C}_b([0,\infty),\R)$, the half-space stochastic heat equation \eqref{eq:hsSHEv2} started from $\zeta(\dd x)=f(x)\dd x$ (under Hypothesis~\ref{hy:detic}) formally admits the Feynman–Kac type representation
\begin{equation}\label{eq:formalFK}
\begin{aligned}
&{Z}_{\mu}(t,x\viv s,\zeta) = \mathbb{E}^{x}_{B}\left[f(|B_{t-s}|)\exp(-\mu L^{0}_{t-s}): \exp :\left\{\int_0^{t-s} \xi\left(t-r, |B_r|\right) d r\right\}\right]\\
& = \int_0^{\infty}\kernel^{N}(t,x\viv s,y) \mathbb{E}^{x}_{B}\left[\exp(-\mu L^{0}_{t-s}): \exp :\left\{\int_0^{t-s} \xi\left(t-r, |B_r|\right) d r\right\} \bigg\mid |B_{t-s}|=y\right] f(y)\dd y.
\end{aligned}
\end{equation}
Here $: \exp :$ denotes the Wick-ordered exponential and other notations are the same as in \eqref{eq:probrep1} and \eqref{eq:probrep2}. 
This representation is only formal because the integral of the white noise over a Brownian path makes no sense. Meanwhile, analogous to the classical result \cite{bertini1995stochastic} on full-space, we can rigorously approximate ${Z}_{\mu}(t,x\viv s,\zeta)$ by using the Feynman-Kac  formulas with a smoothed noise. We defer this detailed approximation to Appendix \ref{ap:fk}.

A useful result that can be proved using the Feynman-Kac formula is the  negative moments bound of the solutions.

\begin{lemma}[Negative moments]\label{le:negmom}
Assume that the initial data $\zeta$ is either the constant $\zeta(\dd x)=\dd x$ or the Dirac-$\delta$ initial data $\zeta=\delta_y$ for some $y \in [0,\infty)$.
Let \[
z_\mu(t,x\viv 0,\zeta):= \int_0^{\infty}\kernel_\mu^R(t,x\viv0,y)\zeta(\dd y).
\]
Then for any $\mu \in \R, p\in[0,\infty), \tau>0$, there exists a constant $C = C(\tau,\mu,p)$ such that for any $x\in[0,\infty)$, $t\in (0,\tau]$, and $\zeta$ being constant or Dirac-$\delta$,
\begin{equation}\label{eq:negativemoments}
\Expe\left[Z_\mu(t,x\viv0,\zeta)^{-p}\right] \leq Cz_\mu(t,x\viv 0,\zeta)^{-p}.
\end{equation}
The constant $C$ can be chosen uniformly for $\mu$ in any compact subset of $\R$.
 \end{lemma}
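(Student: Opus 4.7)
The plan is to reduce the bound to the smoothed-noise setting via the Feynman-Kac approximation of Appendix~\ref{ap:fk} and then pass to the limit by Fatou's lemma. Let $\xi_\eps$ denote a space-time mollification of $\xi$ and $Z^\eps_\mu(t,x\viv 0,\zeta)$ the associated smoothed half-space SHE solution. By the approximation result in Appendix~\ref{ap:fk}, $Z^\eps_\mu \to Z_\mu$ in probability; since $u\mapsto u^{-p}$ is lower semicontinuous and nonnegative on $(0,\infty)$, Fatou's lemma gives
\[
\Expe[Z_\mu(t,x\viv 0,\zeta)^{-p}] \leq \liminf_{\eps\to 0}\,\Expe[Z^\eps_\mu(t,x\viv 0,\zeta)^{-p}],
\]
so it suffices to prove $\Expe[Z^\eps_\mu(t,x\viv 0,\zeta)^{-p}] \leq C\,z_\mu(t,x\viv 0,\zeta)^{-p}$ uniformly in $\eps>0$.

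For the regularized problem, I would use the Feynman-Kac representation
\[
Z^\eps_\mu(t,x\viv 0, \zeta) = \mathbb{E}^x_B\!\left[f(|B_t|)\exp(-\mu L^0_t)\,M_\eps(|B|)\right],
\]
with $f\equiv 1$ in the constant case and the $\delta_y$-case treated by conditioning on $|B_t|=y$; here $M_\eps(|B|) = \exp\bigl(\int_0^t \xi_\eps(t-r,|B_r|)\,\dd r - \tfrac12 V_\eps(|B|)\bigr)$ is the Wick exponential with $V_\eps(|B|)$ the conditional variance of the path integral given $|B|$. Using $z_\mu(t,x\viv 0,\zeta) = \mathbb{E}^x_B[f(|B_t|)\exp(-\mu L^0_t)]$, I introduce the tilted probability measure $\mathbb{Q}^x$ with density $f(|B_t|)\exp(-\mu L^0_t)/z_\mu$ with respect to $\mathbb{P}^x_B$, so that $Z^\eps_\mu = z_\mu\cdot \mathbb{E}^{\mathbb{Q}^x}[M_\eps(|B|)]$. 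Jensen's inequality applied to the convex function $u\mapsto u^{-p}$, followed by Fubini and a direct Gaussian computation conditional on $|B|$, produces the candidate bound
\[
\Expe[Z^\eps_\mu(t,x\viv 0,\zeta)^{-p}] \leq z_\mu(t,x\viv 0,\zeta)^{-p}\cdot \mathbb{E}^{\mathbb{Q}^x}\!\left[\exp\!\bigl(\tfrac{p(p+1)}{2}V_\eps(|B|)\bigr)\right].
\]

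The main obstacle is then to bound the expectation on the right uniformly in $\eps$. In $1$D, $V_\eps(|B|)$ diverges pointwise as $\eps\to 0$, so the naive bound is useless, and the same Wick-type cancellations that make $Z^\eps_\mu\to Z_\mu$ in $L^2$ must be built into the estimate. Concretely, I would regroup the divergent ``constant'' part of $V_\eps$ with the stochastic-integral piece so that the Wick martingale structure controls the bad terms, isolating a finite renormalized self-intersection functional whose exponential moments are uniformly bounded in $\eps$ by classical 1D Brownian local-time estimates of Chen and Varadhan; an alternative I would keep in reserve is a Cameron-Martin tilt of the noise in the spirit of Moreno-Flores that lower-bounds $Z^\eps_\mu$ by $z_\mu$ times a manageable Gaussian factor. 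In either case, the Robin heat-kernel comparison \eqref{eq:RobinComparison} transfers estimates from the Neumann reflected process to the Robin-tilted $\mathbb{Q}^x$ with constants depending continuously on $\mu$, which gives uniformity of $C$ on compact subsets of $\R$; Fatou then closes the argument.
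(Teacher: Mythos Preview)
Your reduction to the smoothed problem via Fatou is fine and matches the paper, but the core of your argument has a genuine gap. After you apply Jensen and Fubini to reach
\[
\Expe[(Z^\eps_\mu)^{-p}] \leq z_\mu^{-p}\,\mathbb{E}^{\mathbb{Q}^x}\!\left[\exp\!\bigl(\tfrac{p(p+1)}{2}V_\eps(|B|)\bigr)\right],
\]
there is no longer any noise randomness to ``regroup'' with: the Wick cancellation has already been consumed in computing $\Expe M_\eps^{-p}$ path by path, and what remains is the exponential of the \emph{single-path} self-correlation $V_\eps(|B|)$, which blows up like $t\,p_{2\delta}(0)$ uniformly over all paths. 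There is no renormalized 1D self-intersection functional that absorbs this divergence --- the Chen--type results you invoke concern intersection local times of \emph{independent} paths (which is exactly what appears in the paper's second-moment bounds), not a single path's self-overlap. This is precisely why the naive Jensen route is known to fail for 1D SHE negative moments. Your Moreno-Flores alternative is a viable direction in principle, but you have not developed it, and adapting it to the Robin boundary setting is nontrivial and is not what you outlined.

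The paper's proof (Appendix~\ref{ap:negmom}, following Hu--L\^e) avoids Jensen entirely. It works at the level of the noise: one identifies a set $A_\lambda$ of ``good'' noise configurations on which $Z^{\eps,\delta}_\mu \geq \tfrac12 z_\mu$ and a certain \emph{two-path} intersection-local-time functional is controlled, shows $\Prob(A_\lambda)\geq b>0$ via Paley--Zygmund and the second-moment bounds \eqref{eq:condwithepsdelta}--\eqref{eq:bdforPhi}, and then proves the deterministic lower bound $Z^{\eps,\delta}_\mu \geq \tfrac12 z_\mu \exp(-\sqrt{\lambda}\,d(\xi_{\eps,\delta},A_\lambda))$. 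Talagrand's Gaussian concentration for the $L^2$-distance $d(\cdot,A_\lambda)$ then gives sub-Gaussian tails uniformly in $\eps,\delta$, which integrate to the desired negative-moment bound. The uniformity in $\mu$ comes from the uniformity of the second-moment constants in Proposition~\ref{pr:negmomcondition}, not from \eqref{eq:RobinComparison}.
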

The bound of negative moments seems to be a well-established result, proven for SHEs in various contexts \cite{mueller1991support,muellernualart,flores2014strict,hu2022asymptotics}. Since no proof has been written for the half-space setting, we prove Lemma~\ref{le:negmom} in Appendix~\ref{ap:negmom}. 

\subsection{Green's function}
%
For any fixed $\mu, s \in \R$, when $\zeta(\cdot)=\delta_y(\cdot)=\delta(\cdot -y)$ for some $y \in [0,\infty)$, the unique mild solution $Z_\mu(\cdot,\cdot\viv s, \zeta)$ as described in Proposition~\ref{pr:hsSHEprop} part~(2) is   known as the Green's function, or the fundamental solution, to the equation \eqref{eq:hsSHEv2}. This is a crucial object for the half-space SHE model which can be used  to construct the half-space continuum directed random polymer (CDRP) measure.

In fact, using the chaos expansion \eqref{eq:chaos1}, we can define a five-parameter random field in our common probability space $(\Omega, \filt, \Prob)$ that supports the space-time white noise $\xi$. Let \[\mathcal{D}= \{(\mu, s,y,t,x)\viv \mu,s,t \in \R \text{ with } s<t \text{ and }x,y \in [0,\infty) \} \subset \mathbb{R}^5.\] For $(\mu, s,y,t,x)\in \mathcal{D}$, define (with the same notation and convention as in \eqref{eq:chaos1})
\begin{equation}\label{eq:greensdef}
\begin{aligned}
\mathcal{Z}_\mu(t, x \viv s, y)&:= \kernel_{\mu}^R(t,x\viv s,y) \\&+\sum_{k=1}^{\infty} \int_{[0,\infty)^{k}}  \int_{\R^k }\kernel_{\mu}^R\left(r_{1: k}, w_{1: k} | s, y ; t, x\right) \xi(\dd r_1\dd w_1)\cdots \xi(\dd r_k\dd w_k).
\end{aligned}
\end{equation}
It   follows that, for any fixed $s, \mu \in \R$, $y \in [0,\infty)$, the field $\mathcal{Z}_\mu(\cdot, \cdot \viv s, y)$ is 
the unique mild solution to \eqref{eq:hsSHEv2} satisfying \eqref{eq:mildform} and \eqref{eq:uniquenesshyp} with initial condition $\zeta = \delta_y$. 
\begin{corollary}\label{co:greensPosMom}
Let $a<b, \mu_0 \in \R $ and $p\in[1,\infty)$. For any $\mu \in [\mu_0, \infty)$, $a \leq s < t \leq b$, $x,y \in [0, \infty)$, 
\begin{equation}\label{eq:greensPosMom}
\|\mathcal{Z}_\mu(t,x\viv s,y)\|_p \lesssim (t-s)^{-1/4} \kernel_{\mu}^R(t,x\viv s,y)^{1/2}\lesssim (t-s)^{-1/2}\exp(-(x-y)^2/4(t-s)),
\end{equation}
where $\alpha \lesssim \beta$ denotes $\alpha \leq C \beta$ with some constant $C=C(a, b, \mu_0, p)$.
\end{corollary}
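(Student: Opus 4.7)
The statement is essentially a specialization of Lemma~\ref{le:posMom} to the Dirac-$\delta$ initial condition, combined with the Gaussian pointwise bound \eqref{eq:elem1} on the Robin heat kernel. The plan is to verify these two ingredients apply and to assemble them.

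First, I would observe that the random field $\mathcal{Z}_\mu(t,x\viv s,y)$ defined by the chaos expansion \eqref{eq:greensdef} is, for each fixed $(\mu,s,y)$, precisely the unique mild solution $Z_\mu(\cdot,\cdot\viv s,\delta_y)$ to \eqref{eq:hsSHEv2} furnished by Proposition~\ref{pr:hsSHEprop}(2); this identification was already pointed out immediately after \eqref{eq:greensdef}. The Dirac-$\delta$ initial condition $\zeta = \delta_y$ satisfies Hypothesis~\ref{hy:detic}, as noted in the paragraph following that hypothesis (the required supremum bound reduces to the estimate $\sqrt{t-s}\,\kernel_\mu^R(t,x\viv s,y) \leq C(a,b,\mu_0)$, which is \eqref{eq:elem1}).

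Next, I would apply Lemma~\ref{le:posMom} directly to this initial condition. For $\mu \in [\mu_0,\infty)$, $a\leq s<t\leq b$, and $x,y\in[0,\infty)$, the lemma yields
\begin{equation*}
\|\mathcal{Z}_\mu(t,x\viv s,y)\|_p = \|Z_\mu(t,x\viv s,\delta_y)\|_p \leq C(a,b,\mu_0,p)\, (t-s)^{-1/4}\Bigl(\int_0^\infty \kernel_\mu^R(t,x\viv s,y')\,\delta_y(\mathrm{d}y')\Bigr)^{1/2},
\end{equation*}
and evaluating the $\delta$-integral produces the factor $\kernel_\mu^R(t,x\viv s,y)^{1/2}$. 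This delivers the first inequality in \eqref{eq:greensPosMom}.

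For the second inequality, I would invoke \eqref{eq:elem1}, which gives $\kernel_\mu^R(t,x\viv s,y) \leq C(a,b,\mu_0)\,p_{t-s}(x-y)$ on the prescribed parameter range. Taking square roots and inserting the explicit Gaussian form of $p_{t-s}$, one gets $\kernel_\mu^R(t,x\viv s,y)^{1/2} \leq C'(t-s)^{-1/4}\exp\bigl(-(x-y)^2/(4(t-s))\bigr)$, which combined with the factor $(t-s)^{-1/4}$ above produces the stated $(t-s)^{-1/2}\exp\bigl(-(x-y)^2/(4(t-s))\bigr)$ bound. Since Lemma~\ref{le:posMom} and \eqref{eq:elem1} both furnish constants that are uniform in $\mu\in[\mu_0,\infty)$ and in $(s,t)\in[a,b]^2$ with $s<t$, the resulting constant depends only on $(a,b,\mu_0,p)$.

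There is no substantive obstacle beyond confirming the input hypotheses: the whole content of the corollary has been absorbed into Lemma~\ref{le:posMom} (whose iterative proof is deferred to the appendix) and into the elementary kernel bound \eqref{eq:elem1}. If anything is delicate, it is merely being careful that the constants in both inputs can be chosen uniformly on the parameter ranges stated, which is explicitly asserted in each source.
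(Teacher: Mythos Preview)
Your approach is exactly the paper's: substitute $\zeta=\delta_y$ into Lemma~\ref{le:posMom} and then invoke \eqref{eq:elem1}. However, you have overlooked a point the paper explicitly flags. The constant in Lemma~\ref{le:posMom} is stated as $C=C(a,b,\mu_0,p,\zeta)$, so when $\zeta=\delta_y$ it may \emph{a priori} depend on $y$. Your final sentence asserts uniformity in $\mu$ and $(s,t)$ but is silent on $y$; as written, your argument only yields a bound with a $y$-dependent constant, which is not what \eqref{eq:greensPosMom} claims.

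The paper handles this by noting that one must go back through the iterative proof of Lemma~\ref{le:posMom} and verify that, for the family $\{\delta_y\}_{y\geq 0}$, the bounds can be relaxed so that $C$ does not depend on $y$. Concretely, the only place $\zeta$ enters that proof is through the quantity $f_0(r)=\sup_x z_0(r,x\viv s,\delta_y,\mu)=\sup_x\kernel_\mu^R(r,x\viv s,y)$, and by \eqref{eq:elem1} this is bounded by $C(a,b,\mu_0)(r-s)^{-1/2}$ uniformly in $y$. You should add this observation to close the gap.
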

\begin{proof}
Substituting $\zeta$ with $\delta_y$ in Lemma~\ref{le:posMom} and using \eqref{eq:elem1}. Note that {in the statement of Lemma~\ref{le:posMom}, the prefactor $C$ may depend on the initial data $\zeta$, but one can follow the same proof and check that,} with the Dirac-$\delta$ initial conditions, we can relax the bounds in the proof so that the constant $C$ does not depend on $y\in [0,\infty)$. 
\end{proof}

We record the following properties for the field $\mathcal{Z}$.
\begin{proposition}\label{pr:greens}
There exists a modification of the field $\mathcal{Z}_{\mu}(t,x\viv s,y)$ that is jointly continuous in all five variables $(\mu, s,y,t,x)\in \mathcal{D}$, and has the following properties:
\begin{enumerate}[(i).]
\item\label{it:mean} For any $(\mu, s,y,t,x)\in \mathcal{D}$, 
\begin{equation}\label{eq:greenmean}
\Expe\left[\mathcal{Z}_{\mu}(t,x\viv s,y)\right] =  \kernel_{\mu}^R(t,x\viv s,y).\end{equation}
\item\label{it:convolution}
For any fixed $s, \mu \in \R$ and $\zeta(\dd x)$ satisfying Hypothesis~\ref{hy:randomic} (or Hypothesis~\ref{hy:detic}), the random process defined by the convolution formula \begin{equation*}
(t,x)\mapsto \int_0^{\infty} \mathcal{Z}_\mu(t,x\viv s,y) \zeta(\dd y) 
\end{equation*}
is the unique mild solution $Z_\mu(\cdot, \cdot \viv s, \zeta) $ in Proposition~\ref{pr:hsSHEprop} satisfying \eqref{eq:mildform} and \eqref{eq:uniquenesshyp1} (or \eqref{eq:uniquenesshyp}).

\item\label{it:strictpos} (Positivity) For any $(\mu, s,y,t,x)\in \mathcal{D}$, we have $\mathcal{Z}_\mu(t,x\viv s,y)>0$. 
\item\label{it:tstat} (Time stationarity) For any fixed $\mu, t_0 \in \R, x,y \in [0,\infty)$, \[\{\mathcal{Z}_\mu(t+t_0,x\viv s+t_0,y)\}_{s, t \in \R, s<t} \stackrel{\text { law }}{=}\{\mathcal{Z}_\mu(t,x\viv s,y)\}_{s, t \in \R, s<t}.\]
\item\label{it:tindep} For any finite disjoint time intervals $\left\{\left(s_i, t_i\right]\right\}_{i=1}^n$ and any $x_i, y_i \in [0,\infty)$, the random variables $\left\{\mathcal{Z_\mu}\left(t_i, x_i \viv s_i, y_i\right)\right\}_{i=1}^n$ are mutually independent.
\item\label{it:cki} (Chapman-Kolmogorov identity) For any fixed $\mu \in \R$, there exists an event $\Omega_0$ with $\Prob(\Omega_0)=1$ so that for any $\omega \in \Omega_0$, $s,t \in \R \text{ with } s<t, x,y \in [0,\infty)$, and $r \in (s,t)$,
\begin{equation}\label{eq:compositionlaw}
 \mathcal{Z}_\mu(t,x\viv s,y) = \int_0^{\infty}  \mathcal{Z}_\mu(t,x\viv r,w) \mathcal{Z}_\mu(r,w\viv s,y) \dd w.
\end{equation}
\item\label{it:trev} For any fixed $\mu, s,t \in \R, t>s$, 
\begin{equation}\label{eq:trev}
\{\mathcal{Z}_\mu(t,x\viv s,y)\}_{x, y \in [0, \infty)} \stackrel{\text { law }}{=}\{\mathcal{Z}_\mu(t,y\viv s,x)\}_{x, y \in [0, \infty)}.\end{equation}
\end{enumerate}
\end{proposition}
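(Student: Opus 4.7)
The plan is to establish the existence of a jointly continuous modification via moment bounds and Kolmogorov's continuity criterion, and then deduce the seven listed properties. The main tool throughout is the chaos expansion \eqref{eq:greensdef}, whose terms are iterated It\^o-Walsh stochastic integrals.

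I would first handle properties (i), (iv), (v), and (vii), each of which is a direct structural consequence of the chaos expansion together with standard white-noise properties. Taking expectations in \eqref{eq:greensdef} annihilates every chaos of positive order and leaves $\kernel_\mu^R(t,x\viv s,y)$, giving (i). For (iv), a time shift $r_j\mapsto r_j+t_0$ maps the chaos expansion for $\mathcal{Z}_\mu(t+t_0,x\viv s+t_0,y)$ onto that of $\mathcal{Z}_\mu(t,x\viv s,y)$ via the distributional invariance of $\xi$ under time translations. For (v), the $k$-th chaos term in $\mathcal{Z}_\mu(t_i,x_i\viv s_i,y_i)$ depends only on $\xi$ restricted to $(s_i,t_i]\times[0,\infty)$, and the restrictions to disjoint intervals are independent. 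For (vii), the Robin heat kernel is symmetric in its spatial arguments, $\kernel_\mu^R(t,x\viv s,y)=\kernel_\mu^R(t,y\viv s,x)$, and this symmetry propagates through the chaos expansion by a change of variables in each spatial integral.

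Property (ii) follows from a stochastic Fubini argument: integrating \eqref{eq:greensdef} against $\zeta(\dd y)$ and exchanging with the stochastic integrals produces precisely the chaos expansion \eqref{eq:chaos1}, which Proposition~\ref{pr:hsSHEprop} identifies as the unique mild solution. The Chapman-Kolmogorov identity (vi) is a uniqueness argument at fixed parameters: for fixed $\mu,s,y$ and $r>s$, the random field $(t,x)\mapsto \int_0^\infty \mathcal{Z}_\mu(t,x\viv r,w)\,\mathcal{Z}_\mu(r,w\viv s,y)\dd w$ is, almost surely, a mild solution on $[r,\infty)$ starting from the positive continuous initial datum $\mathcal{Z}_\mu(r,\cdot\viv s,y)$, which agrees with $\mathcal{Z}_\mu(\cdot,\cdot\viv s,y)$ by uniqueness; establishing the identity on a countable dense set of $(s,y,t,x,r)$ and invoking joint continuity extends it to a single null set. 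Joint continuity in $(\mu,s,y,t,x)\in\mathcal{D}$ itself comes from Kolmogorov's criterion after controlling $p$-th moments of differences in each variable, in the spirit of Corollary~\ref{co:greensPosMom}, combined with the smoothness of $\kernel_\mu^R$ away from $s=t$.

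The main obstacle is the strict positivity (iii), for which the chaos expansion yields no sign information. I would use the standard smooth-noise approximation: if $\xi_\eps$ denotes a spatial mollification of the white noise, then the corresponding Green's function $\mathcal{Z}_\mu^\eps$ admits a genuine Feynman-Kac representation (Appendix~\ref{ap:fk}) as an expectation of a strictly positive functional of a reflected Brownian motion weighted by the Robin local-time factor $\exp(-\mu L_{t-s}^0)$. Nonnegativity of $\mathcal{Z}_\mu$ follows by passing to the limit $\eps\to 0$ with the help of the $L^p$ bounds of Corollary~\ref{co:greensPosMom}. Strict positivity is then obtained by a Mueller-type comparison argument combining a uniform lower bound on $\mathcal{Z}_\mu^\eps$ over compact sets with the negative-moment estimate of Lemma~\ref{le:negmom}, ruling out the vanishing event at fixed parameters; joint continuity of the modification then extends $\mathcal{Z}_\mu>0$ to all of $\mathcal{D}$.
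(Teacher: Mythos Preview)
Your treatment of properties (i)--(vi) and of joint continuity is essentially what the paper does: it points to the full-space argument in \cite{alberts2014continuum}, notes that the extra parameter $\mu$ causes no difficulty because $\kernel_\mu^R$ is smooth in $\mu$, and leaves the details implicit. Your sketch of strict positivity via the negative-moment bound is also fine (and in fact Lemma~\ref{le:negmom} applied with $\zeta=\delta_y$ already gives $\Prob(\mathcal{Z}_\mu(t,x\viv s,y)>0)=1$ at each fixed point, which is all property~\eqref{it:strictpos} asserts).

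The argument for (vii), however, is wrong. Spatial symmetry of the Robin heat kernel does \emph{not} propagate through the chaos expansion by a spatial change of variables. Look at the first chaos term: the integrand for $\mathcal{Z}_\mu(t,x\viv s,y)$ is $\kernel_\mu^R(t,x\viv r,w)\,\kernel_\mu^R(r,w\viv s,y)$, while for $\mathcal{Z}_\mu(t,y\viv s,x)$ it is $\kernel_\mu^R(t,y\viv r,w)\,\kernel_\mu^R(r,w\viv s,x)$. These are genuinely different functions of $(r,w)$ whenever $x\neq y$ (the peak in $w$ sits near $y$ when $r$ is near $s$ in the first expression, and near $x$ in the second), so no purely spatial substitution matches them. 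More tellingly, your argument would yield a \emph{pathwise} identity $\mathcal{Z}_\mu(t,x\viv s,y)=\mathcal{Z}_\mu(t,y\viv s,x)$, which is false; the claim \eqref{eq:trev} is only an equality in law.

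The missing ingredient is a time reversal. In the chaos picture one must substitute $r_j\mapsto s+t-r_{k+1-j}$, $w_j\mapsto w_{k+1-j}$ and then invoke the distributional invariance $\{\xi(s+r,\cdot)\}_{r\in[0,t-s]}\stackrel{\text{law}}{=}\{\xi(t-r,\cdot)\}_{r\in[0,t-s]}$. The paper executes this in the Feynman--Kac picture instead (Appendix~\ref{ap:reversal}): it works with a spatially mollified noise $\xi_\delta$, writes $\mathcal{Z}_\mu^\delta(t,x\viv s,y)$ as an expectation over a reflected Brownian bridge from $x$ to $y$, uses that the time-reversed bridge $\tilde{\Upsilon}_{r'}=\Upsilon_{t-s-r'}$ is again a reflected Brownian bridge (now from $y$ to $x$) with identical local time at zero, combines this with the noise time-reversal to obtain $\{\mathcal{Z}_\mu^\delta(t,x\viv s,y)\}_{x,y}\stackrel{\text{law}}{=}\{\mathcal{Z}_\mu^\delta(t,y\viv s,x)\}_{x,y}$, and passes to the $\delta\to0$ limit via Lemma~\ref{le:fk}.
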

\begin{proof}
The joint continuity of the field and properties \eqref{it:mean}--\eqref{it:cki} closely mirror those of the Green's function field associated with the full-space stochastic heat equations in \cite{alberts2014continuum}. There are two primary differences from the full-space Green's function. First is that we have an additional parameter $\mu$ from the boundary condition in the half-space setting. Since the Robin heat kernel defined in \eqref{eq:RobHeat} is continuous in $\mu$, generalizing the joint continuity property to include this parameter $\mu$ by Kolmogorov continuity
criteria is straightforward.
The second difference is that the field no longer has stationarity in space, but this fact does not impact the properties above. Due to the presence of the boundary, for $x_0\neq 0, x,y \geq -x_0$, we do not expect that $\mathcal{Z}_{\mu}(t,x\viv s,y)$ and $\mathcal{Z}_{\mu}(t,x+x_0\viv s,y+x_0)$ are equal in law.

{For Chapman-Kolmogorov identity \eqref{it:cki},  a detailed proof for the full-space Green's functions can be found in \cite[Lemma 3.12]{alberts2022green}. One can adapt this proof to our half-space setting, except that we do not have a quenched  tail bound analogous to \cite[Corollary 3.10]{alberts2022green}. Alternatively, one can use the Kolmogorov continuity criteria applied to the five variables $(s,t,x,y,r)$ on the right-hand-side of \eqref{eq:compositionlaw} to conclude the proof.}

The proof of property~\eqref{it:trev} is similar to \cite[Appendix B]{dunlap2023fluctuation} and follows from an approximation of Feynman-Kac formulas. We defer it to Appendix~\ref{ap:reversal}.
\end{proof}

For any fixed $\mu\in \R$, we use $Z_{\mu}(\cdot,\cdot)$ to denote the unique mild solution to \eqref{eq:hsSHE} in Proposition~\ref{pr:hsSHEprop}~part~(1). By Proposition~\ref{pr:greens}~\eqref{it:convolution}, when $\mu=u-\frac{1}{2}$, for any $t>0$, $x\in[0,\infty)$,
\begin{equation}\label{eq:sheGreensform}
Z_{u-\frac{1}{2}}(t,x)=
\int_0^{\infty} \mathcal{Z}_{u-\frac{1}{2}}(t,x\viv 0,y)\exp\left(W(y)+uy\right)\dd y.
\end{equation}

We finish this subsection by providing some $L^p(\Omega)$-norm bounds to $Z_{u-\frac{1}{2}}(t,x)$ and $H_u(t,x)$. These bounds follow from Lemma~\ref{le:fk} and Lemma~\ref{le:negmom} and will be very useful throughout the paper. A detailed proof is given in Appendix~\ref{ap:momentsproof}.
\begin{proposition}\label{pr:mombds0}
For any fixed $u \in \R, t\geq 0, p\in[1,\infty)$, there exists a constant $C=C(u,t,p)$ so that the following holds for all $x\in[0,\infty)$:
\begin{equation}\label{eq:posCx}
\|Z_{u-\frac{1}{2}}(t,x)\|_p \leq C\exp(Cx);
\end{equation}
\begin{equation}\label{eq:negCx}
\|Z_{u-\frac{1}{2}}(t,x)^{-1}\|_p \leq C\exp(Cx).
\end{equation}
{It follows that for any fixed $u\in \R$, $\Prob$-almost surely,
\begin{equation}\label{eq:sheposfinite}
0<Z_{u-\frac{1}{2}}(t,x)<\infty , \quad \forall t\geq 0, x\geq 0.\end{equation}
The Hopf-Cole solution to \eqref{eq:hsKPZ} is thus well-defined:}
\begin{equation}\label{eq:kpzGreensform}
H_u(t, x) = \log Z_{u-\frac{1}{2}}(t, x) =  \log \int_0^{\infty} \mathcal{Z}_{u-\frac{1}{2}}(t,x\viv 0,y)\exp(W(y)+uy)\dd y, \end{equation}
and satisfies
\begin{equation}\label{eq:HbdCx} 
\quad \|H_u(t,x)\|_p \leq C(u,t,p)\exp(C(u,t,p)x),
\end{equation}
for any $p \in [1,\infty), t\geq 0, x \geq 0$. All the constants above can be chosen uniformly over any compact subset of $(u,t)\in\R\times {[0,\infty)}$. 
\end{proposition}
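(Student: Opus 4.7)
The plan has three main steps: prove the positive moment bound \eqref{eq:posCx}, prove the negative moment bound \eqref{eq:negCx}, and then deduce almost-sure positivity, well-definedness of $H_u$, and \eqref{eq:HbdCx}.

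For \eqref{eq:posCx}, I apply Minkowski's integral inequality to the Green's function representation \eqref{eq:sheGreensform}. Since $\mathcal{Z}_{u-\frac{1}{2}}(t,x\viv 0,y)$ is a functional of $\xi$ alone, it is independent of $W$, so the $L^p$ norm of each integrand factors as $\|\mathcal{Z}_{u-\frac{1}{2}}(t,x\viv 0,y)\|_p\,\|\exp(W(y)+uy)\|_p$. Corollary~\ref{co:greensPosMom} controls the first factor by $Ct^{-1/2}\exp(-(x-y)^2/4t)$, and Gaussianity of $W(y)\sim N(0,y)$ gives $\|\exp(W(y)+uy)\|_p=\exp((u+p/2)y)$. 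Completing the square in the resulting Gaussian integral in $y$ yields the bound $C(u,t,p)\exp(C(u,t,p)x)$.

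For \eqref{eq:negCx}, the idea is to lower bound $Z_{u-\frac{1}{2}}(t,x)$ by restricting the Green's integral to a short window $I_x$ of length $2$ centered near $y=x$, for instance $I_x=[(x-1)\vee 0,\,x+1]$, on which $(x-y)^2\leq 1$ so that the Robin heat kernel $\kernel^R_{u-\frac{1}{2}}(t,x\viv 0,y)$ is of order $1$. This gives
\[
Z_{u-\frac{1}{2}}(t,x)\;\geq\;\exp\!\Bigl(\inf_{y\in I_x}(W(y)+uy)\Bigr)\,\int_{I_x}\mathcal{Z}_{u-\frac{1}{2}}(t,x\viv 0,y)\,\dd y\;=:\;B(W)\cdot A(\xi).
\]
Since $A$ and $B$ depend on independent noises, $\|Z_{u-\frac{1}{2}}(t,x)^{-1}\|_p\leq \|A^{-1}\|_p\,\|B^{-1}\|_p$. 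For $\|A^{-1}\|_p$, I apply Jensen's inequality on $I_x$ (viewed as a probability measure of mass $|I_x|^{-1}$), converting $A^{-p}$ into an integrated inverse $p$th moment; Lemma~\ref{le:negmom} with delta initial data $\delta_y$ bounds $\Expe[\mathcal{Z}_{u-\frac{1}{2}}(t,x\viv 0,y)^{-p}]$ by $C\kernel^R_{u-\frac{1}{2}}(t,x\viv 0,y)^{-p}$, which is uniformly bounded on $I_x$, yielding an $x$-independent bound $\|A^{-1}\|_p\leq C(u,t,p)$. For $\|B^{-1}\|_p$, I decompose $-W(y)=-W((x-1)\vee 0)-(W(y)-W((x-1)\vee 0))$ and use the Markov property of $W$ to factor out an $x$-dependent Gaussian $\exp(-pW((x-1)\vee 0))$ with moment $\exp(Cx)$ times an independent $O(1)$ supremum over $I_x$; together with the deterministic term $-uy=O(|u|(x+1))$, this yields $\|B^{-1}\|_p\leq C(u,p)\exp(C(u,p)x)$.

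With both moment bounds in hand, \eqref{eq:sheposfinite} follows from Markov's inequality on a countable dense subset of $(0,\infty)\times[0,\infty)$ combined with the joint continuity of $\mathcal{Z}_{u-\frac{1}{2}}$ from Proposition~\ref{pr:greens} (which propagates to $Z_{u-\frac{1}{2}}$ via \eqref{eq:sheGreensform} and dominated convergence using the positive-moment bound). The Hopf--Cole transform $H_u=\log Z_{u-\frac{1}{2}}$ is then well-defined almost surely, and \eqref{eq:HbdCx} follows from the elementary inequality $|\log z|^p\leq C_p(z^p+z^{-p})$ for $z>0$ applied together with \eqref{eq:posCx} and \eqref{eq:negCx}. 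Uniformity of constants over compact subsets of $(u,t)\in\R\times[0,\infty)$ is inherited from the analogous uniformity statements in Corollary~\ref{co:greensPosMom} and Lemma~\ref{le:negmom}. I expect the negative-moment step to be the main obstacle, because Lemma~\ref{le:negmom} is stated only for deterministic constant or $\delta$ initial data and the Brownian initial profile has unbounded exponential growth at infinity; the localization of the Green's integral to the $O(1)$ window $I_x$ is precisely what allows us to bypass these difficulties while retaining only the $\exp(Cx)$ penalty coming from $W$ near $y=x$.
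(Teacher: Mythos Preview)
Your approach is correct overall, and for the positive moment bound \eqref{eq:posCx} and for \eqref{eq:HbdCx} it is essentially the same as the paper's (the paper uses Cauchy--Schwarz where you use independence to factor the $L^p$ norm, but this is cosmetic).

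For the negative moment bound \eqref{eq:negCx}, however, you take a genuinely different route. The paper applies Jensen's inequality with respect to the probability measure $y\mapsto \mathcal{Z}_{u-\frac12}(t,x\viv 0,y)\big/\!\int_0^\infty \mathcal{Z}_{u-\frac12}(t,x\viv 0,y')\,\dd y'$ to write
\[
\Expe\bigl[Z_{u-\frac12}(t,x)^{-p}\bigr]\;\le\;\Expe\Bigl[\Bigl(\int_0^\infty \mathcal{Z}_{u-\frac12}(t,x\viv 0,y)\,\dd y\Bigr)^{-p-1}\int_0^\infty \mathcal{Z}_{u-\frac12}(t,x\viv 0,y)\,e^{-pW_u(y)}\,\dd y\Bigr],
\]
then splits via H\"older and invokes Lemma~\ref{le:negmom} with the \emph{constant} initial condition to bound $\bigl\|\bigl(\int_0^\infty\mathcal{Z}\,\dd y\bigr)^{-1}\bigr\|_{2p+2}$. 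Your localization to the window $I_x$ and subsequent use of Lemma~\ref{le:negmom} with the \emph{Dirac-$\delta$} initial condition is equally valid and arguably more transparent: it makes explicit that the $\exp(Cx)$ penalty arises entirely from $W$ near $y=x$. The paper's Jensen trick is slicker in that it avoids localizing and handles the spatial variable in one stroke, but both arguments use the same inputs (Lemma~\ref{le:negmom} and the Gaussian law of $W$) and yield the same conclusion.

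One minor gap: your argument for \eqref{eq:sheposfinite} as stated is incomplete. Continuity of $Z_{u-\frac12}$ together with almost-sure positivity on a countable dense set does \emph{not} by itself force positivity everywhere (a continuous function can vanish at a point while remaining positive on any prescribed dense set). The paper closes this by showing that $Z_{u-\frac12}(t,x)^{-1}$ has a continuous modification via the Kolmogorov criterion, using the locally uniform negative moment bounds; continuity of the inverse then forces $Z_{u-\frac12}>0$ simultaneously for all $(t,x)$. You should either invoke this directly, or note that Fatou plus your uniform bound on $\|Z^{-1}\|_p$ gives finiteness of $\Expe[Z(t,x)^{-p}]$ at every limit point, and then upgrade to simultaneous positivity via the Kolmogorov argument for $Z^{-1}$.
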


\subsection{Half-space CDRP}\label{se:cdrp}
We can now use the Green's function $\mathcal{Z}_{\mu}(t,x\viv s,y)$ to construct the continuum directed random polymer (CDRP) measures in half-space. We then introduce the induced (quenched) endpoint measures on $([0,\infty),\mathcal{B}([0,\infty))) $ from these polymers.

For simplicity, we always set the initial time $s=0$ and fix some $\mu\in\R$. To align with the previous conventions, we use the parameter $u=\mu+\frac{1}{2}$ to index these measures.

For any $t>0$, we equip the path space $\mathcal{C}_{[0,t]}:=\mathcal{C}([0,t],[0,\infty))$ with 
the Borel $\sigma$-algebra $\mathcal{B}(\mathcal{C}_{[0,t]})$. Let $X=(X_r)_{0 \leq r \leq t}$ be the canonical variable on $\mathcal{C}_{[0,t]}$.
A probability measure on this space is uniquely determined by its finite dimensional distributions. For our purposes, we are interested in the continuum directed polymer $X$ on $\mathcal{C}([0,t],[0,\infty))$ that starts from a fixed point, runs backward in time in the environment of $\xi$, and has the terminal condition $\exp \left(W_{u}(\cdot)\right)$ at $r=t$.

For each fixed $u \in \R, t>0$ and $x\in[0,\infty)$, we define the following (quenched) point-to-measure polymer:

\begin{definition}\label{de:cdrp}
For any $\omega\in \Omega$ such that \eqref{eq:sheposfinite} holds, define the (quenched, point-to-measure) half-space continuum directed random polymer (CDRP) measure $\mathbb{Q}^{u,t}_{x}$ as the probability measure on $\mathcal{C}_{[0,t]}$ with finite dimensional marginals given by
\begin{equation}\label{eq:polydef}
\begin{aligned}
&\mathbb{Q}^{u,t}_{x}(X_{t_1}\in \dd x_1, \dots, X_{t_k} \in \dd x_k) \\ &\quad\quad=  \frac{\int_0^{\infty}\prod_{j=0}^k\mathcal{Z}_{u-\frac{1}{2}}(t-t_{j},x_{j}\viv t- t_{j+1},x_{j+1})\exp(W_{u}(x_{k+1})) \dd x_{k+1}}{\int_0^{\infty}\mathcal{Z}_{u-\frac{1}{2}}(t,x\viv 0,y)\exp(W_{u}(y))\dd y}\dd x_1\dots \dd x_k,
\end{aligned}
\end{equation}
for $0=t_0 < t_1<\cdots<t_k < t_{k+1}=t$ and with $x_0=x$. 
\end{definition}
We first address the well-definedness of the half-space CDRP $\mathbb{Q}^{u,t}_{x}$:

By the Chapman-Kolmogorov equation \eqref{eq:compositionlaw}, the family of finite dimensional distributions in \eqref{eq:polydef} is consistent. Thus by the Kolmogorov  consistency theorem, for $\Prob$-almost every realization of $(\xi, W)$, \eqref{eq:polydef} gives a unique probability measure on $([0,\infty)^{[0,t]}, \mathcal{B}([0,\infty))^{[0, t]})$. It remains to show that the measure $\mathbb{Q}^{u,t}_{x}$ is $\Prob$-almost surely 
supported on paths in $\mathcal{C}_{[0,t]}$. 

{To do so, consider a (deterministic) probability measure  $\mathbb{P}^{RB}_{u,t,x}$ on $([0,\infty)^{[0,t]}, \mathcal{B}([0,\infty))^{[0, t]})$ with finite-dimensional marginals }
\begin{align*}&\mathbb{P}^{RB}_{u,t,x}(X_{t_1}\in \dd x_1, \dots, X_{t_k} \in \dd x_k)\\&=\frac{\Expe\left[\mathbb{Q}^{u,t}_{x}(X_{t_1}\in \dd x_1, \dots, X_{t_k} \in \dd x_k)\int_0^{\infty}\mathcal{Z}_{u-\frac{1}{2}}(t,x\viv 0,y)\exp(W_{u}(y))\dd y \right]}{\Expe [\int_0^{\infty}\mathcal{Z}_{u-\frac{1}{2}}(t,x\viv 0,y)\exp(W_{u}(y))\dd y]}. \end{align*} 
By \eqref{eq:greenmean},
$\mathbb{P}^{RB}_{u,t,x}$ is a probability measure on reflected Brownian paths reweighted correspondingly by a local time term as well as the terminal boundary condition. {(To see this, one can compute the expectation with respect to $\xi$ and $W$ explicitly, separately for the numerator and the denominator.)}  Let $Y:[0,\infty)^{[0,t]}\to \R$ be an indicator function such that $Y=1$ if the paths are uniformly continuous when restricted to rational numbers and $Y=0$ if not. By the regularity of the  Brownian paths, $\mathbb{E}^{RB}_{u,t,x}[Y]=1$, which suggests that $\mathbb{Q}^{u,t}_{x}(Y=1)=1$ $\Prob$-almost surely. It thus follows that the polymer path $X_\cdot$ has a continuous modification $\Prob$-almost surely.

We define $\mathbb{Q}^{u,0}_{x}:=\delta_x$ as a measure on $([0,\infty),\mathcal{B}([0,\infty))) $.
We use $\mathbb{E}^{\mathbb{Q}^{u,t}}_{x}$ to denote the expectation under $\mathbb{Q}^{u,t}_{x}$.  
We also define the following.
\begin{definition}\label{def:cdrpendpt}
   Fix $u \in \R$ and $\omega\in \Omega$ such that \eqref{eq:sheposfinite} holds. For any $t>0$, $x\in [0,\infty)$,
   we define 
   the (quenched) density $\rho^{R}_{u}(\cdot \viv t,x)$ on $[0,\infty)$ as 
\begin{equation}\label{eq:hsQuench0} \rho^{R}_{u}(y \viv t,x) := \frac{ \mathcal{Z}_{u-\frac{1}{2}}(t,x\viv 0,y)\exp (W_{u}(y))}{\int_0^{\infty}\mathcal{Z}_{u-\frac{1}{2}}(t,x\viv 0,y')\exp(W_{u}(y')) \dd y'}, \quad \forall y \in [0,\infty).\end{equation}
\end{definition}
It is clear that $\rho^{R}_{u}(\cdot \viv t,x) $ is well-defined for all $t>0, x\in[0,\infty)$ simultaneously on a   probability one event.
It follows that for any fixed $t>0, x\in[0,\infty)$, $\rho^{R}_{u}(\cdot \viv t,x) $ is the (quenched) density of the polymer endpoint on $[0,\infty)$ under $\mathbb{Q}^{u,t}_{x}$.
\begin{proposition}
    For any fixed $u \in \R, t>0, x\in [0,\infty)$, 
    $\Prob$-almost surely, we have 
\[ \mathbb{Q}^{u,t}_{x}(X_t \in A) = \int_A \rho^{R}_{u}(y \viv t,x) \dd y,\quad\quad \text{for all } A\in \mathcal{B}([0,\infty)),\]
and the quenched expectation of the polymer endpoint is
\[
\mathbb{E}^{\mathbb{Q}^{u,t}}_{x}[X_t] = \int_0^\infty y\rho^{R}_{u}(y \viv t,x) \dd y.
\]
\end{proposition}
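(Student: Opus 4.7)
The formula \eqref{eq:polydef} only specifies finite-dimensional distributions at times strictly less than $t$, so the plan is to identify the density of $X_t$ as the $L^1$-limit of the densities of $X_s$ as $s\to t^-$, and then invoke the $\mathbb{Q}^{u,t}_x$-a.s.\ path continuity (established in the paragraph preceding Definition~\ref{def:cdrpendpt}) to transfer this to the law of $X_t$.

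Applying \eqref{eq:polydef} with $k=1$ and $t_1=s\in(0,t)$, together with the representation \eqref{eq:sheGreensform} of $Z_{u-\frac{1}{2}}$, gives on the $\Prob$-full-measure event where \eqref{eq:sheposfinite} and all relevant continuity and Chapman-Kolmogorov properties hold,
\[
\mathbb{Q}^{u,t}_x(X_s\in \dd y)=\rho^{s}_u(y\viv t,x)\,\dd y,\quad \rho^{s}_u(y\viv t,x):=\frac{\mathcal{Z}_{u-\frac{1}{2}}(t,x\viv t-s,y)\,Z_{u-\frac{1}{2}}(t-s,y)}{Z_{u-\frac{1}{2}}(t,x)}.
\]
That $\int_0^\infty \rho^{s}_u(y\viv t,x)\,\dd y=1$ follows from Tonelli combined with the Chapman-Kolmogorov identity \eqref{eq:compositionlaw}. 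As $s\to t^-$, the joint continuity of $\mathcal{Z}_{u-\frac{1}{2}}$ (Proposition~\ref{pr:greens}), the continuity of $Z_{u-\frac{1}{2}}$ on $[0,\infty)\times[0,\infty)$ (Proposition~\ref{pr:hsSHEprop}(1)), and the identity $Z_{u-\frac{1}{2}}(0,y)=\exp(W_u(y))$ together yield $\rho^{s}_u(y\viv t,x)\to \rho^{R}_{u}(y\viv t,x)$ pointwise in $y$. Scheff\'e's lemma then upgrades this pointwise convergence of probability densities to $L^1$-convergence.

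For any $f\in\mathcal{C}_b([0,\infty),\R)$, Scheff\'e's $L^1$-convergence gives $\int_0^\infty f(y)\rho^{s}_u(y\viv t,x)\,\dd y\to \int_0^\infty f(y)\rho^{R}_{u}(y\viv t,x)\,\dd y$, while $\mathbb{Q}^{u,t}_x$-a.s.\ path continuity and bounded dominated convergence give $\mathbb{E}^{\mathbb{Q}^{u,t}}_x[f(X_s)]\to \mathbb{E}^{\mathbb{Q}^{u,t}}_x[f(X_t)]$. Equating the two limits yields $\mathbb{E}^{\mathbb{Q}^{u,t}}_x[f(X_t)]=\int_0^\infty f(y)\rho^{R}_{u}(y\viv t,x)\,\dd y$ for all $f\in\mathcal{C}_b$, whence the law of $X_t$ under $\mathbb{Q}^{u,t}_x$ has density $\rho^{R}_{u}(\cdot\viv t,x)$; this gives the first identity for all Borel $A$. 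The expectation formula for $X_t$ then follows by monotone convergence applied to $y\wedge N\uparrow y$, the value possibly being $+\infty$. The main technical point is organizational rather than analytical: assembling the continuity and Chapman-Kolmogorov properties on a single $\Prob$-full-measure event, after which Scheff\'e's lemma elegantly bypasses the need for a pathwise $y$-integrable dominating function uniformly in $s$.
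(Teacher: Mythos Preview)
Your proposal is correct and follows essentially the same approach as the paper: both identify the one-dimensional marginal at time $s<t$, pass to the limit $s\to t^-$ via the pointwise continuity of $\mathcal{Z}_{u-\frac12}$ and $Z_{u-\frac12}$, and use the $\mathbb{Q}^{u,t}_x$-a.s.\ path continuity of $X_\cdot$ to identify the result with the law of $X_t$. The only (minor) technical difference is that you invoke Scheff\'e's lemma and test against $\mathcal{C}_b$, whereas the paper tests against $\varphi\in\mathcal{C}_c$ (so dominated convergence is immediate on the compact support) and then extends to Borel sets via Urysohn plus a $\pi$--$\lambda$ argument; your route is equally valid and arguably a touch cleaner.
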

\begin{proof}
    By the path continuity of $X_\cdot$, the continuity of $Z_{u-\frac{1}{2}}(\cdot,\cdot)$ on $\R_{\geq 0}^2$, and \eqref{eq:polydef}, for any $\varphi \in \mathcal{C}_c([0,\infty),\R)$,
\[
\begin{aligned}
\mathbb{E}^{\mathbb{Q}^{u,t}}_{x}[\varphi(X_t)] &= \lim_{r\to t}\mathbb{E}^{\mathbb{Q}^{u,t}}_{x}[\varphi(X_r)] =  \lim_{r\to t} \frac{1}{{Z}_{u-\frac{1}{2}}(t,x)}\int_0^{\infty} \varphi(z){\mathcal{Z}_{u-\frac{1}{2}}(t,x\viv t-r,z){Z}_{u-\frac{1}{2}}(t-r,z) \dd z}{ }\\
&= \frac{1}{{Z}_{u-\frac{1}{2}}(t,x)}\int_0^{\infty} \varphi(y){\mathcal{Z}_{u-\frac{1}{2}}(t,x\viv 0,y)\exp (W_{u}(y)) \dd y},
\end{aligned}
\]where the interchange of the limit and the integral is justified by the dominated convergence theorem. 
By Urysohn's lemma and the monotone convergence theorem, this extends to any function $\varphi=\1_O$ with $O$ being an open set of $[0,\infty)$. By Dynkin's $\pi-\lambda$ theorem, we can further extend to $\1_A$ with $A\in \mathcal{B}([0,\infty))$.
\end{proof}

\begin{remark}\label{re:whynocoupling}
We define the half-space CDRP $\Prob$-almost surely for each fixed $t>0, x\geq 0$, but do not address its coupling over all  $t>0, x\geq 0$. Additionally, we do not prove some basic properties (Markov, Feller, etc.) for the half-space CDRP, analogous to those proved in the recent comprehensive work \cite{alberts2022green} for the full-space CDRP. One reason for this is that an adaption of \cite{alberts2022green} to half-space setting {still needs} some  technical estimates {similar to those obtained in \cite[Appendix C]{alberts2022green}} on the Robin heat kernels{, yet we were not able to obtain them}.
On the other hand, these properties are not required for our purposes, as we will use an alternative method to establish the stochastic monotonicity needed in Section~\ref{se:uppbd}. 

On a heuristic level, the proof   in Section~\ref{se:uppbd} is based  on  sampling the initial points of the CDRP $\mathbb{Q}^{u,t}_{x}$ from another density on $[0,\infty)$ to construct a measure-to-measure polymer  $\mathbb{Q}^{u,t}_{\tilde{W}}$ for any fixed $\omega \in \Omega$. Without a simultaneous coupling, we do not justify that this construction defines a new measure on $\mathcal{C}[0,t]$ $\Prob$-almost surely.
However, as we will show in Section~\ref{se:uppbd}, we can still apply this idea.
 In particular, we only need the quenched endpoint distribution for this polymer, which is well-defined through SHE mild solutions $\Prob$-almost surely. 
All of our 
$\Prob$-almost sure results are derived from the properties and continuity of mild solutions, making the coupling issue irrelevant for our purposes.

{We do not attempt to couple the CDRP measures for all $u\in\R$ either, since it is unnecessary for our purposes.
}

\end{remark}

\begin{remark}
   By the formal Feynman-Kac representation \eqref{eq:formalFK}, it is natural to interpret the above continuum directed polymer measure as a ``Gibbs measure'' of the reflected Brownian motion in half-space with start-point $x \in [0,\infty)$. The Radon–Nikodym derivative with respect to the law of $\Upsilon_\cdot=|B_\cdot|$ should be expressed as
\[\exp(W(\Upsilon_t)+u\Upsilon_t)  \exp \left(-(u-{1}/{2}) L_{t}^{0,\Upsilon}\right): \exp :\left\{\int_0^{t} \xi\left(t-r,\Upsilon_r\right) d r\right\}.
\]
The above expression is only formal when $\xi$ is the spacetime white noise. In fact, we suspect that the half-space continuum directed polymer measure should be singular with respect to the measure of reflected Brownian motion in half-space, just as the full-space polymer measure studied in \cite{alberts2014continuum}. 
\end{remark}

Finally, we remark that with the definition of $\mathbb{Q}^{u,t}_x$, the solution $Z_{u-\frac{1}{2}}(t,x)$ to the half-space stochastic heat equation \eqref{eq:hsSHE} is the partition function of the half-space CDRP $\mathbb{Q}^{u,t}_x$. Thus, its logarithm, which is the solution $H_u(t,x)$ to the KPZ equation \eqref{eq:hsKPZ}, is the free energy of the polymer.

\section{Average growth rate: proof of Theorem~\ref{thm:mean}}\label{se:mean}

{In this section, we study the average growth speed of $H_u$ and prove Theorem~\ref{thm:mean}. Together with Proposition~\ref{pr:symmetry}, it proves the conjecture of the average growth speed in \cite[(4.18)]{barraquand2020halfkpz}.

The method here is stochastic analysis, essentially an application of It\^o's formula. The only explicit calculation we do relies on a nice conditional Laplace transform formula obtained for exponential functionals of a Brownian motion with a drift \cite{baudoin2011exponential}. Similar calculations in the periodic setting can be found in \cite[Section 2.2.3]{gu2024some}, based on Yor's formula \cite{yor1992some}. An alternative proof for Theorem~\ref{thm:mean}  can be pursued by using the half-space log-gamma polymer and making the claims that led to \cite[(4.18)]{barraquand2020halfkpz} rigorous. Notably, the average growth rate has also been recently established for the open KPZ equation \cite{bar2024}; it would be interesting to investigate if the method presented here can be adapted to that context. This section can be read independently of the rest of the paper.}

{It is well-known that, under appropriate assumptions, the partition function of the point-to-line directed polymer can be related to a positive martingale. Thus, to study the average of its logarithm, one can perform a semimartingale decomposition and only needs to analyze the drift term. This trick has been used extensively in the study of disordered systems, see e.g. \cite{comets1995sherrington} for a spin glass model and \cite[Chapter 5]{comets2017directed} for directed polymers in random environments.}


We begin by presenting the following preliminary result, originally proved by \cite{dufresne1990distribution}. A more detailed discussion on the study of exponential functionals of Brownian motions can be found in the review \cite{matsumoto2005exponential}.
\begin{lemma}
For any $u<0$, the perpetual exponential functional of Brownian motion with drift satisfies
\begin{equation}\label{eq:expoBM} \int_0^{\infty}  \exp(W_{u}(x)) \dd x  \stackrel{\text { law }}{=} \frac{2}{\gamma(-2u)},
\end{equation}
where $\gamma(-2u)$ is a gamma random variable with shape parameter $-2u$.
\end{lemma}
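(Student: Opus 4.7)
The plan is to reduce Dufresne's identity to the classical distribution of the first hitting time of zero by a Bessel process, via the Lamperti transform. First I would apply Brownian scaling: writing $W(4y)\stackrel{\text{law}}{=} 2\tilde B(y)$ for a standard Brownian motion $\tilde B$ and changing variables $x=4y$ gives
\begin{equation*}
\int_0^\infty \exp(W_u(x))\, \dd x \stackrel{\text{law}}{=} 4\int_0^\infty \exp\!\bigl(2(\tilde B(y) + 2uy)\bigr)\, \dd y = 4 A_\infty^{(\mu)},
\end{equation*}
where $\mu := 2u < 0$ and $A_t^{(\mu)} := \int_0^t \exp(2 B_s^{(\mu)})\, \dd s$ with $B^{(\mu)}_s := \tilde B(s) + \mu s$.

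Next, the Lamperti transform identifies the geometric Brownian motion $Y_t := \exp(B_t^{(\mu)})$ with a Bessel process $R$ of dimension $\delta := 2 + 2\mu$ started at $1$, run at the clock $A_t^{(\mu)}$; concretely $Y_t = R_{A_t^{(\mu)}}$, which one verifies by applying It\^o's formula to $Y$ and time-changing via $\rho(t) = \int_0^t Y_s^2\, \dd s = A_t^{(\mu)}$. Since $\mu < 0$ we have $B_t^{(\mu)} \to -\infty$ almost surely, so $Y_t \to 0$ and $R$ must reach $0$; matching time scales yields $A_\infty^{(\mu)} = T_0$, the first hitting time of the origin by $R$.

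Finally I would invoke the classical hitting-time distribution for a Bessel process of dimension $\delta = 2(1-\alpha)$ with $\alpha > 0$, started at $1$, namely $T_0 \stackrel{\text{law}}{=} 1/(2\gamma(\alpha))$. Specializing to $\alpha = -\mu = -2u$ gives $A_\infty^{(\mu)} \stackrel{\text{law}}{=} 1/(2\gamma(-2u))$, whence
\begin{equation*}
\int_0^\infty \exp(W_u(x))\, \dd x \stackrel{\text{law}}{=} \frac{4}{2\gamma(-2u)} = \frac{2}{\gamma(-2u)},
\end{equation*}
as claimed.

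The Brownian scaling and Lamperti reduction are routine; the one substantive input is the Bessel hitting-time law. For $\alpha \in (0,1)$ (dimension in $(0,2)$) this is standard textbook material, but for general $\alpha > 0$ (where the dimension may be zero or negative) the cleanest route is to work with the squared Bessel diffusion $X_t := R_t^2$, which satisfies $\dd X_t = 2\sqrt{X_t}\, \dd B_t + \delta\, \dd t$ for $X_t > 0$, and to compute the absorption law at $0$ via its scale and speed measure. Of course, since the identity is exactly Dufresne's original result, citing \cite{dufresne1990distribution} directly is also permissible.
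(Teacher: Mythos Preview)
Your argument is correct. The paper itself does not prove this lemma: it is stated as a preliminary result with a citation to Dufresne's original paper \cite{dufresne1990distribution} and to the survey \cite{matsumoto2005exponential}. Your proof via Brownian scaling, the Lamperti transform, and the Bessel hitting-time law is one of the standard derivations (it appears, for instance, in the Matsumoto--Yor survey the paper cites), so you are supplying the content the paper leaves to the literature. Your caveat about the Bessel dimension $\delta = 2+2\mu = 2+4u$ possibly being zero or negative when $u \le -\tfrac12$ is correct and well handled: the Lamperti identification still goes through at the level of the squared Bessel SDE, and the absorption law at $0$ can be read off from scale and speed as you indicate.
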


{Now for any $u<0,t\geq 0$, define 
\[
\mathcal{Y}_{u }(t) :=  \int_0^{\infty} Z_{u-\frac{1}{2}}(t,x)\dd x,
\]
which can be viewed as the $\zeta$-to-line partition function of the underlying directed polymer, with the initial data  $\zeta(\dd x)=\exp(W_u(x))\dd x$.} 
$\mathcal{Y}_u(0)$ is the random variable appearing on the left-hand side of \eqref{eq:expoBM}. We know that $0<\mathcal{Y}_u(0)<\infty$ $\Prob$-almost surely.

As described in \eqref{eq:station}, the 
 Brownian motion with drift $u$ is stationary to the half-space KPZ equation with boundary parameter $u$, which implies that for each $t\geq0$,
\begin{equation}\label{eq:shestatdist}
\left\{\frac{{Z}_{u-\frac{1}{2}}(t,y)}{{Z}_{u-\frac{1}{2}}(t,0)}\right\}_{y\in[0,\infty)} \stackrel{\text{law}}{=}\{ \exp \left({W}_{u}(y)\right)\}_{y\in[0,\infty)}, 
\end{equation}
Thus, the random variable $\mathcal{Y}_u(t)/{Z}_{u-\frac{1}{2}}(t,0)$ satisfies
\begin{equation}\label{eq:yoverz}
\frac{\mathcal{Y}_u(t)}{{Z}_{u-\frac{1}{2}}(t,0)} = \int_0^{\infty} Z_{u-\frac{1}{2}}(t,x)[{Z}_{u-\frac{1}{2}}(t,0)]^{-1}\dd x\stackrel{\text{law}}{=}\int_0^{\infty}  \exp(W_{u}(x)) \dd x = \mathcal{Y}_u(0).
\end{equation}
Since $\mathcal{Y}_u(0)$ and ${Z}_{u-\frac{1}{2}}(t,0)$ are both $\Prob$-almost surely finite and positive, $\mathcal{Y}_u(t)$ is also $\Prob$-almost surely finite and positive.

The proof of Theorem~\ref{thm:mean} for $u<0$ relies on the stationarity \eqref{eq:shestatdist} as well as the identity in law \eqref{eq:expoBM}. For $u\geq 0$, $ \exp(W_{u}(y)) \dd y $ is not in $L^1([0,\infty))$. This is the key reason why our proof cannot be extended to $u\geq 0$.

{Now we state a decomposition of $\mathcal{Y}_u(t)$ which follows from the mild formulation of the SHE. Due to the Robin boundary condition, there exists an extra drift term compared to the full space case.} 

\begin{lemma}\label{le:semimarY} Let $\filt^{\xi, W}_{t}$ be the natural filtration generated by $W$ and $\xi$ on the time interval $[0,t]$.
For any $u<0$, $(\mathcal{Y}_u(t), \filt^{\xi, W}_{t})$ is a continuous semimartingale, and  $\mathcal{Y}_u(t)$ admits a decomposition
\begin{equation}\label{eq:semidecom}
\mathcal{Y}_u(t) = \mathcal{Y}_u(0) + M_u(t) + A_u(t), 
\end{equation}
where $M_u(t)$ is a continuous local martingale with \[
M_u(t)= \int_0^t\int_0^{\infty} {Z}_{u-\frac{1}{2}} (s,z)\xi(\dd z\dd s),
\]
and $A_u(t)$ is a process of finite-variation 
\[A_u(t) = - \frac{1}{2} \left(u-\frac{1}{2}\right)\int_0^t  {Z}_{u-\frac{1}{2}} (v,0)\dd v.\]
\end{lemma}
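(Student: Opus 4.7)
The plan is to derive the decomposition by integrating the mild formulation of the half-space SHE over the spatial variable $x\in[0,\infty)$ and recognizing the resulting expression as $\int_0^t Z_{u-\frac12}(r,0)\,dr$ (times a constant) plus an It\^o--Walsh integral. The essential analytic input is an identity for the Robin heat kernel coming from integrating its PDE: for $\mu\in\R$, $s<t$, $y\in[0,\infty)$,
\begin{equation*}
\int_0^\infty \kernel_{\mu}^R(t,x\viv s,y)\,\dd x \;=\; 1-\frac{\mu}{2}\int_s^t \kernel_{\mu}^R(r,0\viv s,y)\,\dd r,
\end{equation*}
obtained from $\partial_t\kernel_{\mu}^R=\tfrac12\partial_x^2\kernel_{\mu}^R$, the boundary condition $\partial_x\kernel_{\mu}^R|_{x=0}=\mu\kernel_{\mu}^R(t,0\viv s,y)$, and the Gaussian decay of $\kernel_{\mu}^R$ and its derivative as $x\to\infty$.

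Starting from \eqref{eq:mildform} at $\mu=u-\tfrac12$ and $\zeta(\dd y)=e^{W_u(y)}\,\dd y$, I integrate both sides in $x$. For the deterministic piece, the kernel identity together with Fubini yields
\begin{equation*}
\mathcal{Y}_u(0)\;-\;\frac{\mu}{2}\int_0^t \!\!\int_0^\infty \kernel_{\mu}^R(r,0\viv 0,y)\,e^{W_u(y)}\,\dd y\,\dd r.
\end{equation*}
For the noise piece, a stochastic Fubini (swapping $\dd x$ with $\xi(\dd z\,\dd s)$) and the kernel identity produce $M_u(t)-\tfrac{\mu}{2}\mathcal{N}(t)$, where
\begin{equation*}
\mathcal{N}(t)\;=\;\int_0^t\!\!\int_0^\infty\!\!\left(\int_s^t\kernel_{\mu}^R(r,0\viv s,z)\,\dd r\right)Z_{u-\frac12}(s,z)\,\xi(\dd z\,\dd s).
\end{equation*}
A second stochastic Fubini pulls the $\dd r$ outside and reveals that the inner stochastic integral is precisely the noise part of the mild formulation for $Z_{u-\frac12}(r,0)$. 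Adding back the deterministic term $\int_0^\infty\kernel_{\mu}^R(r,0\viv 0,y)e^{W_u(y)}\dd y$ reassembles $Z_{u-\frac12}(r,0)$, so that
\begin{equation*}
\mathcal{Y}_u(t)-\mathcal{Y}_u(0)\;=\;M_u(t)-\frac{\mu}{2}\int_0^t Z_{u-\frac12}(r,0)\,\dd r\;=\;M_u(t)+A_u(t),
\end{equation*}
which is the claimed decomposition. Continuity of $A_u$ is immediate from the joint continuity of $Z_{u-\frac12}$, and continuity/adaptedness of $M_u$ follow from standard properties of the It\^o--Walsh integral and the $(\filt^{\xi,W}_r)$-adaptedness of $Z_{u-\frac12}$.

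The main obstacle is justifying the two stochastic Fubini interchanges and showing $M_u$ is a bona fide local martingale. Proposition~\ref{pr:mombds0} only provides $\|Z_{u-\frac12}(s,z)\|_2\leq Ce^{Cz}$, so the integrand fails $L^2(\dd s\,\dd z)$ globally on $[0,t]\times[0,\infty)$. I would handle this by a spatial cutoff: truncate all integrals to $x\in[0,L]$, apply classical and stochastic Fubini on this bounded region (where the moment bound together with the Gaussian decay in \eqref{eq:greensPosMom} gives absolute integrability), and then take $L\to\infty$. For the deterministic side the limit follows from monotone/dominated convergence using $\mathcal{Y}_u(0)<\infty$ a.s.\ (a consequence of the Dufresne identity \eqref{eq:expoBM} for $u<0$) and the corresponding a.s.\ finiteness of $\mathcal{Y}_u(t)$ from \eqref{eq:yoverz}. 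For the stochastic side, I would introduce the stopping times $\tau_N=\inf\{t:\mathcal{Y}_u(t)\vee\int_0^t Z_{u-\frac12}(r,0)\dd r\geq N\}$, carry out the computation on $[0,t\wedge\tau_N]$ where all $L^2$-bounds close, and use $\tau_N\uparrow\infty$ (which follows from a.s.\ finiteness of $\mathcal{Y}_u$ and of the Lebesgue integral of $Z_{u-\frac12}(\cdot,0)$) to deduce the decomposition globally. This same localization exhibits $\{\tau_N\}$ as a reducing sequence for $M_u$.
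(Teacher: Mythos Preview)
Your overall strategy is the paper's: integrate the mild formulation over $x$, invoke the Robin kernel identity $\int_0^\infty\kernel_{\mu}^R(t,x\viv s,z)\,\dd x=1-\tfrac{\mu}{2}\int_s^t\kernel_{\mu}^R(r,0\viv s,z)\,\dd r$, apply stochastic Fubini twice, and reassemble $Z_{u-\frac12}(r,0)$. You also correctly locate the obstacle.

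The gap is in the proposed workaround. The stopping times $\tau_N$ localize in time but do nothing for the \emph{spatial} integrability required by stochastic Fubini: on $\{\tau_N>s\}$ one still has only $\Expe[Z_{u-\frac12}(s,z)^2]\leq Ce^{2Cz}$, so the condition $\int_0^\infty(\int_0^t\int_0^\infty\kernel^R(t,x\viv s,z)^2\Expe[Z(s,z)^2]\,\dd z\,\dd s)^{1/2}\dd x<\infty$ is as unreachable as before. The spatial cutoff to $[0,L]$ does allow the first Fubini, but then the kernel identity changes: integrating the heat equation over $[0,L]$ produces an extra boundary flux $\tfrac12\int_s^t\partial_x\kernel_{\mu}^R(r,L\viv s,z)\,\dd r$ and replaces the leading $1$ by $\1_{[0,L]}(z)$. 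Sending $L\to\infty$ in the resulting stochastic integrals again requires the $L^2$-control you lack under the joint law of $(\xi,W)$.

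The paper's resolution is to separate the two Gaussian sources \emph{before} any stochastic Fubini. Via Tonelli one writes $\mathcal{Y}_u(t)=\int_0^\infty e^{W_u(y)}\bigl(\int_0^\infty\mathcal{Z}_{u-\frac12}(t,x\viv 0,y)\,\dd x\bigr)\dd y$ and computes $\int_0^\infty\mathcal{Z}_{u-\frac12}(t,x\viv 0,y)\,\dd x$ for each fixed $y$ from the mild equation for the Green's function. The Fubini hypothesis is then checked with $\mathbf{E}_\xi$-moments of $\mathcal{Z}_{u-\frac12}(s,z\viv 0,y)$, which by \eqref{eq:greensPosMom} have Gaussian decay in $z$ and make $\int_0^\infty(\cdots)^{1/2}\dd x<\infty$ immediate. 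Only after both interchanges does one integrate against $e^{W_u(y)}\,\dd y$. The same conditioning gives $\mathbf{E}_\xi\int_0^t\int_0^\infty Z_{u-\frac12}(s,z)^2\,\dd z\,\dd s\leq C(t,u)\,\mathcal{Y}_u(0)^2<\infty$ a.s., which delivers the local martingale property for $M_u$ directly, without any $\tau_N$.
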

\begin{proof}
For any $t>0$, by the convolution formula \eqref{eq:sheGreensform}, we can use the Green's function $\mathcal{Z}$ to write
\begin{equation}\label{eq:4stofubi}
\begin{aligned}
&\mathcal{Y}_u(t) = \int_0^{\infty} Z_{u-\frac{1}{2}}(t,x)\dd x =  \int_0^{\infty} 
\int_0^{\infty} \mathcal{Z}_{u-\frac{1}{2}} (t,x\viv 0,y) \exp (W_{u}(y))\dd y \dd x\\
&=  \int_0^{\infty} 
\int_0^{\infty} \mathcal{Z}_{u-\frac{1}{2}} (t,x\viv 0,y) \dd x \exp (W_{u}(y))\dd y,
\end{aligned}
\end{equation}
where the last equality follows from Tonelli's theorem. 
By \eqref{eq:mildform}, for any $t>0, y\geq 0$,
\begin{equation}\label{eq:greenintoverx}
\begin{aligned}
& \int_0^{\infty} \mathcal{Z}_{u-\frac{1}{2}} (t,x\viv 0,y)\dd x =  \int_0^{\infty} 
 \kernel_{u-\frac{1}{2}}^R(t,x\viv 0,y) \dd x\\&+ \int_0^{\infty} \int_0^t\int_0^{\infty} \kernel_{u-\frac{1}{2}}^R(t,x\viv s,z)  \mathcal{Z}_{u-\frac{1}{2}} (s,z\viv 0,y)\xi(\dd z\dd s)\dd x.
\end{aligned}
\end{equation}
By \eqref{eq:greensPosMom}, 
\begin{equation}\label{eq:stocfubcon}
\int_0^{\infty} \left(\int_0^t \int_0^{\infty} \kernel_{u-\frac{1}{2}}^R(t,x\viv s,z)^2 {\mathbf{E}_\xi}\left[ 
\mathcal{Z}_{u-\frac{1}{2}} (s,z\viv 0,y)^2\right] \dd z\dd s\right)^{1/2} \dd x<\infty,
\end{equation}
so we can use the stochastic Fubini theorem (\cite[Theorem 4.33]{da2014stochastic}) to switch the order of integration in the last term of \eqref{eq:greenintoverx} and integrate $x \in [0,\infty)$ first.

Applying the fundamental theorem of calculus to the integral $\int_0^{\infty} \kernel_{u-\frac{1}{2}}^R(t,x\viv s,y) \dd x$ and using the definition of the Robin heat kernel in \eqref{eq:RobHeat}, for any fixed $y \geq 0, s\in [0,t]$, we have
\[\begin{aligned}&
\int_0^{\infty}  \kernel_{u-\frac{1}{2}}^R(t,x\viv s,z) \dd x = 1+ \int_s^{t} \int_0^{\infty}\partial_v  \kernel_{u-\frac{1}{2}}^R(v,x\viv s,z)\dd x \dd v  \\&=1+ \int_s^{t} \int_0^{\infty}\frac{1}{2}\partial^2_x\kernel_{u-\frac{1}{2}}^R(v,x\viv s,z)\dd x \dd v =1+ \frac{1}{2} \int_s^{t} \left[\left.-\partial_x\kernel_{u-\frac{1}{2}}^R(v,x\viv s,z)\right|_{x=0}\right] \dd v\\&=1 -\frac{1}{2} \int_s^t \left(u-\frac{1}{2}\right) \kernel_{u-\frac{1}{2}}^R(v,0\viv s,z)\dd v.\end{aligned}
\]
Thus \eqref{eq:greenintoverx} equals to
\[\begin{aligned}
&1 - \frac{1}{2} \left(u-\frac{1}{2}\right)\int_0^t \kernel_{u-\frac{1}{2}}^R(v,0\viv 0,y)\dd v + \int_0^t\int_0^{\infty} \mathcal{Z}_{u-\frac{1}{2}} (s,z\viv 0,y)\xi(\dd z\dd s)\\&
-\frac{1}{2} \left(u-\frac{1}{2}\right) \int_0^t\int_0^{\infty}\left[\int_s^t\kernel_{u-\frac{1}{2}}^R(v,0\viv s,z)\dd v\right] \mathcal{Z}_{u-\frac{1}{2}} (s,z\viv 0,y)\xi(\dd z\dd s).
\end{aligned} \]
Again by \eqref{eq:greensPosMom}, 
\[
\int_0^t \left( \int_0^{\infty} \int_0^v \kernel_{u-\frac{1}{2}}^R(v,0\viv s,z)^2 {\mathbf{E}_\xi}\left[ 
\mathcal{Z}_{u-\frac{1}{2}} (s,z\viv 0,y)^2 \right] \dd s\dd z\right)^{1/2} \dd v<\infty,
\]
so we can use the stochastic Fubini theorem to switch the order of integrations as \[
\begin{aligned}
& \int_0^t\int_0^{\infty}\left[\int_s^t\kernel_{u-\frac{1}{2}}^R(v,0\viv s,z)\dd v\right] \mathcal{Z}_{u-\frac{1}{2}} (s,z\viv 0,y)\xi(\dd z\dd s) \\&= \int_0^t \int_0^{\infty} \int_0^v \kernel_{u-\frac{1}{2}}^R(v,0\viv s,z)
\mathcal{Z}_{u-\frac{1}{2}} (s,z\viv 0,y)\xi(\dd z\dd s) \dd v\\& = \int_0^t  \mathcal{Z}_{u-\frac{1}{2}} (v,0\viv 0,y)\dd v -\int_0^t \kernel_{u-\frac{1}{2}}^R(v,0\viv 0,y)\dd v.\end{aligned}\]
Therefore, for each $y\in[0,\infty)$, \eqref{eq:greenintoverx} equals to
\[ \int_0^{\infty} \mathcal{Z}_{u-\frac{1}{2}} (t,x\viv 0,y)\dd x =1 + \int_0^t\int_0^{\infty} \mathcal{Z}_{u-\frac{1}{2}} (s,z\viv 0,y)\xi(\dd z\dd s)-\frac{1}{2} \left(u-\frac{1}{2}\right) \int_0^t  \mathcal{Z}_{u-\frac{1}{2}} (v,0\viv 0,y)\dd v. \]
Using again the stochastic Fubini theorem and Tonelli's theorem, we have
\[\begin{aligned}
\mathcal{Y}_u(t) &= \int_0^{\infty} \left[1 + \int_0^t\int_0^{\infty} \mathcal{Z}_{u-\frac{1}{2}} (s,z\viv 0,y)\xi(\dd z\dd s)-\frac{1}{2} \left(u-\frac{1}{2}\right) \int_0^t  \mathcal{Z}_{u-\frac{1}{2}} (v,0\viv 0,y)\dd v\right] e^{W_u(y)}\dd y\\
&= \mathcal{Y}_u(0) +\int_0^t\int_0^{\infty} {Z}_{u-\frac{1}{2}} (s,z)\xi(\dd z\dd s) - \frac{1}{2} \left(u-\frac{1}{2}\right)\int_0^t  {Z}_{u-\frac{1}{2}} (v,0)\dd v,
\end{aligned}
\]
which is the decomposition \eqref{eq:semidecom}.
Note that for $\Prob$-almost surely all realization of $W$, by \eqref{eq:greensPosMom},
\begin{equation}\label{eq:qvbd}\begin{aligned}&\mathbf{E}_\xi\int_0^t \int^{\infty}_0 Z_{u-\frac{1}{2}} (s,y)^2 \dd y\dd s\\
&=\int_0^{\infty}  \int_0^{\infty} \int_0^t \int^{\infty}_0 \mathbf{E}_\xi \left[ \mathcal{Z}_{u-\frac{1}{2}} (s,y\viv 0,z_1) \mathcal{Z}_{u-\frac{1}{2}} (s,y\viv 0,z_2)\right]\dd y\dd s e^{W_u(z_1)}e^{W_u(z_2)} \dd z_1 \dd z_2\\
&\leq \int_0^{\infty}  \int_0^{\infty} \int_0^t \int^{\infty}_0\|\mathcal{Z}_{u-\frac{1}{2}} (s,y\viv 0,z_1)\|_2\| \mathcal{Z}_{u-\frac{1}{2}} (s,y\viv 0,z_2)\|_2 \dd y\dd s e^{W_u(z_1)}e^{W_u(z_2)} \dd z_1 \dd z_2\\
&\leq C(t,u) \left[\int_0^{\infty} e^{W_u(z)} \dd z \right]^2,
\end{aligned}
\end{equation}
so for any $t>0$,
\[\int_0^t \int^{\infty}_0 Z_{u-\frac{1}{2}} (s,y)^2 \dd y \dd s<\infty, \quad \Prob\text{-almost surely},
\]
and $M_u(t)$ is a continuous local martingale with quadratic variation \[
\langle M_u \rangle_t=\int_0^t \int^{\infty}_0 Z_{u-\frac{1}{2}} (s,y)^2 \dd y\dd s.\] 
The proof is complete.
\end{proof}
\begin{remark}
One needs to be cautious with the $L^p$ norms after putting the Brownian motion $W$ into the filtration $\filt_t^{\xi,W}$. In fact, by \eqref{eq:expoBM}, $\mathcal{Y}_u(0)=\int_0^{\infty} e^{W_u(x)} \dd x \in L^p(\Omega)$ if and only if $-2u>p$. Consequentially, taking an expectation $\mathbf{E}_W$ on the last line of \eqref{eq:qvbd} only gives a bounded term when $u<-1$, so
we do not claim that $\Expe\langle M_u\rangle_t$ is bounded for all $u<0$. 

On the other hand, if we consider each fixed realization of the initial data,  we can think of $M_u(t)$ as a local martingale with respect to $\filt_t^{\xi}$ -- the filtration generated by $\xi$ only. The bound in \eqref{eq:qvbd} shows that $M_u(t)$ is a square-integrable martingale w.r.t. $\filt_t^{\xi}$ for $\Prob$-almost surely all realization of $W$. 

A similar issue also occurs when we apply the stochastic Fubini theorem in the above proof. In order to overcome it, we used \eqref{eq:4stofubi} to separate the randomness from $\xi$ and from $W$. The stochastic Fubini theorem is applied to processes that are adapted to $\filt_t^{\xi}$, as shown in \eqref{eq:stocfubcon}.  \end{remark}

{The following lemma provides the semimartingale decomposition for $\log \mathcal{Y}_u(t)$, using which we compute $\Expe \log \mathcal{Y}_u(t)$ which further leads to $\Expe H_u(t,0)$. The crucial part is that the drift term will be written as an additive functional of the process $\{Z_{u-\frac12}(s,\cdot)/\mathcal{Y}_u(s)\}_{s\geq0}$, which is at stationarity by \eqref{eq:station}.}
\begin{lemma}\label{co:semimardec}
For any $u<0$ and $t\geq 0$, we have
\begin{equation}\label{eq:meandecom}
\Expe[H_u(t,0)] = \Expe\left[ - \frac{1}{2} \left(u-\frac{1}{2}\right) \frac{1}{\mathcal{Y}_u(0)}-  \frac{1}{2}\frac{\int_{0}^{\infty}\exp(2W_u(y)) \dd y }{\mathcal{Y}_u(0)^{2}} \right]t.
\end{equation}
\end{lemma}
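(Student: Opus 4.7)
The plan is to apply It\^o's formula to $\log \mathcal{Y}_u(t)$ using the continuous semimartingale decomposition from Lemma~\ref{le:semimarY}, take expectations, and exploit the stationarity identity \eqref{eq:shestatdist} (in its stronger joint-in-$y$ form) to reduce the resulting time integrals to constants involving only $\mathcal{Y}_u(0)$.

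Since \eqref{eq:sheposfinite} together with \eqref{eq:yoverz} gives $0 < \mathcal{Y}_u(t) < \infty$ $\Prob$-almost surely, applying It\^o's formula to $x\mapsto \log x$ yields
\begin{equation*}
\log \mathcal{Y}_u(t) - \log \mathcal{Y}_u(0) = \int_0^t \frac{dM_u(s)}{\mathcal{Y}_u(s)} - \frac{1}{2}\Bigl(u-\frac{1}{2}\Bigr)\int_0^t \frac{Z_{u-\frac{1}{2}}(s,0)}{\mathcal{Y}_u(s)}\,ds - \frac{1}{2}\int_0^t \frac{\int_0^\infty Z_{u-\frac{1}{2}}(s,y)^2\,dy}{\mathcal{Y}_u(s)^2}\,ds.
\end{equation*}
The stationarity \eqref{eq:shestatdist} says that $\{Z_{u-\frac{1}{2}}(s,y)/Z_{u-\frac{1}{2}}(s,0)\}_{y\geq 0}$ has the same law as the process $\{e^{W_u(y)}\}_{y\geq 0}$ at every time $s$; dividing numerator and denominator of each drift integrand by an appropriate power of $Z_{u-\frac{1}{2}}(s,0)$ yields
\begin{equation*}
\Expe\!\left[\frac{Z_{u-\frac{1}{2}}(s,0)}{\mathcal{Y}_u(s)}\right] = \Expe\!\left[\frac{1}{\mathcal{Y}_u(0)}\right], \qquad \Expe\!\left[\frac{\int_0^\infty Z_{u-\frac{1}{2}}(s,y)^2\,dy}{\mathcal{Y}_u(s)^2}\right] = \Expe\!\left[\frac{\int_0^\infty e^{2W_u(y)}\,dy}{\mathcal{Y}_u(0)^2}\right],
\end{equation*}
both independent of $s$. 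Once the stochastic integral is shown to have zero mean, taking expectation gives
\begin{equation*}
\Expe[\log \mathcal{Y}_u(t)] - \Expe[\log \mathcal{Y}_u(0)] = -\frac{t}{2}\Bigl(u-\frac{1}{2}\Bigr)\Expe\!\left[\frac{1}{\mathcal{Y}_u(0)}\right] - \frac{t}{2}\,\Expe\!\left[\frac{\int_0^\infty e^{2W_u(y)}\,dy}{\mathcal{Y}_u(0)^2}\right],
\end{equation*}
and by \eqref{eq:yoverz} the left-hand side equals $\Expe[\log Z_{u-\frac{1}{2}}(t,0)] = \Expe[H_u(t,0)]$, which is exactly \eqref{eq:meandecom}.

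The main obstacle is making these expectation manipulations rigorous. I would need to check (i) that $N_t := \int_0^t \mathcal{Y}_u(s)^{-1}\,dM_u(s)$ is a genuine martingale rather than a mere local one, which amounts to verifying $\Expe\langle N\rangle_t = t\,\Expe[\int_0^\infty e^{2W_u(y)}\,dy\,/\,\mathcal{Y}_u(0)^2] < \infty$; this finiteness can be extracted from Dufresne's identity \eqref{eq:expoBM} (and its analogue for $\int_0^\infty e^{2W_u(y)}\,dy$) combined with Cauchy--Schwarz against positive gamma moments. In parallel, (ii) the uniform-in-$t$ integrability of $\log \mathcal{Y}_u(t)$ must be controlled via \eqref{eq:yoverz}, Dufresne, and the negative moment bound in Lemma~\ref{le:negmom} applied at $x=0$. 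A clean way to handle both issues simultaneously is to apply the argument to $\log \mathcal{Y}_u(t\wedge \tau_n)$ for a sequence of stopping times $\tau_n\uparrow\infty$ that localize $N$, and then pass to the limit by dominated convergence using the explicit bounds just mentioned.
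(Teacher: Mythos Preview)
Your approach is exactly the paper's: apply It\^o to $\log\mathcal{Y}_u$, use stationarity \eqref{eq:shestatdist} to make the drift integrands time-independent, and kill the stochastic integral. The structure is correct and you have identified the only real issue, namely whether $N_t=\int_0^t\mathcal{Y}_u(s)^{-1}\,dM_u(s)$ is a genuine martingale.

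The gap is in your proposed justification of $\Expe\langle N\rangle_t=t\,\Expe[\mathcal{J}/\mathcal{K}^2]<\infty$ with $\mathcal{J}=\int_0^\infty e^{2W_u}$, $\mathcal{K}=\int_0^\infty e^{W_u}$. Cauchy--Schwarz (or H\"older) combined with Dufresne does \emph{not} cover the full range $u<0$: by Dufresne and Brownian scaling one has $\mathcal{J}\stackrel{\mathrm{law}}{=}(2\gamma(-u))^{-1}$, so $\Expe[\mathcal{J}^p]<\infty$ only when $-u>p$. Any H\"older split $\Expe[\mathcal{J}\mathcal{K}^{-2}]\le\|\mathcal{J}\|_p\|\mathcal{K}^{-2}\|_q$ therefore requires $u<-1$, and the plain Cauchy--Schwarz bound requires $u<-2$. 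For $u\in[-1,0)$ the argument breaks down, and this regime is exactly the one that matters later (e.g.\ $u\sim t^{-1/3}$). The paper does not use a moment inequality here at all: it invokes the explicit evaluation $\Expe[\mathcal{J}/\mathcal{K}^2]=-u/2+1/12$ carried out in \eqref{eq:2ndmeandec} via the Baudoin--O'Connell conditional Laplace transform, which is finite for every $u<0$. Your localization route could in principle bypass this, but ``dominated convergence using the explicit bounds just mentioned'' is not enough as stated: you would need $\sup_{s\in[0,t]}|\log\mathcal{Y}_u(s)|\in L^1$, and the fixed-time bounds from \eqref{eq:yoverz} and Lemma~\ref{le:negmom} do not immediately give a maximal inequality since $H_u(\cdot,0)$ is not a martingale.
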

\begin{proof}
Recall that as defined in \eqref{eq:hsKPZ}, $H_u(t,0)=\log Z_{u-\frac{1}{2}}(t,0)$. Taking the logarithm and then the expectation on both sides of \eqref{eq:yoverz}, we have
\[
\Expe[H_u(t,0)] =  \Expe[\log \mathcal{Y}_u(t) - \log \mathcal{Y}_u(0)].
\]
By Lemma~\ref{le:semimarY} and It\^o's formula \cite[Theorem 4.32]{da2014stochastic}, $\log \mathcal{Y}_u(t)$ can be decomposed as
\[
\begin{aligned}
&\log \mathcal{Y}_u(t) - \log \mathcal{Y}_u(0) = \int_0^t \mathcal{Y}_u(s)^{-1} \dd M_u(s) +  \int_0^t \mathcal{Y}_u(s)^{-1} \dd A_u(s) -\frac{1}{2} \int_0^t \mathcal{Y}_u(s)^{-2} \dd \langle M_u\rangle_s\\
&=\int_0^t \int_0^{\infty} \frac{Z_{u-\frac{1}{2}}(s,y)}{\mathcal{Y}_u(s)} \xi(\dd y\dd s) - \frac{1}{2} \left(u-\frac{1}{2}\right)\int_0^t \frac{Z_{u-\frac{1}{2}}(s,0)}{\mathcal{Y}_u(s)} \dd s - \frac{1}{2}  \int_0^t \int_0^{\infty} \frac{Z_{u-\frac{1}{2}}(s,y)^2}{\mathcal{Y}_u(s)^{2} } \dd y \dd s.
\end{aligned}
\]
By \eqref{eq:shestatdist}, for any $u<0$ and $s\geq 0$, we have
\[\Expe \frac{Z_{u-\frac{1}{2}}(s,0)}{\mathcal{Y}_u(s)} = \Expe \left[{\int^{\infty}_0 Z_{u-\frac{1}{2}} (s,y) Z_{u-\frac{1}{2}} (s,0)^{-1}\dd y}\right]^{-1} =  \Expe \left[\int^{\infty}_0 \exp(W_u(y))\dd y\right]^{-1} = \Expe\frac{1}{\mathcal{Y}_u(0)},
\]
and
\[
\begin{aligned}&\Expe \int^{\infty}_0 \frac{Z_{u-\frac{1}{2}} (s,y)^2}{\mathcal{Y}_u(s)^2} \dd y = \Expe \frac{ \int^{\infty}_0Z_{u-\frac{1}{2}} (s,y)^2 \dd y }{\left[ \int^{\infty}_0 Z_{u-\frac{1}{2}} (s,y') \dd y'\right]^2 }=  \Expe \frac{ \int^{\infty}_0Z_{u-\frac{1}{2}} (s,y)^2Z_{u-\frac{1}{2}} (s,0)^{-2} \dd y }{\left[ \int^{\infty}_0 Z_{u-\frac{1}{2}} (s,y')Z_{u-\frac{1}{2}} (s,0)^{-1} \dd y'\right]^2 }  \\&= \Expe \frac{ \int^{\infty}_0 \exp(W_u(y))^2 \dd y }{\left[ \int^{\infty}_0 \exp(W_u(y'))\dd y'\right]^2 } =  \Expe\left[ {\mathcal{Y}_u(0)^{-2}}{ \int^{\infty}_0 \exp(2W_u(y))\dd y }\right].
\end{aligned}
\] Here we used the stationarity \eqref{eq:shestatdist}, so both expectations are time independent, and by \eqref{eq:expoBM} and \eqref{eq:2ndmeandec} below, they are both finite. Thus the term $ \int_0^t \int_0^{\infty} {Z_{u-\frac{1}{2}}(s,y)}{\mathcal{Y}_u(s)^{-1}} \xi(\dd y\dd s)$ is a square-integrable martingale, and \eqref{eq:meandecom} follows.
\end{proof}

To complete the proof of Theorem~\ref{thm:mean}, for any $u<0$, it only remains to compute the expectation on the right hand side of \eqref{eq:meandecom}.

\begin{proof}[Proof of Theorem~\ref{thm:mean}]
It follows from \eqref{eq:expoBM} that for any fixed $u<0$,
\[
\Expe\left[\frac{1}{\mathcal{Y}_u(0)} \right]= \frac{1}{2} \Expe [\gamma(-2u) ]= -u.
\]Define
\[
\mathcal{J}=\int_0^{\infty} \mathrm{e}^{2W(y)+2u y} \dd y, \quad \text{and} \quad \mathcal{K}=\int_0^{\infty} \mathrm{e}^{W(y)+u y} \dd y=\mathcal{Y}_u(0),
\]
which are finite $\Prob$-almost surely since $u<0$.
It remains to compute \[ \Expe\left[ \frac{ \int^{\infty}_0 \exp(2W_u(y))\dd y }{\mathcal{Y}_u(0)^{2}}\right]=
\Expe \frac{ \int^{\infty}_0 \exp(W_u(y))^2 \dd y }{\left[ \int^{\infty}_0 \exp(W_u(y'))\dd y'\right]^2 } =\Expe\left[\frac{\mathcal{J}}{\mathcal{K}^2} \right].\] Note that the same Brownian motion appears in both the numerator and the denominator, which makes the calculation more involved.
By \cite[Example 2.4]{baudoin2011exponential}, for any $u<0$, the conditional Laplace transform takes the form
\[
\Expe\left(\left.\mathrm{e}^{-\frac{1}{2}\lambda ^2 \mathcal{J}} \right| \mathcal{K}=s\right) \Prob\left(\mathcal{K} \in \dd s\right)=\frac{\lambda ^{-2u+1}}{2 \Gamma(-2u)} \frac{\mathrm{e}^{-\lambda  \coth\left(\frac{\lambda s}{2}\right)}}{\left(\sinh \left(\frac{\lambda s}{2}\right)\right)^{-2u+1}} \dd s, \quad \forall s>0.
\]
By differentiating both sides in $\lambda ^2$ and taking the limit $\lambda \to 0$, we have
\[\begin{aligned}
&\Expe\left( \left. -\frac{1}{2}\mathcal{J}\right| \mathcal{K}=s\right) \Prob\left(\mathcal{K} \in \dd s\right)=
\lim_{\lambda \to 0}
\frac{\dd}{\dd \lambda ^2}\Expe\left( \left.\mathrm{e}^{-\frac{1}{2}\lambda ^2 \mathcal{J}} \right| \mathcal{K}=s\right) \Prob\left(\mathcal{K} \in \dd s\right)\\&=\lim_{\lambda \to 0} \frac{\dd}{\dd \lambda ^2} \left(\frac{\lambda ^{-2u+1}}{2 \Gamma(-2u)} \frac{\mathrm{e}^{-\lambda  \coth\left(\frac{\lambda  s}{2}\right)}}{\left(\sinh \left(\frac{\lambda  s}{2}\right)\right)^{-2u+1}}\right) \dd s =\lim_{\lambda \to 0}\frac{1}{2\lambda } \left[\frac{\lambda ^{-2u+1}}{2 \Gamma(-2u)} \frac{e^{-\lambda  \coth\left(\frac{\lambda  s}{2}\right)}}{\left(\sinh \left(\frac{\lambda  s}{2}\right)\right)^{-2u+1}}\right]\Psi(s\viv u,\lambda)
\dd s.
\end{aligned}\]
with
\[\Psi(s\viv u,\lambda) = \left[ \frac{-2u+1}{\lambda }- \coth\left(\frac{\lambda  s}{2}\right)+\frac{{\lambda  s}}{2\left(\sinh \left(\frac{\lambda  s}{2}\right)\right)^2} +\frac{(2u-1)s}{2}\coth \left(\frac{\lambda  s}{2}\right)\right].
\]
Taylor series expressions give that
\[
\coth (x)=x^{-1}+\frac{x}{3}-\frac{x^3}{45}+O(x^5), \quad 0<|x|<\pi,
\]
and
\[
\sinh(x)^{-2}=\operatorname{csch} (x)^2=\left(x^{-1}-\frac{x}{6}+O(x^3)\right)^2=x^{-2}-\frac{1}{3}+O(x^2), \quad 0<|x|<\pi.
\]
Thus
\[\lim_{\lambda \to0} e^{-\lambda  \coth\left(\frac{\lambda  s}{2}\right)} =e^{-\frac{2}{s}}, \quad \lim_{\lambda \to0} \frac{\lambda ^{-2u+1}}{\left(\sinh \left(\frac{\lambda  s}{2}\right)\right)^{-2u+1}} = \left(\frac{2}{s}\right)^{-2u+1},
\]
and
\[\begin{aligned}
&\Psi(s\viv u,\lambda)=(-2u+1)\frac{1}{\lambda }-\frac{2}{s}\frac{1}{\lambda }-\frac{1}{3}\frac{\lambda  s}{2}+\frac{\lambda  s}{2}\left(\left(\frac{\lambda  s}{2}\right)^{-2}-\frac{1}{3}\right)+(2u-1)\frac{s}{2}\left(\frac{2}{s}\frac{1}{\lambda } +\frac{1}{3}\frac{\lambda  s}{2}\right)+O(\lambda ^3)\\
&= -\frac{1}{3}{\lambda  s} + \frac{2u-1}{12}\lambda s^2 +O(\lambda ^3).
\end{aligned}
\]
These results imply that
\[\lim_{\lambda \to 0}\frac{1}{2\lambda } \left[\frac{\lambda ^{-2u+1}}{2 \Gamma(-2u)} \frac{e^{-\lambda  \coth\left(\frac{\lambda  s}{2}\right)}}{\left(\sinh \left(\frac{\lambda  s}{2}\right)\right)^{-2u+1}}\right]\Psi(s\viv u,\lambda)
= \frac{e^{-\frac{2}{s}}}{4\Gamma(-2u)}\left(\frac{2}{s}\right)^{-2u+1}\left(-\frac{1}{3}{s} +\frac{2u-1}{12}s^2 \right),\]
and
\begin{equation}\label{eq:2ndmeandec}
\begin{aligned}&
\Expe\left[\frac{\mathcal{J}}{\mathcal{K}^2} \right] = \int_0^{\infty}\frac{-2}{s^2}\Expe\left( \left. -\frac{1}{2}\mathcal{J}\right| \mathcal{K}=s\right) \Prob\left(\mathcal{K} \in \dd s\right) = \int_0^{\infty}\frac{-e^{-\frac{2}{s}}}{2s^2\Gamma(-2u)}\frac{2^{-2u+1}}{s^{-2u+1}}\left(-\frac{s}{3}  +\frac{2u-1}{12}s^2 \right)\dd s\\
&= \frac{1}{2^{2u}\Gamma(-2u)}\int_0^{\infty} \frac{1}{3} {e^{-\frac{2}{s}}}s^{2u-2}- \frac{2u-1}{12}e^{-\frac{2}{s}} s^{2u-1} \dd s =\frac{1}{2^{2u}\Gamma(-2u)}\left(\frac{1}{3}\frac{\Gamma(-2u+1)}{2^{-2u+1}} -\frac{2u-1}{12}\frac{\Gamma(-2u)}{2^{-2u}}  \right)\\
&= -\frac{u}{3} -\frac{2u-1}{12} = -\frac{u}{2}+\frac{1}{12}.
\end{aligned}\end{equation}
Now by \eqref{eq:meandecom}, we have
\[
{\Expe[H_u(t,0)]}  = \left(-\frac{1}{2} \left(u-\frac{1}{2}\right)\Expe\left[\frac{1}{\mathcal{Y}_u(0)} \right]- \frac{1}{2}\Expe\left[\frac{\mathcal{J}}{\mathcal{K} ^2} \right] \right)t=   \left[\frac{1}{2} \left(u-\frac{1}{2}\right)u -\frac{1}{2}\left(-\frac{u}{2}+\frac{1}{12}\right) \right]t = \left(\frac{u^2}{2}-\frac{1}{24}\right) t.
\]

We next extend to $u=0$. For any fixed $t>0$, by Proposition~\ref{pr:mombds0}, $H_u(t,0)$ is uniformly integrable for $u$ in any compact subset of  $\R$. Thus the convergence $\Expe \left[H_{u}(t,0)\right] \to \Expe \left[H_{0}(t,0)\right] $ as $u\to 0$ follows from the convergence in probability \[Z_{u-\frac{1}{2}}(t,0) \stackrel{\text{prob}}{\to} Z_{-\frac{1}{2}}(t,0) \quad \text{as } u\to 0,\]  together with the continuous mapping theorem. To show the above convergence in probability, we note that by the Minkowski and H\"older's inequalities,
\[
\begin{aligned}
\|Z_{u-\frac{1}{2}}(t,0)-Z_{-\frac{1}{2}}(t,0)\|_2 &\leq \int_0^{\infty} \|\mathcal{Z}_{u-\frac{1}{2}}(t,0\viv 0, y)-\mathcal{Z}_{-\frac{1}{2}}(t,0\viv 0,y)\|_4\|\exp(W(y)+uy) \|_4 \dd y
\\&+\int_0^{\infty} \|\mathcal{Z}_{-\frac{1}{2}}(t,0\viv 0, y)\|_4\|\exp(W(y)+uy)-\exp(W(y)+y) \|_4 \dd y.
\end{aligned}
\]
By the $\Prob$-almost surely continuity of the Green's function $\mathcal{Z}_\cdot(\cdot,\cdot\viv \cdot,\cdot)$ in Proposition~\ref{pr:greens}, the uniform moment bounds in \eqref{eq:greensPosMom}, and the dominated convergence theorem, we have $\|Z_{u-\frac{1}{2}}(t,0)-Z_{-\frac{1}{2}}(t,0)\|_2 \to 0$ as $u\to 0$.

For any $x>0$, the result follows from \eqref{eq:station}.
\end{proof}

\section{Variance identity: proof of Theorem~\ref{thm:varianceID}}\label{se:varID}
{To prove the identity \eqref{eq:intmidupbd}, which relates the variance of the height function to the endpoint displacement of the directed polymer, we utilize an integration-by-parts formula in the Gaussian space generated by the Brownian motion $W(x)$.  Although the model involves two sources of Gaussianity, we only rely on the one from the initial data. This method heavily relies on the fact that the Brownian motion with drift $W(x)+ux$ is the stationary measure of the increment process $H_u(t,x)-H_u(t,0)$ (see \eqref{eq:station}).} 

{In the following argument, we consider the four height increments along different directions: $A=H_u(t,0)-H_u(0,0)$, $B=H_u(t,x)-H_u(t,0)$, $C=H_u(t,x)-H_u(0,x)$ and $D=H_u(0,x)-H_u(0,0)$. The quantity of interest is $A$, while $B$ has Gaussian distribution due to the choice of initial data, $C$ shares the same distribution as the height increment in the full space case when $x\gg1$, and $D =W(x)+ux$ is the Gaussian random variable through which we perform the integration by parts. Similar arguments appeared in various previous works, see \cite{timoEJP,gu2023another}.} 

\begin{proof}[Proof of Theorem~\ref{thm:varianceID}] 
Fix $u\in\R, t>0$.
Define the height increment of \eqref{eq:hsKPZ} in the time interval $[0,t]$ for any $x \in [0, \infty)$ as \[\mathcal{H}_u(t,x) := H_u(t,x) - H_u(0,x) = \log Z_{u-\frac{1}{2}}(t,x) - W_{u}(x) .\] 
Then we have
\[
H_u(t,x) - H_u(t,0) = \mathcal{H}_u(t,x)  - \mathcal{H}_u(t,0) + W_u(x),
\]
which implies that
\[
\Var [H_u(t,x) - H_u(t,0)] = \Var [\mathcal{H}_u(t,x)  - \mathcal{H}_u(t,0) + W_u(x)].
\]
By \eqref{eq:station}, for any $t\geq 0$, $\Var [H_u(t,x) - H_u(t,0)]=\Var [W_u(x)] = x$. Thus we have an identity
\[
\begin{aligned}
&0=\Var [\mathcal{H}_u(t,x)-\mathcal{H}_u(t,0) ] +  2\Cov [\mathcal{H}_u(t,x)  - \mathcal{H}_u(t,0), W_u(x)]\\
&= \Var [\mathcal{H}_u(t,x) ]-2\Cov[\mathcal{H}_u(t,x), \mathcal{H}_u(t,0)] +\Var [\mathcal{H}_u(t,0)] +  2\Cov [\mathcal{H}_u(t,x)  - \mathcal{H}_u(t,0), W_u(x)]\\
& = \Var [\mathcal{H}_u(t,0)] +  \left(\Var [\mathcal{H}_u(t,x) ] + 2\Cov [\mathcal{H}_u(t,x), W_u(x)]\right)\\&-2\Cov[\mathcal{H}_u(t,x), \mathcal{H}_u(t,0)] - 2\Cov [\mathcal{H}_u(t,0), W_u(x)].
\end{aligned}
\]
Since $H_u(t,0)=\mathcal{H}_u(t,0)$, we have $\Var[H_u(t,0)] =\Var [\mathcal{H}_u(t,0)]$. The above equation can be rewritten as
\begin{align*}
    \Var[H_u(t,0)] &= 2\Cov [{H}_u(t,0) , W_u(x)]  + 2\Cov[\mathcal{H}_u(t,x), \mathcal{H}_u(t,0)]  \\&- (\Var[\mathcal{H}_u(t,x)]  + 2\Cov [\mathcal{H}_u(t,x), W_u(x)]).
\end{align*}
{Recalling that $\mathbb{E}^{\mathbb{Q}^{u,t}}_{x}[X_t]$ is the (quenched) expectation of the half-space CDRP endpoint. The endpoint has a quenched density $\rho^{R}_{u}(\cdot \viv t,x)$ as defined in \eqref{eq:hsQuench0}.}
The proof is complete once we establish the following results:
\begin{equation}\label{eq:polymlimit}
\Cov [{H}_u(t,0) , W_u(x)] \to \Expe \int_0^{\infty} y \rho^{R}_{u}(y \viv t,0) \dd y 
 {= \Expe \mathbb{E}^{\mathbb{Q}^{u,t}}_{0}[X_t] }, \quad \text{as }x \to \infty;
\end{equation}
\begin{equation}\label{eq:heatlimit}
\Var[\mathcal{H}_u(t,x)]  + 2\Cov [\mathcal{H}_u(t,x), W_u(x)]  \to ut, \quad \text{as } x \to \infty;
\end{equation}
and
\begin{equation}\label{eq:covlimit}
\Cov[\mathcal{H}_u(t,x), \mathcal{H}_u(t,0)]  \to 0, \quad \text{as } x \to \infty.
\end{equation}
We will prove equations \eqref{eq:polymlimit} and \eqref{eq:heatlimit} in the following subsections. The proof of \eqref{eq:covlimit} is very similar to the proof of the analogous result for full-space KPZ equation in \cite{gu2023another}, so we put it in Appendix \ref{ap:covlimit}. 
\end{proof}
\begin{remark}
{If using the above definitions for the four height increments $A,B,C,D$, the computation above is equivalent to taking the variance on both sides of the identity $B=D+C-A$. The value of interest is $\Var[A]$. With stationarity \eqref{eq:station}, $\Var[B]=\Var[D]$. We perform the integration-by-parts to prove the limit of $\Cov[A,D]$ as $x\to\infty$ in \eqref{eq:polymlimit}. For the terms $\Var[C]+2\Cov[C,D]$, note that they only involve the height function $H_u(t,\cdot)$ evaluated at point $x$ away from the boundary. Thus we use its full-space counterpart's value to approximate it when $x\gg 1$ in \eqref{eq:heatlimit}. The remaining term $\Cov[A,C]$ does not involve any KPZ behavior. It vanishes when $x\gg 1$ at any finite time $t$, which is stated in \eqref{eq:covlimit}. }
\end{remark}

\subsection{Proof of \eqref{eq:polymlimit}}
We prove \eqref{eq:polymlimit} through an integration-by-parts in the Gaussian space generated by the Brownian motion $W(x)$. The following is analogous to the full-space result \cite[Lemma 2.4]{gu2023another}, but a few details are different in the half-space setting. As discussed above after Proposition~\ref{pr:greens}, due to   the boundary, the Green's function $ \mathcal{Z}_\mu(t,x\viv 0,y)$ is not spatially stationary.

Let $\mathscr{W}$ be the spatial white noise associated with the Brownian motion $W(\cdot)$. Let $\malD$ be the Malliavin derivative operator on the Gaussian probability space generated by $\mathscr{W}$. 

For each $t,x,z\geq0$ and each realization of $\xi$, we apply an integration-by-parts on the Gaussian space generated by $\mathscr{W}$. Further taking an expectation over $\xi$, we obtain
\[
\begin{aligned}
&\Cov[H_u(t,z), W_u(x)]=\Expe[H_u(t,z)W(x)]\\ &= \Expe[H_u(t,z)\int_0^{\infty} \mathbbm{1}_{[0,x]}(r)\mathscr{W}(r)\dd r] = \Expe\langle\malD H_u(t,z), \1_{[0,x]}\rangle.  
\end{aligned}
\]
{The integral above is the standard Wiener integral since $\mathbbm{1}_{[0,x]}(r)$ is a deterministic function,} and the bracket denotes the inner product in $L^2(\R)$. By \eqref{eq:kpzGreensform}, for any $r\geq 0$, \[
\begin{aligned}
\malD_r H_u(t,z) &= \malD_r \log \int_0^{\infty}\mathcal{Z}_{u-\frac{1}{2}}(t,z\viv 0,y)\exp (W_{u}(y)) \dd y = \frac{\malD_r [\int_0^{\infty}\mathcal{Z}_{u-\frac{1}{2}}(t,z\viv 0,y)\exp (W_{u}(y)) \dd y]}{\int_0^{\infty}\mathcal{Z}_{u-\frac{1}{2}}(t,z\viv 0,y)\exp (W_{u}(y)) \dd y} \\&= \frac{\int_0^{\infty}\mathcal{Z}_{u-\frac{1}{2}}(t,z\viv 0,y)\exp (W_{u}(y))\1_{[0,y]}(r) \dd y}{\int_0^{\infty}\mathcal{Z}_{u-\frac{1}{2}}(t,z\viv 0,y)\exp (W_{u}(y)) \dd y}= \int_r^{\infty}\rho^{R}_{u}(y \viv t,z) \dd y.
\end{aligned}
\]
This implies that 
\[
\begin{aligned}
&\Expe\langle\malD H_u(t,z), \1_{[0,x]}\rangle = \Expe \int_0^x \int_r^{\infty}\rho^{R}_{u}(y \viv t,z)\dd y \dd r = \Expe \int_0^{\infty} \min{(x,y)}\rho^{R}_{u}(y \viv t,z)\dd y\\
&= \Expe \int_0^{x} y \rho^{R}_{u}(y \viv t,z) \dd y + x \Expe  \int_x^{\infty} \rho^{R}_{u}(y \viv t,z)\dd y.
\end{aligned}
\]
Setting $z=0$, we have
\begin{equation}\label{eq:covlimitIBP}
\Cov[H_u(t,0), W_u(x)] = \Expe \int_0^{x} y \rho^{R}_{u}(y \viv t,0) \dd y + x \Expe  \int_x^{\infty} \rho^{R}_{u}(y \viv t,0) \dd y.
\end{equation}
By H\"older's inequality, \eqref{eq:greensPosMom}, \eqref{eq:expBMbd} and \eqref{eq:negCx}, for any $t>0, y \in [0,\infty)$, we have
\begin{equation}\label{eq:quedenmom}
\begin{aligned}
&\Expe \rho^{R}_{u}(y \viv t,0) =\Expe \frac{ \mathcal{Z}_{u-\frac{1}{2}}(t,0\viv 0,y)\exp (W_{u}(y))}{\int_0^{\infty}\mathcal{Z}_{u-\frac{1}{2}}(t,0\viv 0,y')\exp (W_{u}(y')) \dd y'}\\
&\leq \|\mathcal{Z}_{u-\frac{1}{2}}(t,0\viv 0,y)\|_4 \|\exp (W_{u}(y))\|_4 \left\|Z_{u-\frac{1}{2}}(t,0)^{-1}\right\|_2 \leq C \exp(-y^2/C),
\end{aligned}
\end{equation}
for some constant $C=C(t,u)$. Taking the limit $x\to \infty$ on both sides of \eqref{eq:covlimitIBP} gives  \eqref{eq:polymlimit}.

\subsection{Proof of \eqref{eq:heatlimit}}
The idea is that, for large $x\gg1$ and fixed $t>0$, the value $H_u(t,x)$ almost does not feel the boundary effects, hence the result should be close to the full space case.
We proceed in two steps. First, we show that \eqref{eq:heatlimit} holds with $H_u(t,x)$ replaced by  $H^{fs}_u(t,x)$, where ${H}^{fs}_u(t,x)$ solves the KPZ equation on full-space starting from a two-sided Brownian motion with drift $u$. In the second step, we show that the error induced by the approximation vanishes as $x\to\infty$, namely,   ${\mathbf{V}}\mathbf{ar}[{\mathcal{H}}^{fs}_u(t,x)]  + 2\mathbf{Cov} [\mathcal{H}^{fs}_u(t,x), {W}^{fs}_u(x)]$ can be used to approximate the left-hand side of \eqref{eq:heatlimit} when $x \to \infty$. 

Let us begin with  a few definitions. For $(t,x) \in [0, \infty)\times \R$ and $u \in \R$, let $ Z^{fs}_{u}(t, x)$ be the solution to the (1+1)-dimensional stochastic heat equation on the full space 
\begin{equation}\label{eq:fsSHE}
\begin{aligned}
\partial_t Z^{fs}_{u}(t, x) &=\frac{1}{2} \partial_x^2 Z^{fs}_{u}(t, x)+Z^{fs}_{u}(t, x){\xi}(t, x),\\
Z^{fs}_u(0,x) &= \exp({W}^{fs}_{u}(x)),
\end{aligned}
\end{equation}
where ${\xi}$ is the same space-time white noise on $(t,x)\in\R^2$ as in \eqref{eq:hsSHE} and ${W}^{fs}_{u}(x) := {W}^{fs}(x)+u x$ with $W^{fs}$ being a two-sided Brownian motion on $\R$. For simplicity, we assume that $W^{fs}(\cdot)$ is defined on our probability space $(\Omega, \filt, \Prob)$ and it coincides with $W(\cdot)$ on $[0,\infty)$ for each $\omega \in \Omega$. 

The solution to the KPZ equation on the full space is given by the Hopf-Cole transform 
\begin{equation}\label{eq:fsKPZ}
{H}^{fs}_u(t,x):= \log Z^{fs}_{u} (t,x).
\end{equation}
In particular, ${H}^{fs}_u(0,x)={W}^{fs}_u(x)$. For $x \in [0,\infty)$, $H^{fs}_u(0,x) = {H}_u(0,x) = W_u(x)$. Note that the boundary parameter $u$ appears in   the half-space SHE \eqref{eq:hsSHE} itself, while in \eqref{eq:fsSHE}, $u$ is only parametrizing the initial condition. Thus one needs to be careful to include a shift term $1/2$ when referring to a proper half-space SHE. We have $Z^{fs}_u(0,x) = {Z}_{u-\frac{1}{2}}(0,x) = \exp(W_u(x))$ when $x \in [0,\infty)$.

It is a classical result (see  \cite{bertini1997stochastic}) that for each $u \in \R$, \eqref{eq:fsSHE} has a unique mild solution satisfying
\begin{equation}\label{eq:mildformfsSHE}
Z^{fs}_{u}(t, x) = \int_\R p_{t}(x-y)\exp(W^{fs}_u(y))\dd y + \int_0^t\int_\R p_{t-s}(x-y) Z^{fs}_u(s,y)\xi(\dd y\dd s),
\end{equation}
and for any $\tau>0$, $\sup_{0 \leq t \leq \tau}\sup_{x \in \R}e^{-a|x|} \Expe \left[{Z}_{u}^{fs}(t, x)^2\right]<\infty$ {for some $a>0$.} It is also well-known (see \cite{funaki2015kpz}) that when $H_u^{fs}(\cdot,\cdot)$ starts from the initial data $W^{fs}_u(\cdot)$, the increment process $\{H_u^{fs}(t,\cdot)-H_u^{fs}(t,0)\}_{t\geq0}$ is stationary in time.

We use $\mathcal{Z}^{fs}(t,x\viv s,y) $ to denote the Green's function of the (1+1)-dimensional full-space SHE, of which we refer to \cite{alberts2022green} for further properties. In particular, there is a convolution formula
\begin{equation}\label{eq:fsSHEconvolution}
Z^{fs}_u(t,x) = \int_{-\infty}^{\infty}  \mathcal{Z}^{fs}(t,x\viv 0,y) \exp(W^{fs}_u(y))\dd y.
\end{equation}

Now define $\mathcal{H}^{fs}_u(t,x):= H^{fs}_u(t,x) - H^{fs}_u(0,x)$. We first prove the following result, which is almost \eqref{eq:heatlimit} but with $\mathcal{H}_u(t,x) $ substituted by $\mathcal{H}_u^{fs}(t,x)$. 
\begin{proposition}\label{prop:wslimit}
For any $u \in \R, t>0$,
\[
\mathbf{Var}[\mathcal{H}^{fs}_u(t,x)]  + 2\mathbf{Cov} [\mathcal{H}^{fs}_u(t,x), W^{fs}_u(x)]  \to ut, \quad \text{as } x \to \infty.
\]
\end{proposition}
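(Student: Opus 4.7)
The plan is to mirror the proof of Theorem~\ref{thm:varianceID} but in the full-space setting, where now both time-stationarity of the KPZ increments and spatial translation invariance of the joint field $(\xi, W^{fs}_u)$ are available. After an algebraic manipulation, the target limit will reduce to the full-space analog of our variance identity, which is proved in \cite{gu2023another}.

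First I would use time-stationarity of the full-space increment process to write, for any $x>0$,
\[
\Var[H^{fs}_u(t,x) - H^{fs}_u(t,0)] = \Var[W^{fs}_u(x)] = x.
\]
Substituting $H^{fs}_u(t,x) = \mathcal{H}^{fs}_u(t,x) + W^{fs}_u(x)$ and $H^{fs}_u(t,0) = \mathcal{H}^{fs}_u(t,0)$ and expanding the variance yields six variance/covariance terms. Spatial translation invariance of the joint noise plus initial data (analogous to \eqref{eq:station}, but now on $\R$) gives $\mathcal{H}^{fs}_u(t,x) \stackrel{\text{law}}{=} H^{fs}_u(t,0)$, so in particular $\Var[\mathcal{H}^{fs}_u(t,x)] = \Var[H^{fs}_u(t,0)]$. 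After rearrangement this produces the key algebraic identity
\[
\Var[\mathcal{H}^{fs}_u(t,x)] + 2\Cov[\mathcal{H}^{fs}_u(t,x), W^{fs}_u(x)] = -\Var[H^{fs}_u(t,0)] + 2\Cov[\mathcal{H}^{fs}_u(t,x), H^{fs}_u(t,0)] + 2\Cov[H^{fs}_u(t,0), W^{fs}_u(x)].
\]

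Next I would take $x\to\infty$ and evaluate the three terms on the right separately. The middle term $\Cov[\mathcal{H}^{fs}_u(t,x), H^{fs}_u(t,0)] \to 0$ by spatial decorrelation at infinity, the full-space counterpart of \eqref{eq:covlimit} established in \cite{gu2023another}. For the last term, a Gaussian integration by parts against the white noise generating $W^{fs}$, carried out exactly as in \eqref{eq:covlimitIBP}--\eqref{eq:quedenmom} but with the full-space Green's function and quenched endpoint density $\rho^{fs}_u(\cdot\viv t,0)$ on $\R$, gives
\[
\Cov[H^{fs}_u(t,0), W^{fs}_u(x)] \longrightarrow \Expe\int_0^\infty y\,\rho^{fs}_u(y\viv t,0)\dd y \quad \text{as } x\to\infty,
\]
where the tail term vanishes thanks to a subgaussian bound on $\Expe\rho^{fs}_u(y\viv t,0)$ obtained from the Gaussian decay of the full-space heat kernel and the analog of Corollary~\ref{co:greensPosMom}. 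Finally I would invoke the full-space variance identity from \cite{gu2023another}, namely
\[
\Var[H^{fs}_u(t,0)] = 2\Expe\int_0^\infty y\,\rho^{fs}_u(y\viv t,0)\dd y - ut,
\]
so that the contributions $-\Var[H^{fs}_u(t,0)]$ and $2\Expe\int_0^\infty y\,\rho^{fs}_u\dd y$ collapse precisely to $ut$, finishing the proof.

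The main obstacle will be the decorrelation $\Cov[\mathcal{H}^{fs}_u(t,x), H^{fs}_u(t,0)] \to 0$. The intuition is clean: the full-space heat kernel $\mathcal{Z}^{fs}(t,\cdot\viv 0,y)$ concentrates $y$ in a $\sqrt{t}$-window around the basepoint, so $\mathcal{H}^{fs}_u(t,x)$ effectively depends on the initial data and noise near $x$ while $H^{fs}_u(t,0)$ depends on those near $0$, and at separation $x\gg\sqrt{t}$ these contributions are asymptotically independent. Turning this into a quantitative $L^2$ estimate requires controlling differences of the log-partition functions at displaced basepoints using the moment and negative-moment bounds for $Z^{fs}_u$; we would take this as a black-box input from \cite{gu2023another}. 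All remaining ingredients (spatial translation invariance, Malliavin integration by parts, and the tail control on $\Expe\rho^{fs}_u$) are direct transcriptions of the half-space computations already carried out in Section~\ref{se:varID}.
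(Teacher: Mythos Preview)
Your algebraic decomposition and the decorrelation step are exactly what the paper does; the difference is in how you evaluate the remaining pieces. The paper keeps the covariance in the symmetric form
\[
\Cov[\mathcal{H}^{fs}_u(t,x)+\mathcal{H}^{fs}_u(t,0),\,W^{fs}_u(x)]
\]
and computes this directly: after Gaussian integration by parts it uses the shear invariance $\{\rho^{fs}_u(y+z+ut\viv t,z)\}_{y}\stackrel{\text{law}}{=}\{\rho^{fs}_0(y\viv t,0)\}_{y}$ to reduce to the driftless density, and then the \emph{evenness} of $\Expe\rho^{fs}_0(\cdot\viv t,0)$ makes the limit collapse to $ut$. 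No external variance identity is invoked.

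Your route instead splits the term and calls the full-space variance identity
\[
\Var[H^{fs}_u(t,0)] = 2\Expe\int_0^\infty y\,\rho^{fs}_u(y\viv t,0)\,\dd y - ut
\]
as a black box from \cite{gu2023another}. This is algebraically equivalent to the paper's computation (the two identities are rearrangements of the same relation), and your argument is correct \emph{provided} that identity is available for general drift $u$ in that one-sided form. The caveat is that \cite{gu2023another} treats the driftless case; the extension to $u\neq 0$ requires exactly the shear-invariance step that the paper carries out explicitly. So your shortcut defers rather than avoids that work. The paper's direct computation is more self-contained; your version is shorter if one is willing to grant the drifted full-space identity as known.
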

\begin{proof}
Using that $W^{fs}_u(\cdot)$ is the stationary measure of the increment process $\{H^{fs}_u(t,\cdot)-H^{fs}_u(t,0)\}_{t\geq0}$, and the fact that for any fixed  $t>0$, $\{\mathcal{H}^{fs}_u(t,x)\}_{x\in\R}$ is stationary, following the argument for \cite[Lemma 2.3]{gu2023another}, we derive that
\[
\begin{aligned}
& \mathbf{Var}[\mathcal{H}^{fs}_u(t,x)]  + 2\mathbf{Cov} [\mathcal{H}^{fs}_u(t,x), W^{fs}_u(x)] = \mathbf{Var}[\mathcal{H}^{fs}_u(t,0)]  + 2\mathbf{Cov} [\mathcal{H}^{fs}_u(t,x), W^{fs}_u(x)] \\
&= \mathbf{Cov} [\mathcal{H}^{fs}_u(t,x)+ \mathcal{H}^{fs}_u(t,0), W^{fs}_u(x)] + \mathbf{Cov} [\mathcal{H}^{fs}_u(t,x), \mathcal{H}^{fs}_u(t,0)].
\end{aligned}
\]
For the second term on the right-hand side, by adapting the proof in \cite[Appendix A]{gu2023another} to include the drift term, we can show \[\mathbf{Cov} [\mathcal{H}^{fs}_u(t,x), \mathcal{H}^{fs}_u(t,0)] \to 0 \text{ as } x \to \infty.\]
It remains to analyze the term $\mathbf{Cov} [\mathcal{H}^{fs}_u(t,x)+ \mathcal{H}^{fs}_u(t,0), W^{fs}_u(x)]$. 

Let $\rho^{fs}_{u}(y\viv t,x)$ be the quenched density of the (1+1)-dimensional continuum directed polymer model with boundary $\exp(W^{fs}_u(\cdot))$, starting from $(t,x)$ and running backward to time zero:
\[
\rho^{fs}_{u}(y \viv t,x):=\frac{\mathcal{Z}^{fs}(t,x\viv 0,y)\exp(W^{fs}_u(y))}{\int_\R\mathcal{Z}^{fs}(t,x\viv 0,y')\exp(W^{fs}_u(y')) \dd y'}.
\]
Using the same integration-by-parts method as in the proof of \eqref{eq:polymlimit} and \cite[Lemma 2.5]{gu2023another}, for any $t>0$ and $x,z \geq 0$, we have
\[ 
\begin{aligned}
\mathbf{Cov} [{H}^{fs}_u(t,z), W^{fs}_u(x)] &=\Expe \int_{0}^{\infty}\rho^{fs}_{u}(y \viv t,z)\min(x,y)\dd y
 \\ &=\Expe \int_{-z-ut}^{\infty}\rho^{fs}_{0}(y \viv t,0)\min(x,y+z+ut)\dd y.
\end{aligned}
\]
Here, for the second ``='', we used the fact that 
\[
\{\rho^{fs}_{u}(y+z+ut \viv t,z)\}_{y\in\R}\stackrel{\text{law}}{=}\{\rho^{fs}_{0}(y \viv t,0)\}_{y\in\R}, 
\]
which follows from the spatial stationarity of $\mathcal{Z}^{fs}$ and the identity in law:
\[\left\{\mathcal{Z}^{fs}(t, u t+y\viv 0,0) e^{u y+\frac{1}{2} u^2 t}\right\}_{t>0, y \in \mathbb{R}} \stackrel{\text { law }}{=}\left\{\mathcal{Z}^{fs}(t, y\viv 0, 0)\right\}_{t>0, y \in \mathbb{R}}.
\]
This relation was proved in \cite[Proposition 2.7]{gu2023another} and arises from the shear invariance of the space-time white noise ${\xi}$. It follows that for fixed $u\in \R, t>0$ and {any $x \geq0$}, 
\[
\begin{aligned}
& \mathbf{Cov} [\mathcal{H}^{fs}_u(t,x)+ \mathcal{H}^{fs}_u(t,0), W^{fs}_u(x)] \\
&= \mathbf{Cov} [{H}^{fs}_u(t,x) - W^{fs}_u(x) + {H}^{fs}_u(t,0), W^{fs}_u(x)]
\\
&= -x + \Expe\int_{-x-ut}^{\infty}\rho^{fs}_{0}(y\viv t,0)\min(x,y+x+ut)\dd y+\Expe \int_{-ut}^{\infty} \rho^{fs}_0(y\viv t,0)\min(x,y+ut)\dd y \\
&= -x+\Expe \int_{-x-ut}^{x-ut}\rho^{fs}_{0}(y\viv t,0)(y+x+ut)\dd y+ 2x  \Expe \int_{x-ut}^{\infty}\rho^{fs}_{0}(y\viv t,0) \dd y.\\
&= \Expe \int_{-x-ut}^{x-ut}(y+ut) \rho^{fs}_{0}(y\viv t,0)\dd y  + x  \Expe \int_{x-ut}^{\infty}\rho^{fs}_{0}(y\viv t,0) \dd y - x \Expe \int_{-\infty}^{-x-ut}\rho^{fs}_{0}(y\viv t,0) \dd y.
\end{aligned}
\]
Since $\Expe \rho^{fs}_0(t,0;y)$ is an even probability density on $y\in \R$, \[
\begin{aligned}
\Expe \int_{-x-ut}^{x-ut}(y+ut)\rho^{fs}_{0}(y\viv t,0)\dd y \to ut, \quad \text{ when } x\to \infty.\end{aligned}
\]
By \cite[Lemma 2.6]{gu2023another}, for any $t>0$, $\Expe \rho^{fs}_{0}(y\viv t,0) \leq C \exp(-y^2/C)$ for some positive constant $C=C(t)$. Thus we also have that
\[ x  \Expe \int_{x-ut}^{\infty}\rho^{fs}_{0}(y\viv t,0) \dd y - x \Expe \int_{-\infty}^{-x-ut}\rho^{fs}_{0}(y\viv t,0) \dd y \to 0, \quad \text{ when } x\to \infty.
\]
The proof is complete.
\end{proof}

The next proposition is to justify the substitution. It shows that when $x \to \infty$, the statistics of $\mathcal{H}_u(t,x)$ can be approximated by the statistics of $\mathcal{H}^{fs}_u(t,x)$.
\begin{proposition}\label{pr:substitution}
For any $u\in\R, t>0$, we have
\begin{equation}\label{eq:varsubstitution}
\left|\mathbf{Var}[\mathcal{H}^{fs}_u(t,x)]   - \Var[{\mathcal{H}}_u(t,x)] \right| \to 0, \quad \text{as } x\to \infty;
 \end{equation}
 and 
\begin{equation}\label{eq:covsubstitution}
\left|\mathbf{Cov}  [\mathcal{H}^{fs}_u(t,x), W^{fs}_u(x)]  - \Cov [{\mathcal{H}}_u(t,x), {W}_u(x)]\right| \to 0, \quad \text{as } x\to \infty.
\end{equation}
\end{proposition}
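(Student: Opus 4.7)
The plan is to reduce both convergences to a single $L^2$ estimate on
\[
\Delta(t,x) := H_u(t,x) - H^{fs}_u(t,x) = \log\bigl(Z_{u-\frac12}(t,x)/Z^{fs}_u(t,x)\bigr).
\]
Since $W = W^{fs}$ on $[0,\infty)$, the initial data match and $\mathcal{H}_u(t,x) - \mathcal{H}^{fs}_u(t,x) = \Delta(t,x)$, so
\begin{align*}
\Var[\mathcal{H}_u(t,x)] - \Var[\mathcal{H}^{fs}_u(t,x)] &= \Var[\Delta(t,x)] + 2\Cov[\mathcal{H}^{fs}_u(t,x),\Delta(t,x)],\\
\Cov[\mathcal{H}_u(t,x),W_u(x)] - \Cov[\mathcal{H}^{fs}_u(t,x),W^{fs}_u(x)] &= \Cov[\Delta(t,x),W_u(x)].
\end{align*}
The spatial stationarity of $\{\mathcal{H}^{fs}_u(t,x)\}_{x\in\R}$ at fixed $t$ (used already in the proof of Proposition~\ref{prop:wslimit}) makes $\Var[\mathcal{H}^{fs}_u(t,x)]$ finite and independent of $x$; combined with $\Var[W_u(x)] = x$, Cauchy--Schwarz dominates each right-hand side by $C(u,t)\bigl(\sqrt{x}\,\|\Delta(t,x)\|_2 + \|\Delta(t,x)\|_2^2\bigr)$. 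It therefore suffices to show $\|\Delta(t,x)\|_2 = o(x^{-1/2})$; the plan is to establish the much stronger $\|\Delta(t,x)\|_2 \le C(u,t)\exp(-c(u,t)x^2)$.

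To control $\Delta = \log(1+R)$ with $R(t,x) := (Z_{u-\frac12}(t,x) - Z^{fs}_u(t,x))/Z^{fs}_u(t,x)$, split $\Omega$ into $\{|R|\le 1/2\}$ and its complement. On the first set $|\Delta|\le 2|R|$, and by H\"older $\|R\,\mathbf{1}_{\{|R|\le 1/2\}}\|_2 \le \|Z_{u-\frac12}(t,x) - Z^{fs}_u(t,x)\|_4\,\|Z^{fs}_u(t,x)^{-1}\|_4$, with the negative moment uniformly bounded in $x$ via the full-space analog of Lemma~\ref{le:negmom}. On the second set, $|\Delta|\le |\log Z_{u-\frac12}(t,x)| + |\log Z^{fs}_u(t,x)|$ has $L^q$-norm uniformly bounded by Proposition~\ref{pr:mombds0} and its full-space counterpart, while $\Prob(|R|>1/2)\le C\|R\|_p^p$ by Markov. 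The problem therefore reduces to the single moment bound
\begin{equation*}
\|Z_{u-\frac12}(t,x) - Z^{fs}_u(t,x)\|_p \le C(u,t,p)\exp(-c(u,t)x^2), \quad x\ge 0, \tag{$\star$}
\end{equation*}
for sufficiently large $p$.

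To prove $(\star)$, decompose
\begin{align*}
Z_{u-\frac12}(t,x) - Z^{fs}_u(t,x) &= \int_0^\infty \bigl[\mathcal{Z}_{u-\frac12}(t,x\viv 0,y) - \mathcal{Z}^{fs}(t,x\viv 0,y)\bigr] e^{W_u(y)}\dd y\\
&\quad - \int_{-\infty}^0 \mathcal{Z}^{fs}(t,x\viv 0,y) e^{W^{fs}_u(y)}\dd y.
\end{align*}
The second integral has $L^p$-norm $\le C(u,t,p)\exp(-x^2/(4t))$, because $\|\mathcal{Z}^{fs}(t,x\viv 0,y)\|_{2p} \lesssim t^{-1/2}\exp(-(x-y)^2/(4t))$ (full-space analog of Corollary~\ref{co:greensPosMom}) and for $y<0<x$ the inequality $(x-y)^2\ge x^2 + y^2$ leaves a Gaussian in $y$ integrable against the moment bound of $e^{W^{fs}_u(y)}$. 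For the first integral, the chaos expansion \eqref{eq:chaos1} and its full-space analog exhibit $\mathcal{Z}_{u-\frac12} - \mathcal{Z}^{fs}$ as a chaos series each of whose terms carries, at some intermediate time-step, the difference kernel $\kernel_{u-\frac12}^R - p$, which by \eqref{eq:kernelRobin} is dominated for $x,y\ge 0$ by a constant multiple of $p_{t-s}(x+y)$; this propagates Gaussian decay in $x+y$ through the series and, after integrating against $e^{W_u(y)}\dd y$ on $[0,\infty)$, yields $(\star)$.

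The main technical obstacle is the chaos-series bookkeeping in the last step: one must track the $p_{t-s}(x+y)$ factor through each iterated space-time convolution, keep the remaining iterated integrals uniformly finite, and demonstrate absolute convergence of the series with the $\exp(-cx^2)$ prefactor. A more pathwise alternative that sidesteps this is a Feynman--Kac coupling via the mollified approximations from Appendix~\ref{ap:fk}: under $\mathbb{P}^x_B$, let $\tau := \inf\{s>0 : B_s = 0\}$, with $\mathbb{P}^x_B(\tau\le t) \le Ce^{-x^2/(2t)}$ by the reflection principle. On $\{\tau > t\}$ the reflected and free Brownian paths coincide ($L_t^0 = 0$, $|B_s| = B_s$), so the $\eps$-mollified Feynman--Kac integrands for $Z^\eps_{u-\frac12}(t,x)$ and $Z^{fs,\eps}_u(t,x)$ agree pointwise in $B$; Cauchy--Schwarz in the $B$-integration combined with uniform positive-moment bounds on both mollified partition functions (Lemma~\ref{le:posMom} and its full-space analog) then yields $(\star)$ after passing $\eps\to 0$.
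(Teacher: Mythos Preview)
Your overall strategy matches the paper's: reduce both limits to the Gaussian-decay estimate $(\star)$ on $\|Z_{u-\frac12}(t,x)-Z^{fs}_u(t,x)\|_p$, then prove $(\star)$ via the chaos expansion (this is exactly the paper's Lemma~\ref{le:sheinftydif}). Your Feynman--Kac coupling alternative for $(\star)$, based on $\{\tau>t\}$, is a nice pathwise idea not in the paper and would work once the ``uniform positive-moment bounds on both mollified partition functions'' are made precise (one needs moment control on $\mathbb{E}^x_B[G_{hs}^2]$ and $\mathbb{E}^x_B[G_{fs}^2]$, which are second-moment-type quantities rather than the partition functions themselves).

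There is, however, a genuine slip that you should fix. You twice assert uniform-in-$x$ bounds that are false: $\|Z^{fs}_u(t,x)^{-1}\|_4$ is \emph{not} uniformly bounded in $x$ (the full-space analog of Lemma~\ref{le:negmom} applies to constant or Dirac initial data, not to $e^{W^{fs}_u}$), and $\|\log Z_{u-\frac12}(t,x)\|_q$, $\|\log Z^{fs}_u(t,x)\|_q$ are not uniformly bounded either---Proposition~\ref{pr:mombds0} only gives $\le C\exp(Cx)$. The repair is immediate: replace ``uniformly bounded'' by ``$\le C\exp(Cx)$'' in both places; since $(\star)$ delivers $\exp(-cx^2)$, the product still vanishes. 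The paper handles this step more cleanly by using the elementary inequality $|\log\alpha-\log\beta|\le|\alpha-\beta|(\alpha^{-1}+\beta^{-1})$ and H\"older, which avoids your $\{|R|\le 1/2\}$ split entirely:
\[
\|H^{fs}_u(t,x)-H_u(t,x)\|_2 \le \|Z^{fs}_u-Z_{u-\frac12}\|_4\bigl(\|Z^{fs}_u(t,x)^{-1}\|_4+\|Z_{u-\frac12}(t,x)^{-1}\|_4\bigr)\le C\exp(Cx)\exp(-x^2/C).
\]
Your observation that $\Var[\mathcal{H}^{fs}_u(t,x)]$ is constant in $x$ (spatial stationarity) is correct and slightly sharper than what the paper uses, but in the end both routes only need that the polynomial or exponential prefactors are beaten by the Gaussian decay from $(\star)$.
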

\begin{proof}
First, we claim that for any $u\in \R, t>0$, there exists a positive constant $C=C(u,t)$ such that 
\begin{equation}\label{eq:approxfshs0} \| \mathcal{H}^{fs}_u(t,x) - \mathcal{H}_u(t,x)\|_2=\| {H}^{fs}_u(t,x) - {H}_u(t,x)\|_2
 \leq C\exp(-x^2/C), \quad\quad x\geq 0.\end{equation}
By a straightforward computation, we have
\[
\begin{aligned}
&\mathbf{Var}[\mathcal{H}^{fs}_u(t,x)] {-\mathbf{Var}[\mathcal{H}_u(t,x)]} =  \Expe\left( [\mathcal{H}^{fs}_u(t,x) - \mathcal{H}_u(t,x)] [\mathcal{H}^{fs}_u(t,x) + \mathcal{H}_u(t,x)]\right) \\&-  \Expe [\mathcal{H}^{fs}_u(t,x) - \mathcal{H}_u(t,x)]  \Expe[\mathcal{H}^{fs}_u(t,x) + \mathcal{H}_u(t,x)],
\end{aligned}
\]
and it follows that 
\[\left|\mathbf{Var}[\mathcal{H}^{fs}_u(t,x)]   - \Var[{\mathcal{H}}_u(t,x)] \right| \leq 2 \| \mathcal{H}^{fs}_u(t,x) - \mathcal{H}_u(t,x)\|_2 \| \mathcal{H}^{fs}_u(t,x) + \mathcal{H}_u(t,x)\|_2.\]
When $x \geq 0$, \[
\| \mathcal{H}^{fs}_u(t,x) + \mathcal{H}_u(t,x)\|_2 \leq  \|H^{fs}_u(t,x)\|_2 +  \|H^{fs}_u(0,x)\|_2 +  \|{{H}}_u(t,x)\|_2 +  \|{{H}}_u(0,x)\|_2,
\]
with $H^{fs}_u(0,x) = {{H}}_u(0,x) = W_u(x)$. By \eqref{eq:HbdCx}, $\|{H}_u(t,x)\|_2 \leq C_1\exp(C_1x)$ for some positive constant $C_1=C_1(u,t)$. By a similar argument for the full-space KPZ equation, there exists another positive constant $C_2=C_2(u,t)$ so that $\|H^{fs}_u(t,x)\|_2 \leq C_2\exp(C_2x)$. Then the convergence in \eqref{eq:varsubstitution} follows from these bounds and \eqref{eq:approxfshs0}.

Similarly, for the covariance, we have
\[
\begin{aligned}
&\left|\mathbf{Cov}  [\mathcal{H}^{fs}_u(t,x), W^{fs}_u(x)]  - \Cov [{\mathcal{H}}_u(t,x), {W}_u(x)]\right| = |\Expe[\mathcal{H}^{fs}_u(t,x) W^{fs}(x)] - \Expe[{\mathcal{H}}_u(t,x) {W}(x)] |\\
&=  \left|\Expe\left[\left(\mathcal{H}^{fs}_u(t,x) -\mathcal{H}_u(t,x)\right)W (x)\right]\right| \leq \|\mathcal{H}^{fs}_u(t,x) -\mathcal{H}_u(t,x)\|_2 \|W (x)\|_2.
\end{aligned}
\]
Thus \eqref{eq:covsubstitution} also follows from \eqref{eq:approxfshs0}.

To complete the proof of the proposition, it remains to prove \eqref{eq:approxfshs0}. For any positive real numbers $\alpha, \beta>0$, we have the elementary inequality \[ |\log \alpha - \log \beta|  
\leq \frac{|\alpha-\beta|}{\min(\alpha,\beta)} \leq \frac{|\alpha-\beta|}{\alpha}+\frac{|\alpha-\beta|}{\beta}.\]
Therefore, for any $t>0, x \geq 0$
\[
\begin{aligned}
&\|H^{fs}_u(t,x) - {H}_{u}(t,x) \|_2 = \| \log Z^{fs}_u(t,x) -\log {Z}_{u-\frac{1}{2}}(t,x) \|_2\\&\leq \left\| {|Z^{fs}_u(t,x) - {Z}_{u-\frac{1}{2}}(t,x)|}Z^{fs}_u(t,x)^{-1} + {|Z^{fs}_u(t,x) - {Z}_{u-\frac{1}{2}}(t,x)|}{ {Z}_{u-\frac{1}{2}}(t,x)^{-1}} \right\|_2\\
 &\leq \|Z^{fs}_u(t,x) - {Z}_{u-\frac{1}{2}}(t,x)\|_{4} \|Z^{fs}_u(t,x)^{-1}\|_{4}+\|Z^{fs}_u(t,x) - {Z}_{u-\frac{1}{2}}(t,x)\|_{4} \|{Z}_{u-\frac{1}{2}}(t,x)^{-1}\|_{4}.
\end{aligned}
\]
By \eqref{eq:negCx} and a similar result for the full-space SHE, there exists a positive constant $C_3=C_3(u,t)$ such that for any $x\geq 0$,
\[\|{Z}_{u-\frac{1}{2}}(t,x)^{-1}\|_{4} \leq C_3\exp(C_3x) \quad \text{and}\quad \|{Z}^{fs}_{u}(t,x)^{-1}\|_{4} \leq C_3\exp(C_3x).
\]
Thus, the proof of \eqref{eq:approxfshs0} will be complete if we can show there exists a positive constant $C_4=C_4(u,t)$ such that
\begin{equation}\label{eq:approxFSshe}
\|Z^{fs}_u(t,x) - {Z}_{u-\frac{1}{2}}(t,x)\|_{4} \leq C_4\exp(-x^2/C_4), \quad \forall x \geq 0.
\end{equation}
Applying Lemma~\ref{le:sheinftydif} below, we complete the proof.
\end{proof}
%
%


The following lemma shows that, for fixed $t$ and large $x$, the solution to the SHE with Robin boundary condition does not ``feel'' the boundary.
\begin{lemma}\label{le:sheinftydif}
For any $\tau>0$, $u \in \R$, $p \geq 2$, there exists some positive constant $C=C(\tau,u,p)$ such that
\[
\|Z^{fs}_u(t,x) - {Z}_{u-\frac{1}{2}}(t,x)\|_p \leq C\exp(-x^2/t), \quad \text{for all } t \in (0,\tau] \text{ and } x \geq 0.\]
\end{lemma}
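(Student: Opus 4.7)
\medskip

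\noindent\emph{Proof proposal.} The plan is to set $D(t,x):=Z^{fs}_u(t,x)-Z_{u-\frac{1}{2}}(t,x)$ for $t\in[0,\tau]$ and $x\geq 0$, write a mild equation for $D$, and show that every source term in that equation already carries Gaussian decay $e^{-x^2/C}$ in $x$, while the one self-referential term can be closed by a Picard/Gronwall iteration. The key algebraic input is the explicit formula \eqref{eq:kernelRobin}, which I would split as
\[
\kernel^R_{u-\frac{1}{2}}(t,x\viv s,y) = p_{t-s}(x-y)+R(t-s,x,y),
\qquad |R(t-s,x,y)|\leq C(u,\tau)\,p_{t-s}(x+y),
\]
valid for all $x,y\ge 0$, where the bound on $R$ follows from $p_{t-s}(x+y+z)\leq p_{t-s}(x+y)$ and the finiteness of $\int_0^\infty e^{-(u-1/2)z}\dd z$ after an innocuous enlargement of the domain of $\mu$. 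Combining this with the splitting $\int_\R=\int_0^\infty+\int_{-\infty}^0$ in the full-space mild form \eqref{eq:mildformfsSHE}, the difference $D$ decomposes as
\begin{equation*}
\begin{aligned}
D(t,x) &= \underbrace{\int_{-\infty}^{0} p_t(x-y)\,e^{W^{fs}_u(y)}\dd y - \int_0^{\infty} R(t,x,y)\,e^{W_u(y)}\dd y}_{=:I_0(t,x)}\\
&\quad + \underbrace{\int_0^t\!\!\int_{-\infty}^{0} p_{t-s}(x-y)Z^{fs}_u(s,y)\xi(\dd y\dd s) - \int_0^t\!\!\int_0^{\infty} R(t-s,x,y)Z_{u-\frac{1}{2}}(s,y)\xi(\dd y\dd s)}_{=:I_1(t,x)}\\
&\quad + \int_0^t\!\!\int_0^{\infty} p_{t-s}(x-y)D(s,y)\,\xi(\dd y\dd s).
\end{aligned}
\end{equation*}

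Next I would estimate $\|I_0(t,x)\|_p$ and $\|I_1(t,x)\|_p$. For $I_0$, on $\{y\leq 0, x\geq 0\}$ one has $p_t(x-y)=p_t(x+|y|)$, while on $\{x,y\geq 0\}$ the correction $R$ is bounded by $p_{t-s}(x+y)$; using $\Expe[e^{pW^{fs}_u(y)}]\leq e^{Cy^2}$ and integrating Gaussian kernels against exponentials (completing the square) yields $\|I_0(t,x)\|_p\leq C e^{-x^2/C}$. For $I_1$, Minkowski--BDG gives
\[
\|I_1(t,x)\|_p^2 \leq C_p\int_0^t\!\!\int_{-\infty}^0 p_{t-s}(x-y)^2\|Z^{fs}_u(s,y)\|_p^2\,\dd y\dd s + C_p\int_0^t\!\!\int_0^\infty R(t-s,x,y)^2\|Z_{u-\frac{1}{2}}(s,y)\|_p^2\,\dd y\dd s,
\]
and the positive-moment bound \eqref{eq:posCx}, together with the analogous full-space bound $\|Z^{fs}_u(s,y)\|_p\leq Ce^{C|y|}$, reduces both integrals to Gaussian integrals in $y$ whose integrand is concentrated at $y\approx -x$ (resp.\ $y\approx 0$ but shifted by $+x$), each producing a bound of the form $C e^{-x^2/C}$ after completing the square.

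The main obstacle is the self-referential stochastic term. Setting $\phi(t,x):=\|D(t,x)\|_p^2$, the analysis above gives
\[
\phi(t,x) \leq C e^{-x^2/C} + C_p\int_0^t\!\!\int_0^\infty p_{t-s}(x-y)^2\,\phi(s,y)\,\dd y\dd s.
\]
The kernel $p_{t-s}^2$ is only $(t-s)^{-1/2}$-integrable on the diagonal, so a direct Gronwall argument fails. The plan to overcome this is to propagate a Gaussian ansatz: fix $K>\tau$, set $\Phi(t):=\sup_{x\geq 0}\phi(t,x)\,e^{x^2/K}$, and use the Gaussian convolution identity
\[
\int_0^\infty p_{t-s}(x-y)^2 e^{-y^2/K}\dd y \;\leq\; \frac{C}{\sqrt{(t-s)((t-s)+K)}}\,e^{-x^2/((t-s)+K)} \;\leq\; \frac{C}{\sqrt{t-s}}\,e^{-x^2/(K+\tau)},
\]
to derive $\Phi(t)\leq C'+C''\int_0^t (t-s)^{-1/2}\Phi(s)\dd s$ after a harmless enlargement $K\mapsto K+\tau$, which is of the fractional Gronwall form and iterates to give $\Phi\in L^\infty([0,\tau])$. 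Absorbing the enlarged constant back into $C$ in the statement concludes the bound $\|D(t,x)\|_p\leq Ce^{-x^2/C}$ uniformly on $[0,\tau]\times[0,\infty)$. Standard Picard iteration on small time intervals also provides an alternative route, smoothing out the $p_{t-s}^2$ singularity after one iteration.
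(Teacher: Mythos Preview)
Your decomposition is exactly the one the paper uses: split $\kernel^R_{u-1/2}=p_{t-s}+R$ with $|R|\lesssim p_{t-s}(x+y)$, separate the full-space integral over $(-\infty,0)$, and observe that every term except the self-referential stochastic integral already carries a factor $p_{t-s}(x+|y|)$, which after integration against $\|Z^{fs}_u(s,y)\|_p\lesssim e^{C|y|}$ or $\|Z_{u-1/2}(s,y)\|_p\lesssim e^{Cy}$ produces Gaussian decay $e^{-x^2/C}$. (Two small slips: $\int_0^\infty e^{-(u-1/2)z}\,\dd z$ is only finite for $u>1/2$; in general you must keep the Gaussian factor $e^{-z^2/2(t-s)}$ from $p_{t-s}(x+y+z)$, as the paper does. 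Also $\Expe e^{pW_u^{fs}(y)}=e^{C|y|}$, not $e^{Cy^2}$, but your looser bound still works.)

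The genuine gap is in the Gronwall closure. With the fixed weight $\Phi(t)=\sup_x \phi(t,x)e^{x^2/K}$, your own convolution estimate gives
\[
\int_0^\infty p_{t-s}(x-y)^2 e^{-y^2/K}\,\dd y \;\lesssim\; (t-s)^{-1/2}\,e^{-x^2/(K+(t-s))},
\]
so after multiplying by $e^{x^2/K}$ you pick up the factor $\exp\bigl(x^2(t-s)/(K(K+t-s))\bigr)$, which is unbounded in $x$ for every $t>s$ and every $K$. The ``harmless enlargement $K\mapsto K+\tau$'' does not cure this: if you redefine $\Phi$ with $K+\tau$ the same degradation occurs, now from $K+\tau$ to $K+\tau+(t-s)$. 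The inequality $\Phi(t)\leq C'+C''\int_0^t(t-s)^{-1/2}\Phi(s)\,\dd s$ simply does not follow. A correct closure is obtained with the \emph{time-dependent} weight $\Phi(t)=\sup_x\phi(t,x)e^{x^2/(K_0+t)}$ for $K_0\geq\tau$: then the convolution sends $e^{-y^2/(K_0+s)}$ to $e^{-x^2/(K_0+s+(t-s))}=e^{-x^2/(K_0+t)}$, the weight matches exactly, and the fractional Gronwall closes.

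The paper sidesteps this entirely by working chaos-by-chaos: it writes $Z^{fs}_u=\sum z_k^{fs}$, $Z_{u-1/2}=\sum z_k$, and bounds $\|z_{n+1}-z_{n+1}^{fs}\|_p^2$ in terms of $\|z_n-z_n^{fs}\|_p^2$ plus a source $\lesssim \tfrac{(Ct)^{n/2}}{(n/2)!}\,t\,p_t(x)^2$. The key computation
\[
\int_0^t\!\!\int_0^\infty p_{t-s}(x-w)^2\,s\,p_s(w)^2\,\dd w\,\dd s \;\lesssim\; \sqrt{t}\cdot t\,p_t(x)^2
\]
shows that the Gaussian rate $e^{-x^2/t}$ is \emph{preserved} at each step while the prefactor picks up a summable $1/\sqrt{(n/2)!}$; summing over $n$ gives the lemma. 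Your ``Picard iteration'' remark is really this argument, and would work once made precise; the direct Gronwall with fixed $K$, as written, does not.
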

\begin{proof}
We use $\alpha \lesssim \beta$ to denote $\alpha \leq C \beta$ with any constant $C=C(\tau, u,p)$.


In the proof, we use the chaos expansion for the half-space SHE \eqref{eq:hsSHE} and its full-space counterpart \eqref{eq:fsSHE}. For any $x\geq 0$, we write
\[
\begin{aligned}
Z^{fs}_u(t,x) - {Z}_{u-\frac{1}{2}}(t,x) = \sum_{k=0}^{\infty} z^{fs}_k(t,x) -  \sum_{k=0}^{\infty} z_k(t,x),
\end{aligned}
\]
with
\[\begin{aligned}
z_0(t,x) &:
= \int_0^{\infty} \kernel_{u-\frac{1}{2}}^R(t,x\viv 0,y)\exp({W}_{u}(y)) \dd y,\\
z^{fs}_0(t,x) &:= \int_{-\infty}^{\infty}  p_t(x-y)\exp(W^{fs}_{u}(y)) \dd y,\end{aligned}
\]
and for any $n\geq 0$,
\[\begin{aligned}
z_{n+1}(t,x) &:= \int_0^t\int_0^{\infty}  \kernel_{u-\frac{1}{2}}^R(t,x\viv s,w) z_n(s,w) \xi(\dd w \dd s),\\
z^{fs}_{n+1}(t,x) &:= \int_0^t\int_{-\infty}^{\infty} p_{t-s}(x-w) z^{fs}_n(s,w) \xi(\dd w \dd s).
\end{aligned}
\]
The above expansion of $Z_{u-\frac{1}{2}}$ can be obtained through the chaos expansion of the Greens function \eqref{eq:greensdef} together with the convolution formula \eqref{eq:sheGreensform} and the stochastic Fubini theorem. 
Similarly, the expansion of $Z^{fs}_u(t,x)$ follows from the convolution formula \eqref{eq:fsSHEconvolution} together with the chaos expansion of the Green's function $\mathcal{Z}^{fs}$, as proved in \cite{alberts2014intermediate}.

We shall use the following inequalities later to estimate the difference between the integrals involving the Robin heat kernel and the ones involving the standard heat kernel.
For any $t>0$ and $x,y\geq 0$, we can bound
$p_{t}(x+y) \leq p_{t}(x)e^{-\frac{y^2}{2t}}$. Also, for any $\tau >0$, there exists some constant $C=C(\tau,u)$ so that for any $0\leq s<t \leq \tau$ and $x,y\geq 0$,
\[
\int_0^{\infty} p_{t-s}(x+y+z) e^{-(u-\frac{1}{2})z} \dd z \leq   p_{t-s}(x+y)\int_0^{\infty}e^{-\frac{z^2}{2(t-s)}} e^{-(u-\frac{1}{2})z} \dd z \leq C p_{t-s}(x+y) \sqrt{t-s}.
\]
By the explicit expression \eqref{eq:kernelRobin} and the above estimates, we estimate the difference between $z_0$ and $z^{fs}_0$ as  
\begin{equation}\label{eq:zeroterm}
\begin{aligned}
&\|{z}_0(t,x)  - z^{fs}_0(t,x)  \|_p
=\left\|\int_0^{\infty}\kernel_{u-\frac{1}{2}}^R(t,x\viv 0,y)\exp({W}_{u}(y)) \dd y -  \int_{-\infty}^{\infty}  p_{t}(x-y)\exp(W^{fs}_u(y))\dd y\right\|_p \\
&=\left\|\int_0^{\infty} \left(p_{t}(x+y)-(2u-1) \int_0^{\infty} p_{t}(x+y+z) e^{-(u-\frac{1}{2})z} \dd z\right)e^{{W}_{u}(y)} \dd y-  \int_{-\infty}^0 p_{t}(x-y)e^{W^{fs}_u(y)}\dd y\right\|_p\\
& \leq \int_0^{\infty} \left|p_{t}(x+y)-(2u-1) \int_0^{\infty} p_{t}(x+y+z) e^{-(u-\frac{1}{2})z} \dd z\right|\|e^{{W}_{u}(y)}\|_p \dd y  + \int_{-\infty}^0 p_{t}(x-y)\|e^{W^{fs}_u(y)}\|_p\dd y\\
&\leq p_{t}(x)(1+ |2u-1|C \sqrt{t})\int_0^{\infty}  e^{-\frac{y^2}{2t}}\| \exp({W}_{u}(y))\|_p\dd y  + p_{t}(x) \int_{0}^{\infty} e^{-\frac{y^2}{2t}}\| \exp(W^{fs}_{u}(-y))\|_p\dd y \\
& \lesssim p_t(x)\sqrt{t}.
\end{aligned}
\end{equation}
Now for any $n \geq 0$,
\[
\begin{aligned}
&z_{n+1}(t,x)  - z^{fs}_{n+1}(t,x) \\&=\int_0^t\int_0^{\infty}  \kernel_{u-\frac{1}{2}}^R(t,x\viv s,w) z_n(s,w) \xi( \dd w \dd s)- \int_0^t\int_{-\infty}^{\infty} p_{t-s}(x-w) z^{fs}_{n}(s,w) \xi( \dd w\dd s)\\
&=I_n(t,x)+J_n(t,x),
\end{aligned}
\]
with
\[\begin{aligned}
I_n(t,x)&:=  \int_0^t \int_0^{\infty} p_{t-s}(x-w) [z_n(s,w) -z^{fs}_n(s,w)] \xi( \dd w \dd s),\\
\text{and}\quad  J_n(t,x)&:=  \int_0^t \int_0^{\infty} \left(p_{t-s}(x+w)-(2u-1) \int_0^{\infty} p_{t-s}(x+w+z) e^{-(u-\frac{1}{2})z} \dd z\right) z_n(s,w) \xi( \dd w \dd s)\\
&-  \int_0^t\int_{-\infty}^0p_{t-s}(x-w) z^{fs}_{n}(s,w)\xi( \dd w \dd s).
\end{aligned}
\]
While $I_n$ is not a martingale in $t$, the process
\[
I_n(v):=  \int_0^v \int_0^{\infty} p_{t-s}(x-w) [z_n(s,w) -z^{fs}_n(s,w)]\xi( \dd w \dd s)
\] is a martingale in $v$ so we could apply the Burkholder-Davis-Gundy inequality to obtain\[
\begin{aligned}
\|I_n(t,x)\|^2_p & \lesssim   \int_0^t \int_0^{\infty}p_{t-s}(x-w)^2  \|z_n(s,w) -z^{fs}_n(s,w)\|^2_{p} \dd s \dd w.
\end{aligned}
\]
Similarly, we can convert the two terms in $J_n$ into martingales and apply the Burkholder-Davis-Gundy inequality:
\[\begin{aligned}
&\|J_n(t,x)\|_p^2 \lesssim  \left\| \int_0^t\int_{-\infty}^0p_{t-s}(x-w)^2z^{fs}_n(s,w)^2 \dd s \dd w\right\|_{p/2} \\&+\left\|\int_0^t \int_0^{\infty} \left(p_{t-s}(x+w)-(2u-1) \int_0^{\infty} p_{t-s}(x+w+z) e^{-(u-\frac{1}{2})z} \dd z\right)^2 z_n(s,w)^2\dd s \dd w \right\|_{p/2}\\& \leq \int_0^t\int_0^{\infty}p_{t-s}(x+w)^2\left\|z^{fs}_n(s,-w)\right\|_{p}^2 \dd s \dd w
\\&+ \int_0^t \int_0^{\infty} \left(p_{t-s}(x+w)-(2u-1) \int_0^{\infty} p_{t-s}(x+w+z) e^{-(u-\frac{1}{2})z} \dd z\right)^2 \left\|z_n(s,w)\right\|_{p}^2\dd s \dd w \\
& \lesssim \int_0^t \int_0^{\infty} p_{t-s}(x+w)^2 \left\|z_n(s,w)\right\|_{p}^2\dd s \dd w + \int_0^t\int_0^{\infty}p_{t-s}(x+w)^2\left\|z^{fs}_n(s,-w)\right\|_{p}^2 \dd s \dd w.
\end{aligned}
\]
Thus we have achieved an intermediate result
\begin{equation}\label{eq:interme}
\begin{aligned}
&\|z_{n+1}(t,x)  - z^{fs}_{n+1}(t,x) \|_p^2  \leq \left( \|I_n(t,x)\|_p+\|J_n(t,x)\|_p\right)^2\\
& \lesssim \int_0^t \int_0^{\infty}p_{t-s}(x-w)^2   \|z_{n}(s,w) -z^{fs}_n(s,w)\|^2_{p} \dd s \dd w\\
&+  \int_0^t \int_0^{\infty}p_{t-s}(x+w)^2 \|z_n(s,w)\|_p^2\dd s \dd w +  \int_0^t\int_0^{\infty}p_{t-s}(x+w)^2\|z^{fs}_n(s,-w)\|_p^2\dd s \dd w.
\end{aligned} 
\end{equation}
We first estimate the last two terms in the right-hand side of \eqref{eq:interme}.
By a similar iteration as in the proof of  Lemma~\ref{le:posMom}, for any $s\in[0,\tau], w\geq 0$, there exists a constant $C=C(\tau,p,u)$ such that {(with the convention of $(n/2)!=\Gamma(\tfrac{n}{2}+1)$)},
\[
\|z_n(s,w)\|_p^2 \leq \frac{(Cs)^{(n-1)/2}}{(n/2)!}  e^{Cw} \int_0^{\infty} \kernel^R_{u-\frac{1}{2}}(s,w\viv 0,y)  \left\| \exp(W_u(y)) \right\|_p\dd y \leq \frac{(Cs)^{(n-1)/2}}{(n/2)!}  e^{2Cw} .
\]
It then follows that
\[
\begin{aligned}
 &\int_0^t\int_0^{\infty}p_{t-s}(x+w)^2 \|z_n(s,w)\|_p^2\dd s \dd w  \leq \int_0^t\int_0^{\infty}p_{t-s}(x+w)^2  \frac{(Cs)^{(n-1)/2}}{(n/2)!} e^{2Cw}\dd s \dd w\\
 & \leq  \frac{(Ct)^{(n-1)/2}}{(n/2)!} \int_0^t p_{t-s}(x)^2\int_0^{\infty}e^{-\frac{w^2}{2(t-s)}}e^{2Cw}\dd w \dd s \lesssim  \frac{(Ct)^{(n-1)/2}}{(n/2)!}\int_0^tp_{t-s}(x)^2\sqrt{t-s}\dd s \\& \lesssim  \frac{(Ct)^{n/2}}{(n/2)!} \left[\sqrt{t}p_t(x)\right]^2.
 \end{aligned}
\]
A similar result can be obtained for $z^{fs}_n$. We record that, for $0<t\leq \tau$ and $x \geq 0$,
\[
 \int_0^t\int_0^{\infty}p_{t-s}(x+w)^2 \|z^{fs}_n(s,-w)\|_p^2\dd s \dd w \lesssim \frac{({C}t)^{n/2}}{(n/2)!} \left[\sqrt{t}p_t(x)\right]^2,
\]
for some positive constant ${C}= C(\tau,p,u)$. With these results, we could further bound \eqref{eq:interme} by
\[
\|z_{n+1}(t,x)  - z^{fs}_{n+1}(t,x) \|_p^2  \lesssim \frac{(Ct)^{n/2}}{(n/2)!} \left[\sqrt{t}p_t(x)\right]^2+\int_0^t\int_0^{\infty}p_{t-s}(x-w)^2   \|z_{n}(s,w) -z^{fs}_n(s,w)\|^2_{p} \dd s \dd w,
\]
with some constant $C=C(\tau,p,u)$. Using the estimate \eqref{eq:zeroterm}, we iterate the above inequality to derive 
\[
\|z_{n+1}(t,x)  - z^{fs}_{n+1}(t,x) \|_p^2 \leq C (n+2) \frac{(Ct)^{n/2}}{(n/2)!} \left[\sqrt{t}p_t(x)\right]^2,
\]
for some $C=C(\tau,p,u)$. Thus for any $t\in(0,\tau]$,
\[
\begin{aligned}
\|Z^{fs}_u(t,x) - {Z}_{u-\frac{1}{2}}(t,x) \|_p &\leq \sum_{k=0}^{\infty} \|z_{k}(t,x)  - z^{fs}_{k}(t,x) \|_p \leq \sum_{k=0}^{\infty} \left( (k+2) C\frac{(Ct)^{k/2}}{(k/2)!} \right)^{1/2}\sqrt{t}p_t(x).
\end{aligned}
\]
The proof is complete as the sum of the infinite series is bounded. 
\end{proof}

\section{Endpoint displacement: proof of Theorem~\ref{thm:var}}\label{se:uppbd}
In this section, we  prove \eqref{eq:intmidupbd}, which provides an upper bound on the first moment of the polymer endpoint displacement {in the bounded phase (when the boundary parameter $u<0$)}. Combining with the variance identity, this leads to the estimates on the fluctuations of the height function. 
The main contribution here is to leverage stochastic dominance to derive an upper bound of the annealed mean of the CDRP endpoint displacement in the bounded phase (when $u<0$). The upper bound is expected to be sharp since  it is conjectured to be the limit of the annealed mean when $t\to \infty$ (see Remark~\ref{re:conjconv}).

{The quantity of interest $\Expe\mathbb{E}^{\mathbb{Q}^{u,t}}_{0}[X_t]$ is the annealed average of the polymer endpoint. By our choice of the stationary initial data for KPZ, the polymer path $\{X_s\}_{s\in[0,t]}$, starting from $(t,0)$ and running backward in time, has a stationary terminal condition. Stochastic dominance ensures that the displacement of the endpoint in this case can be bounded from above by the endpoint of a different polymer path, one that starts from stationarity.  In other words, we use the polymer measure with both endpoints sampled from stationarity as a comparison. The upper bound in $\Expe\mathbb{E}^{\mathbb{Q}^{u,t}}_{0}[X_t] \leq \psi'(-2u)$ is derived from this stationarity-to-stationarity polymer measure.}


%

For any  $u\in \R, t>0, x\in [0,\infty)$, the half-space CDRP $\mathbb{Q}^{u,t}_{x}$ defined in Definition~\ref{de:cdrp} is a point-to-measure polymer with initial point $x$ and terminal data $\exp \left(W_{u}(x)\right)\dd x$. When $u< 0$, the terminal data is $\Prob$-almost surely in $L^1([0,\infty))$, thus we can also think of $\mathbb{Q}^{u,t}_{x}$ as a polymer with the normalized terminal data
\begin{equation}\label{eq:density1}
\frac{\exp \left(W_{u}(x)\right)\dd x}{\int_0^{\infty}\exp \left(W_{u}(x')\right)\dd x'}.
\end{equation}

Now let $\tilde{W}$ be another standard Brownian motion defined on  the probability space $(\Omega,\filt,\Prob)$, independent of $\xi$ and $W$. When $u<0$, for $\Prob$-almost surely all realization of $\tilde{W}$, we have $\int_0^{\infty}\exp \left(\tilde{W}_{u}(x)\right)\dd x<\infty$ and one can sample the initial point of the polymer $\mathbb{Q}^{u,t}_{x}$ from the density
\begin{equation}\label{eq:resample}
\frac{\exp \left(\tilde{W}_{u}(x)\right)\dd x}{\int_0^{\infty}\exp \left(\tilde{W}_{u}(x')\right)\dd x'}
\end{equation}
to construct another polymer $\mathbb{{Q}}^{u,t}_{\tilde{W}_u}$. By \eqref{eq:station} (and thus \eqref{eq:shestatdist}), the polymer $\mathbb{{Q}}^{u,t}_{\tilde{W}_u}$ has the special property that the start-point and end-point distributions of the paths are both stationary but independent of each other as well as the random environment. For $\Prob$-almost sure every realization of $\xi,W$ and $\tilde{W}$, the (quenched) endpoint density of this polymer is
\begin{equation}\label{eq:densitysam2x}
\begin{aligned}
\tilde{\rho}^{R}_{u}(y\viv t) := \frac{\int_0^{\infty}\mathcal{Z}_{u-\frac{1}{2}}(t,x\viv 0,y)\exp(\tilde{W}_{u}(x))\dd x\exp (W_{u}(y))}{\int_0^{\infty}\int_0^{\infty}\mathcal{Z}_{u-\frac{1}{2}}(t,x'\viv 0,y')\exp(\tilde{W}_{u}(x')) \dd x'\exp(W_{u}(y')) \dd y' }, \quad \forall y \in [0,\infty).
\end{aligned}\end{equation}

As discussed in Remark~\ref{re:whynocoupling}, since we do not show the simultaneous coupling of CDRP $\mathbb{Q}^{u,t}_{x}$ for all $x \in [0,\infty)$, it is not immediately clear that such a construction by sampling the start-point gives a  measure-to-measure CDRP $\mathbb{{Q}}^{u,t}_{\tilde{W}_u}$ on an event of  probability one. For our purpose, we do not need to check this. Throughout the proof, we only need the endpoint density formula \eqref{eq:densitysam2x} to be $\Prob$-almost surely well-defined, yet it is intuitive to understand this density from the ``polymer measure'' $\mathbb{{Q}}^{u,t}_{\tilde{W}_u}$. 

We verify that the endpoint density \eqref{eq:densitysam2x} is $\Prob$-almost surely well-defined.
By \eqref{eq:shestatdist}, the denominator in \eqref{eq:densitysam2x} multiplied by $Z_{u-\frac{1}{2}}(t,0)^{-1}$ is
\begin{equation}\label{eq:asfinite}\begin{aligned}
& Z_{u-\frac{1}{2}}(t,0)^{-1}\int_0^{\infty} {Z}_{u-\frac{1}{2}}(t,x)\exp(\tilde{W}_{u}(x)) \dd x \stackrel{\text { law }}{=} \int_0^{\infty} \exp(W_u(x)+\tilde{W}_{u}(x)) \dd x. \end{aligned}
\end{equation}
The right-hand side is a $\Prob$-almost surely positive and finite random variable when $u<0$. Since $Z_{u-\frac{1}{2}}(t,0)$ is also $\Prob$-almost surely positive and finite {by \eqref{eq:sheposfinite}}, the denominator in \eqref{eq:densitysam2x} is $\Prob$-almost surely positive and finite. The density $\tilde{\rho}^{R}_{u}(y\viv t) $ is thus well-defined.

Let us first briefly explain how to prove \eqref{eq:intmidupbd} through the density \eqref{eq:densitysam2x}. As we shall see below in Corollary~\ref{co:endptcompdom}, the polymer measures with different starting points $\{\mathbb{Q}_x^{u,t}\}_{x\in[0,\infty)}$ satisfies a stochastic monotonicity inherited from their path continuity and the planar structure of half-space. Intuitively, for any fixed environment noise $\xi$ and $W$, two independent polymer paths $X^1, X^2$ starting from points $x_1, x_2$ with $x_1<x_2$ should satisfy $\rho^1\leq_{st}\rho^2$, where $\leq_{st}$ refers to stochastic dominance and $\rho^i$ is the endpoint density. {Without an established strong Markov property for the half-space CDRP, we take a detour to prove this stochastic monotonicity through a Karlin-McGregor based argument. In particular, we analyze some determinants formed by four copies of Green's functions, working with a smoothed noise   to utilize the Feynman-Kac formula before passing to the limit. This argument gives  $\Prob$-almost sure comparison between products of Green's functions, which can be further used to prove the relation $\rho^1\leq_{st}\rho^2$.} 

We then compare the quenched densities $\rho_u^R(\cdot\viv t,0)$ and $\tilde{\rho}_u^R(\cdot\viv t)$ for $\Prob$-almost surely all $\omega \in \Omega$. We know that $\rho_u^R(\cdot\viv t,0)$ is the endpoint density of polymer paths starting from $x=0$, while $\tilde{\rho}_u^R(\cdot\viv t)$ is the endpoint density of polymer paths with starting points sampled from the density \eqref{eq:resample} on $[0,\infty)$. It shall follow that 
$\rho_u^R(\cdot\viv t,0)\leq_{st}\tilde{\rho}_u^R(\cdot\viv t)$ $\Prob$-almost surely, and as a corollary,
\[\Expe\mathbb{E}^{\mathbb{Q}^{u,t}}_{0}[X_t]=
\Expe \int_0^{\infty}y\rho_u^R(y\viv t,0) \dd y\leq \Expe\int_0^{\infty}y\tilde{\rho}_u^R(y\viv t)  \dd y = \Expe\mathbb{E}^{\mathbb{Q}^{u,t}}_{\tilde{W}_u}[X_t].\] On the other hand, since $\mathbb{{Q}}^{u,t}_{\tilde{W}_u}$ has the initial point sampled from the stationary density, we can  compute exactly $\Expe\mathbb{E}^{\mathbb{Q}^{u,t}}_{\tilde{W}_u}(X_t)$ for any $t> 0$. These two facts combined  together lead to the upper bound in \eqref{eq:intmidupbd}. Since $\exp \left(W_{u}(x)\right)\dd x$ is not in $L^1([0,\infty))$ when $u\geq 0$, the proof   in this section does not extend to $u \geq 0$.

{We now prove the aforementioned results in sequence. We begin by showing that some specific determinants consisting of four copies of Green's functions are $\Prob$-almost surely nonnegative. This argument is an adaption of  \cite[Proposition 5.5]{oconnell} to the half-space setting}.
\begin{lemma}\label{le:ptstdom} 
Fix $\mu \in \R$ and $t>0$. For any  $0\leq x_1<x_2$ and $0\leq y_1<y_2$, we have
\begin{equation}\label{eq:determinant}
    \det[\mathcal{Z}_{\mu}(t,x_i\viv 0, y_j) ]_{i,j \in \{1,2\}} \geq 0 \quad \Prob\text{-almost surely}.
\end{equation}
\end{lemma}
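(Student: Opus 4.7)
My plan is to follow the Karlin--McGregor reflection strategy adapted to the half-space setting, proceeding in three conceptual steps: mollify the noise so that a genuine Feynman--Kac representation in terms of reflected Brownian motion is available; establish the determinant inequality at the smoothed level by a swap-at-first-meeting bijection; then pass to the vanishing mollification limit.

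First, I would take a spatial mollifier $\rho^\eps$, set $\xi^\eps := \xi * \rho^\eps$, and let $\mathcal{Z}^\eps_\mu(t,x\viv 0,y)$ denote the Green's function of the corresponding smoothed half-space SHE. By the Feynman--Kac representation rigorously developed in Appendix~\ref{ap:fk}, one can write
\begin{equation*}
\mathcal{Z}^\eps_\mu(t,x\viv 0,y) = \kernel^{N}(t,x\viv 0,y)\,\mathbb{E}^x_{\Upsilon}\!\left[F^\eps(\Upsilon)\,\Big|\,\Upsilon_t = y\right],
\end{equation*}
where $\Upsilon$ is a reflected Brownian motion and the path functional $F^\eps(\gamma) := \exp\!\big(-\mu L_t^{0,\gamma} + \int_0^t \xi^\eps(t{-}s,\gamma_s)\,ds - \tfrac{1}{2} c_\eps t\big)$ is strictly positive (here $c_\eps = \|\rho^\eps\|_{L^2}^2$ is the It\^o correction). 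The key structural feature is that $F^\eps$ acts separately on each trajectory, so for any two paths the product $F^\eps(\gamma^1)F^\eps(\gamma^2)$ is invariant under the involution that swaps the two trajectories on any interval.

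Second, I would let $(\Upsilon^1,\Upsilon^2)$ be two independent RBMs started at $(x_1,x_2)$ and set $\tau := \inf\{s \in [0,t] : \Upsilon^1_s = \Upsilon^2_s\}$. On $\{\tau \leq t\}$, swapping the two trajectories on $[\tau,t]$ is a measure-preserving involution of the joint law (by the strong Markov property and exchangeability of the two RBMs). Since $F^\eps(\Upsilon^1)F^\eps(\Upsilon^2)$ is unchanged by the swap, the two off-diagonal terms in the determinant $\det[\mathcal{Z}^\eps_\mu(t,x_i\viv 0,y_j)]$ cancel the crossing contributions of the two diagonal terms, yielding
\begin{equation*}
\det\!\left[\mathcal{Z}^\eps_\mu(t,x_i\viv 0,y_j)\right]_{i,j=1}^{2} = \mathbb{E}^{(x_1,x_2)}\!\left[F^\eps(\Upsilon^1)F^\eps(\Upsilon^2)\,\delta_{y_1}(\Upsilon^1_t)\,\delta_{y_2}(\Upsilon^2_t)\,\1_{\{\Upsilon^1_s < \Upsilon^2_s\text{ for all } s \in [0,t]\}}\right],
\end{equation*}
read as an identity of joint densities in $(y_1,y_2)$. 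Since $F^\eps > 0$, this quantity is nonnegative.

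Finally, I would pass $\eps \to 0$, using the convergence $\mathcal{Z}^\eps_\mu(t,x_i\viv 0,y_j) \to \mathcal{Z}_\mu(t,x_i\viv 0,y_j)$ in $L^p(\Omega)$ supplied by Appendix~\ref{ap:fk} together with the uniform moment bounds in Corollary~\ref{co:greensPosMom}. Convergence in probability of each entry implies convergence in probability of the $2{\times}2$ determinant, and extracting an almost surely convergent subsequence preserves nonnegativity, giving \eqref{eq:determinant}. I expect the main obstacle to lie in this final step: one must verify that the Feynman--Kac approximation produces the limit Green's function in a sufficiently strong sense, jointly in $(x_1,x_2,y_1,y_2)$, so that nonnegativity really transfers to the rough limit. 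Step two is morally classical -- the swap-at-first-meeting bijection works for RBM because paths remain strongly Markov and continuous despite the Neumann boundary -- but the conditioning on endpoints in the presence of the local-time weight $e^{-\mu L_t^{0,\gamma}}$ warrants some care in the precise formulation.
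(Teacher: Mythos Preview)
Your proposal is correct and follows essentially the same route as the paper: mollify the noise so that a Feynman--Kac representation via reflected Brownian motion is available, apply the Karlin--McGregor swap-at-first-meeting argument to obtain nonnegativity of the determinant at the smoothed level, and then pass to the $\eps\to 0$ limit using the $L^p$ convergence of the Feynman--Kac approximation. The paper formalizes your second step by testing against compactly supported functions with coordinate-wise ordered supports (rather than writing the delta-function identity directly), but this is a presentational detail and not a different idea; your worry about needing joint-in-$(x_i,y_j)$ convergence is unnecessary, since the statement is for fixed points and pointwise $L^p$ convergence of each entry already gives convergence in probability of the determinant.
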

\begin{proof}
    We will use an argument based on the Karlin-McGregor formula. We first prove the inequality \eqref{eq:determinant} for the Green's functions of half-space SHE with a smoothed noise, using a  Feynman-Kac representation. We then pass to the limit to obtain the same result for the  white noise   using Lemma~\ref{le:fk}.

Let $\xi_{\delta}(t,x) = \int_\R  p_\delta(x-y)\xi(t, \dd y)$ so that it is a Gaussian noise that is white in time and smooth in space, with the covariance function
\[
Q_{\delta}(t_1,t_2,x_1,x_2) = \Expe \xi_\delta(t_1,x_1)\xi_\delta(t_2,x_2)=\delta_0(t_1-t_2) p_{2\delta}(x_1-x_2).
\]
For any $\delta>0$, $x,y \in [0,\infty)$, define \[
    \mathcal{Z}_\mu^{\delta}(t,x\viv 0, y) := \kernel^{N}(t,x\viv 0,y)
\mathbb{E}^x_B\left[\exp\left(-\mu L^{0}_{t} +
\int_{0}^{t}\xi_{\delta}(t-r,|B_r|)\dd r- \frac{t}{2}p_{2\delta}(0)\right)\bigg||B_{t}|=y\right].\]
{As in \eqref{eq:probrep3}, we rewrite the right-hand side in terms of the ``reflected Brownian bridges''. Let $\Upsilon_t$ be the reflected Brownian motion with $\Upsilon_t := |B_t|$ starting from $\Upsilon_0=|B_0|=x$. For any $t>0, y \geq 0$, there exists a unique probability measure $\mathbb{P}^{x,y}_{\Upsilon}$ on $\mathcal{C}([0,t],[0,\infty))$ such that $\mathbb{P}^{x,y}_{\Upsilon}$ equals to the conditional probability distribution  $\mathbb{P}^{x}_{\Upsilon}(\cdot \viv \Upsilon_t=y)$. We call the canonical process associated with $\mathbb{P}^{x,y}_{\Upsilon}$  the ``reflected Brownian bridge'', 
starting from $\Upsilon_0 =x$ and ending at $\Upsilon_t =y$.} 
The above display equals to
\[ 
\kernel^{N}(t,x\viv 0,y)
\mathbb{E}^{x,y}_{\Upsilon}\left[\exp\left(-\mu L^{0,{\Upsilon}}_{t} +
\int_{0}^{t}\xi_{\delta}(t-r,\Upsilon_r)\dd r- \frac{t}{2}p_{2\delta}(0)\right)\right],
\]
where 
we used $L^{0,{\Upsilon}}_{t}$ to denote the local time  at zero:
\begin{equation}\label{eq:ltrBM}
L^{0,{\Upsilon}}_{t} := \lim _{\eps \to 0} \frac{1}{2 \varepsilon} \int_0^t \1_{[0, \varepsilon]}\left(\Upsilon_s\right) \dd s.
\end{equation}
For {discussions on }reflected Brownian bridges, we refer to {\cite{{fpyMarkovianBridge}} in which the authors discussed general Markovian bridges,} and also \cite{pitman2001distribution, aldous2006two} and the references therein.

To ease notations, we define
\[
F^\delta_{t}(\Upsilon) := \exp\left(-\mu L^{0,{\Upsilon}}_{t} +
\int_{0}^{t}\xi_{\delta}(t-r,\Upsilon_r)\dd r- \frac{t}{2}p_{2\delta}(0)\right).
\]
Then $F^\delta_{t}(\cdot)$ is an $\Prob$-almost surely continuous, strictly positive and multiplicative functional of $\Upsilon$. 

Next we prove that for any $\delta>0$, $0\leq x_1<x_2$ and $0\leq y_1<y_2$,   the following identity holds $\Prob$-almost surely:
\begin{equation}\label{eq:KMid}
    \det[\mathcal{Z}^\delta_{\mu}(t,x_i\viv 0, y_j) ]_{i,j \in \{1,2\}} = \det[\kernel^N(t,x_i\viv 0, y_j)]_{i,j \in \{1,2\}} \mathbb{E}^{\mathbf{x},\mathbf{y}}_{\Xi^{1},\Xi^{2}} [F^\delta_{t}(\Xi^{1})F^\delta_{t}(\Xi^{2})],
\end{equation}
where the expectation $\mathbb{E}^{\mathbf{x},\mathbf{y}}_{\Xi^{1},\Xi^{2}}$ is taken over the  two-dimensional reflected Brownian bridges $(\Xi^{1}, \Xi^{2})$ started from $\mathbf{x}=(x_1,x_2)$, ending at $\mathbf{y}=(y_1,y_2)$ at time $t$, and   $\Xi^{1}, \Xi^{2}$ being non-intersecting on $[0,t]$. 

To prove \eqref{eq:KMid}, by the continuity of the Green's function, it suffices to show that for arbitrary functions $\varphi_1,\varphi_2 \in \mathcal{C}_c([0,\infty),\R)$ with coordinate-wise ordered supports (which means that for any $y_1\in \supp~\varphi_1$ and $y_2 \in \supp~\varphi_2$, we have $y_1< y_2$),  there is the identity
\begin{equation}\label{eq:KMidvar}
\begin{aligned}
     &\int_0^\infty \int_0^\infty \varphi_1(y_1) \varphi_2(y_2) \det[\mathcal{Z}_{\mu}(t,x_i\viv 0, y_j) ]_{i,j \in \{1,2\}} \dd y_1 \dd y_2 \\
      = & \int_0^\infty \int_0^\infty \varphi_1(y_1) \varphi_2(y_2) 
 \det[\kernel^N(t,x_i\viv 0, y_j)]_{i,j \in \{1,2\}} \mathbb{E}^{\mathbf{x},\mathbf{y}}_{\Xi^{1},\Xi^{2}} [F^\delta_{t}(\Xi^{1})F^\delta_{t}(\Xi^{2})] \dd y_1 \dd y_2.
\end{aligned}
\end{equation}

By Karlin-McGregor formula \cite{KMid}, the determinant $\det[\kernel^N(t,x_i\viv 0, y_j)]_{i,j \in \{1,2\}} $ is the transition density at time $t$ of a two-dimensional reflected Brownian motion $(\Upsilon^{1}, \Upsilon^{2})$  in the domain $\Lambda=\{\mathbf{y}=(y_1,y_2)\in \R_{\geq 0}^2: y_1\leq y_2\}$ killed when $\Upsilon^{1}$ and $ \Upsilon^{2}$ first intersect. It then follows that the right-hand side of \eqref{eq:KMidvar} equals to
\[ \mathbb{E}^{\mathbf{x}}_{\Upsilon^{1},\Upsilon^{2}} [\varphi_1({\Upsilon}_t^{1})\varphi_2({\Upsilon}_t^{2}) F^\delta_{t}(\Upsilon^{1})F^\delta_{t}(\Upsilon^{2}) \1(\tau > t)], \]
where the expectation $\mathbb{E}^{\mathbf{x}}_{\Upsilon^{1},\Upsilon^{2}}$ is taken over the two-dimensional reflected Brownian motions $(\Upsilon^{1}, \Upsilon^{2})$ in $\R_{\geq 0}^2$ started at $\mathbf{x}=(x_1,x_2)$,
and $\tau$ is the stopping time taking values in $[0,t]\cup \{\infty\}$ defined by \[
\tau=\inf \left\{r \in[0, t]: \Upsilon_r^{1}=\Upsilon_r^{2}\right\}.\]

On the other hand, by expanding the determinant $\det[\mathcal{Z}_{\mu}(t,x_i\viv 0, y_j) ]_{i,j \in \{1,2\}}$,  the left-hand side of \eqref{eq:KMidvar} equals to
\begin{align*}
    & \mathbb{E}^{\mathbf{x}}_{\Upsilon^{1},\Upsilon^{2}} [\varphi_1({\Upsilon}_t^{1})\varphi_2({\Upsilon}_t^{2}) F^\delta_{t}(\Upsilon^{1})F^\delta_{t}(\Upsilon^{2})]
     -  \mathbb{E}^{\mathbf{x}}_{\Upsilon^{1},\Upsilon^{2}} [\varphi_2({\Upsilon}_t^{1})\varphi_1({\Upsilon}_t^{2}) F^\delta_{t}(\Upsilon^{1})F^\delta_{t}(\Upsilon^{2})].
\end{align*}
Recall that the reflected Brownian motions $\Upsilon^{1}, \Upsilon^{2}$ started from ordered initial points $x_1<x_2$, and the support of $\varphi_1,\varphi_2$ are also ordered. If $\tau >t$, the paths of $\Upsilon^{1}, \Upsilon^{2}$ do not cross each other on $[0,t]$. Since the half-space domain is planar, the path continuity of the reflected Brownian motions forces that
\[
\mathbb{E}^{\mathbf{x}}_{\Upsilon^{1},\Upsilon^{2}} [\varphi_2({\Upsilon}_t^{1}) \varphi_1({\Upsilon}_t^{2})F^\delta_{t}(\Upsilon^{1})F^\delta_{t}(\Upsilon^{2})\1{(\tau >t)}]=0.
\]
Combining the above results, the identity \eqref{eq:KMidvar} is now equivalent to
\begin{equation}\label{eq:finalkm}
\begin{aligned}
  &\mathbb{E}^{\mathbf{x}}_{\Upsilon^{1},\Upsilon^{2}} [\varphi_1({\Upsilon}_t^{1})\varphi_2({\Upsilon}_t^{2}) F^\delta_{t}(\Upsilon^{1})F^\delta_{t}(\Upsilon^{2})\1{(\tau \leq t)}]\\  
&-\mathbb{E}^{\mathbf{x}}_{\Upsilon^{1},\Upsilon^{2}} [\varphi_2({\Upsilon}_t^{1})\varphi_1({\Upsilon}_t^{2}) F^\delta_{t}(\Upsilon^{1})F^\delta_{t}(\Upsilon^{2})\1{(\tau \leq t)}]=0.
\end{aligned}
\end{equation}

We prove \eqref{eq:finalkm} by using a path-switching argument. Define a new two-dimensional reflected Brownian motion on $r\in[0,t]$ by \[
(\tilde{\Upsilon}^{1}_r, \tilde{\Upsilon}^{2}_r) = \begin{cases}({\Upsilon}^{1}_r,  {\Upsilon}^{2}_r)  & r \leq \tau \\ ( {\Upsilon}^{2}_r,  {\Upsilon}^{1}_r) & r \geq \tau. \end{cases}
\]
The paths of $\tilde{\Upsilon}^{1}, \tilde{\Upsilon}^{2}$ intersect on $[0,t]$ if and only if the paths of ${\Upsilon}^{1}, {\Upsilon}^{2}$ intersect on $[0,t]$.
Since the functional $F^\delta_{t}(\cdot)$ is multiplicative, for each realization of $({\Upsilon}^{1},  {\Upsilon}^{2})$, \[F^\delta_{t}(\tilde{\Upsilon}^{1})F^\delta_{t}(\tilde{\Upsilon}^{2}) = F^\delta_{t}({\Upsilon}^{1})F^\delta_{t}({\Upsilon}^{2}).\] On the event of $\{\tau\leq t\}$, $\varphi_1(\tilde{\Upsilon}_t^{1})\varphi_2(\tilde{\Upsilon}_t^{2}) =\varphi_1({\Upsilon}_t^{2})\varphi_2({\Upsilon}_t^{1})$. It follows that
\[
\begin{aligned}
&\mathbb{E}^{\mathbf{x}}_{\tilde{\Upsilon}^{1},\tilde{\Upsilon}^{2}} [\varphi_1(\tilde{\Upsilon}_t^{1})\varphi_2(\tilde{\Upsilon}_t^{2}) F^\delta_{t}(\tilde{\Upsilon}^{1})F^\delta_{t}(\tilde{\Upsilon}^{2})\1{(\tau \leq t)}]\\
& =\mathbb{E}^{\mathbf{x}}_{{\Upsilon}^{1},{\Upsilon}^{2}} [\varphi_1({\Upsilon}_t^{2})\varphi_2({\Upsilon}_t^{1}) F^\delta_{t}({\Upsilon}^{1})F^\delta_{t}({\Upsilon}^{2})\1{(\tau \leq t)}].
\end{aligned}
\]
Moreover, as the reflected Brownian motion is strong Markov, $(\tilde{\Upsilon}^{1}, \tilde{\Upsilon}^{2})$ has the same law as $({\Upsilon}^{1},  {\Upsilon}^{2})$, which implies that
\[\begin{aligned}
   & \mathbb{E}^{\mathbf{x}}_{\Upsilon^{1},\Upsilon^{2}} [\varphi_1({\Upsilon}_t^{1})\varphi_2({\Upsilon}_t^{2}) F^\delta_{t}(\Upsilon^{1})F^\delta_{t}(\Upsilon^{2})\1{(\tau \leq t)}]\\
&= \mathbb{E}^{\mathbf{x}}_{\tilde{\Upsilon}^{1},\tilde{\Upsilon}^{2}} [\varphi_1(\tilde{\Upsilon}_t^{1})\varphi_2(\tilde{\Upsilon}_t^{2}) F^\delta_{t}(\tilde{\Upsilon}^{1})F^\delta_{t}(\tilde{\Upsilon}^{2})\1{(\tau \leq t)}].
\end{aligned}
\]
The above two identities prove
\eqref{eq:finalkm}.

Now we have proved \eqref{eq:KMidvar} for any $\varphi_1,\varphi_2 \in \mathcal{C}_c([0,\infty),\R)$ with coordinate-wise ordered supports. The identity \eqref{eq:KMid} follows. When $0\leq x_1<x_2$ and $0\leq y_1<y_2$, $\det[\kernel^N(t,x_i\viv 0, y_j)]_{i,j \in \{1,2\}} $ is strictly positive. Thus for any $\delta>0$, $0\leq x_1<x_2$ and $0\leq y_1<y_2$, we have
\[\det[\mathcal{Z}^\delta_{\mu}(t,x_i\viv 0, y_j) ]_{i,j \in \{1,2\}}>0  \quad \Prob\text{-almost surely}.\]
By Lemma~\ref{le:fk}~\eqref{it:approx} (with a mollification in space only), for any $t>0, x,y \in [0,\infty)$, $p \in [1,\infty)$, \[\mathcal{Z}^\delta_{\mu}(t,x\viv 0, y) \to \mathcal{Z}_{\mu}(t,x\viv 0, y)\quad \text{ in } L^p(\Omega) \quad \text{as } \delta \to 0.\]
The result \eqref{eq:determinant} is thus proved.
\end{proof}

With Lemma~\ref{le:ptstdom}, we are ready to  prove the stochastic monotonicity.
\begin{corollary}\label{co:endptcompdom}
For any $u\in \R$, $t>0$, $0\leq x_1<x_2$,  there exists an event $\Omega_0$ with $\Prob(\Omega_0)=1$ such that for all $\omega \in \Omega_0$ and $a>0$,
\begin{equation}\label{eq:comp2dens}
\mathbb{Q}^{u,t}_{x_1}(X_t \in [0,a]) = \int_0^a \rho^R_u(y \viv t,x_1) \dd y \geq  \int_0^a \rho^R_u(y \viv t,x_2) \dd y =\mathbb{Q}^{u,t}_{x_2}(X_t \in [0,a]).
\end{equation}
As a result, for any fixed $u<0$, $t>0$,
\begin{equation}\label{eq:endptcompdom}\mathbb{E}^{\mathbb{Q}^{u,t}}_{0}[X_t]=
\int_0^{\infty} y \rho^R_u(y \viv t,0) \dd y \leq
 \int_0^{\infty}y \tilde{\rho}^{R}_{u}(y\viv t) \dd y, \quad \Prob\text{-almost surely},
\end{equation}
where $\tilde{\rho}^{R}_{u}(\cdot \viv t)$ is as defined in \eqref{eq:resample}.
\end{corollary}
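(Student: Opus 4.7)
The plan is to first prove the pathwise stochastic dominance \eqref{eq:comp2dens} directly from Lemma~\ref{le:ptstdom}, and then deduce the first-moment comparison \eqref{eq:endptcompdom} by exhibiting $\tilde\rho^R_u(\cdot\viv t)$ as a random convex combination of the family $\{\rho^R_u(\cdot\viv t,x)\}_{x\geq0}$ and integrating the pathwise bound against the mixing weights.

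For the pathwise comparison, fix $0\leq x_1<x_2$ and $a>0$. Using the explicit expression \eqref{eq:hsQuench0} for $\rho^R_u(\cdot\viv t,x_i)$ and cross-multiplying by the strictly positive normalizing integrals (finite by \eqref{eq:sheposfinite}), a direct algebraic manipulation reduces \eqref{eq:comp2dens} to
\begin{equation*}
\int_0^a\!\!\int_a^\infty \det\!\bigl[\mathcal{Z}_{u-\frac{1}{2}}(t,x_i\viv 0,y_j)\bigr]_{i,j\in\{1,2\}}\, e^{W_u(y_1)+W_u(y_2)}\,\dd y_2\,\dd y_1 \;\geq\; 0.
\end{equation*}
Since the integration range forces $y_1<a<y_2$, the integrand is pointwise nonnegative by Lemma~\ref{le:ptstdom}. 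To make this hold simultaneously in $(x_1,x_2,a)$ on a single full-probability event $\Omega_0$, I would combine (i) the joint continuity of $(x,y)\mapsto \mathcal{Z}_{u-\frac{1}{2}}(t,x\viv 0,y)$ on $[0,\infty)^2$ from Proposition~\ref{pr:greens}; (ii) Lemma~\ref{le:ptstdom} applied to each rational quadruple satisfying $x_1<x_2$, $y_1<y_2$; and (iii) the positivity \eqref{eq:sheposfinite}. Intersecting these countably many almost-sure events and extending by continuity yields $\Omega_0$ on which the determinant is nonnegative for all admissible real quadruples, giving \eqref{eq:comp2dens} uniformly.

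For the first-moment bound, rearranging \eqref{eq:densitysam2x} produces the mixture representation
\begin{equation*}
\tilde\rho^R_u(y\viv t) \;=\; \int_0^\infty \rho^R_u(y\viv t,x)\,\nu(\dd x),\qquad \nu(\dd x) \;:=\; \frac{Z_{u-\frac{1}{2}}(t,x)\,e^{\tilde W_u(x)}\,\dd x}{\int_0^\infty Z_{u-\frac{1}{2}}(t,x')\,e^{\tilde W_u(x')}\,\dd x'},
\end{equation*}
where $\nu$ is a $\Prob$-a.s.\ well-defined random probability density on $[0,\infty)$ when $u<0$, thanks to \eqref{eq:asfinite}. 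Multiplying by $y$ and applying Tonelli (the integrand is nonnegative) yields
\begin{equation*}
\int_0^\infty y\,\tilde\rho^R_u(y\viv t)\,\dd y \;=\; \int_0^\infty \left(\int_0^\infty y\,\rho^R_u(y\viv t,x)\,\dd y\right)\nu(\dd x).
\end{equation*}
The first part, applied with $x_1=0$ and $x_2=x$, implies $\int_0^\infty y\,\rho^R_u(y\viv t,x)\,\dd y\geq \int_0^\infty y\,\rho^R_u(y\viv t,0)\,\dd y$ in $[0,\infty]$ for every $x\geq0$ on $\Omega_0$, so pulling the lower bound outside the $\nu$-integral gives \eqref{eq:endptcompdom}.

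The main obstacle I anticipate is the bookkeeping of a single probability-one event on which all the inequalities hold simultaneously in the continuous parameters; this requires promoting the pointwise almost-sure statement of Lemma~\ref{le:ptstdom} to a uniform one via the joint continuity of the Green's function field. A minor subtlety is that $\int_0^\infty y\,\rho^R_u(y\viv t,x)\,\dd y$ may a priori equal $+\infty$; however, the dominance is stated in $[0,\infty]$ and is unaffected, and the right-hand side of \eqref{eq:endptcompdom} will in any case be shown finite (of order $\psi'(2|u|)$) via the explicit structure of $\nu$.
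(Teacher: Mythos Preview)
Your proposal is correct and follows essentially the same route as the paper. For \eqref{eq:comp2dens} both you and the paper reduce to the nonnegativity of the integrated determinant from Lemma~\ref{le:ptstdom} and use continuity to pass from rational to real parameters; for \eqref{eq:endptcompdom} your mixture representation $\tilde\rho^R_u(\cdot\viv t)=\int_0^\infty \rho^R_u(\cdot\viv t,x)\,\nu(\dd x)$ is exactly the content of the paper's cross-multiplied inequality \eqref{eq:compwoCoup} integrated against $e^{\tilde W_u(x)}\dd x$, just packaged more transparently. The only cosmetic difference is that the paper secures the uniform-in-$x$ inequality by appealing to continuity of the mild solutions $x\mapsto Z_{u-\frac{1}{2}}(t,x)$ after cross-multiplying, whereas you obtain it earlier via the joint continuity of the Green's function field; both are valid and yield the same single full-probability event.
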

\begin{proof}
By Lemma~\ref{le:ptstdom}, for any fixed $u\in \R$, $t>0$, $0\leq x_1<x_2$, with $\Prob$ probability one, 
 \[
\mathcal{Z}_{u-\frac{1}{2}}(t,x_1\viv 0, y_1)\mathcal{Z}_{u-\frac{1}{2}}(t,x_2\viv 0, y_2)\geq \mathcal{Z}_{u-\frac{1}{2}}(t,x_1\viv 0, y_2)\mathcal{Z}_{u-\frac{1}{2}}(t,x_2\viv 0, y_1),
\]
for all $0\leq y_1<y_2$ simultaneously (one can first consider rational $y_1,y_2$ then use the continuity of $\mathcal{Z}$). By \eqref{eq:sheposfinite}, $\{\mathcal{Z}_{u-\frac{1}{2}}(t,x_i\viv 0, y)\}_{i=1,2}$ are $\Prob$-almost surely integrable with respect to the density \eqref{eq:density1}. Therefore, there exists an event $\Omega_0$ with $\Prob(\Omega_0)=1$ such that for any $\omega \in \Omega_0$, we can integrate over any rectangles $(y_1,y_2)\in [0,a)\times [a,\infty)$ with $a>0$ to obtain
\begin{align*}
&\int_0^a\mathcal{Z}_{u-\frac{1}{2}}(t,x_1\viv 0, y_1) \exp (W_{u}(y_1))\dd y_1 \int_a^\infty \mathcal{Z}_{u-\frac{1}{2}}(t,x_2\viv 0, y_2) \exp (W_{u}(y_2))\dd y_2\\
&\geq \int_a^\infty\mathcal{Z}_{u-\frac{1}{2}}(t,x_1\viv 0, y_2) \exp (W_{u}(y_2))\dd y_2 \int_0^a \mathcal{Z}_{u-\frac{1}{2}}(t,x_2\viv 0, y_1) \exp (W_{u}(y_1))\dd y_1,
\end{align*}
and both sides are positive and finite. Add 
\[
\int_0^a\mathcal{Z}_{u-\frac{1}{2}}(t,x_1\viv 0, y) \exp (W_{u}(y))\dd y
\int_0^a\mathcal{Z}_{u-\frac{1}{2}}(t,x_2\viv 0, y) \exp (W_{u}(y))\dd y
\]
to both sides and then divide by the product $Z_{u-\frac{1}{2}}(t,x_1)Z_{u-\frac{1}{2}}(t,x_2)$, we obtain  \eqref{eq:comp2dens}.

To prove \eqref{eq:endptcompdom}, by \eqref{eq:comp2dens}, for any fixed $x>0$,
\[\begin{aligned}&\int_0^{\infty} y \rho^R_u(y \viv t,0) \dd y =\frac{\int_0^{\infty}y\mathcal{Z}_{u-\frac{1}{2}}(t,0\viv 0,y)\exp (W_{u}(y))\dd y}{\int_0^{\infty}\mathcal{Z}_{u-\frac{1}{2}}(t,0\viv 0,y')\exp(W_{u}(y')) \dd y'} \\& \leq \int_0^{\infty} y \rho^R_u(y \viv t,x) \dd y = \frac{\int_0^{\infty}y\mathcal{Z}_{u-\frac{1}{2}}(t,x\viv 0,y)\exp (W_{u}(y))\dd y}{\int_0^{\infty}\mathcal{Z}_{u-\frac{1}{2}}(t,x\viv 0,y')\exp(W_{u}(y')) \dd y'} \quad \Prob\text{-almost surely.}
\end{aligned}\]
Multiplying the positive denominators on both sides, this is equivalent to the inequality
\begin{equation}\label{eq:compwoCoup}\begin{aligned}
&{\int_0^{\infty}y\mathcal{Z}_{u-\frac{1}{2}}(t,0\viv 0,y)\exp (W_{u}(y))\dd y}{\int_0^{\infty}\mathcal{Z}_{u-\frac{1}{2}}(t,x\viv 0,y')\exp(W_{u}(y')) \dd y'}\\
& \leq {\int_0^{\infty}\mathcal{Z}_{u-\frac{1}{2}}(t,0\viv 0,y')\exp(W_{u}(y')) \dd y'}{\int_0^{\infty}y\mathcal{Z}_{u-\frac{1}{2}}(t,x\viv 0,y)\exp (W_{u}(y))\dd y}.
 \end{aligned}\end{equation}
 By Proposition~\ref{pr:greens}~\eqref{it:convolution}, both sides are continuous in $x\in[0,\infty)$ $\Prob$-almost surely as the integrals are mild solutions to SHE. It follows that \eqref{eq:compwoCoup} holds for all $x\in[0,\infty)$ simultaneously. 
 
When $u<0$, by \eqref{eq:asfinite}, the left-hand side of \eqref{eq:compwoCoup} is integrable in $x$ with respect to the density \eqref{eq:resample} $\Prob$-almost surely. Similar integrability holds for the right-hand side.
Thus with  probability one, we have an inequality 
\[\begin{aligned}
&{\int_0^{\infty}y\mathcal{Z}_{u-\frac{1}{2}}(t,0\viv 0,y)\exp (W_{u}(y))\dd y}{\int_0^{\infty} \int_0^{\infty}\mathcal{Z}_{u-\frac{1}{2}}(t,x\viv 0,y')\exp(W_{u}(y')) \dd y'} \exp(\tilde{W}_u(x))\dd x
\\
& \leq {\int_0^{\infty}\mathcal{Z}_{u-\frac{1}{2}}(t,0\viv 0,y')\exp(W_{u}(y')) \dd y'}{\int_0^{\infty}\int_0^{\infty}y\mathcal{Z}_{u-\frac{1}{2}}(t,x\viv 0,y)\exp (W_{u}(y))\dd y}\exp(\tilde{W}_u(x))\dd x,
 \end{aligned}
\] with all the integrals being positive and finite. This is equivalent to
\[\frac{\int_0^{\infty}y\mathcal{Z}_{u-\frac{1}{2}}(t,0\viv 0,y)\exp (W_{u}(y))\dd y}{\int_0^{\infty}\mathcal{Z}_{u-\frac{1}{2}}(t,0\viv 0,y')\exp(W_{u}(y')) \dd y'} \leq \frac{\int_0^{\infty} \int_0^{\infty}y\mathcal{Z}_{u-\frac{1}{2}}(t,x\viv 0,y)\exp (W_{u}(y))\exp(\tilde{W}_u(x)) \dd y \dd x}{\int_0^{\infty} \int_0^{\infty}\mathcal{Z}_{u-\frac{1}{2}}(t,x'\viv 0,y')\exp(W_{u}(y'))\exp(\tilde{W}_u(x')) \dd y' \dd x'},
\]
and the proof is complete.
\end{proof}

\begin{remark}
We would like to point out that in \cite{alberts2022green}, the authors proved a strict inequality `$>$' for \eqref{eq:determinant} after coupling the full space CDRP. Another proof using a different argument has been conducted in \cite{hang2020continuity}. We do not pursue it here.

Since we only need an inequality with `$\leq$', an alternative approach is to prove the strong Markov property for polymer measures $\mathbb{Q}_x^{u,t}$ and then use the coupling method. For continuum time models, proving the strong Markov property requires both the Markov and the Feller properties of the associated semigroup \cite[Theorem 6.17]{le2016brownian}, which we believe can also be done for this model. 
\end{remark}

To complete the proof of Theorem~\ref{thm:var} and Corollary~\ref{co:tightnessleft}, it remains to compute the annealed mean of the stationary measure-to-measure polymer's endpoint $\Expe\mathbb{E}^{\mathbb{Q}^{u,t}}_{\tilde{W}_u}(X_t)$.

\begin{lemma}\label{le:compendpt} For any $u<0$ and $t>0$, we have
\begin{equation}\label{eq:twosideddensity}
\Expe \int_0^{\infty}y \tilde{\rho}^{R}_{u}(y\viv t) \dd y = \psi'(2|u|).
\end{equation}
The functions $\psi$ and $\psi'$ are the digamma and trigamma functions defined in \eqref{eq:digammafunc}--\eqref{eq:trigammafunc}.
\end{lemma}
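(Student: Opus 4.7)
The plan is to exploit the fact that $\tilde\rho^R_u(\cdot \viv t)$ corresponds to a polymer with both ends sampled from stationary Brownian data, which should let the environment dependence collapse via the stationarity \eqref{eq:shestatdist} and reduce everything to an exponential functional of a single Brownian motion with drift. Once that reduction is achieved, the Dufresne identity \eqref{eq:expoBM} combined with a differentiation-in-parameter trick will produce the trigamma function.

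First I would apply the time-reversal symmetry of the Green's function (Proposition~\ref{pr:greens}~\eqref{it:trev}) to rewrite
\[
\tilde Z_{u-\frac12}(t,y) := \int_0^\infty \mathcal{Z}_{u-\frac12}(t,x\viv 0,y)\, e^{\tilde W_u(x)} \dd x
\]
in terms of the mild solution of the half-space SHE started from $e^{\tilde W_u(\cdot)}$, namely $Z^{\tilde W}_{u-\frac12}(t,y) := \int_0^\infty \mathcal{Z}_{u-\frac12}(t,y\viv 0,x)\, e^{\tilde W_u(x)} \dd x$. This gives $\{\tilde Z_{u-\frac12}(t,y)\}_y \stackrel{\text{law}}{=} \{Z^{\tilde W}_{u-\frac12}(t,y)\}_y$ jointly with $W$, since the Green's function field is independent of $(W,\tilde W)$. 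Because $e^{\tilde W_u(\cdot)}$ is a stationary initial condition, \eqref{eq:shestatdist} applied to $Z^{\tilde W}$ then yields that $\{Z^{\tilde W}_{u-\frac12}(t,y)/Z^{\tilde W}_{u-\frac12}(t,0)\}_y$ has the same law as $\{e^{\tilde W^{\mathrm{new}}_u(y)}\}_y$, where $\tilde W^{\mathrm{new}}$ is a standard Brownian motion which, being a measurable functional of $(\tilde W,\xi)$, is independent of $W$.

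Substituting these two identities into \eqref{eq:densitysam2x} after dividing numerator and denominator by $Z^{\tilde W}_{u-\frac12}(t,0)$, the environment factor cancels and
\[
\Expe \int_0^\infty y\, \tilde\rho^R_u(y\viv t)\,\dd y = \Expe \frac{\int_0^\infty y\, e^{W_u(y)+\tilde W^{\mathrm{new}}_u(y)}\,\dd y}{\int_0^\infty e^{W_u(y)+\tilde W^{\mathrm{new}}_u(y)}\,\dd y}.
\]
Since $W + \tilde W^{\mathrm{new}} \stackrel{\text{law}}{=} \sqrt{2}\,B$ with $B$ a standard Brownian motion, the change of variables $s=2y$ together with the Brownian scaling $\sqrt{2}\,B(s/2) \stackrel{\text{law}}{=} B^*(s)$ collapses the right-hand side to $\tfrac12\, \Expe\!\left[\int_0^\infty s\, e^{W_u(s)}\,\dd s\big/ \int_0^\infty e^{W_u(s)}\,\dd s\right]$, now a functional of a single Brownian motion with drift $u<0$.

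The final step uses $\partial_u \log \int_0^\infty e^{W_u(s)}\,\dd s = \int_0^\infty s\, e^{W_u(s)}\,\dd s \big/ \int_0^\infty e^{W_u(s)}\,\dd s$ together with Dufresne's identity \eqref{eq:expoBM}: since $\int_0^\infty e^{W_u(s)}\,\dd s \stackrel{\text{law}}{=} 2/\gamma(-2u)$ and $\Expe \log \gamma(\alpha) = \psi(\alpha)$, we get $\Expe \log \int_0^\infty e^{W_u(s)}\,\dd s = \log 2 - \psi(-2u)$, whose $u$-derivative equals $2\psi'(-2u) = 2\psi'(2|u|)$; multiplying by $\tfrac12$ yields the claimed $\psi'(2|u|)$. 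The main technical obstacle will be justifying the interchange of expectation and $u$-derivative in this last step, which will require uniform integrability of the logarithmic-derivative ratio on a small neighborhood of the fixed $u<0$; this should follow from monotone bounds of the form $B_u/A_u \leq \bar B_{u-\eps}/\underline A_{u+\eps}$ combined with an $L^2$ control using Cauchy-Schwarz and moments of $1/\gamma(-2u\mp 2\eps)$ via Dufresne.
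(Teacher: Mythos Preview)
Your proposal follows essentially the same route as the paper: time reversal of the Green's function, stationarity \eqref{eq:shestatdist} to collapse the $\xi$-dependence, Brownian scaling to reduce to a single drifted Brownian motion, and Dufresne's identity with differentiation in the drift. The paper's only variation is cosmetic: instead of applying stationarity directly to $Z^{\tilde W}$, it first swaps $W\leftrightarrow\tilde W$ under the expectation (using that they are i.i.d.) and then applies \eqref{eq:shestatdist} to $Z_{u-\frac12}(t,\cdot)$; the outcome is identical.

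One caveat on your final step: the Cauchy--Schwarz route you sketch for the interchange of $\Expe$ and $\partial_u$ requires $\bigl\|\int_0^\infty y\,e^{W(y)+(u+\eps)y}\,dy\bigr\|_2<\infty$, which fails whenever $u+\eps\geq -\tfrac12$, so it does not cover the full range $u<0$. The paper's device is simpler and works for all $u<0$: apply Fatou's lemma to the nonnegative difference quotients $(\log A_{v+h}-\log A_v)/h$ to obtain $\Expe[B_v/A_v]\leq 2\psi'(-2v)<\infty$ for every $v<0$; then, since $v\mapsto B_v/A_v$ is nondecreasing (Cauchy--Schwarz on the probability measure $e^{W_v(y)}\,dy/A_v$), the integrable function $B_{u+\eps}/A_{u+\eps}$ dominates the difference quotients on a neighborhood of $u$, and dominated convergence (via the mean value theorem) finishes.
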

\begin{proof}
By definition \eqref{eq:densitysam2x},\[
\begin{aligned}
\Expe \int_0^{\infty} y \tilde{\rho}^{R}_{u}(y\viv t) \dd y&=\Expe \frac{\int_0^{\infty}y\int_0^{\infty}\mathcal{Z}_{u-\frac{1}{2}}(t,x\viv 0,y)e^{\tilde{W}_{u}(x)}\dd xe^{W_{u}(y)}\dd y}{\int_0^{\infty}\int_0^{\infty}\mathcal{Z}_{u-\frac{1}{2}}(t,x'\viv 0,y')e^{\tilde{W}_{u}(x')} \dd x'e^{W_{u}(y')} \dd y' }\\
& = \Expe \frac{\int_0^{\infty}y\int_0^{\infty}\mathcal{Z}_{u-\frac{1}{2}}(t,y\viv 0,x)e^{\tilde{W}_{u}(x)}\dd xe^{W_{u}(y)}\dd y}{\int_0^{\infty}\int_0^{\infty}\mathcal{Z}_{u-\frac{1}{2}}(t,y'\viv 0,x')e^{\tilde{W}_{u}(x')} \dd x'e^{W_{u}(y')} \dd y' },\end{aligned}\]
where the second equality follows from the time reversal of the field $\mathcal{Z}$ in \eqref{eq:trev}. Since $W$ and $\tilde{W}$ are independent and identical, we can interchange them under the expectation, and the above equals to \[
\Expe \frac{\int_0^{\infty}y Z_{u-\frac{1}{2}}(t,y) e^{\tilde{W}_{u}(y)}\dd y}{\int_0^{\infty}Z_{u-\frac{1}{2}}(t,y') e^{\tilde{W}_{u}(y')} \dd y'}= \Expe \frac{\int_0^{\infty}y Z_{u-\frac{1}{2}}(t,y)Z_{u-\frac{1}{2}}(t,0)^{-1} e^{\tilde{W}_{u}(y)}\dd y}{\int_0^{\infty}Z_{u-\frac{1}{2}}(t,y')Z_{u-\frac{1}{2}}(t,0)^{-1} e^{\tilde{W}_{u}(y')} \dd y' } = \Expe \frac{\int_0^{\infty}y  e^{W_u(y)}e^{\tilde{W}_{u}(y)} \dd y}{\int_0^{\infty}  e^{W_u(y')} e^{\tilde{W}_{u}(y')}\dd y' },
\] where we used the invariance \eqref{eq:shestatdist} and our assumption that $W,\tilde{W}, \xi$ are all independent. The last expression is time independent and can be computed explicitly. In fact, since $W(y)+\tilde{W}(y)$ has the same distribution as $W(2y)$, the above expectation equals to
\begin{equation}\label{eq:midptmean}\begin{aligned}
& \Expe \frac{\int_0^{\infty}y \exp(W(2y)+2uy) \dd y}{\int_0^{\infty} \exp(W(2y')+2uy') \dd y' } = \frac{1}{2} \Expe \frac{\int_0^{\infty}y \exp(W(y)+uy) \dd y}{\int_0^{\infty} \exp(W(y')+uy') \dd y' } = \frac{1}{2} \Expe \frac{\dd}{\dd v} \log \int_0^{\infty} e^{W(y)+vy} \dd y\bigg\mid_{v=u}\\
 & = \frac{1}{2} \frac{\dd}{\dd v} \Expe \log \int_0^{\infty} e^{W(y)+vy} \dd y\bigg\mid_{v=u} = \frac{1}{2} \frac{\dd}{\dd v} \left(-\psi(-2v)+\log{2} \right)\bigg\mid_{v=u} = \psi'(-2u).
\end{aligned}\end{equation}
 In the second from last equality, we have used \eqref{eq:expoBM} together with the fact that $\Expe \left[\log(1/\gamma(\theta))\right]=-\psi(\theta)$ for any $\theta>0$. To check the interchange of differentiation and expectation, we first use Fatou's lemma to derive that $\Expe \frac{\int_0^{\infty}y \exp(W(y)+uy) \dd y}{\int_0^{\infty} \exp(W(y')+uy') \dd y' } \leq 2\psi'(-2u)$, then the interchange can be justified by applying the dominated convergence theorem and the mean value theorem.
\end{proof}
\begin{remark}
With the convergence conjecture mentioned in Remark~\ref{re:conjconv}, \eqref{eq:twosideddensity} should also equal to the annealed mean of the midpoint position of long half-space polymers (i.e. as $t \to \infty$) with both endpoints fixed
near the wall at times $0$ and $2t$ in the bound phase $u<0$. For a discussion on such midpoint distributions, see \cite[Section IV B]{barraquand2021kardar}.
\end{remark}





\section{Symmetry: proof of Proposition~\ref{pr:symmetry}}\label{se:symmetry}
The goal of this section is to prove the symmetry identity in Proposition~\ref{pr:symmetry}. This symmetry would allow us to study the height function statistics and polymer endpoint displacement for all values of $u>0$, using the results we obtained in the case of $u<0$. It follows from results in \cite{barraquand2020half} regarding the half-space Macdonald processes. We learned this symmetry from \cite[Claim 4.9]{barraquand2020halfkpz}, where the authors  stated a proof of convergence heuristically. This convergence was later rigorously proved in \cite{barraquand2023stationary}, so we now summarize these results to provide a proof. 

For this section, we use uppercase letters (e.g. $S, T,X,Y$) for continuum variables and lowercase letters (e.g. $r,s,t,x,y,w$) for discrete variables. We use $\mathbf{Q}$   to denote the set of rational numbers, and $\mathbf{Z}$ to denote the set of integers.

We use the inhomogeneous half-space log-gamma (HSLG) polymer models as defined in \cite[Definition 2.1]{barraquand2023stationary}. Following the notations there, let $\alpha_{\circ}, \alpha_1, \alpha_2, \ldots$ be real parameters such that $\alpha_i+\alpha_{\circ}>0$ for all $i \geq 1$ and $\alpha_i+\alpha_j>0$ for all $i \neq j \geq 1$. Let $\left(\varpi_{i, j}\right)_{i \geqslant j}$ be a family of independent random variables such that for $i>j, \varpi_{i, j} \sim 1/\gamma\left(\alpha_i+\alpha_j\right)$ and $\varpi_{i, i} \sim 1/\gamma\left(\alpha_{\circ}+\alpha_i\right)$. The partition function of the half-space log-gamma polymer is defined as \begin{equation}\label{eq:hsloggam}
z(n, m)=\sum_{\pi:(1,1) \rightarrow(n, m)} \prod_{(i, j) \in \pi} \varpi_{i, j},
\end{equation}
where the sum is over all up-right paths from $(1,1)$ to $(n, m)$ in the octant $\left\{(i, j) \in \mathbf{Z}_{>0}^2: i \geqslant j\right\}$.

Derived from a symmetry result of the half-space Macdonald process, the partition functions $z(m,m)$ for any $m\geq 1$ as defined in \eqref{eq:hsloggam} will not change their laws when the parameters $\alpha_{\circ}$ and $\alpha_1$ are interchanged. 
This is a generalization of \cite[Proposition 8.1]{barraquand2020half} for inhomogeneous HSLG polymer.

\begin{proposition}\label{pr:loggammasymmetry}
For any $m \geq 1$, the law of the partition function $z(m,m)$ with parameters $\alpha_{\circ},\alpha_1,\alpha_2, ..., \alpha_m$ equals to the law of the partition function $\tilde{z}(m,m)$ with parameters $\alpha_{1},\alpha_{\circ},\alpha_2, ..., \alpha_m$.
\end{proposition}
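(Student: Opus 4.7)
The plan is to deduce the symmetry of $z(m,m)$ from an underlying symmetry at the level of the half-space Macdonald process, following the scheme of \cite[Proposition 8.1]{barraquand2020half} but keeping track of the inhomogeneity in the parameters $\alpha_1,\ldots,\alpha_m$. Concretely, I would apply geometric RSK to the triangular array $(\varpi_{i,j})_{1 \leq j \leq i \leq m}$ to obtain a Whittaker pattern whose $(m,m)$ entry equals $\log z(m,m)$, and then identify the law of this pattern with an inhomogeneous half-space Whittaker measure with parameters $(\alpha_{\circ};\alpha_1,\ldots,\alpha_m)$, as done in \cite{barraquand2020half,imamura2022solvable}.

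The next step is to invoke the symmetry of the half-space Macdonald process: its law is invariant under exchange of the boundary parameter and any of the bulk parameters, because the half-space Pieri kernels together with the boundary specialization satisfy Cauchy-type identities that are symmetric in $(\alpha_{\circ},\alpha_1)$. This symmetry is the engine behind \cite[Proposition 8.1]{barraquand2020half}, and the only thing that changes in the inhomogeneous setting is that the Pieri transitions from row $k-1$ to row $k$ now carry a parameter $\alpha_k$ instead of a common $\alpha$; since these transitions for $k \geq 2$ are untouched by the swap $\alpha_\circ \leftrightarrow \alpha_1$, the same argument goes through. Passing to the log-gamma (Whittaker) degeneration along the lines of \cite[Section 6]{barraquand2020half} preserves this symmetry and transports it to the joint law of the $z(k,k)$ for $k=1,\ldots,m$, which in particular yields the claim for $z(m,m)$.

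The main obstacle I anticipate is the bookkeeping to ensure that in the inhomogeneous setting the swap of $\alpha_{\circ}$ and $\alpha_1$ still produces a bona fide half-space Macdonald measure (so that both sides of the desired equality are really pushforwards of the \emph{same} measure with parameters permuted), rather than something whose structure is broken by the asymmetric treatment of $\alpha_1$ versus $\alpha_2,\ldots,\alpha_m$. Once this is verified, the rest is a direct degeneration argument: the log-gamma limit of Macdonald $\to$ Whittaker is standard, and the map from the top-right entry of the Whittaker pattern to $\log z(m,m)$ under geometric RSK is by now well documented in the half-space literature. Hence the proof reduces to verifying a Cauchy identity for Macdonald polynomials with boundary specialization that is manifestly symmetric in $(\alpha_\circ,\alpha_1)$, which I would extract from the framework in \cite{barraquand2020half}.
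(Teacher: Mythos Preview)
Your plan is correct in outline and close to the paper's argument: both pass through the half-space Macdonald (q-Whittaker) framework and degenerate to the log-gamma polymer, and both recognize that the inhomogeneity in $\alpha_2,\ldots,\alpha_m$ is irrelevant to the swap $\alpha_\circ \leftrightarrow \alpha_1$. The one substantive difference is in how the boundary--bulk symmetry itself is justified. You propose to exhibit a Cauchy-type identity directly symmetric in $(\alpha_\circ,\alpha_1)$, and you correctly flag as the main obstacle the bookkeeping needed to check that the swapped object is still a genuine half-space Macdonald measure. The paper sidesteps this obstacle entirely with a cleaner trick: it invokes \cite[Proposition~2.6]{barraquand2020half}, which says that for the marginal $\lambda_1$, the q-Whittaker measure with bulk parameters $(q^{\alpha_1},\ldots,q^{\alpha_m})$ and boundary parameter $q^{\alpha_\circ}$ coincides with the one with bulk parameters $(q^{\alpha_1},\ldots,q^{\alpha_m},q^{\alpha_\circ})$ and boundary parameter $0$. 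This absorbs the boundary parameter into the list of bulk parameters, where permutation symmetry is trivial (symmetry of Macdonald polynomials in their variables, \cite[Lemma~2.7]{barraquand2023stationary}); one then applies \cite[Proposition~2.6]{barraquand2020half} in reverse to extract $q^{\alpha_1}$ as the new boundary parameter. Your approach would work, but the paper's reduction to bulk-only symmetry is shorter and avoids exactly the verification you were worried about.
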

\begin{proof}
The law of the partition function $z(m,m)$ with parameters $\alpha_{\circ},\alpha_1, ..., \alpha_m$ equals to the limiting law of  $(1-q)^{2m-1} q^{-\lambda_1}$ as $q\to 1$, where $\lambda_1$ is distributed according to the q-Whittaker measure \cite{o2014geometric, barraquand2020half} with parameters $\{(q^{\alpha_1}, ..., q^{\alpha_m}),  q^{\alpha_{\circ}}\}$. One can think of $(q^{\alpha_1}, ..., q^{\alpha_m})$ as the bulk parameters, and $q^{\alpha_{\circ}}$ as the diagonal parameter. By \cite[Proposition 2.6]{barraquand2020half},  $\lambda_1$ has the same distribution as $\pi_1$, where $\pi_1$ is distributed to the q-Whittaker measure with parameters $\{(q^{\alpha_1}, ..., q^{\alpha_m} , q^{\alpha_{\circ}}),0 \}$, where $(q^{\alpha_1}, ..., q^{\alpha_m} , q^{\alpha_{\circ}})$ is the bulk parameter and $0$ is the diagonal parameter. Since the q-Whittaker measure is invariant under permutation of its bulk parameters (see \cite[Lemma 2.7]{barraquand2023stationary} for the proof and further applications of this symmetry), $\pi_1$ equals in law to $\tilde{\pi}_1$, which is defined to be distributed as the q-Whittaker measure with parameters $\{(q^{\alpha_{\circ}},q^{\alpha_2}, ..., q^{\alpha_m},  q^{\alpha_1}),0\}$. Apply \cite[Proposition 2.6]{barraquand2020half} again backward, we have $\tilde{\pi}_1$ has the same distribution as $\tilde{\lambda}_1$, which is distributed according to the q-Whittaker measure with parameters $\{(q^{\alpha_{\circ}},q^{\alpha_2}, ..., q^{\alpha_m}),  q^{\alpha_1}\}$. Thus we have \[(1-q)^{2m-1} q^{-\lambda_1} \stackrel{\text{law}}{=}  (1-q)^{2m-1} q^{-\tilde{\lambda}_1}.\] Taking the limit $q \to 1$ as in \cite[Proposition 8.1]{barraquand2020half}, we have the partition function $z(m,m)$ with parameters $\alpha_{\circ},\alpha_1, ..., \alpha_m$ equals in law to the partition function $\tilde{z}(m,m)$ with parameters $\alpha_1,\alpha_{\circ},\alpha_2, ..., \alpha_m$.
\end{proof}

From now on, we assume $u-v>0$. We set $\alpha_\circ=u$ and $\alpha_1=-v$, and for all $i\geq 2$, $\alpha_i=\alpha$ for some $\alpha>\min{(0,-u,v)}$. We use $z_{u,-v}^{\alpha}(n,m):= z(n,m)$ to emphasize the dependence on parameters $u,v$ and $\alpha$. 
We define the ratio \[z_{u,-v}^{\text{stat},\alpha}(m,m) :=\frac{z_{u,-v}^{\alpha}(m,m)}{\varpi_{1,1}}.\]
\begin{corollary}\label{cr:ratioequalinlaw}
   {Assume $u>v$.} For any fixed $m\geq 1$, \begin{equation}\label{eq:ratioidinlaw}
z_{u,-v}^{\text{stat},\alpha}(m,m) \stackrel{\text{law}}{=} z_{-v,u}^{\text{stat},\alpha}(m,m).
\end{equation}
\end{corollary}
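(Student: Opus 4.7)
The plan is to derive the stationary partition function identity from Proposition~\ref{pr:loggammasymmetry} by cancelling the corner weight $\varpi_{1,1}$, which is an independent factor whose distribution is insensitive to the swap $\alpha_\circ \leftrightarrow \alpha_1$.

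First, I would observe that every up-right path from $(1,1)$ to $(m,m)$ passes through the corner $(1,1)$, so the weight $\varpi_{1,1}$ factors out of the sum \eqref{eq:hsloggam}. This gives
\begin{equation*}
z_{u,-v}^{\alpha}(m,m) = \varpi_{1,1}\cdot z_{u,-v}^{\text{stat},\alpha}(m,m),
\end{equation*}
with $z_{u,-v}^{\text{stat},\alpha}(m,m)$ measurable with respect to $\{\varpi_{i,j}\}_{(i,j)\neq(1,1)}$, hence independent of $\varpi_{1,1}$. Because the law of $\varpi_{1,1}$ depends only on $\alpha_\circ+\alpha_1=u-v$, the analogous factorization in the swapped parameter setting yields $z_{-v,u}^{\alpha}(m,m) = \tilde{\varpi}_{1,1}\cdot z_{-v,u}^{\text{stat},\alpha}(m,m)$ with $\tilde{\varpi}_{1,1}\stackrel{\text{law}}{=}\varpi_{1,1}\sim 1/\gamma(u-v)$ and $\tilde{\varpi}_{1,1}$ independent of $z_{-v,u}^{\text{stat},\alpha}(m,m)$.

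Next, Proposition~\ref{pr:loggammasymmetry} gives
\begin{equation*}
\varpi_{1,1}\cdot z_{u,-v}^{\text{stat},\alpha}(m,m) \stackrel{\text{law}}{=} \tilde{\varpi}_{1,1}\cdot z_{-v,u}^{\text{stat},\alpha}(m,m).
\end{equation*}
Taking logarithms converts this into an equality in law of sums of independent random variables. The characteristic function of $\log\varpi_{1,1}=-\log\gamma(u-v)$ at $t\in\R$ equals $\Gamma(u-v-it)/\Gamma(u-v)$; since the Gamma function is zero-free on $\mathbb{C}$, this is non-vanishing on all of $\R$. Dividing the characteristic function of $\log z_{u,-v}^{\alpha}(m,m)$ by that of $\log\varpi_{1,1}$ (and analogously on the swapped side) yields equality of the characteristic functions of $\log z_{u,-v}^{\text{stat},\alpha}(m,m)$ and $\log z_{-v,u}^{\text{stat},\alpha}(m,m)$. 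Fourier uniqueness then delivers the desired identity in law.

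The only subtle step, as I see it, is checking that the characteristic-function cancellation is actually justified -- which amounts to confirming that all three factorization pieces are strictly positive and almost surely finite, and that the logarithms have proper characteristic functions. Since $z_{u,-v}^{\text{stat},\alpha}(m,m)$ is a finite sum of products of finitely many inverse-gamma random variables whose shape parameters are strictly positive under the standing assumptions ($u-v>0$ and $\alpha>\min(0,-u,v)$), this verification is routine. Everything else is bookkeeping.
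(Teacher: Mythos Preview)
Your proposal is correct and follows essentially the same approach as the paper: factor out the corner weight $\varpi_{1,1}$ (independent of the rest, with law unchanged under the swap since it depends only on $\alpha_\circ+\alpha_1=u-v$), apply Proposition~\ref{pr:loggammasymmetry} to the full partition functions, take logarithms, and cancel via the nonvanishing characteristic function of a log--inverse-gamma variable. Your write-up is slightly more explicit about the characteristic function computation and the routine positivity/finiteness checks, but the argument is the same.
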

\begin{proof}
Proposition~\ref{pr:loggammasymmetry} says that for any $m \geq 1$,
$z_{u,-v}^{\alpha}(m,m) \stackrel{\text{law}}{=} z_{-v,u}^{\alpha}(m,m)$. Since $\left(\varpi_{i, j}\right)_{i \geqslant j}$ are independent and $\varpi_{1,1}$ appears in the product in all of the terms in \eqref{eq:hsloggam},   the ratio $z_{u,-v}^{\text{stat},\alpha}(m,m)$ is independent of ${\varpi_{1,1}}$ and we have
$
\log z_{u,-v}^{\text{stat},\alpha}(m,m)  + \log \varpi_{1,1} \stackrel{\text{law}}{=} \log z_{-v,u}^{\text{stat},\alpha}(m,m)+ \log \varpi_{1,1}$. Since {$\alpha_0+\alpha_1=u-v>0$} and the characteristic function of a log-gamma random variable is always nonzero, the independence implies \eqref{eq:ratioidinlaw}.
\end{proof}

We next take the weak noise scaling limit to obtain the result for the half-space KPZ equation, based on  \cite[Theorem 5.4]{barraquand2023stationary} for the above half-space HSLG models. 
\begin{proposition}\label{pr:weakDisorderLimit}
{Assume $u>v$}. For any fixed $T,X\geq 0$, $u>v$, with $\alpha=\alpha^{(n)}=\frac{1}{2}+\sqrt{n}$, we have
\begin{equation}\label{eq:weakDisLim}
(\sqrt{n})^{n T+\sqrt{n}X} {z_{u, -v}^{\text{stat},\alpha^{(n)}} \left(\frac{n T}{2}+\sqrt{n}X+1, \frac{n T}{2}+1\right)}\stackrel{\text{law}}{\to} \int_0^{\infty}\mathcal{Z}_{u-\frac{1}{2}}(T,X\viv 0,Y)\exp (W_{v}(Y)) \dd Y,\end{equation}
as $n\to \infty$, where
$z_{u, -v}^{\text{stat},\alpha^{(n)}} \left(\cdot , \cdot \right)$ are defined by linear interpolation when $nT/2 \notin \mathbf{Z}_{\geq 0}$ or $\sqrt{n}X \notin \mathbf{Z}_{\geq 0}$.
\end{proposition}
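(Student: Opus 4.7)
The plan is to deduce this convergence directly from \cite[Theorem 5.4]{barraquand2023stationary}, which establishes precisely the weak noise scaling limit for inhomogeneous half-space log-gamma (HSLG) polymers that is needed here. Under the scaling $\alpha^{(n)} = \frac{1}{2} + \sqrt{n}$ for the bulk parameters, that theorem provides the convergence in distribution of the normalized HSLG partition function to the mild solution of the half-space SHE with Robin boundary and Brownian-with-drift initial data. Thus the proof reduces to a parameter match-up together with a small bookkeeping argument for the ``stat'' normalization.

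First I would verify the parameter dictionary. The diagonal parameter $\alpha_\circ = u$ corresponds to the Robin boundary coefficient $u - \tfrac{1}{2}$ of the limiting SHE; the $-\tfrac{1}{2}$ shift is the same one built into \eqref{eq:hsKPZ}--\eqref{eq:hsSHE} and originates from the $\tfrac{1}{2}$ appearing in $\alpha^{(n)}$. The first-column parameter $\alpha_1 = -v$ corresponds to the drift $v$ in the initial condition $W_v(Y) = W(Y) + vY$, the sign flip arising from how the deterministic means of log-gamma weights accumulate along the boundary row. Finally, the sampling point $(nT/2 + \sqrt{n}X + 1, nT/2 + 1)$ together with the normalization $(\sqrt{n})^{nT+\sqrt{n}X}$ constitute the standard KPZ scaling used in the cited theorem; the linear-interpolation clause at non-integer sampling points introduces only an error that vanishes by continuity of the underlying lattice field.

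Next I would account for the ``stat'' normalization. The corner weight $\varpi_{1,1} \sim 1/\gamma(u-v)$ is well-defined because $u > v$, and it is an $O(1)$ random variable independent of $n$ that multiplies every lattice-path summand of $z_{u,-v}^{\alpha}$. Dividing by $\varpi_{1,1}$ therefore does not affect the scaling prefactor, and in the limit it corresponds to the absence of an extra multiplicative constant at $Y=0$ in the Brownian initial data, yielding the clean form $\exp(W_v(Y))$ on the right-hand side of \eqref{eq:weakDisLim}. The main obstacle will be making sure that the parameter conventions (the $-\tfrac{1}{2}$ shift, the sign on $\alpha_1$, and the exact form of the scaling prefactor) in \cite[Theorem 5.4]{barraquand2023stationary} line up with ours; once they do, the stated convergence in distribution is immediate.
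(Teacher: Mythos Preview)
Your approach is the same as the paper's---invoke \cite[Theorem~5.4]{barraquand2023stationary}---but you mischaracterize what remains as ``just parameter match-up.'' Theorem~5.4 there is a general convergence framework with hypotheses on (a) the bulk weights, (b) the boundary weights, and (c) the initial data; the conclusion is not automatic once the parameter dictionary is settled. Parts (a) and (b) were indeed verified in \cite[Section~6]{barraquand2023stationary} for the same $\boldsymbol{\mathcal{X}},\boldsymbol{\omega}$, but the initial data here is different: the ``stat'' normalization (dividing out $\varpi_{1,1}$) produces a \emph{one-row} initial profile $\widetilde{z}^{\text{1r}}_{-v;\alpha}(\cdot)$, whereas \cite[Theorem~1.4]{barraquand2023stationary} uses a two-parameter initial profile converging to a Hariya--Yor type measure. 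So the substantive content of the proof is to verify the initial-data hypotheses of Theorem~5.4 for this new one-row profile.

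Concretely, this means two things you did not mention: first, showing via Donsker's theorem that $\widetilde{z}^{\text{1r}}_{-v;\alpha^{(n)}}(\sqrt{n}X) \Rightarrow \exp(W_v(X))$ as a process in $X\ge 0$; second, establishing the uniform second-moment bound $\sup_n \sup_{X\ge 0} e^{-aX}\,\mathbb{E}\big[\widetilde{z}^{\text{1r}}_{-v;\alpha^{(n)}}(\sqrt{n}X)^2\big]<\infty$, which comes from the exact formula $nM_2(-v)=1+(2+2v)n^{-1/2}+O(n^{-1})$ for the second moment of inverse-gamma weights. Your treatment of the $\varpi_{1,1}$ division as ``an $O(1)$ factor that does not affect the scaling'' is also not how the argument actually runs: the paper recasts the whole partition function in the reflected-random-walk framework of \cite[(5.13), (5.19)]{barraquand2023stationary} and identifies the one-row object as the initial data of that framework, rather than tracking $\varpi_{1,1}$ separately.
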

\begin{proof}
The proof follows closely  the proof of \cite[Theorem 1.4]{barraquand2023stationary}, except that we need to verify  the convergence of a different initial condition.
Following the convention there, we let $\overline{\varpi}:=\mathbb{E}\left[\varpi_{3,2}\right] = (2\alpha-1)^{-1}$ be the mean of a generic log-gamma polymer bulk weight. Since we are dealing with ``one-row partition functions'' (see \cite{barraquand2023stationary}), we define
\[\widetilde{z}^{\text{1r}}_{u, -v ; \alpha}(t, y):=\left(\frac{1}{2 \overline{\varpi}}\right)^{2 t+y} {z_{u, -v}^{\text {stat}, \alpha}(t+y+1, t+1)},\]
and{
\[
\widetilde{z}^{\text{1r}}_{-v; \alpha}(x):=\widetilde{z}^{\text{1r}}_{u, -v ; \alpha}(0, x+1),
\]
where we drop the subscript $u$ as these weights only depend on $-v$ and $\alpha$.}
We have that for any $y \in \mathbf{Z}_{\geq 0}$, and $ t \in \mathbf{Z}_{\geq 1}$, 
\begin{equation}\label{eq:1rcompos}
\widetilde{z}^{\text{1r}}_{u, -v ; \alpha}(t, y)=\sum_{x=0}^{t+y-1}
\widetilde{z}^{\text{1r}}_{-v ; \alpha}(x) \left(\frac{1}{2 \overline{\varpi}}\right)^{2t+y-x-1}
{z}_{u ; \alpha}(x+2,2 ; t+y+1, t+1),  
\end{equation}
where $z_{u ; \alpha}\left(a, b ; a^{\prime}, b^{\prime}\right)$ denotes the log-gamma partition function for up-right paths starting at $(a, b)$ and ending at $\left(a^{\prime}, b^{\prime}\right)$ as a generalization of the definition \eqref{eq:hsloggam}. This partition function only depends on parameters $u$ and $\alpha$. Similar to \cite{barraquand2023stationary}, by assigning proper boundary weights, one can match the log-gamma polymer paths to reflected  symmetric simple random walks.
Consequently, as a process in $x,y \in \mathbf{Z}_{\geq 0}$ and $ t \in \mathbf{Z}_{\geq 1}$, 
\begin{equation}\label{eq:modparinlaw}
    {z}_{u ; \alpha}(x+2,2 ; t+y+1, t+1)\stackrel{\text { law }}{=} \frac{1}{2} \cdot\left(\frac{\omega(2 t+y-2, y)}{\overline{\varpi} 2^{\1_{y=0}}}\right) \cdot 2^{\1_{y=0}} z_{\boldsymbol{\mathcal{X}}, \boldsymbol{\omega}, \beta}(x, x ; 2 t+y-2, y),
\end{equation}
where $z_{\boldsymbol{\mathcal{X}}, \boldsymbol{\omega}, \beta}(s, x ; t,y)$ is the ``modified polymer partition function'' as defined in \cite[(5.13)]{barraquand2023stationary} for reflected simple symmetric random walks, with the boundary random variables $\boldsymbol{\mathcal { X }}=(\mathcal{X}(r))_{r\in \mathbf{Z}_{\geq 0}}$ being i.i.d., the bulk random variables $ \boldsymbol{\omega} = (\omega(r, w))_{r\in \mathbf{Z}_{\geq 0}, w \in \mathbf{Z}_{\geq 1}}$ being i.i.d., and the distributions and the parameter $\beta>0$ are   so that 
\[
\begin{aligned}
1+\beta \omega(r, w) & \stackrel{\text{law}}{=} \frac{\varpi_{3,2}}{\overline{\varpi}} \sim(2 \alpha-1) \mathrm{Gamma}^{-1}(2 \alpha), \\
\mathcal{X}(r) & \stackrel{\text{law}}{=} \frac{\varpi_{2,2}}{2 \overline{\varpi}} \sim \frac{2 \alpha-1}{2} \mathrm{Gamma}^{-1}(\alpha+u).
\end{aligned}
\]
Then \eqref{eq:1rcompos} and \eqref{eq:modparinlaw} imply that as a process in $t \in \mathbf{Z}_{\geq 1}$ and $y \in \mathbf{Z}_{\geq 0}$,
\begin{equation}\label{eq:1ridinlaw}
\widetilde{z}^{\text{1r}}_{u,-v ; \alpha}(t, y) \stackrel{\text{law}}{=} \frac{2^{\1_{y=0}}}{2} \cdot\left(\1_{y=0} \mathcal{X}(2 t+y-2)+\1_{y>0}(1+\beta \omega(2 t+y-2, y))\right) \cdot z^{\diagup}_{\boldsymbol{\mathcal{X}}, \boldsymbol{\omega}, \beta, z^{\diagup\text{1r}}}(2 t+y-2, y),
\end{equation}
where $z^{\diagup}_{\boldsymbol{\mathcal{X}}, \boldsymbol{\omega}, \beta, z^{\diagup\text{1r}}}(\cdot,\cdot)$ is the partition function as defined in \cite[(5.19)]{barraquand2023stationary} with the initial data $z^{\diagup\text{1r}}(\cdot) = \widetilde{z}^{\text{1r}}_{-v ; \alpha}(\cdot)$ independent of the boundary and bulk weights $\boldsymbol{\mathcal { X }}, \boldsymbol{\omega}$.
For any $S\geq 0, Y \geq 0$, define $z^{\diagup}_{\boldsymbol{\mathcal{X}}, \boldsymbol{\omega}, \beta, z^{\diagup\text{1r}}}(S,Y)$ by linear interpolation when $S \notin \mathbf{Z}_{\geq 0}$ or $Y \notin \mathbf{Z}_{\geq 0}$ or $S+Y$ is not even. 
Note that the only difference between the field $z^{\diagup}_{\boldsymbol{\mathcal{X}}, \boldsymbol{\omega}, \beta, z^{\diagup\text{1r}}}(\cdot,\cdot)$ here and the field $z^{\diagup}_{\boldsymbol{\mathcal{X}}, \boldsymbol{\omega}, \beta, z^{\diagup}}(\cdot,\cdot)$ in \cite[(6.3)]{barraquand2023stationary} is that the initial condition differs. The boundary and the bulk random variables $\boldsymbol{\mathcal{X}},\boldsymbol{\omega}$ are the same. Let $\alpha=\alpha^{(n)} = \sqrt{n}+1/2$ and $\beta=\beta^{(n)}=n^{-1/4}/\sqrt{2}$. In \cite[Section 6]{barraquand2023stationary}, the authors have verified the conditions on the same $\boldsymbol{\mathcal{X}},\boldsymbol{\omega}$ to apply \cite[Theorem 5.4]{barraquand2023stationary}. As long as we can  verify the conditions on the initial data $z^{\diagup\text{1r}}(\cdot)$ here, \cite[Theorem 5.4]{barraquand2023stationary} implies the convergence in law
\begin{equation}\label{eq:1rconv}
   \frac{2^{\1_{X=0}}}{2} z^{\diagup}_{\boldsymbol{\mathcal{X}}, \boldsymbol{\omega}, \beta, z^{\diagup\text{1r}}}(nT+\sqrt{n}X-2,\sqrt{n}X)  \stackrel{\text{law}}{\to} \int_0^{\infty}\mathcal{Z}_{u-\frac{1}{2}}(T,X\viv 0,Y)\exp (W_{v}(Y)) \dd Y,  \quad \text{as } n\to \infty, 
\end{equation}
for any fixed $T,X \geq 0$. The convergence \eqref{eq:weakDisLim} then follows from \eqref{eq:1ridinlaw} and \eqref{eq:1rconv}, together with the fact that $ \mathcal{X}(r)\to 1$ and $1+\beta \omega(r,w) \to 1$ in probability as $n\to \infty$ (the dependence on $n$ was kept implicit through $\alpha^{(n)}$ and $\beta^{(n)}$).

To verify the conditions on the initial data, with $\alpha=\alpha^{(n)}=\sqrt{n}+1/2$, we use Donsker's theorem to check convergence in law 
\[
\widetilde{z}^{\text{1r}}_{-v ; \alpha}(\sqrt{n}X)  \stackrel{\text{law}}{\to} \exp(W(X)+vX) = \exp(W_v(X))  \quad \text{as a process in } X \in [0,\infty).
\]
Moreover, for any $n \in \mathbf{Z}_{\geq 1}$ and $X \geq 0$ such that $\sqrt{n}X\in \mathbf{Z}_{\geq 0}$, we have that $\mathbb{E}[ \widetilde{z}^{\text{1r}}_{-v ; \alpha}(\sqrt{n}X) ^2]=[n M_2(-v)]^{{\sqrt{n} X+1}}$ with $M_k(- v):=\mathbb{E}[(\varpi^{(n)})^k]$ where $\varpi^{(n)} \sim \operatorname{Gamma}^{-1}(\alpha^{(n)}-v)$. By \cite[(1.1)]{barraquand2023stationary}, \[n M_2 (-v)=1+(2+2 v) n^{-1 / 2}+O(n^{-1}),\]
which implies the uniform   bound 
\[
\sup_{n\in \mathbf{Z}_{\geq 1}}\sup_{X\geq 0}
e^{-aX}\mathbb{E}[ \widetilde{z}^{\text{1r}}_{-v ; \alpha}(\sqrt{n}X) ^2] <\infty,
\]
for some $a>0$ that only depends on $v$. The conditions imposed on the initial data in \cite[Theorem 5.4]{barraquand2023stationary} are thus verified for our $z^{\diagup}_{\boldsymbol{\mathcal{X}}, \boldsymbol{\omega}, \beta, z^{\diagup\text{1r}}}(\cdot,\cdot)$.
\end{proof}

Combining the results \eqref{eq:ratioidinlaw} and \eqref{eq:weakDisLim}, we prove the identity in law for the partition function on the boundary in the continuum setting under the condition $u>v$. This identity in law was previously claimed in \cite[Claim 4.9]{barraquand2020halfkpz} with a slightly different convention on the parameters. For discussion on general $u,v \in \R$, see Remark~\ref{re:idinlaw} below.
\begin{proposition}\label{pr:symm1}
    For any $T\geq 0$ and $u,v \in \R$ with $u>v$,  we have \begin{equation}\label{eq:covIDinlaw}
\int_0^{\infty}\mathcal{Z}_{u-\frac{1}{2}}(T,0\viv 0,Y)\exp (W_{v}(Y)) \dd Y \stackrel{\text{law}}{=}\int_0^{\infty}\mathcal{Z}_{-v-\frac{1}{2}}(T,0\viv 0,Y)\exp (W_{-u}(Y)) \dd Y.
\end{equation}
\end{proposition}
\begin{proof}
    Since Corollary~\ref{cr:ratioequalinlaw} is not generalized to joint law, we need to be careful to avoid using any linear interpolations for the identity in law. For any fixed $T\in \mathbf{Q}_{\geq 0}$, there exists a series of integers $\{n_k \in \mathbf{Z}_{\geq 0}\}_{k\geq 1}$ such that $n_k\to \infty$ as $k \to \infty$ and $\frac{n_kT}{2} \in \mathbf{Z}_{\geq 0}$. Then Corollary~\ref{cr:ratioequalinlaw} implies that for each $k\geq 1$,
\[(\sqrt{n_k})^{n_k T} {z_{u, -v}^{\text{stat},\alpha^{(n_k)}} \left(\frac{n_k T}{2}+1, \frac{n_k T}{2}+1\right)}\stackrel{\text{law}}{=}(\sqrt{n_k})^{n_k T} {z_{-v, u}^{\text{stat},\alpha^{(n_k)}} \left(\frac{n_k T}{2}+1, \frac{n_k T}{2}+1\right)}.
\]
By taking the limit $k\to \infty$, Proposition~\ref{pr:weakDisorderLimit} implies that for any fixed $T\in \mathbf{Q}_{\geq 0}$, \eqref{eq:covIDinlaw} holds.

Using the continuity of mild solutions, we extend \eqref{eq:covIDinlaw} to any $T\in [0,\infty)$.
\end{proof}


\begin{remark}
By sending $v\to -\infty$ in Proposition~\ref{pr:symm1}, one can formally recover \cite[Theorem 1.1]{shalinID}.
\end{remark}

To complete the proof of Proposition~\ref{pr:symmetry}, it remains to pass the limit $v\to u$.

\begin{proof}[Proof of Proposition~\ref{pr:symmetry}]
By the Minkowsiki and H\"older's inequalities, the $\Prob$-almost surely continuity of the Green's function $\mathcal{Z}_\cdot(\cdot,\cdot\viv \cdot,\cdot)$ (Proposition~\ref{pr:greens}), the uniform moment bounds in \eqref{eq:greensPosMom} and the dominated convergence theorem, as $v\to u$,
we have 
\[\left\|\int_0^{\infty}\mathcal{Z}_{u-\frac{1}{2}}(T,0\viv 0,Y)\exp (W_{v}(Y)) \dd Y -\int_0^{\infty}\mathcal{Z}_{u-\frac{1}{2}}(T,0\viv 0,Y)\exp (W_{u}(Y)) \dd Y \right\|_2\to 0,\]
and
\[\left\|\int_0^{\infty}\mathcal{Z}_{-v-\frac{1}{2}}(T,0\viv 0,Y)\exp (W_{-u}(Y)) \dd Y -\int_0^{\infty}\mathcal{Z}_{-u-\frac{1}{2}}(T,0\viv 0,Y)\exp (W_{-u}(Y)) \dd Y \right\|_2\to 0.\]
The identity \eqref{eq:covIDinlaw} can thus be extended to $v=u$, and we have $Z_{u-\frac{1}{2}}(T,0) \stackrel{\text{law}}{=}Z_{-u-\frac{1}{2}}(T,0)$ for any $T\geq 0$ and $u \in \R$. Taking a logarithm on both sides and applying the continuous mapping theorem then completes the proof.
\end{proof}
\begin{remark}\label{re:idinlaw}
By using analytic continuation, one should be able to extend Corollary~\ref{cr:ratioequalinlaw} to the more general condition with $u,v\in \R$ and $\alpha>\min(0,-u,v)$. In other words, the identity in law still holds after dropping the assumption $u-v>0$. Heuristically, this is clear as the ratio $z_{u,-v}^{\text{stat},\alpha}(m,m)$ does not involve $\varpi_{1,1}$ and thus does not depend on the value of $u-v$. This extension has been explained in the proof of \cite[Claim 4.9]{barraquand2020half}, but a verification for the analytic continuation has not yet been proved.
\end{remark}

As a last result, we prove Corollary~\ref{co:upbdright}.

\begin{proof}[Proof of Corollary~\ref{co:upbdright}]
From \eqref{eq:varidkpzhs} in Theorem~\ref{thm:varianceID}, we know that for any $u\in \R, T\in [0,\infty)$,
\begin{equation}\label{eq:rewritevarID}
\Expe \mathbb{E}^{\mathbb{Q}^{u,T}}_{0}[X_T] = \frac{1}{2} \left(\Var[H_u(T,0)]+uT\right).
\end{equation}
By Proposition~\ref{pr:symmetry}, Theorem~\ref{thm:var} is extended to all $u>0$ with
\[
uT\leq \Var[H_u(T,0)] \leq 2\psi'(2u)+uT.
\]
Substituting the above bounds of $\Var[H_u(T,0)]$ into \eqref{eq:rewritevarID} gives \eqref{eq:intright}.
\end{proof}


\section{Extended critical regime: proof of Theorem~\ref{thm:scpolym}}\label{se:sc}
{In this last section, }we prove \eqref{eq:scpolym}, which provides an upper bound to the polymer endpoint displacement when the boundary parameter $u$ is in the {extended critical} regime: $u=ct^{-\alpha}$ with $\alpha>1/3$ and $c\in\R$. Combining
with the variance identity obtained in Theorem~\ref{thm:varianceID}, we further derive an upper bound for the fluctuation size of the height function in the {extended critical regime}, as stated in \eqref{eq:scfe}.

To prove \eqref{eq:scpolym}, we use a  stochastic monotonicity argument different from Section~\ref{se:uppbd}. The main idea is the following. 

For any realization of $(\xi, W)$ and  any $u_1< u_2$, one may expect that the endpoint distribution of the half-space  continuum directed random polymer   $\mathbb{Q}_0^{u_1,t}$ would be stochastically dominated by that  of the half-space continuum directed random polymer  $\mathbb{Q}_0^{u_2,t}$. Intuitively, this is because that when the parameter $u$ becomes greater, the ``wall attraction'' at $x=0$ imposed on the   polymer would be weakened, while at the same time, the terminal data at time $t$ would have a greater drift. Both the weakened attraction force and the greater drift would drive the polymer endpoint farther away from the wall. Recall that in \eqref{eq:hsQuench0} we defined $\rho^R_{u}(\cdot\viv t,0)$ as the quenched density of polymer endpoints under $\mathbb{Q}_0^{u,t}$. Let $c\in\R$,  $u_1=ct^{-\alpha}$ for some $\alpha>1/3$, and $u_2=(|c|\vee 1)t^{-1/3}$. For any $t\geq 1$,  we expect that $ \rho^{R}_{u_1}(\cdot \viv t,0) \leq_{st} \rho^{R}_{u_2}(\cdot\viv t,0)$ $\Prob$-almost surely.

On the other hand, from the upper bound in \eqref{eq:intright} of Corollary~\ref{co:upbdright}, we know that when  $u_2=ct^{-1/3}$ for some $c>0$, for any $t\geq 1$, the annealed mean of the polymer endpoint is bounded by \[\Expe \mathbb{E}_0^{\mathbb{Q}^{u_2,t}}[X_t] \leq Ct^{2/3}\]
for some $C>0$ depending only on $c$.  Combining  the above estimate and the stochastic dominance of $ \rho^{R}_{u_1}(\cdot \viv t,0) \leq_{st} \rho^{R}_{u_2}(\cdot\viv t,0)$  gives \eqref{eq:scpolym}.

We now prove the aforementioned results. The Lemma~\ref{le:udominance} below is the  main ingredient to prove $ \rho^{R}_{u_1}(\cdot \viv t,0) \leq_{st} \rho^{R}_{u_2}(\cdot\viv t,0)$ $\Prob$-almost surely.  One can compare it with Lemma~\ref{le:ptstdom} above. 
In fact, our approach to the  stochastic monotonicity here is partly inspired by Section~\ref{se:uppbd}. 
As in Section~\ref{se:uppbd},  we first analyze the  determinants in \eqref{eq:bounddeterminant} formed by four copies of Green’s functions, and we will work with a smoothed noise to utilize the Feynman-Kac formula before passing to the limit. The determinant in \eqref{eq:bounddeterminant} being nonnegative further leads to the relation $ \rho^{R}_{u_1}(\cdot \viv t,0) \leq_{st} \rho^{R}_{u_2}(\cdot\viv t,0)$.

For reference, we note that in Section~\ref{se:uppbd}, we proved $ \rho^{R}_{u}(\cdot\viv t,x_1) \leq_{st} \rho^{R}_{u}(\cdot\viv t,x_2)$ for any  $x_1< x_2$. Here we are expecting that $ \rho^{R}_{u_1}(\cdot\viv t,0)\leq_{st}  \rho^{R}_{u_2}(\cdot\viv t,0)$ for any $u_1 < u_2$. 

\begin{lemma}\label{le:udominance}
Fix $t>0$. For any $u_1 < u_2$ and $0\leq y_1<y_2$, we have
\begin{equation}\label{eq:bounddeterminant}
\det[\mathcal{Z}_{u_i-\frac{1}{2}}(t,0\viv 0, y_j) ]_{i,j \in \{1,2\}} \geq 0 \quad \Prob\text{-almost surely}.
\end{equation}
\end{lemma}

Before proving Lemma~\ref{le:udominance}, we first use it to prove Theorem~\ref{thm:scpolym}.

\begin{proof}[Proof of Theorem~\ref{thm:scpolym}]
By \eqref{eq:bounddeterminant}, for any fixed $t>0$ and $u_1 < u_2$, with  probability one,
\begin{equation}\label{eq:compbdpa}
\mathcal{Z}_{u_1-\frac{1}{2}}(t,0\viv 0, y_1)\mathcal{Z}_{u_2-\frac{1}{2}}(t,0\viv 0, y_2)  \geq \mathcal{Z}_{u_1-\frac{1}{2}}(t,0\viv 0, y_2)\mathcal{Z}_{u_2-\frac{1}{2}}(t,0\viv 0, y_1),
\end{equation}
for all $0\leq y_1<y_2$ simultaneously. (As in the proof of Corollary~\ref{co:endptcompdom}, one can first consider rational $y_1,y_2$ then use the continuity of $\mathcal{Z}$.) 

Similar to Proposition~\ref{pr:mombds0}, it is straightforward to show that for any fixed $u_1, u_2\in \R$, we have $\Prob$-almost surely
\[
0<\int_0^\infty \mathcal{Z}_{u_2-\frac{1}{2}}(t,0\viv 0, y) \exp(W_{u_1}(y))\dd y<\infty.
\]
Define \[\mathcal{U}(t, u, \theta):= \frac{ \int_0^\infty y \mathcal{Z}_{u-\frac{1}{2}}(t,0\viv 0, y) \exp(W_{\theta}(y))\dd y }{ \int_0^{\infty} \mathcal{Z}_{u-\frac{1}{2}}(t,0\viv 0, y) \exp(W_{\theta}(y))\dd y},\]
which is the quenched mean of the endpoint displacement of a half-space polymer with boundary parameter $u$ and initial   parameter $\theta$. 
As in the proof of Corollary~\ref{co:endptcompdom}, one can obtain  from \eqref{eq:compbdpa} that for any fixed $t>0$ and $u_1< u_2$,
\[\mathcal{U}(t, u_1, u_1)\leq \mathcal{U}(t, u_2, u_1)  \quad \Prob\text{-almost surely}.
\]
(To compare with the proof of Corollary~\ref{co:endptcompdom}, one only needs to replace $u_i\mapsto x_i, i=1,2$.)

On the other hand, by the Jensen's inequality,
\[
\partial_\theta \mathcal{U}(t, u, \theta) =\frac{ \int_0^\infty y^2 \mathcal{Z}_{u-\frac{1}{2}}(t,0\viv 0, y) \exp(W_{\theta}(y))\dd y }{ \int_0^{\infty} \mathcal{Z}_{u-\frac{1}{2}}(t,0\viv 0, y) \exp(W_{\theta}(y))\dd y} -\frac{ [\int_0^\infty y \mathcal{Z}_{u-\frac{1}{2}}(t,0\viv 0, y) \exp(W_{\theta}(y))\dd y]^2 }{ [\int_0^{\infty} \mathcal{Z}_{u-\frac{1}{2}}(t,0\viv 0, y) \exp(W_{\theta}(y))\dd y]^2}\geq 0,
\]
$\Prob$-almost surely. It thus follows that $\mathcal{U}(t, u_2, u_1)\leq \mathcal{U}(t, u_2, u_2)$ $\Prob$-almost surely.

Combining the above results, for any   $u_1< u_2$, we have $\mathcal{U}(t, u_1, u_1)\leq \mathcal{U}(t, u_2, u_2)$ $\Prob$-almost surely, 
which further implies that 
\begin{equation}\label{eq:EPcomp}
    \Expe \mathbb{E}_0^{\mathbb{Q}^{u_1,t}}[X_t] \leq \Expe \mathbb{E}_0^{\mathbb{Q}^{u_2,t}}[X_t].
\end{equation}

Now for any $u_1=ct^{-\alpha}$ with $\alpha>1/3$ and $c\in \R$, let $u_2 =  (|c|\vee 1)t^{-1/3}$.  For any $t\geq 1$, we have $u_1\leq u_2$, and by \eqref{eq:intright},
\begin{equation}\label{eq:useofCo16}
\Expe \mathbb{E}_0^{\mathbb{Q}^{u_2,t}}[X_t] \leq u_2t+\psi'(2u_2)\leq Ct^{2/3},
\end{equation}
for some constant $C>0$ depending on $c$ only.  \eqref{eq:EPcomp} and \eqref{eq:useofCo16} together prove \eqref{eq:scpolym}, and \eqref{eq:scfe} follows immediately using \eqref{eq:varidkpzhs}.
\end{proof}

To finish the proof, it only remains to prove Lemma~\ref{le:udominance}. 

\begin{proof}[Proof of Lemma~\ref{le:udominance}]
Similar to the proof of Lemma~\ref{le:ptstdom}, we first prove an inequality analogous to \eqref{eq:bounddeterminant} for the Green's functions of half-space SHE with a smoothed noise (\eqref{eq:mollifieducomp} below). With smoothed noise, one can prove such an inequality using the Feynman-Kac representation and the strong Markov property of reflected Brownian motions. We then pass to the limit using Lemma~\ref{le:fk} to obtain \eqref{eq:bounddeterminant} for the Green's functions of half-space SHE with the white noise.

As in the proof of Lemma~\ref{le:ptstdom}, let $\xi_{\delta}(t,x) = \int_\R  p_\delta(x-y)\xi(t, \dd y)$ be  a Gaussian noise that is white in time and smooth in space, with the covariance function
\[
Q_{\delta}(t_1,t_2,x_1,x_2) = \Expe \xi_\delta(t_1,x_1)\xi_\delta(t_2,x_2)=\delta_0(t_1-t_2) p_{2\delta}(x_1-x_2).
\]
Using the same notations as in the proof of Lemma~\ref{le:ptstdom},
for any $\mu \in \R, \delta>0, x,y \in [0,\infty)$,  define
\[\begin{aligned}
    \mathcal{Z}_\mu^{\delta}(t,x\viv 0, y) &:= \kernel^{N}(t,x\viv 0,y)
\mathbb{E}^x_B\left[\exp\left(-\mu L^{0}_{t} +
\int_{0}^{t}\xi_{\delta}(t-r,|B_r|)\dd r- \frac{t}{2}p_{2\delta}(0)\right)\bigg||B_{t}|=y\right]\\
&= \kernel^{N}(t,x\viv 0,y)
\mathbb{E}^{x,y}_{\Upsilon}\left[\exp\left(-\mu L^{0,{\Upsilon}}_{t} +
\int_{0}^{t}\xi_{\delta}(t-r,\Upsilon_r)\dd r- \frac{t}{2}p_{2\delta}(0)\right)\right],
\end{aligned}
\]with the expectation $\mathbb{E}_\Upsilon^{x,y}$ being taken with respect to the reflected Brownian bridges starting from $\Upsilon_0=x$ and ending at $\Upsilon_t=y$, and the local time term $L^{0,{\Upsilon}}_{t}$ is the same as defined in \eqref{eq:ltrBM}.
Furthermore, by the same argument following after \eqref{eq:equalinlawRE} in Appendix~\ref{ap:reversal}, one can always make a time reversal to the paths of reflected Brownian bridges to obtain\[   \mathcal{Z}_\mu^{\delta}(t,x\viv 0, y) =
 \kernel^{N}(t,y\viv 0,x)
\mathbb{E}^{y,x}_{\Upsilon}\left[\exp\left(-\mu L^{0,{\Upsilon}}_{t} +
\int_{0}^{t}\xi_{\delta}(r,\Upsilon_r)\dd r- \frac{t}{2}p_{2\delta}(0)\right)\right].
\]
We use $L^{0,{\Upsilon}}_{[s_1,s_2]}$ to denote the local time at zero for reflected Brownian motions restricted to the time interval $[s_1,s_2] \subset [0,t]$:
\begin{equation}\label{eq:localtimetau}
L^{0,{\Upsilon}}_{[s_1,s_2]} := \lim _{\eps \to 0} \frac{1}{2 \varepsilon} \int_{s_1}^{s_2} \1_{[0, \varepsilon]}\left(\Upsilon_s\right) \dd s.
\end{equation}
One may rewrite $L^{0,{\Upsilon}}_{t}=L^{0,{\Upsilon}}_{[0,t]} $. To ease notations, we also define the following functional on the paths of reflected Brownian motions between any time interval $[s_1,s_2] \subset [0,t]$:
\[
F^{\delta,\mu}_{[s_1,s_2]}(\Upsilon) := \exp\left(-\mu L^{0,{\Upsilon}}_{[s_1,s_2]} +
\int_{s_1}^{s_2}\xi_{\delta}(r,\Upsilon_r)\dd r- \frac{s_2-s_1}{2}p_{2\delta}(0)\right).
\]
For any $0\leq s_1\leq s_2\leq t$, 
$F^{\delta,\mu}_{[s_1,s_2]}(\Upsilon) $ is a $\Prob$-almost surely continuous, strictly positive, and multiplicative functional of $\Upsilon$.  We also note that the functional $F^{\delta,\mu}_{[s_1,s_2]}(\cdot)$ depends on the boundary parameter $\mu$ through the multiplier of the local time term.

Our goal is to prove a $\delta$-level mollified version of \eqref{eq:bounddeterminant}, namely, for any $t>0, \delta>0$, $u_1< u_2$ and $0\leq y_1<y_2$,
\begin{equation}\label{eq:mollifieducomp}
\det\left[\mathcal{Z}^\delta_{u_i-\frac{1}{2}}(t,0\viv 0, y_j) \right]_{i,j \in \{1,2\}}=\det\left[ \kernel^{N}(t,y_j\viv 0,0) \mathbb{E}_\Upsilon^{y_j,0}\left[F^{\delta,u_i-\frac{1}{2}}_{[0,t]}\right]\right]  \geq 0 \quad \Prob\text{-almost surely}.
\end{equation}
One can check that \eqref{eq:bounddeterminant} follows from   \eqref{eq:mollifieducomp} with Lemma~\ref{le:fk}~\eqref{it:approx}.
In fact, by Lemma~\ref{le:fk}~\eqref{it:approx} (with a mollification in space only), for any $t>0, u_1,u_2\in\R, y_1,y_2 \in [0,\infty)$ and $p\in [1,\infty)$,
\[\det\left[\mathcal{Z}^\delta_{u_i-\frac{1}{2}}(t,0\viv 0, y_j) \right]_{i,j \in \{1,2\}} \to  \det\left[\mathcal{Z}_{u_i-\frac{1}{2}}(t,0\viv 0, y_j) \right]_{i,j \in \{1,2\}}\quad \text{in }L^p(\Omega) \quad \text{as }\delta \to 0. \]
Thus the proof is complete once we prove \eqref{eq:mollifieducomp}.

To prove \eqref{eq:mollifieducomp}, we use a path-switching argument. Let  $\mathbb{P}^{\mathbf{y},\mathbf{0}}_{\Upsilon^1,\Upsilon^2 } $ be the measure of a two-dimensional reflected Brownian bridge  $(\Upsilon^1,\Upsilon^2)$ starting from $\mathbf{y}=(y_1,y_2)$ at time $0$ and ending at $\mathbf{0}=(0,0)$ at time $t$. Since $\kernel^{N}(t,y\viv 0,0) $ is positive for any $y\in[0,\infty)$, it suffices to show that  \begin{equation}\label{eq:detpos}
\mathbb{E}^{\mathbf{y},\mathbf{0}}_{\Upsilon^1,\Upsilon^2 }  \left[F^{\delta, u_1-\frac{1}{2}}_{[0,t]}(\Upsilon^1)F^{\delta, u_2-\frac{1}{2}}_{[0,t]}(\Upsilon^2)
- F^{\delta, u_2-\frac{1}{2}}_{[0,t]}(\Upsilon^1)F^{\delta, u_1-\frac{1}{2}}_{[0,t]}(\Upsilon^2)\right] \geq 0  \quad \Prob\text{-almost surely},
\end{equation}
for any $t>0, \delta>0$, $u_1<u_2$ and $0\leq y_1<y_2$. 

Define
the stopping time
\[
\tau = \inf \{ s \in [0,t]: \Upsilon^1(s)=\Upsilon^2(s)\}.
\]
Since $\Upsilon^1(t)=\Upsilon^2(t)=0$ and $\Upsilon^1(0)=y_1<y_2=\Upsilon^2(0)$, we have $0< \tau \leq t$ $\mathbb{P}^{\mathbf{y},\mathbf{0}}_{\Upsilon^1,\Upsilon^2 }\text{-almost surely}$.
By the path continuity of  reflected Brownian motions, $\Upsilon^1(s)\leq \Upsilon^2(s)$ for all $s\in[0,\tau]$. It thus follows from the definition of the local time through approximation
\eqref{eq:localtimetau} that
\begin{equation}\label{eq:localtimecomparison}
L^{0,{\Upsilon^1}}_{[0,\tau]} \geq 
L^{0,{\Upsilon^2}}_{[0,\tau]}  \quad \mathbb{P}^{\mathbf{y},\mathbf{0}}_{\Upsilon^1,\Upsilon^2 }\text{-almost surely}.
\end{equation}

Define
$$
\left(\tilde{\Upsilon}_r^1, \tilde{\Upsilon}_r^2\right)= \begin{cases}\left(\Upsilon_r^1, \Upsilon_r^2\right) & r \leq \tau \\ \left(\Upsilon_r^2, \Upsilon_r^1\right) & r >\tau .\end{cases}
$$
As stated in \cite[Proposition 1]{fpyMarkovianBridge},  the reflected Brownian bridges $\Upsilon_\cdot$ under $\mathbb{P}^{{x},{y}}_{\Upsilon}$ for any $x,y\in[0,\infty)$ are strong Markov processes. (The proof follows from the optional stopping theorem and that the reflected Brownian motion is strong Markov.) Hence $\left(\tilde{\Upsilon}_r^1, \tilde{\Upsilon}_r^2\right)$ is also a two-dimensional reflected Brownian bridge starting from $\mathbf{y}=(y_1,y_2)$ at time $0$ and ending at $\mathbf{0}=(0,0)$ at time $t$, and the left-hand-side of \eqref{eq:detpos} equals to
\[
\mathbb{E}^{\mathbf{y},\mathbf{0}}_{\tilde{\Upsilon}^1,\tilde{\Upsilon}^2 }  \left[F^{\delta, u_1-\frac{1}{2}}_{[0,t]}(\tilde{\Upsilon}^1)F^{\delta, u_2-\frac{1}{2}}_{[0,t]}(\tilde{\Upsilon}^2)
- F^{\delta, u_2-\frac{1}{2}}_{[0,t]}(\tilde{\Upsilon}^1)F^{\delta, u_1-\frac{1}{2}}_{[0,t]}(\tilde{\Upsilon}^2)\right],\]
which can be further expanded as
\[\begin{aligned}
&\mathbb{E}^{\mathbf{y},\mathbf{0}}_{\Upsilon^1,\Upsilon^2 }  \bigg[F^{\delta, u_1-\frac{1}{2}}_{[0,\tau]}(\Upsilon^1)F^{\delta, u_1-\frac{1}{2}}_{[\tau,t]}(\Upsilon^2) F^{\delta, u_2-\frac{1}{2}}_{[0,\tau]}(\Upsilon^2)F^{\delta, u_2-\frac{1}{2}}_{[\tau,t]}(\Upsilon^1)
\\&- F^{\delta, u_2-\frac{1}{2}}_{[0,\tau]}(\Upsilon^1) F^{\delta, u_2-\frac{1}{2}}_{[\tau,t]}(\Upsilon^2) F^{\delta, u_1-\frac{1}{2}}_{[0,\tau]}(\Upsilon^2)F^{\delta, u_1-\frac{1}{2}}_{[\tau,t]}(\Upsilon^1)\bigg]=:\mathcal{S}_1.\end{aligned}
\]Meanwhile, one can also expand the left-hand-side of \eqref{eq:detpos} directly so  that it equals to
\[
\begin{aligned}
&\mathbb{E}^{\mathbf{y},\mathbf{0}}_{\Upsilon^1,\Upsilon^2 }  \bigg[F^{\delta, u_1-\frac{1}{2}}_{[0,\tau]}(\Upsilon^1)F^{\delta, u_1-\frac{1}{2}}_{[\tau,t]}(\Upsilon^1) F^{\delta, u_2-\frac{1}{2}}_{[0,\tau]}(\Upsilon^2)F^{\delta, u_2-\frac{1}{2}}_{[\tau,t]}(\Upsilon^2)
\\&- F^{\delta, u_2-\frac{1}{2}}_{[0,\tau]}(\Upsilon^1) F^{\delta, u_2-\frac{1}{2}}_{[\tau,t]}(\Upsilon^1) F^{\delta, u_1-\frac{1}{2}}_{[0,\tau]}(\Upsilon^2)F^{\delta, u_1-\frac{1}{2}}_{[\tau,t]}(\Upsilon^2)\bigg]=:\mathcal{S}_2.\end{aligned}
\]
Denote the left-hand-side of \eqref{eq:detpos} by $\mathcal{S}$, then by a symmetrization we have
\[
\begin{aligned}
\mathcal{S} &= \mathcal{S}_1 = \mathcal{S}_2
=\frac{1}{2}\mathcal{S}_1+\frac{1}{2}\mathcal{S}_2\\
&=\frac{1}{2}\mathbb{E}^{\mathbf{y},\mathbf{0}}_{\Upsilon^1,\Upsilon^2 }\left[ \left(F^{\delta, u_1-\frac{1}{2}}_{[0,\tau]}(\Upsilon^1) F^{\delta, u_2-\frac{1}{2}}_{[0,\tau]}(\Upsilon^2)-F^{\delta, u_1-\frac{1}{2}}_{[0,\tau]}(\Upsilon^2) F^{\delta, u_2-\frac{1}{2}}_{[0,\tau]}(\Upsilon^1) \right) F^{\delta, u_1-\frac{1}{2}}_{[\tau,t]}(\Upsilon^2) F^{\delta, u_2-\frac{1}{2}}_{[\tau,t]}(\Upsilon^1)
 \right]\\
 &+\frac{1}{2}\mathbb{E}^{\mathbf{y},\mathbf{0}}_{\Upsilon^1,\Upsilon^2 } \left[ \left(F^{\delta, u_1-\frac{1}{2}}_{[0,\tau]}(\Upsilon^1) F^{\delta, u_2-\frac{1}{2}}_{[0,\tau]}(\Upsilon^2)-F^{\delta, u_1-\frac{1}{2}}_{[0,\tau]}(\Upsilon^2) F^{\delta, u_2-\frac{1}{2}}_{[0,\tau]}(\Upsilon^1) \right) F^{\delta, u_1-\frac{1}{2}}_{[\tau,t]}(\Upsilon^1) F^{\delta, u_2-\frac{1}{2}}_{[\tau,t]}(\Upsilon^2)
 \right]\\
 &= \frac{1}{2}\mathbb{E}^{\mathbf{y},\mathbf{0}}_{\Upsilon^1,\Upsilon^2 } \left[ \det \left[F^{\delta, u_i-\frac{1}{2}}_{[0,\tau]}(\Upsilon^j) \right]_{i,j\in\{1,2\}}\left( F^{\delta, u_1-\frac{1}{2}}_{[\tau,t]}(\Upsilon^1) F^{\delta, u_2-\frac{1}{2}}_{[\tau,t]}(\Upsilon^2) + F^{\delta, u_1-\frac{1}{2}}_{[\tau,t]}(\Upsilon^2) F^{\delta, u_2-\frac{1}{2}}_{[\tau,t]}(\Upsilon^1)
 \right)
 \right].
\end{aligned}
\]

By \eqref{eq:localtimecomparison} and the assumption that $u_1< u_2$, we have 
\begin{equation}\label{eq:uandloaltime}
\left(u_1-u_2\right)\left(L^{0,{\Upsilon^1}}_{[0,\tau]} - L^{0,{\Upsilon^2}}_{[0,\tau]}\right) \leq 0  \quad  \mathbb{P}^{\mathbf{y},\mathbf{0}}_{\Upsilon^1,\Upsilon^2 }\text{-almost surely}.\end{equation}
One can directly check that \eqref{eq:uandloaltime} implies $ \det \left[F^{\delta, u_i-\frac{1}{2}}_{[0,\tau]}(\Upsilon^j) \right]_{i,j\in\{1,2\}}\geq 0$,  $\mathbb{P}^{\mathbf{y},\mathbf{0}}_{\Upsilon^1,\Upsilon^2 }$-almost surely.  Since the functionals $F^{\delta,u_i-\frac{1}{2}}_{[\tau,t]}(\cdot)$ are always positive, $\mathcal{S} \geq 0$ is thus proved.
\end{proof}


\appendix

\section{Auxiliary lemmas}\label{ap:auxlemproofs}
\subsection{Proof of Lemma~\ref{le:posMom}}\label{ap:auxlemproofsposMom}
The proof follows a standard argument as in \cite{walsh1986introduction}. We use $\alpha \lesssim \beta$ to denote $\alpha \leq C \beta$ for any constant $C=C(a, b, \mu_0, p,\zeta)$.

 By the chaos expansion \eqref{eq:chaos1}, for any fixed $\mu \in [\mu_0, \infty), s \in [a,b), t \in (s, b]$ and $x \in [0, \infty)$,
\[
Z_\mu(t,x\viv s, \zeta) = \sum_{n=0}^{\infty} z_n(t,x\viv s, \zeta, \mu),
\]
where
\[
\begin{aligned}
z_0(t,x\viv s, \zeta, \mu)&:= \int_0^{\infty} \kernel_{\mu}^R(t,x\viv s,y)\zeta(\dd y),\\
\text{and }\quad z_{n+1}(t,x\viv s, \zeta, \mu) & := \int_s^t\int_0^{\infty}  \kernel_{\mu}^R(t,x\viv r,w) z_n(r,w\viv s, \zeta, \mu) \xi(\dd w\dd r ).
\end{aligned}
\]
Define \[f_n(r) = \sup_{x\in [0,\infty)}\|z_n(r,x\viv s, \zeta, \mu)\|_p^2 [z_0(r,x\viv s, \zeta, \mu)]^{-1}.\] We will show below that this supremum always exists. By Hypothesis~\ref{hy:detic}, for any $\mu \in  [\mu_0, \infty), r \in (s, b]$, \[f_0(r) = \sup_{x\in [0,\infty)}z_0(r,x\viv s, \zeta, \mu) \lesssim (r-s)^{-1/2}.\]
While $z_{n+1}(t,x\viv s, \zeta, \mu) $ is not a martingale in $t$, the process \[M_{n+1}(v) :=  \int_s^v\int_0^{\infty}  \kernel_{\mu}^R(t,x\viv r,w) z_n(r,w \viv s, \zeta, \mu) \xi(\dd w\dd r )\] is a martingale in $v\in[s,t]$.
By applying the Burkholder-Davis-Gundy (to $M_{n+1}(v)$) and the Minkowski inequalities,  \eqref{eq:elem1} and semigroup property of the Robin heat kernel, we derive \[
\begin{aligned}
&\|z_{n+1}(t,x\viv s, \zeta, \mu) \|^2_p  = \|M_{n+1}(t)\|_p^2 \lesssim \left\| \int_s^t\int_0^{\infty} \left[\kernel_{\mu}^R(t,x\viv r,w) z_n(r,w \viv s, \zeta, \mu)\right]^2 \dd r \dd w \right\|_{p/2}\\
& \leq  \int_s^t \int_0^{\infty}  \kernel_{\mu}^R(t,x\viv r,w)^2  \|z_n(r,w \viv s, \zeta, \mu)\|^2_{p} \dd r \dd w \\& \leq   \int_s^t \int_0^{\infty} \kernel_{\mu}^R(t,x\viv r,w)^2z_0(r,w\viv s, \zeta, \mu)f_n(r) \dd r \dd w  \lesssim z_0(t,x\viv s, \zeta, \mu)  \int_r^t f_n(r) (t-r)^{-1/2}\dd r.
\end{aligned}
\]
where we used inequality \eqref{eq:elem1} and semigroup property of the Robin heat kernel
 in the last inequality. Thus \[f_{n+1}(t)\lesssim  \int_s^t f_n(r)(t-r)^{-1/2}\dd r.\]
It is straightforward to check that $f_1(r)$ and $f_2(r)$ are both bounded on $[s,t]$. By iteration, for any $n \geq 2$, (with the convention  $(n/2)!=\Gamma(\tfrac{n}{2}+1)$,) \[
f_{n}(t) \leq C \int_s^tf_{n-2}(r)\dd r \leq   \frac{[C(t-s)]^{(n-1)/2}}{(n/2)!}
\]
for some constant $C =C(a,b,\mu_0, p,\zeta)$.
Now by the Minkowski's inequality, 
\[
\begin{aligned}
& \|Z_\mu(t,x\viv s, \zeta) \|_p \leq \sum_{n=0}^{\infty} \|z_n(t,x\viv s, \zeta, \mu)\|_p \leq \sum_{n=0}^{\infty}[z_0(t,x\viv s, \zeta, \mu) f_n(t)]^{1/2}\\&\lesssim z_0(t,x\viv s, \zeta, \mu)^{1/2}(t-s)^{-1/4}\sum_{k=0}^{\infty}\left[\frac{[C(t-s)]^{n/2}}{(n/2)!}\right]^{1/2} \lesssim z_0(t,x\viv s, \zeta, \mu)^{1/2}(t-s)^{-1/4}.
\end{aligned}
\]
This completes the proof.

\subsection{Feynman-Kac Approximation}\label{ap:fk}
We describe how to approximate the half-space SHE solution $Z_\mu(t,x \viv s, \zeta)$ with the Feynman-Kac type representations. This approximation is used in the proof of Lemma~\ref{le:negmom}. It is similar to full-space Feynman-Kac type approximations, except that the Brownian paths are now restricted to the positive half-plane (become the reflected Brownian motion), and we have a local time term on the boundary.

We consider a mollification of the space-time white noise $\xi(t,x)$.  For the proof of Lemma~\ref{le:negmom} below, we will smooth both time and spatial variables. As in \cite{bertini1995stochastic}, the Feynman-Kac type approximation is also valid when we only mollify the spatial variable. We omit the statements and proofs for the ``spatial mollification only'' approximation, as they are similar.

Let $g\in \mathcal{C}_{c}^\infty(\R,[0,\infty))$ be a  smooth symmetric function with compact support such that $\int_{\R}g(x)\dd x=1$. We define $g_{\eps}(x) := \frac{1}{\eps}g(\frac{x}{\eps})$ and
\begin{equation}\label{eq:smstnoise}
\xi_{\eps, \delta}(t,x) := e^{-\frac{1}{2}\eps|t|^2-\frac{1}{2}\delta|x|^2}\int_\R\int_\R g_{\eps} (t-s) p_\delta(x-y)\xi(\dd y \dd s),
\end{equation}
where $p_t(x)=\frac{1}{\sqrt{2 \pi t}} e^{-x^2 /(2 t)}$ is the standard heat kernel. In the above mollification   the extra factor $ e^{-\frac{1}{2}\eps|t|^2-\frac{1}{2}\delta|x|^2}$  will be used later for the negative moments bound. With ``$*$'' denoting the convolution,
the covariance of $\xi_{\eps,\delta}$ is a nonnegative function
\begin{equation}\label{eq:qmollified}
\begin{aligned}
Q_{\eps,\delta}(t_1,t_2,x_1,x_2)&:= \Expe [\xi_{\eps, \delta}(t_1,x_1)\xi_{\eps, \delta}(t_2,x_2)] \\
&=  e^{-\frac{1}{2}\eps (|t_1|^2+|t_2|^2)} (g_{\eps}*g_{\eps})(t_1-t_2)e^{-\frac{1}{2}\delta (|x_1|^2+|x_2|^2)} p_{2\delta}(x_1-x_2),
\end{aligned}
\end{equation}
and $\xi_{\eps,\delta}(\cdot,\cdot) \in \mathcal{C}^{\infty}(\R^2,\R)$. One can check that for any $\eps,\delta>0$,
\[
\Expe \int_\R\int_\R |\xi_{\eps,\delta}(t,x)|^2\dd t\dd x = \int_\R\int_\R Q_{\eps,\delta}(t,t,x,x) \dd t\dd x <\infty,
\] so the noise $\xi_{\eps,\delta}(\cdot,\cdot)\in L^2(\R^2)$ $\Prob$-almost surely.

For any $\eps,\delta>0$ and $\zeta(\cdot)$ under Hypothesis~\ref{hy:detic}, define
\begin{equation}\label{eq:approxdef}
\begin{aligned}
&Z_\mu^{\eps, \delta}(t,x\viv s, \zeta) \\&:=\int_0^{\infty}\kernel^{N}(t,x\viv s,y)
\mathbb{E}^x_B\left[\exp\left(-\mu L^{0}_{t-s}\right):\exp: \left\{\int_{0}^{t-s}\xi_{\eps,\delta}(t-r,|B_r|)\dd r\right\}\bigg\mid |B_{t-s}|=y\right] \zeta(\dd y),
\end{aligned}
\end{equation}
with 
\[
\begin{aligned}
&:\exp: \left\{\int_{0}^{t-s}\xi_{\eps,\delta}(t-r,|B_r|)\dd r\right\}\\&\quad := \exp\left(
\int_{0}^{t-s}\xi_{\eps,\delta}(t-r,|B_r|)\dd r- \frac{1}{2}\int_0^{t-s}\int_0^{t-s} Q_{\eps,\delta}(t-r_1,t-r_2,|B_{r_1}|,|B_{r_2}|) \dd r_1 \dd r_2\right).
\end{aligned}
\]

It may be possible to interpret \eqref{eq:approxdef} as the random field solution to the following equation in the Skorohod sense, with $\diamond$ denoting the Wick product. For equation of Skorohod type on the full-space, we refer to \cite{hu2015stochastic}. Here we do not pursue this interpretation for our half-space problem.
\begin{equation*}
\begin{aligned}
\partial_t Z_\mu^{\eps, \delta}(t,x \viv s, \zeta) & =\frac{1}{2} \partial_x^2 Z_\mu^{\eps, \delta}(t,x \viv s, \zeta)+Z_\mu^{\eps, \delta}(t,x \viv s, \zeta) \diamond \xi_{\eps, \delta}(t, x),\\ \partial_x Z_\mu^{\eps, \delta}(t,x \viv s, \zeta)\big|_{x=0} & =\mu Z_\mu^{\eps, \delta}(t,0 \viv s, \zeta), \\
Z_\mu^{\eps, \delta}(s, \cdot\viv s, \zeta)& =\zeta(\cdot).
\end{aligned}
\end{equation*}

Let $\mathscr{L}^a_t(X)$ denote the (symmetric) local time at level $a$ and at time $t$ for any continuous semimartingale $X$, which means that,  with $\langle X \rangle_\cdot$ denoting the quadratic variation of $X$, we define
\[\mathscr{L}^a_t(X):= \lim _{\varepsilon \rightarrow 0} \frac{1}{2 \varepsilon} \int_0^t \1_{[a-\varepsilon, a+\varepsilon]}(X_s) \dd\langle X\rangle_s.\]
We are using the usual local time definition so that it is consistent with the local time of Brownian motions defined after \eqref{eq:probrep1}. When $X$ is the difference of two independent standard Brownian motions, this definition of local time is two times the local time  used in \cite{bertini1995stochastic}.

The following lemma is on the approximation of the solution $Z_\mu(t,x \viv s, \zeta)$. The expression in \eqref{eq:formalFK} can now be understood as the $L^p(\Omega)$ limit of $Z_\mu^{\eps,\delta}(t,x\viv s,\zeta)$ as $\eps,\delta \to 0$.
\begin{lemma}[Feynman-Kac approximation]\label{le:fk}
Let $\zeta(\cdot)$ be an initial condition satisfying Hypothesis~\ref{hy:detic}. The following holds for any $\mu,s,t \in \R$ with $s<t$, and $x\in [0,\infty)$:
\begin{enumerate}[(i).]
\item\label{it:approx} {By first taking $\delta \to 0$ then taking $\eps \to 0$,} the approximated solution $Z_\mu^{\eps,\delta}(t,x\viv s,\zeta)$ in \eqref{eq:approxdef} converges to $Z_\mu(t,x\viv s,\zeta)$ in $L^p(\Omega,\filt,\Prob)$ for any   $p \in[1,\infty)$, where $Z_\mu(t,x\viv s,\zeta)$ is the unique mild solution to \eqref{eq:hsSHEv2}. The convergence is uniform for $x \in [0,\infty)$, and for $t$ in any
compact subset of $(s,\infty)$.
\item\label{it:apprmoment} For any integer $p \in [2,\infty)$, let $B_t^{1},\dots,B_t^{p}$ be $p$ independent Brownian motions starting at $x$ and independent of $\xi$, with $L^{0,j}_{t}:=\mathscr{L}^0_t(B^j)$ for any $B^{j}$, $j=1,\cdots,p$, $t\geq 0$.
 For any $x \in [0,\infty)$,
\begin{equation}\label{eq:estmomentFK}
\begin{aligned}
&\Expe[Z_\mu^{\eps, \delta}(t,x \viv s, \zeta) ^p] =\int_{[0,\infty)}\dots \int_{[0,\infty)}\zeta(\dd y_1)\dots \zeta(\dd y_p)
\prod_{j=1}^p \kernel^{N}(t,x\viv s,y_j)\\
&\mathbb{E}^x_{B^{1},\dots,B^{p}}\left[e^{-\mu \sum_{j=1}^p L^{0,j}_{t-s}+\sum_{1\leq i<j \leq p} \int_0^{t-s}\int_0^{t-s}Q_{\eps,\delta} (t-r_1,t-r_2, |B^{i}_{r_1}|,|B^{j}_{r_2}|) \dd r_1 \dd r_2}\Bigg\mid |B^j_{t-s}|=y_j{\text{ for all } j}\right].
\end{aligned}
\end{equation} 
\item\label{it:momentsFK}
For any integer $p \in [2,\infty)$,
\begin{equation}\label{eq:fkmomentexpression}
\begin{aligned}
&\Expe[Z_\mu(t,x \viv s, \zeta)^p] =\int_{[0,\infty)}\dots \int_{[0,\infty)}\zeta(\dd y_1)\dots \zeta(\dd y_p)
\prod_{j=1}^p \kernel^{N}(t,x\viv s,y_j)\\
&\mathbb{E}^x_{B^{1},\dots,B^{p}}\left[ \exp(-\mu \sum_{j=1}^p L^{0,j}_{t-s})
\exp\left({\frac{1}{2}}\sum_{1\leq i<j \leq p}\mathscr{L}^0_{t-s}(|B^{i}|-|B^{j}|)\right)\Bigg\mid |B^j_{t-s}|=y_j{\text{ for all }j}
\right].\end{aligned}
\end{equation}
In particular, for any fixed 
$p \in [2,\infty)$, $a,b \in \R$ with $a<b$, $\mu_0\in\R$, there exists a constant $C=C(p,\mu_0,a,b)$ such that for any $x\in[0,\infty)$, $a\leq s< t\leq b$, and $\mu \in [\mu_0,\infty)$,
\begin{equation}\label{eq:momentbdsratio}
    \Expe[Z_\mu (t,x \viv s, \zeta) ^p] \leq C \left[ \int_0^{\infty} \kernel^{N}(t,x\viv s,y) \zeta(\dd y)\right]^p.
\end{equation}
\end{enumerate}
\end{lemma}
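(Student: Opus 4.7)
The overall strategy mirrors the classical Feynman--Kac approximation of Bertini--Cancrini \cite{bertini1995stochastic} adapted to the Robin boundary. The point is that $\xi_{\eps,\delta}$ is smooth in space-time, so $Z_\mu^{\eps,\delta}$ defined in \eqref{eq:approxdef} is a genuine Brownian path integral. For part~(ii) I would condition on $p$ independent copies $B^1,\ldots,B^p$ of Brownian motion (all independent of $\xi$) and use the defining property of the Wick exponential: since each $N_j:=\int_0^{t-s}\xi_{\eps,\delta}(t-r,|B^j_r|)\dd r$ is a linear functional of $\xi$ once the $B^j$ are frozen, the standard identity for Gaussian Wick products gives
\[
\Expe_\xi\Bigl[\prod_{j=1}^p :\!\exp\!:(N_j)\Bigr]=\exp\Bigl(\sum_{1\leq i<j\leq p}\Expe_\xi[N_iN_j]\Bigr)=\exp\Bigl(\sum_{i<j}\int_0^{t-s}\!\int_0^{t-s}Q_{\eps,\delta}(t-r_1,t-r_2,|B^i_{r_1}|,|B^j_{r_2}|)\dd r_1\dd r_2\Bigr).
\]
Inserting the conditional reflected-Brownian-bridge representation $\kernel^N(t,x\viv s,y_j)\mathbb{E}^x_{B^j}[\,\cdot\mid|B^j_{t-s}|=y_j]$ and applying Fubini in the $\zeta(\dd y_j)$ integrals then yields \eqref{eq:estmomentFK}.

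For part~(iii) I would pass $\eps,\delta\to 0$ inside the bridge expectation in \eqref{eq:estmomentFK}. The Gaussian cutoffs $e^{-\frac12\eps|t|^2-\frac12\delta|x|^2}$ in \eqref{eq:smstnoise} tend to $1$ pointwise, while $(g_\eps*g_\eps)(t_1-t_2)\,p_{2\delta}(x_1-x_2)$ converges as a distribution to $\delta_0(t_1-t_2)\delta_0(x_1-x_2)$. Along each sample path of $(|B^i|,|B^j|)$ the double time integral of $Q_{\eps,\delta}$ therefore converges, in probability under the bridge law, to $\int_0^{t-s}\delta_0(|B^i_r|-|B^j_r|)\dd r=\tfrac12\mathscr{L}^0_{t-s}(|B^i|-|B^j|)$, the factor $1/2$ coming from $\dd\langle |B^i|-|B^j|\rangle_r=2\,\dd r$. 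To interchange this limit with the bridge expectation, and simultaneously to obtain \eqref{eq:momentbdsratio}, I would establish a uniform exponential bound of the form
\[
\sup_{\eps,\delta>0}\mathbb{E}^x_{B^{1},\ldots,B^{p}}\Bigl[\exp\Bigl(p|\mu_0|\sum_{j=1}^p L^{0,j}_{t-s}+p\sum_{i<j}\int\!\int Q_{\eps,\delta}\Bigr)\,\Big|\,|B^j_{t-s}|=y_j\,\forall j\Bigr]\leq C(a,b,\mu_0,p),
\]
uniform in $y_j\in[0,\infty)$ and $a\leq s<t\leq b$. This is a half-space analogue of the classical delta-Bose-gas exponential moment estimate: reflecting $|B^i|-|B^j|$ across the origin identifies $\mathscr{L}^0_{t-s}(|B^i|-|B^j|)$ with an intersection local time of a two-dimensional process, for which textbook bridge exponential-moment estimates apply, while exponential moments of the one-point local times $L^{0,j}$ under the reflected bridge are standard. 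Once established, uniform integrability identifies the limit as the right-hand side of \eqref{eq:fkmomentexpression}, and \eqref{eq:momentbdsratio} follows after replacing $\kernel^N$ by $\kernel_\mu^R$ in the prefactor using $\kernel_\mu^R\asymp\kernel^N$ on compacta from \eqref{eq:RobinComparison}.

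For part~(i), I plan to iterate the Wick-form mild equation satisfied by $Z_\mu^{\eps,\delta}$ to obtain its Wiener chaos expansion $\sum_{k\geq 0}I_k(f_k^{\eps,\delta})$, with kernels $f_k^{\eps,\delta}$ that are the kernels $\kernel_\mu^R(r_{1:k},w_{1:k}|s,y;t,x)$ of \eqref{eq:chaos1} convolved against $g_\eps\otimes p_\delta$ and dressed with the multiplicative cutoffs. Termwise $f_k^{\eps,\delta}\to f_k$ in $L^2$, so by It\^o isometry each chaos term converges in $L^2$; Gaussian hypercontractivity $\|I_k(g)\|_p\leq(p-1)^{k/2}\|I_k(g)\|_2$ lifts this to $L^p$, and the super-exponential decay of $\|I_k(f_k)\|_2$ inherited from the iterative scheme of Lemma~\ref{le:posMom} supplies a summable majorant, giving (i) with the claimed uniformity via \eqref{eq:elem1}. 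The main technical obstacle is the uniform exponential moment bound announced in (iii): because of the boundary reflection, the one-body local times $L^{0,j}$ (on which $\mu$ acts) and the two-body cross local times $\mathscr{L}^0(|B^i|-|B^j|)$ are coupled through the Robin kernel structure, so a direct transcription of the usual full-space delta Bose gas argument is not available and the explicit form of $\kernel_\mu^R$ in \eqref{eq:kernelRobin} must be used to separate them.
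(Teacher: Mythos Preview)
Your overall strategy matches the paper's: part~(ii) is indeed a direct Gaussian calculation, and parts~(i) and~(iii) hinge on exactly the two ingredients you isolate --- convergence of the smoothed interaction to $\tfrac12\mathscr{L}^0_{t-s}(|B^i|-|B^j|)$ under the bridge law, and a uniform-in-$(\eps,\delta)$ exponential moment bound that furnishes both uniform integrability and \eqref{eq:momentbdsratio}. Your chaos-expansion route for~(i) is equivalent to the paper's reference to \cite{hu2015stochastic}.

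Where you diverge is in how to prove the uniform exponential bound. You anticipate that the coupling between the one-body boundary local times $L^{0,j}$ and the two-body interaction through the Robin kernel forces use of the explicit form \eqref{eq:kernelRobin}. The paper's route avoids this entirely. First, H\"older under the bridge law decouples the boundary factor $\exp(-\mu\sum_j L^{0,j}_{t-s})$ from the interaction factor $\exp(\sum_{i<j}\iint Q_{\eps,\delta})$; the boundary factor is then handled by the standard one-dimensional bridge local-time bound you already cite. For the interaction factor, the absolute values are removed by the elementary pointwise inequality
\[
Q_{\eps,\delta}(\cdot,\cdot,|B^i_{r_1}|,|B^j_{r_2}|)\ \leq\ \sum_{\sigma_1,\sigma_2\in\{\pm1\}}Q_{\eps,\delta}(\cdot,\cdot,\sigma_1 B^i_{r_1},\sigma_2 B^j_{r_2}),
\]
followed by Cauchy--Schwarz to split the four summands. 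Each summand now involves genuine (unreflected) Brownian bridges, and a positive-definiteness argument (expanding the exponential and Fourier-transforming $p_{2\delta}$, as in \cite{huang2017large}) shows the supremum over all bridge endpoints is attained at the $0\to0$ bridge, reducing the problem to the known full-space delta-Bose-gas estimate. So the Robin structure never enters the interaction bound, and the coupling you worry about is dissolved by H\"older rather than by kernel analysis. One minor point: \eqref{eq:momentbdsratio} is already stated with $\kernel^N$, so no replacement via \eqref{eq:RobinComparison} is needed.
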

We first provide the next two useful results.
\begin{lemma}\label{le:bbmax}
    Let $B_{\cdot}$ be a standard Brownian motion. For any $\tau>0, \Lambda>0$, there exists a constant $C(\tau, \Lambda)>0$ such that for each $t\in [0,\tau], \lambda\in [0,\Lambda]$,
\begin{equation}\label{eq:bcLTbd}
\sup_{x,y,a \in \R}\mathbb{E}^x_{B}\left[\exp(\lambda \mathscr{L}^a_t(B))\mid B_t=y\right] \leq C(\tau,\Lambda).
\end{equation}
\end{lemma}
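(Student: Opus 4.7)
The plan is to compute the relevant Laplace transform explicitly using the joint density of $(B_t, \mathscr{L}^0_t(B))$ under $\mathbb{P}^x_B$. First, by translation invariance of Brownian motion, it suffices to treat $a=0$: under $\mathbb{P}^x_B$ the process $B_\cdot - a$ is a Brownian motion starting at $x-a$ with $\mathscr{L}^a_t(B) = \mathscr{L}^0_t(B-a)$, and the conditioning $B_t=y$ becomes $(B-a)_t = y-a$. We therefore reduce to bounding $\mathbb{E}^{x}_B[\exp(\lambda \mathscr{L}^0_t(B)) \mid B_t = y]$ uniformly over $x,y\in\R$.

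Next, I recall the classical joint density (see e.g.\ Borodin--Salminen): for any $x\in\R$ and $l>0$,
\[
\mathbb{P}^x_B(B_t \in \dd y,\, \mathscr{L}^0_t \in \dd l) = \frac{|x|+|y|+l}{\sqrt{2\pi t^3}} \exp\!\left(-\frac{(|x|+|y|+l)^2}{2t}\right)\dd y\,\dd l,
\]
supplemented by a point mass at $l=0$ of total mass $[p_t(y-x) - p_t(y+x)]\,\dd y$ whenever $xy \geq 0$. Multiplying by $e^{\lambda l}$, integrating in $l$, and performing one integration by parts (using the identity $(c+l)e^{-(c+l)^2/(2t)} = -t\,\partial_l e^{-(c+l)^2/(2t)}$ with $c = |x|+|y|$), I expect the unnormalized Laplace transform to simplify, after combining with the atom, to
\[
\mathbb{E}^x_B\!\bigl[e^{\lambda \mathscr{L}^0_t};\,B_t \in \dd y\bigr]/\dd y = p_t(y-x) + \lambda\, e^{-\lambda c + \lambda^2 t/2}\,\bar\Phi\!\left(\frac{c-\lambda t}{\sqrt{t}}\right),
\]
where $\bar\Phi(u) = \frac{1}{\sqrt{2\pi}}\int_u^\infty e^{-v^2/2}\,\dd v$. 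The two cases $xy\geq 0$ and $xy\leq 0$ coalesce via the identities $p_t(y+x) = p_t(c)$ in the former and $p_t(y-x) = p_t(c)$ in the latter.

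Dividing by $p_t(y-x)$ gives the conditional expectation as $1 + \lambda\sqrt{2\pi t}\,e^{-\lambda c + \lambda^2 t/2 + (y-x)^2/(2t)}\,\bar\Phi((c-\lambda t)/\sqrt{t})$, and I would bound the second term by a case split. If $c \geq \lambda t$, the Gaussian tail bound $\bar\Phi(a) \leq e^{-a^2/2}$ for $a\geq 0$ collapses the exponents to $\lambda \sqrt{2\pi t}\,e^{((y-x)^2 - c^2)/(2t)}$, which is at most $\lambda \sqrt{2\pi t}$ since $|y-x|\leq |x|+|y| = c$. If instead $c<\lambda t$, I use $\bar\Phi\leq 1$ together with the key observation that $(y-x)^2 \leq c^2 < \lambda^2 t^2$, hence $(y-x)^2/(2t) < \lambda^2 t/2$, yielding the bound $\lambda\sqrt{2\pi t}\,e^{\lambda^2 t}$. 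Both estimates are uniform in $(x,y,a)$ and depend only on $\lambda t$ and $\lambda^2 t$, hence are finite for $(\lambda,t)\in[0,\Lambda]\times[0,\tau]$. The hard part is that $p_t(y-x)$ can be arbitrarily small while the Laplace transform remains of order one; the case $c<\lambda t$ handles exactly this subtlety, since whenever the exponential local-time factor is potentially dangerous, the endpoints are automatically close, making $(y-x)^2/(2t)$ automatically $O(\lambda^2 t)$.
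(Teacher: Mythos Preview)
Your argument is correct and complete. The paper does not actually carry out a proof of this lemma; it merely lists three possible approaches (direct computation from the distribution of the bridge local time, the argument of \cite[Lemma~3.2]{bertini1995stochastic}, or an approximation by positive-definite functions as in \eqref{eq:posdefdom}), and your explicit Laplace-transform computation is precisely a realization of the first of these. Your case split $c\gtrless\lambda t$ cleanly isolates the only nontrivial regime, where the bridge density $p_t(y-x)$ is small; the observation that $c<\lambda t$ forces $|y-x|\le c<\lambda t$ is exactly what makes the bound uniform.
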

\begin{proof}
There are several ways to prove this property of local times. One way is to directly compute from the distribution of local times of a Brownian bridge (see \cite{pitman1999distribution}). An alternative is the arguments used in the proof of
\cite[Lemma 3.2]{bertini1995stochastic}. By approximating the local time with some positive definite functions, an argument similar to the one used in the last inequality in \eqref{eq:posdefdom} below can give another proof.
\end{proof}
\begin{lemma}\label{le:uniformbdFK}
    For any $\Lambda>0$ and $a<b$, there exists some uniform constant $C=C(\Lambda,a,b)$ such that for any two independent Brownian motions $B^1, B^2$ starting at $x\in[0,\infty)$,
\begin{equation}\label{eq:uniformbdFK}
    \sup_{\eps, \delta \in (0,1)}
    \mathbb{E}^x_{B^1,B^2}\left[\exp\left(\lambda\int_0^{t-s}\int_0^{t-s} Q_{\eps,\delta}(t-r_1,t-r_2,|B^{1}_{r_1}|,|B^{2}_{r_2}|) \dd r_1 \dd r_2 \right)\Bigg\mid |B^1_{t-s}|=y_1, |B^2_{t-s}|=y_2 \right]\leq C,
\end{equation}
for any $x, y_1, y_2 \in [0,\infty)$, $a\leq s<t\leq b$, and $\lambda \in [0,\Lambda]$.
\end{lemma}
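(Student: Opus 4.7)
My plan is to peel off the two mollifications using Jensen's inequality and reduce the double integral appearing in the exponential to a semimartingale local time controlled by Lemma~\ref{le:bbmax}. Write $T:=t-s$ and denote the conditioning event by $\mathcal{C}:=\{|B^1_T|=y_1,|B^2_T|=y_2\}$. Since $\eps,\delta,t_i^2,x_i^2\geq 0$, the Gaussian decay factors in $Q_{\eps,\delta}$ are bounded by~$1$, giving the pointwise domination
\[
Q_{\eps,\delta}(t_1,t_2,x_1,x_2)\leq (g_\eps*g_\eps)(t_1-t_2)\,p_{2\delta}(x_1-x_2).
\]
Let $I_{\eps,\delta}$ denote the double integral inside the exponential on the left-hand side of \eqref{eq:uniformbdFK}, and change variables $u=r_1-r_2$ to obtain
\[
I_{\eps,\delta}\leq \int_{\R} (g_\eps*g_\eps)(u)\,F_\delta(u)\,\dd u,\qquad F_\delta(u):=\int_{[0,T]\cap[-u,T-u]}p_{2\delta}\bigl(|B^1_{v+u}|-|B^2_v|\bigr)\,\dd v.
\]
Because $(g_\eps*g_\eps)(u)\,\dd u$ is a probability measure on $\R$ (as $\int g_\eps=1$), Jensen's inequality yields $\exp(\lambda I_{\eps,\delta})\leq \int_\R (g_\eps*g_\eps)(u)\exp(\lambda F_\delta(u))\,\dd u$, reducing the problem to an $(\eps,u,\delta)$-uniform bound on $\mathbb{E}^x_{B^1,B^2}[\exp(\lambda F_\delta(u))\mid \mathcal{C}]$.

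For fixed $u\geq 0$ (the case $u<0$ being symmetric after swapping $B^1\leftrightarrow B^2$), the substitution $s=v+u$ rewrites $F_\delta(u)=\int_u^T p_{2\delta}(X^u_s)\,\dd s$ with $X^u_s:=|B^1_s|-|B^2_{s-u}|$. Applying the It\^o-Tanaka formula to each $|B^i|$ and using that $B^1,B^2$ are independent, the process $X^u$ is a continuous semimartingale on $[u,T]$ with quadratic variation $\langle X^u\rangle_s=2\,\dd s$, plus a finite-variation drift $A^u_s:=L^{0,1}_s-L^{0,1}_u-L^{0,2}_{s-u}$ arising from the reflection local times. The occupation-density formula then yields $F_\delta(u)=\tfrac12\int_\R p_{2\delta}(y)\,\mathscr{L}^y_{[u,T]}(X^u)\,\dd y$, and since $p_{2\delta}(y)\,\dd y$ is itself a probability measure on $\R$, a second application of Jensen's inequality gives
\[
\mathbb{E}\bigl[\exp(\lambda F_\delta(u))\mid\mathcal{C}\bigr]\leq \int_\R p_{2\delta}(y)\,\mathbb{E}\Bigl[\exp\Bigl(\tfrac\lambda2\mathscr{L}^y_{[u,T]}(X^u)\Bigr)\Bigm|\mathcal{C}\Bigr]\,\dd y.
\]
It therefore suffices to bound the integrand uniformly in $y\in\R$, $u\in[0,T]$, and $T\in[0,b-a]$.

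For this last step, the drift $A^u$ genuinely shifts the location of local-time accumulation, so a direct identification of $\mathscr{L}^y_{[u,T]}(X^u)$ with the local time of a Brownian bridge obtained by Dambis-Dubins-Schwarz on the martingale part of $X^u$ is not available -- this is the main obstacle. I plan to circumvent it via the crude $y$-uniform bound
\[
\mathscr{L}^y_{[u,T]}(X^u)\leq 2\,\mathrm{Osc}(X^u;[u,T])\leq 4\sup_{s\in[u,T]}|M^u_s|+2L^{0,1}_T+2L^{0,2}_T,
\]
where $M^u$ is the martingale part of $X^u$ (with $\langle M^u\rangle_T\leq 2(b-a)$), the first inequality being the standard pathwise estimate for semimartingale local times. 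H\"older's inequality then reduces the bound to the Burkholder-Davis-Gundy-type Gaussian-tail estimate for $\sup_{s}|M^u_s|$ and the exponential-moment estimate for $L^{0,i}_T$ supplied by Lemma~\ref{le:bbmax} applied to the individual reflected Brownian motions $|B^i|$; combined, they deliver the desired $(y,u,\delta,T)$-uniform constant $C$ depending only on $b-a$ and $\Lambda$.
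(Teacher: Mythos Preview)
Your reduction via the two Jensen steps is clean and correctly brings the problem down to a $(\eps,\delta)$-free bound on $\mathbb{E}\bigl[\exp\bigl(\tfrac{\lambda}{2}\mathscr{L}^y_{[u,T]}(X^u)\bigr)\bigm|\mathcal{C}\bigr]$, uniformly in $u,y,T$ and the endpoints. The gap is in the final step: the claimed ``standard pathwise estimate'' $\mathscr{L}^y_{[u,T]}(X^u)\leq 2\,\mathrm{Osc}(X^u;[u,T])$ is false for general continuous semimartingales. A clean counterexample is reflected Brownian motion in $[0,\eps]$ on $[0,T]$: the oscillation is at most $\eps$, while $\int_0^\eps \mathscr{L}^a_T\,\dd a=\langle X\rangle_T=T$ forces some $\mathscr{L}^a_T\geq T/\eps\gg\eps$ once $T\gg\eps^2$. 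Even in your specific setting (say both $B^i$ stay away from zero so $X^u$ is a rescaled Brownian-type process), the local time at a fixed level has the law of a running maximum and is not pathwise dominated by the oscillation.

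A Tanaka-based repair---writing $\mathscr{L}^y_T(X)=|X_T-y|-|X_0-y|-\int\mathrm{sgn}(X_s-y)\,\dd M_s-\int\mathrm{sgn}(X_s-y)\,\dd A_s$ and controlling the stochastic integral through its quadratic variation---is the natural next try, but you then face two complications: the bridge conditioning $|B^i_T|=y_i$ alters the semimartingale decomposition of $|B^i|$ (an extra drift appears), and the time shift $u$ makes $X^u$ non-Markov, so Lemma~\ref{le:bbmax} does not apply directly. The paper avoids both obstacles by first stripping the absolute values---dominating $Q_{\eps,\delta}(\cdot,\cdot,|B^1|,|B^2|)\leq\sum_{i_1,i_2=\pm1}Q_{\eps,\delta}(\cdot,\cdot,i_1B^1,i_2B^2)$ and applying Cauchy--Schwarz---and then exploiting the positive-definiteness of $p_{2\delta}$ (an argument from \cite{huang2017large}) to reduce to Brownian bridges pinned at the origin, where the full-space estimates of \cite{hu2015stochastic,huang2017large} finish the job.
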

\begin{proof}
We explain how to relate \eqref{eq:uniformbdFK} to similar estimates for the full-space SHE    and use the uniform bounds derived there.
We first note that, for each fixed $y_1,y_2 \in [0,\infty)$, the left-hand side of \eqref{eq:uniformbdFK} can be bounded above by a sum over four expectations conditioning on the events $\{B^i_{t-s}=\pm y_i, i=1,2\}$ respectively, i.e., 
\[
\begin{aligned}
&\mathbb{E}^x_{B^1,B^2}\left[\exp\left(\lambda\int_0^{t-s}\int_0^{t-s} Q_{\eps,\delta}(t-r_1,t-r_2,|B^{1}_{r_1}|,|B^{2}_{r_2}|) \dd r_1 \dd r_2 \right)\Bigg\mid |B^1_{t-s}|=y_1, |B^2_{t-s}|=y_2 \right] \\&\leq \sum_{i_1,i_2 =\pm 1} \mathbb{E}^x_{B^1,B^2}\left[\exp\left(\lambda\int_0^{t-s}\int_0^{t-s} Q_{\eps,\delta}(t-r_1,t-r_2,|B^{1}_{r_1}|,|B^{2}_{r_2}|) \dd r_1 \dd r_2 \right)\Bigg\mid B^1_{t-s}=i_1y_1, B^2_{t-s}=i_2y_2 \right].
\end{aligned}
\]

To ease notations, we use $\mathbb{E}^{x_1, x_2, y_1,y_2}_{B^1,B^2}$ as the expectation over independent Brownian bridges $B^1, B^2$ with $B^1_{0}=x_1, B^2_{0}=x_2, B^1_{t-s}=y_1$ and $B^2_{t-s}=y_2$ for any $x_1,x_2,y_1,y_2 \in \R$.

A straightforward observation is that, for any realization of $B^1,B^2$ and any $r_1,r_2\in [0,t-s]$,
\begin{equation*}
\begin{aligned}
&Q_{\eps,\delta}(t-r_1,t-r_2,|B^{1}_{r_1}|,|B^{2}_{r_2}|) \leq Q_{\eps,\delta}(t-r_1,t-r_2,|B^{1}_{r_1}|,|B^{2}_{r_2}|)\sum_{i_1,i_2=\pm 1}\1_{\{i_1B_{r_1} \geq 0, i_2B_{r_2} \geq 0 \}}(r_1,r_2)\\&
\leq \sum_{i_1,i_2 =\pm 1}Q_{\eps,\delta}(t-r_1,t-r_2,i_1B^{1}_{r_1},i_2B^{2}_{r_2}).
\end{aligned}
\end{equation*}
Thus by Cauchy-Schwarz inequality, for any $\eps,\delta>0$ and    $x_1,x_2,y_1,y_2 \in \R$,
    \begin{equation*}
        \begin{aligned}
        &\mathbb{E}^{x,x,y_1,y_2}_{B^1,B^2}\left[\exp\left(\lambda\int_0^{t-s}\int_0^{t-s} Q_{\eps,\delta}(t-r_1,t-r_2,|B^{1}_{r_1}|,|B^{2}_{r_2}|) \dd r_1 \dd r_2 \right) \right]\\
            &\leq \prod_{i_1, i_2 = \pm 1}\left\{\mathbb{E}^{x,x,y_1,y_2}_{B^1,B^2}\left[\exp\left(4\lambda\int_0^{t-s}\int_0^{t-s} Q_{\eps,\delta}(t-r_1,t-r_2,i_1B^{1}_{r_1},i_2B^{2}_{r_2}) \dd r_1 \dd r_2 \right) \right]\right\}^{1/4},
        \end{aligned}
    \end{equation*}
and \eqref{eq:uniformbdFK} will be proved if we can show that there exists $C=C(\Lambda,a,b)$, such that
\[
    \sup_{\eps, \delta \in (0,1)}\sup_{x_1,x_2,y_1,y_2 \in \R}
\mathbb{E}^{x_1,x_2,y_1,y_2}_{B^1,B^2}\left[\exp\left(4\lambda\int_0^{t-s}\int_0^{t-s} Q_{\eps,\delta}(t-r_1,t-r_2,B^{1}_{r_1},B^{2}_{r_2}) \dd r_1 \dd r_2 \right) \right]\leq C,
\]
    for any $a\leq s<t\leq b$, and $\lambda \in [0,\Lambda]$. 

To give a uniform bound over different starting points and endpoints, we note that for any $\eps,\delta>0$, $x_1,x_2,y_1,y_2\in\R$,
    \begin{equation}\label{eq:posdefdom}
    \begin{aligned}
&\mathbb{E}^{x_1,x_2,y_1,y_2}_{B^1,B^2}\left[\exp\left(4\lambda\int_0^{t-s}\int_0^{t-s} Q_{\eps,\delta}(t-r_1,t-r_2,B^{1}_{r_1},B^{2}_{r_2}) \dd r_1 \dd r_2 \right) \right]\\
        &\leq \mathbb{E}^{x_1,x_2,y_1,y_2}_{B^1,B^2}\left[\exp\left(4\lambda\int_0^{t-s}\int_0^{t-s} (g_{\eps}*g_{\eps})(r_1-r_2)  p_{2\delta}(B^{1}_{r_1}-B^{2}_{r_2}) \dd r_1 \dd r_2 \right) \right]\\
        & = \mathbb{E}^{0,0,0,0}_{B^1,B^2}\bigg[\exp\bigg(4\lambda\int_0^{t-s}\int_0^{t-s} (g_{\eps}*g_{\eps})(r_1-r_2) \\& p_{2\delta}\left(B^{1}_{r_1}-B^{2}_{r_2}+x_1-x_2+\frac{r_1(y_1-x_1)-r_2(y_2-x_2)}{t-s}\right) \dd r_1 \dd r_2 \bigg) \bigg]\\
        &\leq \mathbb{E}^{0,0,0,0}_{B^1,B^2}\left[\exp\left(4\lambda\int_0^{t-s}\int_0^{t-s} (g_{\eps}*g_{\eps})(r_1-r_2) p_{2\delta}\left(B^{1}_{r_1}-B^{2}_{r_2}\right) \dd r_1 \dd r_2 \right) \right],
    \end{aligned}
        \end{equation}
    where the equation follows from rescaling the Brownian bridges, and the last inequality follows from a generalization of \cite[Lemma 4.1]{huang2017large} (see also \cite[(4.5)]{huang2017large2}). Roughly speaking, the proof of the last inequality can be done by expanding the exponential, applying Fourier transforms to the Gaussians, and using the fact that $p_{2\delta}$ is a positive definite function for any $\delta>0$. Heuristically, this is to say that an approximated ``local time at level $0$'' of Brownian bridges starting from $x_1-x_2$ and ending at $y_1-y_2$ would reach its maximum when $x_1-x_2=0$ and $y_1-y_2=0$. 

Combining the results above, we have reduced the problem of showing \eqref{eq:uniformbdFK} to showing that for any $a\leq s<t\leq b$, and $\lambda \in [0,\Lambda]$,
\[
\sup_{\eps, \delta \in (0,1)}\mathbb{E}^{0,0,0,0}_{B^1,B^2}\left[\exp\left(4\lambda\int_0^{t-s}\int_0^{t-s} (g_{\eps}*g_{\eps})(r_1-r_2) p_{2\delta}\left(B^{1}_{r_1}-B^{2}_{r_2}\right) \dd r_1 \dd r_2 \right) \right] \leq C(\Lambda, a,b),
\]
where the Brownian bridges no longer have reflections. This uniform bound would follow from the same arguments as for a similar result \cite[(3.17)]{hu2015stochastic}. Compared with \cite[(3.17)]{hu2015stochastic}, a few adaptions are needed for the noise $\xi$ being white in time (instead of colored as in \cite[(3.17)]{hu2015stochastic}) and $B^1, B^2$ being Brownian bridges (instead of Brownian motions as in \cite[(3.17)]{hu2015stochastic}). The adaptation for time independence is straightforward from the computations. For the adaption to Brownian bridges, we refer to \cite[Proposition 4.2]{huang2017large}, where essential estimates have been proved.
\end{proof}

We now prove Lemma~\ref{le:fk}.

\begin{proof}[Proof of Lemma~\ref{le:fk}] A direct Gaussian calculation gives \eqref{it:apprmoment}. To prove \eqref{it:approx} and \eqref{eq:fkmomentexpression}, one can follow the same arguments as used in \cite[Theorem 3.6]{hu2015stochastic} for the analogous full-space SHEs. In particular, the proof mainly needs two estimates: (i) passing to the limit: for each $p \in [1,\infty)$,
\begin{equation}\label{eq:fklimitcon}
\begin{aligned}
        \lim _{\eps,\delta \to 0} \mathbb{E}^x_{B^1,B^2}\Bigg[&\Bigg|\int_0^{t-s}\int_0^{t-s} Q_{\eps,\delta}(t-r_1,t-r_2,|B^{1}_{r_1}|,|B^{2}_{r_2}|) \dd r_1 \dd r_2 \\& -\frac{1}{2} \mathscr{L}_{t-s}^0\left(\left|B^1\right|-\left|B^2\right|\right)\Bigg|^p\Bigg\mid |B^1_{t-s}|=y_1, |B^2_{t-s}|=y_2 \Bigg]=0,
\end{aligned}
\end{equation}
uniformly for $x,y_1,y_2\in [0,\infty)$ and $t$ in compact subset of $(s,\infty)$; (ii) a uniform bound: for any $p \in[1,\infty)$,
\begin{equation}\label{eq:keyestapprox}
\begin{aligned}
     &\sup_{\eps,\delta\in(0,1)} \sup_{x, y_1,\dots,y_p \in [0,\infty)}\mathbb{E}^x_{B^{1},\dots,B^{p}}\Bigg[\exp({-\mu \sum_{j=1}^p L^{0,j}_{t-s}})\\& \quad
\exp\left(\sum_{1\leq i<j \leq p} \int_0^{t-s}\int_0^{t-s}Q_{\eps,\delta} (t-r_1,t-r_2, |B^{i}_{r_1}|,|B^{j}_{r_2}|) \dd r_1 \dd r_2\right)\bigg\mid |B^j_{t-s}|=y_j\text{ for all }j\Bigg]
\leq C(p,\mu,s,t). 
\end{aligned}
\end{equation}
The limit \eqref{eq:fklimitcon} follows from the continuity of the semimartingale $|B^1|-|B^2|$, the density of occupation time formula \cite[Corollary 9.7]{le2016brownian}, and the fact that the local time process $\mathscr{L}^a_t{(|B^1|-|B^2|)}$ is continuous in $a\in \R$ almost surely (which can be verified using e.g. \cite[Theorem 9.4]{le2016brownian}).  The uniform bound \eqref{eq:keyestapprox} follows from H\"older's inequality applied to the conditional expectation and applying Lemma~\ref{le:bbmax} and Lemma~\ref{le:uniformbdFK}.

The proof also shows that the right-hand side of \eqref{eq:fkmomentexpression} is the limit of the right-hand side of \eqref{eq:estmomentFK} as $\eps,\delta \to 0$.
To prove \eqref{eq:momentbdsratio}, we apply H\"older's inequality to the conditional expectation in \eqref{eq:estmomentFK}, and use Lemma~\ref{le:bbmax} and Lemma~\ref{le:uniformbdFK} with the convergence. From the uniform bounds in Lemma~\ref{le:bbmax} and Lemma~\ref{le:uniformbdFK}, it is not hard to see that the constant in \eqref{eq:momentbdsratio} is uniform for $x\in[0,\infty)$, $\mu \in [\mu_0,\infty)$ and $a\leq s<t\leq b$.
\end{proof}

\subsection{Proof of Lemma~\ref{le:negmom}}\label{ap:negmom}
In this section, we provide a proof of the negative moments bounds {for the SHEs starting from constant or Dirac-$\delta$ initial data}  following the argument in \cite{hu2022asymptotics}. 


Using the notations as in Appendix~\ref{ap:fk}, {we define} 
\begin{equation}\label{eq:shemuchaosexpand}
\begin{aligned}
&\Phi^{\eps,\delta}_{\mu}(t,x\viv 0,\zeta):= \int_0^{\infty}\int_0^{\infty}\kernel^{N}(t,x\viv 0,y_1) \kernel^{N}(t,x\viv 0,y_2) \mathbb{E}^x_{B^1,B^2}\Bigg[ \exp(-\mu L^{0,1}_{t}-\mu L^{0,2}_{t}) \\& \exp\left(\int_0^t\int_0^tQ_{\eps,\delta} (t-s,t-r, |B^{1}_{r}|,|B^{2}_{s}|)\dd s \dd r+\frac{1}{2}\mathscr{L}_t^0(|B^{1}|-|B^{2}|)\right) \bigg\mid |B^1_{t}|=y_1, |B^2_{t}|=y_2  \Bigg]  \zeta(\dd y_1)\zeta(\dd y_2).
\end{aligned}
\end{equation}
Recall that $z_\mu(t,x\viv 0,\zeta):= \int_0^{\infty}\kernel_\mu^R(t,x\viv0,y)\zeta(\dd y).$
We first prove the following auxiliary result:
\begin{proposition}\label{pr:negmomcondition}
    Let $\zeta$ be the constant initial condition  or the Dirac-$\delta$ initial condition $\zeta=\delta_y$ for some $y\in [0,\infty)$. For any $\tau>0$, there exists some positive constant $C=C(\mu,\tau)$ such that
\begin{equation}\label{eq:condwithepsdelta}
    \sup_{x\in[0,\infty)} \sup_{\eps,\delta \in (0,1)} \frac{\Expe[Z_\mu^{\eps, \delta}(t,x \viv 0, \zeta) ^2]}{z_\mu(t,x\viv0,\zeta)^2} \leq C(\mu,\tau),
\end{equation}
and
\begin{equation}\label{eq:bdforPhi}
     \sup_{x\in[0,\infty)} \sup_{\eps,\delta \in (0,1)} \frac{\Phi^{\eps,\delta}_{\mu}(t,x\viv 0,\zeta)}{z_\mu(t,x\viv0,\zeta)^2}\leq C(\mu,\tau),
\end{equation}
for any $t\in (0,\tau]$. 
The constant $C(\mu,\tau)$ can   be chosen uniformly for $\mu$ in any compact subset of $\R$.

\end{proposition}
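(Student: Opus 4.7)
The plan is to apply the Feynman-Kac moment formula \eqref{eq:estmomentFK} with $p=2$ to rewrite the numerators, the probabilistic representation \eqref{eq:probrep2} to rewrite $z_\mu$, and then bound each piece via Hölder's inequality together with Lemmas~\ref{le:bbmax} and \ref{le:uniformbdFK}.

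For \eqref{eq:condwithepsdelta}, by \eqref{eq:estmomentFK} with $p=2$,
\[
\Expe[Z_\mu^{\eps,\delta}(t,x\viv 0,\zeta)^2] = \int\!\!\int \zeta(\dd y_1)\zeta(\dd y_2) \kernel^{N}(t,x\viv 0,y_1)\kernel^{N}(t,x\viv 0,y_2)\,\mathcal{F}^{\eps,\delta}(y_1,y_2),
\]
where $\mathcal{F}^{\eps,\delta}(y_1,y_2) = \mathbb{E}^x_{B^1,B^2}[e^{-\mu(L^{0,1}_t+L^{0,2}_t)+\int\!\int Q_{\eps,\delta}}\mid |B^j_t|=y_j]$. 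A Cauchy--Schwarz split of the exponent, using independence of $B^1, B^2$ and Lemma~\ref{le:bbmax} for $e^{-2\mu L^{0,j}_t}$ together with Lemma~\ref{le:uniformbdFK} for $e^{2\int\!\int Q_{\eps,\delta}}$, yields $\mathcal{F}^{\eps,\delta}(y_1,y_2) \le C(\mu,\tau)$ uniformly in $\eps,\delta\in(0,1)$, $x, y_1, y_2\in[0,\infty)$, and $t\in(0,\tau]$. Consequently, $\Expe[Z_\mu^{\eps,\delta}(t,x\viv 0,\zeta)^2] \le C(\mu,\tau)\bigl(\int\kernel^{N}(t,x\viv 0,y)\zeta(\dd y)\bigr)^2$.

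For the denominator, \eqref{eq:probrep2} gives
\[
z_\mu(t,x\viv 0,\zeta) = \int \zeta(\dd y)\,\kernel^{N}(t,x\viv 0,y)\,\mathbb{E}^x_B[e^{-\mu L^0_t}\mid |B_t|=y].
\]
The key step is a uniform lower bound $\mathbb{E}^x_B[e^{-\mu L^0_t}\mid |B_t|=y]\ge c(\mu,\tau)>0$, independent of $x,y\in[0,\infty)$ and $t\in(0,\tau]$. Via the standard Brownian-bridge local time identity
\[
\mathbb{E}^x_B[L^0_t\mid |B_t|=y]\cdot \kernel^{N}(t,x\viv 0,y) = 2\int_0^t p_s(x)p_{t-s}(y)\,\dd s,
\]
combined with the elementary pointwise inequality $\tfrac{x^2}{2s}+\tfrac{y^2}{2(t-s)}\ge \tfrac{(x+y)^2}{2t}$ and $\int_0^t(s(t-s))^{-1/2}\dd s=\pi$, one obtains $\mathbb{E}^x_B[L^0_t\mid |B_t|=y]\le \sqrt{2\pi t}$ uniformly in $x,y\ge 0$. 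Jensen's inequality then yields $\mathbb{E}^x_B[e^{-\mu L^0_t}\mid |B_t|=y]\ge e^{-|\mu|\sqrt{2\pi\tau}}$. Squaring, $z_\mu^2\ge c^2\bigl(\int\kernel^{N}\zeta\bigr)^2$, and combined with the Step~1 bound this proves \eqref{eq:condwithepsdelta}.

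For \eqref{eq:bdforPhi}, the analysis is the same except that $\Phi_\mu^{\eps,\delta}$ carries the additional factor $\exp\bigl(\tfrac{1}{2}\mathscr{L}^0_{t}(|B^1|-|B^2|)\bigr)$. I would apply Hölder with exponents $(3,3,3)$ to separate the three exponentials, and then use a Lemma~\ref{le:bbmax}-type estimate for the continuous semimartingale $|B^1|-|B^2|$ (whose quadratic variation is $2t$) to bound $\mathbb{E}[e^{\frac{3}{2}\mathscr{L}^0_t(|B^1|-|B^2|)}\mid |B^j_t|=y_j]$ uniformly; this bound is already implicit in the proof of Lemma~\ref{le:fk}\eqref{it:momentsFK} and can alternatively be obtained as the $\eps,\delta\to 0$ limit of Lemma~\ref{le:uniformbdFK}. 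The main technical obstacle is the uniform Brownian-bridge local time estimate; once that is in hand, the remaining bounds are direct applications of the auxiliary lemmas and H\"older's inequality.
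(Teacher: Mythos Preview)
Your proposal is correct and follows essentially the same route as the paper: apply H\"older's inequality to the conditional expectations in \eqref{eq:estmomentFK} and \eqref{eq:shemuchaosexpand}, invoke Lemmas~\ref{le:bbmax} and~\ref{le:uniformbdFK} for the individual factors, and then replace the resulting Neumann integral $\int\kernel^N\zeta$ by $z_\mu$ via a uniform lower bound on $\mathbb{E}^x_B[e^{-\mu L^0_t}\mid |B_t|=y]$. The only cosmetic difference is in this last step: the paper derives the lower bound by citing Lemma~\ref{le:bbmax} once more (an exponential-moment bound on $L^0_t$ implies a first-moment bound, hence Jensen), whereas you compute $\mathbb{E}^x_B[L^0_t\mid |B_t|=y]\le\sqrt{2\pi t}$ directly from the bridge local-time density, which is a pleasant explicit alternative.
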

\begin{proof}
    The proof is essentially applying H\"older's inequality to the conditional expectations and using the uniform bounds in Lemma~\ref{le:bbmax} and Lemma~\ref{le:uniformbdFK}. These estimates would give us \eqref{eq:condwithepsdelta} and \eqref{eq:bdforPhi} with $z_\mu(t,x\viv0,\zeta)^2$ replaced by $z_0(t,x\viv 0, \zeta)^2$ on the denominators. Note that when $\zeta=\delta_y$, $z_0(t,x\viv 0, \zeta)=\kernel^N(t,x\viv 0, y)$. Another estimate useful here is that following Lemma~\ref{le:bbmax}, for any $\tau>0, \mu_0 \in \R$, there exists a constant $C(\mu_0, \tau)>0$ such that
    \begin{equation}\label{eq:ratiokernels}
    \sup_{t\in(0,\tau]}\sup_{\mu\leq\mu_0} 
    \frac{z_0(t,x\viv 0,\delta_y)}{z_\mu(t,x\viv 0,\delta_y)} = \sup_{t\in(0,\tau]}\sup_{\mu\leq \mu_0} \frac{1}{\mathbb{E}^{x}_{B}[\exp(-\mu L_t^0)\mid |B_t|=y]} \leq C(\mu_0, \tau),
    \end{equation}
    for all $x,y \in [0,\infty)$. Similarly, we also have that
    \[
\sup_{t\in(0,\tau]}\sup_{\mu\leq\mu_0} 
    \frac{z_0(t,x\viv 0,\1)}{z_\mu(t,x\viv 0,\1)} = \sup_{t\in(0,\tau]}\sup_{\mu\leq \mu_0} \frac{1}{\mathbb{E}^{x}_{B}[\exp(-\mu L_t^0)]} \leq C'(\mu_0, \tau),
    \]
    for all $x \in [0,\infty)$ with some constant $C'(\mu_0, \tau)>0$.
\end{proof}


Following  \cite{hu2022asymptotics}, we now present the proof of Lemma~\ref{le:negmom}. 
{For clarity of  presentation, we give the argument only in the case where the initial condition $\zeta$ admits a continuous density $f(\cdot)$ with respect to the Lebesgue measure, since in this setting the Feynman–Kac approximations take a simpler form.
Under the assumption on the initial data in Lemma~\ref{le:negmom}, this corresponds to the case where 
 $\zeta$ is  constant (that is $f(\cdot)=\mathbf{1}(\cdot)$).} 

{However, since the Feynman-Kac approximation in Lemma~\ref{le:fk} is also valid for Dirac-$\delta$ initial conditions, one can easily adapt the following proof, requiring only minor modifications to the Feynman-Kac expressions.
To make the argument fully rigorous in the Dirac cases, these expressions need to be rewritten to reflect the singular nature of the initial data, but the underlying steps of the proof carry over.
 Formally, one can simply check by  taking 
$f (\cdot) = \delta_y$
for any 
$y\geq 0$. } 

{We also note that Lemma~\ref{le:negativelemma1} below is independent of the choice of initial condition and holds for any 
$\zeta$ satisfying Hypothesis~\ref{hy:detic}.}

{Now suppose $\zeta(\dd y)= f(y) \dd y $ for some deterministic function $f\in \mathcal{C}([0,\infty), [0,\infty))$.} For any fixed $\mu \in \R, t> 0$ and any $L^2(\R^2)$-valued sample path $\xi_{\eps,\delta}$ of the smoothed space-time noise \eqref{eq:smstnoise}, define \[
\Theta(B_t, \xi_{\eps,\delta}) := f(|B_t|)\exp(-\mu L^0_t):\exp: \left\{\int_{0}^{t}\xi_{\eps,\delta}(t-r,|B_r|)\dd r\right\}.\]
$\Theta(B_t, \xi_{\eps,\delta})$
 is a continuous functional of the Brownian motion $B_t$ starting from $x \in [0,\infty)$. For any bounded functional $F$ of the Brownian motion $B$, we define its weighted expectation as
\[
\mathbb{E}^B_{x,\xi_{\eps,\delta}}[F] =  \frac{\mathbb{E}^x_{B}\left[F(\cdot) \Theta(B_t, \xi_{\eps,\delta})\right]}{\mathbb{E}^x_{B}\left[ \Theta(B_t, \xi_{\eps,\delta})\right]} = \frac{\mathbb{E}^x_B\left[F(\cdot) f(|B_t|)\exp(-\mu L^0_t):\exp: \left\{\int_{0}^{t}\xi_{\eps,\delta}(t-r,|B_r|)\dd r\right\}\right]}{\mathbb{E}^x_B\left[f(|B_t|)\exp(-\mu L^0_t):\exp: \left\{\int_{0}^{t}\xi_{\eps,\delta}(t-r,|B_r|)\dd r\right\}\right]}.
\]
For any bounded functional $F$ of two paths $B^1,B^2$, we define 
\[
\begin{aligned}
&\mathbb{E}^{B^1,B^2}_{x,\xi_{\eps,\delta}}[F] = \frac{\mathbb{E}^x_{B^1,B^2}\left[F(\cdot) \Theta(B^{1}_t, \xi_{\eps,\delta})\Theta(B^{2}_t, \xi_{\eps,\delta}) \right]}{\mathbb{E}^x_{B^1,B^2}\left[\Theta(B^{1}_t, \xi_{\eps,\delta})\Theta(B^{2}_t, \xi_{\eps,\delta}) \right]}.
\end{aligned}
\]
To ease notations, we write
\[Z_\mu(t,x,\xi_{\eps,\delta}):=\mathbb{E}^x_B\left[f(|B_t|)\exp\left(-\mu L^0_t\right):\exp: \left\{\int_{0}^{t}\xi_{\eps,\delta}(t-r,|B_r|)\dd r\right\} \right]
\]
to emphasize the dependence of $Z^{\eps,\delta}_\mu(t,x\viv0,\zeta)$ on the noise $\xi_{\eps,\delta}$. 
For any fixed Brownian path $B_t$,
\[
\Expe\left[:\exp: \left\{\int_{0}^{t}\xi_{\eps,\delta}(t-r,|B_r|)\dd r\right\}\right]=1.
\]
Tonelli's theorem gives $
\Expe\left[Z_\mu(t,x, \xi_{\eps,\delta})\right] =z_\mu(t,x\viv0,\zeta)$. As above, we use $\mathscr{L}^a_t(\cdot)$ to denote the local time at level $a \in \R$ up to time $t\geq 0$ for continuous semimartingales.

Now for each $\lambda>0, (t,x)\in(0,\infty)\times[0,\infty)$, define the event
\begin{equation}\label{eq:alambda}
\begin{aligned}
A_{\lambda}(t,x) &= \Bigg\{\xi'_{\eps,\delta}: Z_\mu(t,x, \xi'_{\eps,\delta})> \frac{1}{2}z_\mu(t,x\viv0,\zeta),  \quad   \mathbb{E}^{B^1,B^2}_{x,\xi'_{\eps,\delta}}\left[{\frac{1}{2}}\mathscr{L}_t^0(|B^{1}|-|B^{2}|)\right] \leq \lambda \Bigg\},
\end{aligned}
\end{equation}
and a distance
\[
d(\xi_{\eps,\delta}, A_{\lambda}(t,x)):= \inf\{ \|\xi_{\eps,\delta}-\xi'_{\eps,\delta}\|_{L^2(\R^2)}: \xi'_{\eps,\delta} \in A_{\lambda}(t,x) \}.
\]
We first prove the following lemmas.

\begin{lemma}\label{le:negativelemma1}
For any sample path $\xi_{\eps,\delta}$ and any $\lambda>0$,
\[
Z_\mu(t,x, \xi_{\eps,\delta}) \geq \frac{1}{2}z_\mu(t,x\viv0,\zeta)\exp\left[ - \sqrt{\lambda}d(\xi_{\eps,\delta}, A_{\lambda}(t,x))\right].
\]
\end{lemma}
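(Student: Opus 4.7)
The strategy is to view $F(\xi_{\eps,\delta}):=\log Z_\mu(t,x,\xi_{\eps,\delta})$ as a convex functional on $L^2(\R^2)$ and to show that on the set $A_\lambda(t,x)$ its $L^2$-gradient norm is bounded by $\sqrt{\lambda}$; the lemma then follows from a standard one-sided Lipschitz estimate. Setting $\xi_s=\xi'_{\eps,\delta}+s\eta$ with $\eta=\xi_{\eps,\delta}-\xi'_{\eps,\delta}$ and noting that the Wick renormalizer $V_B:=\int_0^t\!\int_0^t Q_{\eps,\delta}(t-r_1,t-r_2,|B_{r_1}|,|B_{r_2}|)\,dr_1\,dr_2$ is independent of the noise sample, direct differentiation of the Wick exponential gives
\[
\frac{d}{ds}\log Z_\mu(t,x,\xi_s) = \mathbb{E}^B_{x,\xi_s}\!\left[\int_0^t \eta(t-r,|B_r|)\,dr\right], \qquad \frac{d^2}{ds^2}\log Z_\mu(t,x,\xi_s) \ge 0,
\]
the second derivative being the nonnegative quenched variance of the same integral. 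Hence $F$ is convex.

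The next step is to compute $\nabla F(\xi_{\eps,\delta})$ and its $L^2$ norm. After the change of variable $s=t-r$ and Fubini,
\[
D_\eta F(\xi_{\eps,\delta}) = \int_0^t\!\!\int_0^\infty \eta(s,y)\,p^{\xi_{\eps,\delta}}_{t-s}(x,y)\,dy\,ds = \langle \eta, h\rangle_{L^2(\R^2)},
\]
where $p^{\xi_{\eps,\delta}}_r(x,\cdot)$ denotes the quenched density of $|B_r|$ under $\mathbb{E}^B_{x,\xi_{\eps,\delta}}$ (a genuine function because the weight $\Theta(B_t,\xi_{\eps,\delta})$ is bounded on sets of bounded paths and the underlying reflected Brownian motion has a smooth transition density), and $h(s,y):=p^{\xi_{\eps,\delta}}_{t-s}(x,y)\,\1_{[0,t]\times[0,\infty)}(s,y)$. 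By Riesz representation, $\nabla F(\xi_{\eps,\delta})=h$. A replica computation using two independent copies $B^1,B^2$ decouples to give $\int_0^\infty p^{\xi_{\eps,\delta}}_{r}(x,y)^2\,dy = \mathbb{E}^{B^1,B^2}_{x,\xi_{\eps,\delta}}[\delta(|B^1_r|-|B^2_r|)]$. Integrating over $r\in[0,t]$ and invoking the occupation time density formula for the continuous semimartingale $|B^1|-|B^2|$, whose quadratic variation equals $2r$ (since $\langle|B^i|\rangle_r=r$ and $\langle|B^1|,|B^2|\rangle_r=0$ by Tanaka's formula together with the independence of the driving Brownian motions), yields
\[
\|\nabla F(\xi_{\eps,\delta})\|_{L^2(\R^2)}^2 = \mathbb{E}^{B^1,B^2}_{x,\xi_{\eps,\delta}}\!\left[\tfrac12\mathscr{L}_t^0(|B^1|-|B^2|)\right].
\]

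Fix any $\xi'_{\eps,\delta}\in A_\lambda(t,x)$. The second defining condition in \eqref{eq:alambda} gives $\|\nabla F(\xi'_{\eps,\delta})\|_{L^2}\le\sqrt{\lambda}$, so by convexity and Cauchy-Schwarz,
\[
F(\xi_{\eps,\delta}) \ge F(\xi'_{\eps,\delta}) + \langle\nabla F(\xi'_{\eps,\delta}),\xi_{\eps,\delta}-\xi'_{\eps,\delta}\rangle \ge \log\!\Big(\tfrac12 z_\mu(t,x\viv 0,\zeta)\Big) - \sqrt{\lambda}\,\|\xi_{\eps,\delta}-\xi'_{\eps,\delta}\|_{L^2},
\]
where the first defining condition in \eqref{eq:alambda} was used for the last inequality. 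Taking the infimum over $\xi'_{\eps,\delta}\in A_\lambda(t,x)$ and exponentiating delivers the stated bound (with the convention $\exp(-\infty)=0$ handling the vacuous case $A_\lambda(t,x)=\varnothing$).

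The main obstacle is the identification $\|\nabla F(\xi_{\eps,\delta})\|_{L^2}^2 = \mathbb{E}^{B^1,B^2}_{x,\xi_{\eps,\delta}}[\tfrac12\mathscr{L}_t^0(|B^1|-|B^2|)]$: it requires pushing a Dirac mass through the polymer expectation in the replica step and invoking the occupation time density formula together with Tanaka's computation of the quadratic variation of $|B^1|-|B^2|$. For the smoothed noise $\xi_{\eps,\delta}$ the quenched density $p^{\xi_{\eps,\delta}}$ is sufficiently regular that this manipulation can be made rigorous, but it is the step demanding the most care; the rest of the argument is a clean convexity plus Cauchy-Schwarz estimate.
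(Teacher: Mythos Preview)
Your proof is correct and follows essentially the same route as the paper. The convexity inequality $F(\xi_{\eps,\delta})\ge F(\xi'_{\eps,\delta})+\langle\nabla F(\xi'_{\eps,\delta}),\xi_{\eps,\delta}-\xi'_{\eps,\delta}\rangle$ is precisely the Jensen step \eqref{eq:negmomJensen} in the paper, and your identification $\|\nabla F(\xi'_{\eps,\delta})\|_{L^2}^2=\mathbb{E}^{B^1,B^2}_{x,\xi'_{\eps,\delta}}[\tfrac12\mathscr{L}_t^0(|B^1|-|B^2|)]$ is exactly the Cauchy--Schwarz bound \eqref{eq:pathintegralbd}; the paper justifies the latter via a $p_\kappa$-mollification and the occupation-time formula, which is the rigorous version of the ``Dirac mass through the polymer expectation'' step you flag as requiring the most care.
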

\begin{proof}
Fix any  $\xi'_{\eps,\delta} \in A_{\lambda}(t,x)$. By   definitions and Jensen's inequality, we have
\begin{equation}\label{eq:negmomJensen}
\begin{aligned}
Z_\mu(t,x, \xi_{\eps,\delta}) &= Z_\mu(t,x, \xi'_{\eps,\delta}) \mathbb{E}^{B}_{x,\xi'_{\eps,\delta}}\left[\frac{:\exp: \left\{\int_{0}^{t}\xi_{\eps,\delta}(t-r,|B_r|)\dd r\right\}}{:\exp: \left\{\int_{0}^{t}\xi'_{\eps,\delta}(t-r,|B_r|)\dd r\right\}} \right]\\
&=Z_\mu(t,x, \xi'_{\eps,\delta}) \mathbb{E}^{B}_{x,\xi'_{\eps,\delta}}\left\{\exp\left[ \int_{0}^{t}\xi_{\eps,\delta}(t-r,|B_r|)\dd r - \int_{0}^{t}\xi'_{\eps,\delta}(t-r,|B_r|)\dd r  \right] \right\}&\\
 &\geq  Z_\mu(t,x, \xi'_{\eps,\delta}) \exp\left\{\mathbb{E}^{B}_{x,\xi'_{\eps,\delta}}\left[ \int_{0}^{t}\xi_{\eps,\delta}(t-r,|B_r|)\dd r - \int_{0}^{t}\xi'_{\eps,\delta}(t-r,|B_r|)\dd r  \right]\right\}.
\end{aligned}
\end{equation}
The exponent can be estimated as
\begin{equation}\label{eq:pathintegralbd}
\begin{aligned}
& \left|\mathbb{E}^{B}_{x,\xi'_{\eps,\delta}}\left[ \int_{0}^{t}\xi_{\eps,\delta}(t-r,|B_r|)\dd r - \int_{0}^{t}\xi'_{\eps,\delta}(t-r,|B_r|)\dd r  \right]\right|\\
&\leq \mathbb{E}^{B}_{x,\xi'_{\eps,\delta}}\left[ \int_{0}^{t}\left|\xi_{\eps,\delta}(t-r,|B_r|)- \xi'_{\eps,\delta}(t-r,|B_r|)\right|\dd r  \right] \\&\leq \left[ {\frac{1}{2}}\mathbb{E}^{B^1,B^2}_{x,\xi'_{\eps,\delta}} \left[\mathscr{L}_t^0( |B^{1}|-|B^{2}|) \right]\right]^{1/2} \|\xi_{\eps,\delta}-\xi'_{\eps,\delta}\|_{L^2(\R^2)}.\end{aligned}\end{equation}
The last ``$\leq$'' will be proved at the end. Since $\xi'_{\eps,\delta} \in A_{\lambda}(t,x)$, by the second inequality in \eqref{eq:alambda},  the above expression  is  further bounded by $\sqrt{\lambda} \|\xi_{\eps,\delta}-\xi'_{\eps,\delta}\|_{L^2(\R^2)}$. Therefore, we have
\[Z_\mu(t,x, \xi_{\eps,\delta}) \geq Z_\mu(t,x, \xi'_{\eps,\delta}) \exp(-\sqrt{\lambda} \|\xi_{\eps,\delta}-\xi'_{\eps,\delta}\|_{L^2(\R^2)}).
\]
The proof will be complete when we combine the above with the first inequality in \eqref{eq:alambda} and take the supremum over all $\xi'_{\eps,\delta} \in A_\lambda(t,x)$ on the right-hand side.

It remains to prove the last inequality in \eqref{eq:pathintegralbd}. Formally, this inequality follows from the density of occupation time formula (see e.g. \cite[Corollary 9.7]{le2016brownian}) and H\"older's inequality on $\R^2$. However, since the local time process $\frac{1}{2}\mathscr{L}^a_t$ is not differentiable in $t$, we will prove it by approximation.

Set $\Delta \xi (t,x) :=|\xi'_{\eps,\delta}(t,x)- \xi_{\eps,\delta}(t,x)|$ for $(t,x)\in\R^2$. Since $\Delta\xi$ is deterministic, continuous and in $L^2(\R^2)$, the second line of \eqref{eq:pathintegralbd} can be approximated by a family of convolutions
\[\mathbb{E}^{B}_{x,\xi'_{\eps,\delta}}\left[ \int_{0}^{t} \Delta \xi (t-r,|B_r|)\dd r\right]  = \lim_{\kappa\to0} \mathbb{E}^{B}_{x,\xi'_{\eps,\delta}}\left[ \int_{0}^{t} \int_\R \Delta \xi (t-r,y)p_\kappa(|B_r|-y)\dd y \dd r\right].
\]
For each $\kappa>0$, by Tonelli's theorem and H\"older's inequality,
\[
\begin{aligned}
& \mathbb{E}^{B}_{x,\xi'_{\eps,\delta}}\left[ \int_{0}^{t} \int_\R \Delta \xi (t-r,y)p_\kappa(|B_r|-y)\dd y \dd r\right]=\int_{0}^{t} \int_\R \Delta \xi (t-r,y) \mathbb{E}^{B}_{x,\xi'_{\eps,\delta}}\left[p_\kappa(|B_r|-y)\right]\dd y \dd r \\&\leq \left[\int_0^t\int_\R \left(\mathbb{E}^{B}_{x,\xi'_{\eps,\delta}} \left[p_\kappa(|B_r|-y)\right]\right)^2\dd y \dd r \right]^{1/2} \|\Delta\xi\|_{L^2(\R^2)}
\\&=  \left[\int_0^t\int_\R \left(\mathbb{E}^{B^1, B^2}_{x,\xi'_{\eps,\delta}} \left[p_\kappa(|B^1_r|-y)p_\kappa(|B^2_r|-y)\right]\right)\dd y \dd r \right]^{1/2} \|\Delta\xi\|_{L^2(\R^2)}\\
& =  \left[\mathbb{E}^{B^1, B^2}_{x,\xi'_{\eps,\delta}} \left( \int_0^t p_{2\kappa}(|B^1_r|-|B^2_r|)\dd r \right) \right]^{1/2} \|\Delta\xi\|_{L^2(\R^2)}.
\end{aligned}\]
By the density of occupation time formula, {since the quadratic variation of the semimartingale $|B^1_r|-|B^2_r|$ equals to $2r$,}
\[\int_0^t p_{2\kappa}(|B^1_r|-|B^2_r|)\dd r ={\frac{1}{2} }\int_\R p_{2\kappa}(a)\mathscr{L}_t^a(|B^1|-|B^2|) \dd a.
\] The proof is complete since \[\int_\R p_{2\kappa}(a) \mathbb{E}^{B^1, B^2}_{x,\xi'_{\eps,\delta}} \left[\frac{1}{2}\mathscr{L}_t^a(|B^1|-|B^2|)\right] \dd a \to \mathbb{E}^{B^1, B^2}_{x,\xi'_{\eps,\delta}} \left[\frac{1}{2}\mathscr{L}_t^0(|B^1|-|B^2|)\right], \quad \text{as } \kappa \to 0.\]
\end{proof}

We next show that the probability of  $\xi'_{\eps,\delta} \in A_{\lambda}(t,x)$ is always positive.
\begin{lemma}\label{le:probofAset} For any $\mu \in \R$, $\tau>0$,
there exists some constant $\lambda=\lambda(\mu,\tau)$ such that for any $x \in [0,\infty)$, $t \in (0,\tau]$ and $\zeta$ addressed in Proposition~\ref{pr:negmomcondition},
\[
\Prob(A_{\lambda}(t,x)) \geq \frac{1}{8}\frac{1}{C(\mu,\tau)}=:b(\mu,\tau),\]
where $C(\mu,\tau)$ is the constant in \eqref{eq:condwithepsdelta} and \eqref{eq:bdforPhi}.
\end{lemma}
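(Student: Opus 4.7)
The plan is to lower bound $\Prob(A_\lambda(t,x))$ via a Paley--Zygmund--Markov argument that controls the two defining conditions separately. Set
\[E_1 := \{\xi'_{\eps,\delta}: Z_\mu(t,x,\xi'_{\eps,\delta}) > \tfrac{1}{2} z_\mu(t,x\viv 0,\zeta)\}, \quad E_2(\lambda) := \{\xi'_{\eps,\delta}: \mathbb{E}^{B^1,B^2}_{x,\xi'_{\eps,\delta}}[\tfrac{1}{2}\mathscr{L}_t^0(|B^1|-|B^2|)] \leq \lambda\},\]
so that $A_\lambda(t,x) = E_1 \cap E_2(\lambda)$. I will first show $\Prob(E_1) \geq \tfrac{1}{4C(\mu,\tau)}$ using Paley--Zygmund, then $\Prob(E_1 \setminus E_2(\lambda)) \leq \tfrac{1}{8C(\mu,\tau)}$ by Markov for a suitable $\lambda$, which combined give $\Prob(A_\lambda(t,x)) \geq \tfrac{1}{8C(\mu,\tau)}$.

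For the first bound, Tonelli's theorem and $\Expe[{:}\exp{:}\{\cdot\}] = 1$ yield $\Expe[Z_\mu(t,x,\xi_{\eps,\delta})] = z_\mu(t,x\viv 0,\zeta)$, while \eqref{eq:condwithepsdelta} gives $\Expe[Z_\mu(t,x,\xi_{\eps,\delta})^2] \leq C(\mu,\tau)\, z_\mu(t,x\viv 0,\zeta)^2$. The Paley--Zygmund inequality with parameter $\theta = 1/2$ then yields $\Prob(E_1) \geq \tfrac{1}{4} \cdot (\Expe Z_\mu)^2 / \Expe[Z_\mu^2] \geq \tfrac{1}{4C(\mu,\tau)}$.

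For the second bound, the key observation is that $Z_\mu(t,x,\xi_{\eps,\delta})^2 = \mathbb{E}^x_{B^1,B^2}[\Theta(B^1_t,\xi_{\eps,\delta})\Theta(B^2_t,\xi_{\eps,\delta})]$ by independence of $B^1$ and $B^2$, so clearing the denominator in the weighted expectation gives
\[\mathbb{E}^{B^1,B^2}_{x,\xi_{\eps,\delta}}\!\left[\tfrac{1}{2}\mathscr{L}_t^0(|B^1|-|B^2|)\right]\cdot Z_\mu(t,x,\xi_{\eps,\delta})^2 = \mathbb{E}^x_{B^1,B^2}\!\left[\tfrac{1}{2}\mathscr{L}_t^0(|B^1|-|B^2|)\,\Theta(B^1_t,\xi_{\eps,\delta})\Theta(B^2_t,\xi_{\eps,\delta})\right].\]
Taking $\Prob$-expectation and applying Fubini with the standard Wick exponential identity
\[\Expe\!\left[{:}\exp{:}\!\left\{\int_0^t \xi_{\eps,\delta}(t-r,|B^1_r|)\dd r\right\}{:}\exp{:}\!\left\{\int_0^t \xi_{\eps,\delta}(t-r,|B^2_r|)\dd r\right\}\right] = e^{\int_0^t\int_0^t Q_{\eps,\delta}(t-r_1,t-r_2,|B^1_{r_1}|,|B^2_{r_2}|)\,\dd r_1\dd r_2},\]
the right-hand side becomes
\[\mathbb{E}^x_{B^1,B^2}\!\left[\tfrac{1}{2}\mathscr{L}_t^0(|B^1|-|B^2|)\,f(|B^1_t|)f(|B^2_t|)\,e^{-\mu(L^{0,1}_t + L^{0,2}_t)}\,e^{\int_0^t\int_0^t Q_{\eps,\delta}(t-r_1,t-r_2,|B^1_{r_1}|,|B^2_{r_2}|)\,\dd r_1\dd r_2}\right].\]
Using the elementary inequality $y \leq e^y$ for $y \geq 0$ to replace the local-time factor by $e^{\frac{1}{2}\mathscr{L}_t^0(|B^1|-|B^2|)}$, this is dominated by $\Phi^{\eps,\delta}_\mu(t,x\viv 0,\zeta)$ and hence, by \eqref{eq:bdforPhi}, by $C(\mu,\tau) z_\mu(t,x\viv 0,\zeta)^2$. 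Restricting to $E_1$, where $Z_\mu^2 > \tfrac{1}{4} z_\mu^2$, we obtain $\Expe[\mathbb{E}^{B^1,B^2}_{x,\xi_{\eps,\delta}}[\tfrac{1}{2}\mathscr{L}_t^0(|B^1|-|B^2|)] \1_{E_1}] \leq 4 C(\mu,\tau)$. Choosing $\lambda = 32\, C(\mu,\tau)^2$, Markov's inequality yields $\Prob(E_1 \setminus E_2(\lambda)) \leq \tfrac{4C(\mu,\tau)}{\lambda} = \tfrac{1}{8C(\mu,\tau)}$, and subtracting from the Paley--Zygmund bound gives the asserted $b(\mu,\tau) = \tfrac{1}{8C(\mu,\tau)}$. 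The only subtle step is the Fubini--Wick-exponential exchange above, which is routine given the smoothness of $\xi_{\eps,\delta}$ for $\eps,\delta > 0$ and the uniform integrability guaranteed by Proposition~\ref{pr:negmomcondition}.
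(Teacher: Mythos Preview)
Your proof is correct and follows essentially the same approach as the paper: Paley--Zygmund for $\Prob(E_1)\ge \tfrac{1}{4C(\mu,\tau)}$, then control of the second condition via the unnormalized identity and the bound $y\le e^y$ leading to $\Phi^{\eps,\delta}_\mu\le C(\mu,\tau)z_\mu^2$, and finally Markov with $\lambda=32\,C(\mu,\tau)^2$. The only cosmetic difference is that the paper passes to the subset $\{Z_\mu>\tfrac12 z_\mu,\ \text{unnormalized}\le \tfrac{\lambda}{4}z_\mu^2\}\subset A_\lambda$ and applies Chebyshev directly to the unnormalized quantity, whereas you first restrict to $E_1$ and use $Z_\mu^2>\tfrac14 z_\mu^2$ there before applying Markov; both routes give the identical bound $4C(\mu,\tau)/\lambda$.
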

\begin{proof}
From \eqref{eq:alambda}, we see that the set of noise
\[
\begin{aligned}
& \Bigg\{\xi'_{\eps,\delta}: Z_\mu(t,x, \xi'_{\eps,\delta})> \frac{1}{2} z_\mu(t,x\viv0,\zeta),\\& \quad 
\mathbb{E}^x_{B^1,B^2}\left[ \frac{1}{2}\mathscr{L}_t^0(|B^{1}|-|B^{2}|) \Theta(B^{1}_t, \xi'_{\eps,\delta})\Theta(B^{2}_t, \xi'_{\eps,\delta})  \right] \leq  \frac{\lambda}{4}z_\mu(t,x\viv0,\zeta)^2 \Bigg\}
\end{aligned}
\]
is a subset of $A_{\lambda}(t,x)$. Thus we have that 
\begin{equation}\label{eq:probofaset}
\Prob(A_{\lambda}(t,x)) \geq \Prob\left( Z_\mu(t,x, \xi'_{\eps,\delta})> \frac{1}{2}z_\mu(t,x\viv0,\zeta) \right) - \Prob(B_{\lambda}(t,x)),
\end{equation}
where
\[
\begin{aligned}
B_{\lambda}(t,x) = \Bigg\{\xi'_{\eps,\delta}:   \mathbb{E}^x_{B^1,B^2}\left[ \frac{1}{2}\mathscr{L}_t^0(|B^{1}|-|B^{2}|) \Theta(B^{1}_t, \xi'_{\eps,\delta})\Theta(B^{2}_t, \xi'_{\eps,\delta})  \right] >  \frac{\lambda}{4}z_\mu(t,x\viv0,\zeta)^2    \Bigg\}.
\end{aligned}
\]
{By the Paley–Zygmund inequality and \eqref{eq:condwithepsdelta}, when $\eps,\delta \in (0,1)$, for any $t \in (0,\tau]$, $x \in [0,\infty)$,
\begin{equation}\label{eq:paleyzygmund}
\begin{aligned}
\Prob\left( Z_\mu(t,x, \xi'_{\eps,\delta})> \frac{1}{2}z_\mu(t,x\viv0,\zeta)\right) \geq \frac{1}{4} \frac{z_\mu(t,x\viv0,\zeta)^2 }{\Expe\left[Z_\mu(t,x, \xi'_{\eps,\delta})^2\right]} \geq \frac{1}{4} \frac{1}{C(\mu,\tau)},
\end{aligned}
 \end{equation}
 where $C(\mu,\tau)$ is the constant in Proposition~\ref{pr:negmomcondition}.}
To estimate $\Prob(B_{\lambda}(t,x))$, by Tonelli's theorem and Lemma~\ref{le:fk}~\eqref{it:apprmoment},
 \[
 \begin{aligned} & \Expe \mathbb{E}^x_{B^1,B^2}\left[ \frac{1}{2}\mathscr{L}_t^0(|B^{1}|-|B^{2}|) \Theta(B^{1}_t, \xi'_{\eps,\delta})\Theta(B^{2}_t, \xi'_{\eps,\delta})  \right]  \\
 &=  \mathbb{E}^x_{B^1,B^2}\left[ \frac{1}{2}\mathscr{L}_t^0(|B^{1}|-|B^{2}|)\Expe[ \Theta(B^{1}_t, \xi'_{\eps,\delta})\Theta(B^{2}_t, \xi'_{\eps,\delta}) ] \right]  \\
 & = \mathbb{E}^x_{B^1,B^2}\bigg[ \frac{1}{2}\mathscr{L}_t^0(|B^{1}|-|B^{2}|)f\left(|B^{1}_t|\right) f\left(|B^{2}_t|\right) 
 \\&\quad\exp(-\mu L^{0,1}_{t}-\mu L^{0,2}_{t})
\exp\left(\int_0^t\int_0^tQ_{\eps,\delta} (t-s,t-r, |B^{1}_{r}|,|B^{2}_{s}|) \dd s \dd r\right)\bigg].
 \end{aligned}
 \]

Using a trivial bound $a \leq \exp(a)$ for all $a\geq 0$, the last expression is bounded above by the value $\Phi^{\eps,\delta}_{\mu}(t,x\viv 0,\zeta)$ defined in \eqref{eq:shemuchaosexpand}.
By Chebyshev inequality and \eqref{eq:bdforPhi}, when $\eps,\delta \in (0,1)$, for any $t \in (0,\tau]$, $x \in [0,\infty)$,
\[
\Prob(B_{\lambda}(t,x)) \leq \frac{4 \Phi^{\eps,\delta}_{\mu}(t,x\viv 0,\zeta)}{\lambda z_\mu(t,x\viv0,\zeta)^2} \leq \frac{4C(\mu,\tau)}{\lambda}.
\]
Together with \eqref{eq:probofaset} and \eqref{eq:paleyzygmund}, we have that for any $x \in [0,\infty)$, $t \in (0,\tau]$,
\[
\Prob(A_{\lambda}(t,x)) \geq \frac{1}{4}\frac{1}{C(\mu,\tau)} - \frac{4C(\mu,\tau)}{\lambda}.
\]
Let
\[
\lambda_0(\mu,\tau) := 32  C(\mu,\tau)^2.
\]
By Proposition~\ref{pr:negmomcondition}, $\lambda_0(\mu,\tau)<\infty$. Choose some $\lambda =\lambda(\mu,\tau)> \lambda_0(\mu,\tau)$, then for any $x \in [0,\infty)$, $t \in (0,\tau]$, \[
\Prob(A_{\lambda}(t,x))  \geq \frac{1}{8}\frac{1}{C(\mu,\tau)}>0.
\]\end{proof}

With Lemma~\ref{le:probofAset}, we can further bound the probability of any sample path $\xi_{\eps,\delta}$ deviating from the set $A_{\lambda}(t,x)$: for any $t\in (0,\tau], x \in [0,\infty)$ and $a>0$,
\begin{equation}\label{eq:distanceprobbound}
\Prob \left(d(\xi_{\eps,\delta}, A_{\lambda}(t,x))\geq a+2\sqrt{\log \frac{2}{b(\mu,\tau)}} \right) \leq 2e^{-a^2/4}.
\end{equation}
This is derived from Talagrand's concentration inequality. The proof only depends on estimates of the space-time noise $\xi_{\eps,\delta}$ and the lower bound on the probability of event $A_{\lambda}(t,x)$ from Lemma~\ref{le:probofAset}. A technical (though not difficult) part here is that one needs to use an $L^2(\R^2)$ approximation $\xi_{\eps,\delta,n}$ for $\xi_{\eps,\delta}$ and prove concentration at each $n$. Since we are using the same $\xi_{\eps,\delta}$ as in the (1+1) full-space SHEs, we refer to \cite[Lemma 4.5]{hu2022asymptotics} for the proof of \eqref{eq:distanceprobbound}. 

We are now ready to prove Lemma~\ref{le:negmom}.

\begin{proof}[Proof of Lemma~\ref{le:negmom}]
By Lemma~\ref{le:negativelemma1}, for any fixed $\mu \in \R, t\in (0,\tau], x \in [0,\infty)$, we have
\[
Z_\mu(t,x, \xi_{\eps,\delta}) \geq \frac{1}{2} z_\mu(t,x\viv0,\zeta)\exp\left[ - \sqrt{\lambda}d(\xi_{\eps,\delta}, A_{\lambda}(t,x))\right].
\]
Applying \eqref{eq:distanceprobbound}, for any $\lambda>\lambda_0(\mu,\tau)$, $a>0$, \[
\Prob \left(\frac{Z_\mu(t,x, \xi_{\eps,\delta}) }{z_\mu(t,x\viv0,\zeta)}\leq \frac{1}{2}\exp\left(-\sqrt{\lambda}\left[a+2\sqrt{\log \frac{2}{b(\mu,\tau)}}\right]\right) \right) \leq 2e^{-a^2/4}.
\]
As $\lambda,b$ are independent of $\eps, \delta$, by Lemma~\ref{le:fk}~\eqref{it:approx}, by sending $\eps, \delta \to 0$, we have
\[
\Prob \left(F(t,x)\leq \exp\left(-\sqrt{\lambda}a\right) \right) \leq 2e^{-a^2/4}, \quad \text{for }
F(t,x):=\frac{Z_\mu(t,x\viv 0, \zeta) }{z_\mu(t,x\viv0,\zeta)}2 \exp\left(2\sqrt{\lambda\log\frac{2}{b(\mu,\tau)}} \right).
\]
As in \cite[Corollary 4.8]{hu2022asymptotics}, for any $p\geq0$, we compute
\[
\begin{aligned}
&\Expe F^{-p} = p\int_0^{\infty}r^{-p} \Prob(F<r)\frac{\dd r}{r} \leq p\int_0^1r^{-p} \Prob(F<r)\frac{\dd r}{r} +  p\int_1^{\infty}r^{-p}\frac{\dd r}{r} \\
&= p \sqrt{\lambda}\int_0^{\infty} \exp(p\sqrt{\lambda}a)\Prob \left(F<\exp(-\sqrt{\lambda}a) \right)\dd a +1 \leq p \sqrt{\lambda}\int_0^{\infty} 2\exp(p\sqrt{\lambda}a-\frac{a^2}{4})\dd a +1 \\ &\leq 4\sqrt{\pi p^2\lambda}\exp(p^2\lambda)+1.
\end{aligned}
\]
Thus we have that for any $t\in (0,\tau], x \in [0,\infty)$,
\[
\Expe \left[\left(Z_\mu(t,x\viv 0, \zeta)  \right)^{-p}\right] \leq  z_\mu(t,x\viv0,\zeta)^{-p} 2^{p} \exp\left(2p\sqrt{\lambda\log\frac{2}{b(\mu,\tau)}} \right)\left(4\sqrt{\pi p^2\lambda}\exp(p^2\lambda)+1\right),
\]
which implies \eqref{eq:negativemoments}.

Additionally, Proposition~\ref{pr:negmomcondition} has assured that the constant $C(\mu,\tau)$ in \eqref{eq:condwithepsdelta} and \eqref{eq:bdforPhi} can be chosen uniformly over $\mu$ in compact subsets of $\R$. It thus implies that $\lambda_0(\mu,\tau)$ and $b(\mu,\tau)^{-1}$ can also be uniform in the corresponding sets of $\mu$.
\end{proof}

\subsection{Proof of Proposition~\ref{pr:greens}~\eqref{it:trev}}\label{ap:reversal}
Let $\xi_{\delta}(t,x) = \int_\R  p_\delta(x-y)\xi(t, \dd y)$, which is a Gaussian noise white in time and smooth in space, with the covariance function
\[
Q_{\delta}(t_1,t_2,x_1,x_2) = \delta_0(t_1-t_2) p_{2\delta}(x_1-x_2).
\]
For any $\delta>0$, $\mu, s,t \in \R, t>s$, $x,y \in [0,\infty)$, define \[
    \mathcal{Z}_\mu^{\delta}(t,x\viv s, y) := \kernel^{N}(t,x\viv s,y)
\mathbb{E}^x_B\left[\exp\left(-\mu L^{0}_{t-s} +
\int_{0}^{t-s}\xi_{\delta}(t-r,|B_r|)\dd r- \frac{1}{2}(t-s)p_{2\delta}(0)\right)\bigg||B_{t-s}|=y\right]\]
Let $\Upsilon_\cdot = |B_\cdot|$ be the reflected Brownian motion starting from $\Upsilon_0=|B_0|=x$. We use $L^{0,\Upsilon}_{t}$ to denote the local time
\[L^{0,\Upsilon}_{t}:=\lim _{\eps \to 0} \frac{1}{2 \varepsilon} \int_0^t \1_{[0, \varepsilon]}\left(\Upsilon_s\right) \dd s.
\]
As discussed after \eqref{eq:probrep3}, we can rewrite the above as
\[\mathcal{Z}_\mu^{\delta}(t,x\viv s, y) = 
\kernel^{N}(t,x\viv s,y)
\mathbb{E}^x_{\Upsilon}\left[\exp\left(-\mu L^{0,\Upsilon}_{t-s} +
\int_{0}^{t-s}\xi_{\delta}(t-r,\Upsilon_r)\dd r- \frac{1}{2}(t-s)p_{2\delta}(0)\right)\bigg|\Upsilon_{t-s}=y\right],
\]
where the conditional expectation is taken over the reflected Brownian bridges started from $\Upsilon_0=x$ and ending at $\Upsilon_{t-s}=y$.
Since 
\[
\{\xi_{\delta}(s+r,x)\}_{r \in [0,t-s], x \in [0, \infty)} \stackrel{\text { law }}{=} \{\xi_{\delta}(t-r,x)\}_{r \in [0,t-s], x \in [0, \infty)},\]
we have
\begin{equation}\label{eq:equalinlawRE}
\begin{aligned}
&\left\{\mathbb{E}_{\Upsilon}\left[\exp\left(-\mu L^{0,\Upsilon}_{t-s} +
\int_{0}^{t-s}\xi_{\delta}(t-r,\Upsilon_r)\dd r\right)\bigg\mid \Upsilon_0=x, \Upsilon_{t-s}=y\right]\right\}_{x,y \in [0,\infty)}\\
\stackrel{\text { law }}{=} &
\left\{\mathbb{E}_{\Upsilon}\left[\exp\left(-\mu L^{0,\Upsilon}_{t-s} +
\int_{0}^{t-s}\xi_{\delta}(s+r,\Upsilon_r)\dd r\right)\bigg\mid \Upsilon_0=x, \Upsilon_{t-s}=y\right]\right\}_{x,y \in [0,\infty)}.
\end{aligned}\end{equation} Let $r'=t-s-r$, $\tilde{\Upsilon}_{r'}:=\Upsilon_{t-s-{r'}}$ for $0\leq r'\leq t-s$ and ${L}_{t-s}^{0,\tilde{\Upsilon}}$ be the local time at zero associated with $\tilde{\Upsilon}$. The
tuples $((\Upsilon_{t-s-r})_{r \in [0,t-s]},L_{t-s}^{0,\Upsilon})$ and $((\tilde{\Upsilon}_r)_{r \in [0,t-s]},{L}_{t-s}^{0,\tilde{\Upsilon}})$ are identical $\omega$-by-$\omega$, and the process $(\tilde{\Upsilon}_{r'})_{0\leq r'\leq t-s}$ is a reflected Brownian bridge started from $\tilde{\Upsilon}_{0}=y$ and ending at $\tilde{\Upsilon}_{t-s}=x$. It then follows that the right-hand side of \eqref{eq:equalinlawRE} equals to
\[\left\{\mathbb{E}_{\tilde{\Upsilon}}\left[\exp\left(-\mu {L}^{0,\tilde{\Upsilon}}_{t-s} +
\int_{0}^{t-s}\xi_{\delta}(t-r',\tilde{\Upsilon}_{r'})\dd r'\right)\bigg\mid \tilde{\Upsilon}_0=y, \tilde{\Upsilon}_{t-s}=x\right]\right\}_{x,y \in [0,\infty)}.\]
Since the deterministic function $\kernel^{N}(t,x\viv s,y)$ is also symmetric in $(x,y)$-variables, we have proved that for any $\delta>0$,
\[
\{\mathcal{Z}^\delta_\mu(t,x\viv s,y)\}_{x, y \in [0, \infty)} \stackrel{\text { law }}{=}\{\mathcal{Z}^\delta_\mu(t,y\viv s,x)\}_{x, y \in [0, \infty)}.\]
{By  Lemma~\ref{le:fk}~\eqref{it:approx} (with a mollification in space only),} $\mathcal{Z}_\mu^{\delta}(t,x\viv s, y)$ converges to $\mathcal{Z}_\mu(t,x\viv s, y)$ in $L^p(\Omega)$ as $\delta \to 0$. We   thus derive \eqref{eq:trev}.

\subsection{Proof of Proposition~\ref{pr:mombds0}}\label{ap:momentsproof}
For any $a,u \in \R$, $p \in [1,\infty)$, and $x \in [0,\infty)$, we have identity
\begin{equation}\label{eq:expBMbd}
 \left\|\exp \left(aW_{u}(x)\right)\right\|_p = \exp\left(aux+\frac{1}{2}a^2px\right).
 \end{equation}
By \eqref{eq:sheGreensform}, Minkowski and Cauchy-Schwarz inequalities, for any $t>0$,
\[\begin{aligned}
\|&Z_{u-\frac{1}{2}}(t,x)\|_p \leq \int_0^{\infty} \|\mathcal{Z}_{u-\frac{1}{2}}(t,x\viv 0,y)\exp(W_u(y))\|_p\dd y \\&\leq  \int_0^{\infty} \|\mathcal{Z}_{u-\frac{1}{2}}(t,x\viv 0,y)\|_{2p}\|\exp(W_u(y))\|_{2p}\dd y.\\
\end{aligned}
\]
Using Lemma~\ref{le:fk}~\eqref{it:momentsFK} and \eqref{eq:expBMbd}
proves \eqref{eq:posCx}. 

For the bound of negative moments, we use Jensen's, Minkowski and Cauchy-Schwarz inequalities to obtain
\[
\begin{aligned}
&\Expe \left[ Z_{u-\frac{1}{2}}(t,x)^{-p}\right] = \Expe\left[ \left( \int_0^{\infty}\mathcal{Z}_{u-\frac{1}{2}}(t,x\viv 0,y)\exp (W_u(y)) \dd y\right)^{-p}\right]\\
&\leq \Expe \left[\left(\int_0^{\infty}\mathcal{Z}_{u-\frac{1}{2}}(t,x\viv 0,y) \dd y\right)^{-p-1} \int_0^{\infty}\mathcal{Z}_{u-\frac{1}{2}}(t,x\viv 0,y)\exp (-pW_{u}(y)) \dd y \right]\\
& \leq \left\| \left(\int_0^{\infty}\mathcal{Z}_{u-\frac{1}{2}}(t,x\viv 0,y) \dd y\right)^{-1} \right\|^{p+1}_{2p+2} \left\| \int_0^{\infty}\mathcal{Z}_{u-\frac{1}{2}}(t,x\viv 0,y)\exp (-pW_{u}(y)) \dd y\right\|_2\\
& \leq  \left\| \left(\int_0^{\infty}\mathcal{Z}_{u-\frac{1}{2}}(t,x\viv 0,y) \dd y\right)^{-1} \right\|^{p+1}_{2p+2}  \int_0^{\infty}\left\|\mathcal{Z}_{u-\frac{1}{2}}(t,x\viv 0,y)\right\|_4 \left\|\exp (-pW_{u}(y))\right\|_4 \dd y.
\end{aligned}
\]
By Lemma~\ref{le:negmom},
the first term in the last product is uniformly bounded on $x \in [0,\infty)$. The bound of the second term follows from a similar computation.

The constants in Lemma~\ref{le:fk}~\eqref{it:momentsFK} or Lemma~\ref{le:negmom} do not depend on $t$ when $t\in (0,\tau]$ for any fixed $\tau>0$ and can be chosen uniformly for $u$ in any compact subset of $\R$.  Thus it is not hard to check that the bounds of $\|Z_{u-\frac{1}{2}}(t,x)\|_p$ and $\|Z_{u-\frac{1}{2}}(t,x)^{-1}\|_p$ can also be chosen uniformly for $(u,t)$ in any compact subset of $\R\times {[0,\infty)}$.

{Using the uniform negative moment bounds, by Kolmogorov continuity criteria and the uniqueness of the mild solution, $Z_{u-\frac{1}{2}}(t,x)^{-1}$ is also continuous on $(t,x)\in \R^2_{\geq 0}$, which implies that $Z_{u-\frac{1}{2}}(t,x)>0$ for all $(t,x)\in \R^2_{\geq 0}$.}

To prove \eqref{eq:HbdCx}, we use an elementary inequality: for any $a \in\R$, $|a| \leq e^a+e^{-a}$. By the definition of $H_u(t,x)$,  
$ |H_u(t,x)| \leq Z_{u-\frac{1}{2}}(t,x)+Z_{u-\frac{1}{2}}(t,x)^{-1}$,
and \eqref{eq:HbdCx} follows from the triangle inequality.

\section{Proof of \eqref{eq:covlimit}}\label{ap:covlimit}

Recall that the goal was to show 
\[
\Cov[\mathcal{H}_u(t,x), \mathcal{H}_u(t,0)]  \to 0, \quad \text{as } x \to \infty.
\]
The above convergence  arises from the spatial decorrelation of the half-space KPZ equation \eqref{eq:hsKPZ}, for fixed $t>0$. Similar to the analogous result for the full-space KPZ equation (see  \cite[Proposition 5.2]{balazs2011fluctuation} and \cite[Eq. (2.10)]{gu2023another} for two different proofs), this decorrelation is independent of the KPZ behavior or the boundary condition. 

Using the same notation as above, $\mathscr{W}$ is the spatial white noise associated with the Brownian motion $W$ and $\malD$ is the Malliavin derivative operator on the Gaussian space generated by $\mathscr{W}$. We further define $\mathscr{D}$ to be the Malliavin derivative operator on the Gaussian space generated by the noise $\xi$. For any $t\geq 0, x\geq 0$, $\Expe H_u (t,x) = \mathbf{E}_W\mathbf{E}_\xi H_u(t,x)$. The law of total covariance gives that
\[
\begin{aligned}
&\Cov[\mathcal{H}_u(t,x), \mathcal{H}_u(t,0)] =  \mathbf{E}_W\mathbf{E}_\xi \left( \left[\mathcal{H}_u(t,x) - \mathbf{E}_\xi \mathcal{H}_u(t,x)\right] \left[\mathcal{H}_u(t,0) - \mathbf{E}_\xi \mathcal{H}_u(t,0)\right] \right) \\&+ \mathbf{E}_W\left( \left[\mathbf{E}_\xi \mathcal{H}_u(t,x) - \Expe \mathcal{H}_u(t,x) \right] \left[\mathbf{E}_\xi \mathcal{H}_u(t,0) - \Expe \mathcal{H}_u(t,0) \right] \right).
\end{aligned}
 \]
 For each realization of $W$, by the Clark-Ocone formula   \cite[Proposition 6.3]{chen2021spatial}, we have \[
\mathcal{H}_u(t,x) - \mathbf{E}_\xi \mathcal{H}_u(t,x) = \int_0^t \int_{\mathbb{R}} \mathbf{E}_{\xi}\left[\mathscr{D}_{s, z} \mathcal{H}_u(t, x) \mid \mathcal{F}_s\right] \xi(\dd s \dd z),
\] 
with $\{\mathcal{F}_s\}_{s\geq 0}$ being the natural filtration in time generated by $\xi$. Similarly, 
\[
\mathbf{E}_\xi \mathcal{H}_u(t,x) - \Expe \mathcal{H}_u(t,x) = \int_0^{\infty} \mathbf{E}_W[\malD_r\mathbf{E}_\xi \mathcal{H}_u(t,x)\mid \tilde{\mathcal{F}}_r ]\dd W(r),
\]
with $\{\tilde{\mathcal{F}}_r\}_{r\geq 0}$ being the natural filtration generated by $\mathscr{W}$.
By It\^o isometry, Cauchy-Schwarz inequality and Jensen's inequality,
\[
\begin{aligned}
&\Expe\left( \left[\mathcal{H}_u(t,x) - \mathbf{E}_\xi \mathcal{H}_u(t,x)\right] \left[\mathcal{H}_u(t,0) - \mathbf{E}_\xi \mathcal{H}_u(t,0)\right] \right)\\
& = \int_0^t\int_\R \Expe\left[ \mathbf{E}_{\xi}\left[\mathscr{D}_{s, z} \mathcal{H}_u(t, x) \mid \mathcal{F}_s\right]  \mathbf{E}_{\xi}\left[\mathscr{D}_{s, z} \mathcal{H}_u(t, 0) \mid \mathcal{F}_s\right] \right] \dd z \dd s\\
&\leq \int_0^t\int_\R  \|\mathscr{D}_{s, z} \mathcal{H}_u(t, x)\|_2  \|\mathscr{D}_{s, z} \mathcal{H}_u(t, 0)\|_2 \dd z \dd s=:I_1(t,x),
\end{aligned}
\]
and 
\[\begin{aligned}
&\mathbf{E}_W\left( \left[\mathbf{E}_\xi \mathcal{H}_u(t,x) - \Expe \mathcal{H}_u(t,x) \right] \left[\mathbf{E}_\xi \mathcal{H}_u(t,0) - \Expe \mathcal{H}_u(t,0) \right] \right)\\
&=\mathbf{E}_W\left[  \int_0^{\infty} \mathbf{E}_W[\malD_r\mathbf{E}_\xi \mathcal{H}_u(t,x)\mid \tilde{\mathcal{F}}_r] \mathbf{E}_W[\malD_r\mathbf{E}_\xi \mathcal{H}_u(t,0)\mid \tilde{\mathcal{F}}_r ]\dd r\right]\\
&\leq \int_0^{\infty}\|\malD_r \mathcal{H}_u(t,x)\|_2\|\malD_r \mathcal{H}_u(t,0)\|_2 \dd r=:I_2(t,x).
\end{aligned}
\]
The proof of \eqref{eq:covlimit} reduces to showing that for any fixed $t>0$, $I_1(t,x)+I_2(t,x) \to 0$ as $x \to \infty$.

We first analyze $I_1$. Similar to the result of full-space SHE in \cite[Proposition 5.1]{chen2021regularity}, for $(s,z)\in [0,t)\times \R$ and $x\geq 0$, we have
\[
\begin{aligned}
\mathscr{D}_{s, z} {Z}_{u-\frac{1}{2}}(t, x) &=\1_{[0, \infty)}(z) \kernel_{u-\frac{1}{2}}^R(t,x\viv s,z) {Z}_{u-\frac{1}{2}}(s, z) \\
&+ \int_s^t\int_0^{\infty} \kernel_{u-\frac{1}{2}}^R(t,x\viv r,y) \mathscr{D}_{s,z} {Z}_{u-\frac{1}{2}}(r, y) \xi(r,y) \dd y\dd r,
\end{aligned}
\]
with $\mathscr{D}_{s, z} {Z}_{u-\frac{1}{2}}(t, x) \equiv 0$ when $z<0$ or $s\geq t$. When $s<t, z \geq 0$, since ${Z}_{u-\frac{1}{2}}(s, z)$ is strictly positive, we can multiply both sides by ${Z}_{u-\frac{1}{2}}(s, z)^{-1} $. The uniqueness statement in Proposition~\ref{pr:hsSHEprop}~(2) then implies that \[
\mathscr{D}_{s, z} {Z}_{u-\frac{1}{2}}(t, x) {Z}_{u-\frac{1}{2}}(s, z)^{-1} = \mathcal{Z}_{u-\frac{1}{2}}(t,x\viv s,z),
\]
where we recall that $\mathcal{Z}$ is the Green's function.
It follows that for any $s\in[0,t), z \in [0, \infty)$,
\[
\begin{aligned}
\mathscr{D}_{s, z} \mathcal{H}_{u}(t, x) &= \mathscr{D}_{s, z} \left[\log{Z}_{u-\frac{1}{2}}(t, x)\right]
 = \frac{\mathscr{D}_{s, z} {Z}_{u-\frac{1}{2}}(t, x)}{Z_{u-\frac{1}{2}}(t,x)}=\frac{ \mathcal{Z}_{u-\frac{1}{2}}(t,x\viv s,z) {Z}_{u-\frac{1}{2}}(s, z)}{Z_{u-\frac{1}{2}}(t,x)}.
\end{aligned}
\]
By H\"older's inequality, \eqref{eq:greensPosMom}, \eqref{eq:posCx} and \eqref{eq:negCx}, there exists some positive constant $C=C(t,u)$ such that for any $x \in [0,\infty)$,
\[
\begin{aligned}
\|\mathscr{D}_{s, z} \mathcal{H}_{u}(t, x) \|_2 &\leq \|\mathcal{Z}_{u-\frac{1}{2}}(t,x\viv s,z)\|_6  \|{{Z}}_{u-\frac{1}{2}}(s,z)\|_6  \|{{Z}}_{u-\frac{1}{2}}(t,x)^{-1}\|_6\\
&\leq C(t-s)^{-1/2}\exp(-(x-z)^2/4(t-s)) \exp(C(x+z)).
\end{aligned}
\]
Thus we have
\[
I_1(t,x) \leq  C^2 \int_0^t\int_{0}^{\infty}  (t-s)^{-1}\exp\left(-\frac{(x-z)^2}{4(t-s)} - \frac{z^2}{4(t-s)}\right)  \exp(C(x+2z)) \dd z \dd s.
\]
By a straightforward computation, the right-hand side of the above converges to 0 as $x \to \infty$.

For $I_2$, we note that 
\[
\begin{aligned}
\malD_r \mathcal{H}_u(t,x) &= \frac{\int_r^{\infty}\mathcal{Z}_{u-\frac{1}{2}}(t,x\viv 0,y)\exp (W_{u}(y)) \dd y}{\int_0^{\infty}\mathcal{Z}_{u-\frac{1}{2}}(t,x\viv 0,y)\exp (W_{u}(y)) \dd y} - \1_{[0,x]}(r) \\
&= \1_{[x,\infty)}(r)\int_r^{\infty}\rho^{R}_{u}(y \viv t,x) \dd y - \1_{[0,x]}(r)\int_0^{r}\rho^{R}_{u}(y \viv t,x) \dd y,
\end{aligned}
\]
which further implies 
\[
\|\malD_r \mathcal{H}_u(t,x)\|_2 \leq \1_{[0,x]}(r)\int_0^{r} \|\rho^{R}_{u}(y \viv t,x)\|_2 \dd y + \1_{[x,\infty)}(r)\int_r^{\infty}\|\rho^{R}_{u}(y \viv t,x)\|_2 \dd y.
\]
By a similar estimate as in \eqref{eq:quedenmom}, we can prove that there exists a constant $C=C(t,u)$ such that
\[
\|\rho^{R}_{u}(y \viv t,x)\|_2 \leq  C\exp\left(-{(x-y)^2/C}+Cx\right), \quad \text{for all } x \in [0,\infty).\]
As a result, we can bound $I_2$ by 
\[
\begin{aligned}
&I_2(t,x) \leq \exp({Cx})\int_0^x\left( \int_0^{r} C\exp\left(-{(x-y)^2}/{C}\right) \dd y \right)\left(\int_r^{\infty} C\exp\left(-{y^2}/{C}\right)  \dd y\right) \dd r \\
 &+ \exp({Cx})\int_x^{\infty}\left( \int_r^{\infty} C\exp\left(-{(x-y)^2}/{C}\right) \dd y \right)\left(\int_r^{\infty} C\exp\left(-{y^2}/{C}\right)  \dd y\right) \dd r\\
  & \leq \exp({Cx}) \int_0^x \phi(2x-r)\phi(r)\dd r + \exp({Cx}) \int_x^{\infty} \phi(r-x)\phi(r) \dd r,
\end{aligned}
\]
with $\phi(x):= \int_x^{\infty} C\exp\left(-{y^2}/{C}\right)  \dd y$. By the Gaussian tail of the function $\phi$, the last line above converges to 0 as $x \to \infty$.

\bibliographystyle{alpha}
\bibliography{hskpz.bib}

\end{document}